\tikzset{anchorbase/.style={baseline={([yshift=-0.5ex]current bounding box.center)}}}
\tikzset{wipe/.style={white,line width=4pt}}
\tikzset{->-/.style={decoration={ markings, mark=at position #1 with
  {\arrow{>}}},postaction={decorate}}} \tikzset{-<-/.style={decoration={ markings, mark=at position
  #1 with {\arrow{<}}},postaction={decorate}}}
 \tikzset{darkg/.style={green!70!black}}
\theoremstyle{plain}
\newtheorem*{theorem*}{Theorem}
\newtheorem*{remark*}{Remark}
\newtheorem*{example*}{Example}
\newtheorem{lemma}{Lemma}[subsection]
\newtheorem{proposition}[lemma]{Proposition}
\newtheorem{corollary}[lemma]{Corollary}
\newtheorem{theorem}[lemma]{Theorem}
\newtheorem*{conjecture*}{Conjecture}
\theoremstyle{definition}
\newtheorem{definition}[lemma]{Definition}
\theoremstyle{remark}
\newcommand{\tr}{\operatorname{tr}} 
\newcommand{\HC}{\operatorname{HC}}
\newcommand{\Hom}{\operatorname{Hom}} 
\newcommand\Homp{\operatorname{Hom^{\prime}}}
\newcommand{\diag}{\operatorname{diag}} 
\newcommand{\sdim}{\operatorname{sdim}} 
\newcommand{\str}{\operatorname{str}}
\newcommand{\ad}{\operatorname{ad}} 
\newcommand{\at}{\operatorname{at}}
\newcommand{\defect}{\operatorname{def}} 
\renewcommand{\Im}{\operatorname{im}}
\newcommand{\Ker}{\operatorname{ker}} 
\newcommand{\Coker}{\operatorname{{coker}}}
\newcommand{\WCl}{\operatorname{WCl}}
\newcommand{\ann}{\operatorname{ann}}
\def\Weyl{{\mathscr W(\odd)}} 
\def\CC{{\mathbb C}} 
\def\RR{{\mathbb R}}
\def\QQ{{\mathbb Q}}
\def\PP{{\mathbb P}}
\def\UE{{\mathfrak U}} 
\def\ZC{{\mathfrak Z}} 
\def\SU{{\mathrm{SU}}} 
\def\Uone{{\mathrm{U}}}
\def\HH{{\mathcal{H}}}
\def\hh{{\mathfrak h}} 
\def\kk{{\mathfrak k}} 
\def\uu{{\mathfrak u}}
\def\Oss{{\mathfrak{O}}} 
\def\slmn{{\mathfrak{sl}(m\vert n)}}
\def\supqn{{\mathfrak{su}(p,q\vert n)}} 
\def\glmn{{\mathfrak{gl}(m\vert n)}} 
\def\LL{{\mathcal{L}}}
\def\YY{{\mathcal Y}} 
\def\qq{{\mathfrak q}}
\def\bb{{\mathfrak b}} 
\def\even{{\mathfrak{g}_{0}}} 
\def\odd{{\mathfrak{g}_{1}}}
\def\tt{{\mathfrak t}} 
\def\NN{{\mathbb N}} 
\def\nn {{\mathfrak{n}}} 
\def\pr{{\mathrm{pr}}} 
\def\gg{{\mathfrak{g}}} 
\def\S{{\mathrm{S}}} 
\def\DS{{\operatorname{DS}}}
\def\rform{{\gg_{0}^{\RR}}}
\def\rformg{{G_{0}^{\RR}}}
\def\sccg{{\widetilde{G}_{0}^{\RR}}}
\def\reg{{\it reg}}
\def\ZZ{{\mathbb Z}} 
\def\gsmod{{\mathfrak{g}}\textbf{-smod}} 
\def\ugsmod{({\mathfrak{g}},\omega)\textbf{-usmod}} 
\def\ugsmodp{({\mathfrak{g}},\omega)\textbf{-usmod$^{\prime}$}} 
\def\gxsmod{{\mathfrak{g}_{x}}\textbf{-smod}}
\def\gmod{{\mathfrak{g}_{0}}\textbf{-mod}}
\def\evsmod{{\mathfrak{g}_{0}}\textbf{-smod}} 
\def\calC{{\mathcal C}} 
\def\calS{{\mathcal S}} \def\calI{{\mathcal I}}
\newcommand{\Span}{{\operatorname{Span}}}
\newcommand{\len}{{\text{len}}}
\newcommand{\rk}{{\operatorname{rk}}}
\newcommand{\End}{\operatorname{End}} 
\newcommand{\gr}{\operatorname{gr}}
\newcommand{\gl}{{\mathfrak{gl}}}
\newcommand{\Ad}{\operatorname{Ad}} 
\newcommand{\abs}[1]{\left|{#1}\right|}
\newcommand{\bil}{B}
\newcommand{\Dirac}{\operatorname{D}}
\newcommand{\ev}{\mathrm{ev}}
\newcommand{\comment}[1]{}
\def\psl{{\mathfrak{psl}}}
\def\psu{{\mathfrak{psu}}} 
\def\su{{\mathfrak{su}}} 
\def\dd{{\mathfrak{d}}}
\def\sqrttwo{{\textstyle 
\frac{1}{\sqrt{2}}}}
\def\onehalf{{\textstyle \frac 12}}
\def\sl{{\mathfrak{sl}}} 
\def\u{{\mathfrak{u}}} 
\def\so{{\mathfrak{so}}} 
\def\spo{{\mathfrak{spo}}}
\newcommand\twedge{\mbox{\text{\Large$\wedge$}}} 
\def\id{{\rm id}}
\newcommand\topa[2]{\genfrac{}{}{0pt}{2}{\scriptstyle #1}{\scriptstyle #2}} 
\DeclareFontFamily{U}{stix2bb}{} 
\DeclareFontShape{U}{stix2bb}{m}{n} {<-> stix2-mathbb}{}
\begin{document}

\title{An index for unitarizable $\slmn$-supermodules}

\author{Steffen Schmidt}
\address{S.S.: Institut für Mathematik, Heidelberg University, Germany}
\email{stschmidt@mathi.uni-heidelberg.de}

\author{Johannes Walcher}
 \address{J.W.: Institut für Mathematik and Institut f\"ur Theoretische Physik, Heidelberg
 University, Germany}
 \email{walcher@mathi.uni-heidelberg.de}
 
\begin{abstract}
The ``superconformal index'' is a character-valued invariant attached by theoretical physics to unitary 
representations of Lie superalgebras, such as $\su(2,2\vert n)$, that govern certain
quantum field theories. The index can be calculated as a supertrace over Hilbert space, and is
constant in families induced by variation of physical parameters. This is because the index receives
contributions only from ``short'' irreducible representations such that it is invariant
under recombination at the boundary of the region of unitarity.

The purpose of this paper is to develop these notions for unitarizable supermodules over the special 
linear Lie superalgebras $\slmn$ with $m\ge 2$, $n\ge 1$. To keep it self-contained, we include a 
fair amount of background material on structure theory, unitarizable supermodules, the Duflo-Serganova
functor, and elements of Harish-Chandra theory. Along the way, we provide a precise dictionary 
between various notions from theoretical physics and mathematical terminology. Our final result is a kind
of ``index theorem'' that relates the counting of atypical constituents in a general unitarizable
$\slmn$-supermodule to the character-valued $Q$-Witten index, expressed as a supertrace over the
full supermodule. The formal superdimension of holomorphic discrete series $\slmn$-supermodules
can also be formulated in this framework.
\end{abstract}

\maketitle

\setcounter{tocdepth}{1}

\tableofcontents

\setcounter{section}{-1}

\newpage

\hfill\parbox[b]{9.6cm}{{\it
Mathematicians are kind of like Frenchmen: when you talk to them, they translate it into their 
language and then it is soon something rather different.} (Goethe)}

\section{Introduction}

\subsection{Elementary ideas}
\label{sec::elements}

Let $\bigl(\HH,\langle\cdot,\cdot\rangle\big)$ be a separable Hilbert space, $(-1)^F\in\LL(\HH)$ a 
self-adjoint involution, and $Q\in \LL(\HH)$ a closed and densely defined operator with $(-1)^F Q = 
- Q (-1)^F$ and $Q^2=0$. Letting $Q^\dagger$ denote the adjoint of $Q$ with respect to $\langle\cdot,
\cdot\rangle$, the operators $Q^\dagger Q$ and $Q Q^\dagger$, and \emph{a fortiori} $\Xi:=Q^\dagger Q + 
Q Q^\dagger$ are self-adjoint and non-negative (von Neumann's Theorem), and $[\Xi,Q]=0$. Under the 
assumption that $e^{-\beta \Xi}$ is trace class for some $\beta>0$, the \emph{Witten index} of $Q$ 
(also known as the ``$Q$-Witten index'') is 
defined as \cite{WittenMorse}
\begin{equation}
\label{definedas}
I^W_\HH(Q;\beta) 
:= \tr_{\HH} (-1)^F e^{-\beta (Q Q^\dagger + Q^\dagger Q)}
= \tr_{\HH} (-1)^F e^{-\beta \Xi}.
\end{equation}
The ``chiral halves'' of $Q$, defined by $Q_\pm := \onehalf (Q+Q^\dagger) \bigl(1\pm(-1)^F\bigr)$ 
verify $Q_\pm^2=0$, $Q_-=Q_+^\dagger$, and $Q_+Q_-+Q_-Q_+=\Xi$. If, when viewed as operators from 
$\HH_\pm:=\onehalf (1\pm (-1)^F)\HH$ to $\HH_\mp$, respectively, they satisfy the 
Fredholm property,  that is, $\Im(Q_\pm)$ are closed, and both $\Ker Q_+$ and $\Ker Q_-= \Coker(Q_+)$ are 
finite-dimensional, the Witten index is in fact independent of $\beta$, which is thence omitted from the 
notation, and
\begin{equation}
\label{groundstate}
I^W_\HH(Q) = \dim \Ker(Q_+) - \dim \Ker(Q_-) = \tr_{\ker Q\cap\Ker(Q^\dagger)} (-1)^F.
\end{equation}
The Witten index is also stable under a certain class of deformations of the data $(Q,\Xi)$ for
fixed $(\HH,(-1)^F)$. In particular, under the above assumptions, 
\begin{equation}
\label{invariance}
I^W_\HH(Q+\delta Q)=I_\HH^W(Q) 
\end{equation}
if $\delta Q\in\LL(\HH)$ is \emph{relatively $Q$-compact} in the sense that $\delta Q (Q-i)^{-1}$ 
is compact \cite{CFKS,GesztesySimon}. In fact, $I^W$ satisfies all desirable properties of an index.

A rich supply of such data is expected to be provided by constructions of supersymmetric quantum field
theories (QFT) \cite{iaslectures}. In this context, $(-1)^F$ is the ``fermion number'' and $Q$ any 
``nilpotent complex supercharge''. The operator $\Xi$ can be viewed as ``energy'', although in general 
it differs from the physical Hamiltonian by generators of internal, so-called R-symmetries (which 
themselves commute with the Hamiltonian). According to \eqref{groundstate}, the Witten index counts 
the number of bosonic, minus the number of fermionic, ``ground states'' (states of zero energy, also
referred to as ``BPS states''). The ``topological invariance'' \eqref{invariance} says that 
this number is independent of continuous physical parameters, such as gauge couplings, that enter the 
putative mathematical definition of the QFT. This was the original motivation for the definition of the 
Witten index \cite{WittenIndex}.

Over the years, such indices have been of great interest in mathematical physics to explore and test 
predictions concerning properties of quantum field theories with \emph{superconformal symmetry} (SCFTs) 
\cite{Minwalla,romelsberger2006counting}. In this context, $\bigl(\HH,\langle\cdot,\cdot\rangle\bigr)$ 
is the ``super Hilbert space in radial quantization'' equipped with its unitary representation of the
superconformal algebra (SCA) of super Minkowski space, and $Q$ any nilpotent odd element of its 
complexification. The (in general, infinitely degenerate!) ground states organize in representations of 
the centralizer of $Q$,
as captured by the ``grand canonical'' refinement of the standard $Q$-Witten index \eqref{groundstate}. 
Non-vanishing contributions arise only from so-called ``short'' irreducible representations of the SCA, 
and one can show that this contains all protected information that can be obtained without 
invoking any dynamical assumptions. Moreover, the ``operator-state correspondence'' paired with topological 
invariance allow for a semi-rigorous path-integral evaluation of this ``superconformal index'' in many 
interesting examples via {\it supersymmetric localization}. This has led to a plethora of insights into SCFTs, 
their dualities, and related fields of mathematics \cite{RastelliReview,Gadde:2020yah}.

This paper grew out of attempts to explain the representation theoretic underpinnings of the superconformal
index (namely, the content of the preceding two paragraphs) in a mathematically intelligible language, as 
a way to exchange insight and extract lessons for possible applications in other areas. The natural context
for this are unitarizable supermodules over the special linear Lie superalgebras $\slmn$ with respect to 
real forms $\supqn$. This does not yet cover all superconformal algebras but shares all essential features, 
including ``atypical highest weight supermodules'' as mathematical counterparts to ``short supermultiplets'' 
and the delicate dependence of unitarizability on the weights of the two-dimensional center of the maximal 
compact subalgebra (which take the role of conformal dimension and superconformal R-charge). 

In order to provide precise translations for all these concepts, we have found it useful to review
various aspects of the representation theory of Lie superalgebras, which is otherwise covered in many 
excellent review articles (see, \emph{e.g.,} \cite{musson2012lie,serganova2017representations}), if not quite 
in the selection that we need here.\footnote{The expert or busy reader might wish to glance at section 
\ref{sec::leitfaden} and skip from there directly to section \ref{sec::anindex}, which contains our main 
results.} The most prominent insight from physics that did not seem to have an adequate 
mathematical counterpart is the ``continuity of the fragmentation/recombination process at the boundary of the 
region of unitarity''. While we do not claim that our ad hoc approach to this issue is entirely satisfying, we 
hope that it justifies the interest in the definition of indices for Lie superalgebras in general. Such 
indices come in two flavors, one topological, the other analytic, and one the natural questions, which we
establish in the special case, is the equivalence between the two. In the other direction (from mathematics 
to physics), we uncover a relation between the Witten index (of analytic flavor) and the notion of
formal superdimension that was studied recently by one of us \cite{SchmidtGeneralizedSuperdimension}.

\subsection{Basic conventions} 
\label{sec::basic}

In this article, $\ZZ_{\ge 0}$ denotes the set of non-negative integers, and $\ZZ_{+}$, the positive.
$\ZZ/2:= \ZZ/2\ZZ$ is the ring of integers modulo $2$. The $\ZZ/2$ grading of various construction of 
supermathematics is often left implicit. When needed, the induced sign (or ``parity'') and 
sign rule are symbolized ruthlessly as ``exponentiated Fermion number''. On any super vector space 
$V=V_0\oplus V_1$, $(-1)^F:= \id_{V_0} \oplus (-\id_{V_1})$. A formula containing terms such as $(-1)^{vw}$ is
parsed by first restricting to homogeneous $v, w\in V$, replacing them in the exponent by their grading, 
and then extending linearly. For example, if $A=A_0\oplus A_1$ is an (associative) superalgebra, the
definition of the bracket that turns $A$ into a Lie superalgebra\footnote{Although one should, we
will not derive any meaning from the position occupied by the prefix ``super'' with respect to other
epithets. A Lie superalgebra is the same thing as a super Lie algebra.} reads $[a,b] = ab -
(-1)^{ab} ba$.
Morphisms of vector spaces and modules, (bi-)linear forms, etc., are implicitly assumed to be even, 
\emph{i.e.}, respect the grading. The exception are endomorphisms of a super vector space, which are 
taken to be the ``internal hom'', and viewed as a Lie superalgebra. 
The ground field is $\CC$ unless otherwise stated. Real structures are usually conceived as
``conjugate-linear {\it graded} anti-automorphisms'' \cite{deligne1999classical}, except in the
context of Hermitian forms, where we give preference to positivity. See section \ref{unitarizable} for
details and verify that one may mediate between the two conventions by the ``amendment''
(here and throughout, $i=\sqrt{-1}=e^{\pi i/2}$)
\begin{equation}
\label{amendment}
i^a := \begin{cases} 
1 & \text{ if $a$ is even} 
\\
i & \text{ if $a$ is odd}
\end{cases}
\quad
\Longrightarrow 
\quad
i^{ab} = (-1)^{ab} i^{a} i^b.
\end{equation}
We have no need to internalize signs of homological algebra. 

If $\gg=\gg_0\oplus \gg_1$ is a Lie superalgebra, a (left) $\gg$-supermodule is a super vector space $M$ 
with a graded (left) action of $\gg$ that induces a morphism of Lie superalgebras. In other words, $[X,Y]m= 
X(Ym)-(-1)^{XY} Y(Xm)$ for all $X,Y \in \gg$ and $m \in M$. We will reserve the notation $\rho:\gg\to \End(M)$ 
for the structure map when needed for clarity. 
A morphism $f:M \to N$ of $\gg$-supermodules is a linear map such that $f(M_{i})\subset N_{i}$, and 
$f(Xm)=Xf(m)$ for all $X\in \gg$ and $m\in M$. 
We write $\gsmod$ for the category of all (left) $\gg$-supermodules. This is a $\CC$-linear 
abelian category. It is equipped with an endofunctor $\Pi$, the \emph{parity reversing functor}, defined 
by $\Pi(M)_{0}=M_{1}$, $\Pi(M)_{1}=M_{0}$, and viewed as a $\gg$-supermodule via $X\cdot m := (-1)^X Xm$ 
for any $X \in \gg$ and $m \in M$. Note that a $\gg$-supermodule $M$ is 
not necessarily isomorphic to $\Pi(M)$. 
If $\gmod$ denotes the category of ordinary (even) (left) $\even$-modules, and we consider $\even$ as a 
purely even Lie superalgebra, the category $\evsmod$ is a direct sum of two copies of $\gmod$, since any 
$\even$-module can be viewed as a $\even$-supermodule concentrated in either one of two single degrees. 
The ordinary $\even$-module that is obtained from a $\gg$-supermodule $M$ by restriction and forgetting 
the grading is denoted by $M_{\ev}$.
The universal enveloping algebra of $\gg$ is denoted by $\UE(\gg)$, the universal enveloping algebra of $\even$ 
by $\UE(\even)$, and their centers by $\ZC(\gg)$ and $\ZC(\even)$, respectively.
In the context of real Lie superalgebras, Lie groups, or physics, we (tend to) use the notion of unitary 
(irreducible) representations synonymously with unitarizable (simple) supermodules over the complexification, 
see section \ref{unitarizable} for details. In a similar vein, elements of the odd part of a Lie superalgebra 
are sometimes addressed as ``supercharges''.

\subsection{Standing assumptions}
\label{subsec::assumptions} 

The thrust of the paper being representation theoretic, it seems worthwhile to clarify early on that the main
interest is \emph{analytic}, although we will not develop all technical details here. Generally speaking, 
the Hilbert space $\bigl(\HH,\langle\cdot,\cdot\rangle\bigr)$ of a physical theory provides a module over its symmetry 
(super)algebra, $\gg$, that is unitarizable with respect to the relevant real structure, but, quite importantly, 
\emph{not at all irreducible}, 
lest the dynamics be rather trivial. Instead, the composition of $\HH$ from simple $\gg$-modules is, on
the one hand, \emph{constrained} by further physical requirements (such as locality) and, on the other, \emph{varies
continuously} as a function of external parameters. 

To formalize the latter notion, we think of a topological space $\mathds{H}$ of ``coupling constants'' and of
a ``family of physical theories'' as providing (among other things) a continuous map
$T : \mathds{H} \to \Hom(\gg^\RR, \uu(\HH))$ into the space of unitarizable $\gg$-supermodule structures on a 
fixed $\ZZ/2$-graded Hilbert space $\HH$, which is identified with the representations of the underlying real
Lie algebra $\gg^\RR$ in the space $\mathfrak{u}(\HH)$ of skew-Hermitian operators on $\HH$.
Equivalently, we can think geometrically of the datum of a bundle 
of unitarizable $\gg$-supermodules over $\mathds{H}$ whose underlying Hilbert space bundle is trivial.

Concretely, with $\gg=\sl(4\vert 4)$, $\gg^\RR=\su(2,2\vert 4)$, we may think of $\tau\in\mathds{H}$, the complex upper 
half-plane, as representing the complexified gauge coupling of maximally supersymmetric Yang-Mills theory with 
fixed compact gauge group (e.g., ${\SU}(N)$ for some $N\in\NN$), and of $T(\tau)$ as being defined by the supersymmetric 
path-integral over fields on a suitable four-dimensional Lorentzian manifold with appropriate boundary conditions.
Here unitarity and continuity are guaranteed by standard, but non-rigorous(!), physical arguments.

From our perspective, the main issue in this approach is that the space $\Hom(\gg^\RR, \mathfrak{u}(\HH))$ 
can be endowed with various topologies, depending on the choice of topology for $\uu(\HH)$. A natural topology 
on $\uu(\HH)$ is not known, and it is not entirely clear which topology is actually preferred by physics. 
To circumvent this problem (in an admittedly rather ad hoc fashion), we add the technical assumption that the 
unitarizable supermodules $T(\tau)\in \Hom(\gg^\RR,\uu(\HH))$ decompose as direct sums of simple $\gg$-supermodules,
as follows for example from a discrete spectrum condition. This condition is, in particular, expected to be 
satisfied in ``radial quantization'' of super Yang-Mills theory or more pertinently when the four-manifold 
in question has compact spatial slices. This is natural from the physics point of view.%
\footnote{The direct sum might be (countably)
infinite, necessitating a precise definition of the \emph{Hilbert space direct sum}. Given a family 
$(\mathcal{H}_{i})_{i \in I}$ of Hilbert spaces indexed by a countable set $I$, we define the Hilbert 
space direct sum as
\begin{equation}
\bigoplus_{i \in I} \mathcal{H}_{i} := \biggl\{(v_{i})_{i\in I} \in \prod_{i 
\in I}\mathcal{H}_{i} \bigg\vert \sum_{i \in I} \langle v_{i},v_{i}\rangle < \infty \biggr\},
\end{equation}
which forms a Hilbert space with inner product
\begin{equation}
\langle (v_{i})_{i\in I},(w_{i})_{i\in I}\rangle := \sum_{i \in I}\langle v_{i},w_{i}\rangle_{\mathcal{H}_{i}}.
\end{equation}
Each $\mathcal{H}_{j}$ embeds as the subspace $\{(v_{i})_{i \in I} \mid v_{i} = 0 \ \text{for all $i \neq j$}\}$.}

As recounted in section \ref{sec::HW}, the unitarizable simple $\gg$-supermodules in turn decompose as direct sums of 
weight spaces, joint eigenspaces of the Cartan subalgebra  $\hh$ of $\gg$. This yields a map 
$\Theta:\Hom(\gg^\RR,\uu(\HH))\to(\hh^*)^{\ZZ_+}/S_\infty$, 
where $(\hh^*)^{\ZZ_{+}}$ consists of all sequences $(\lambda_{1}, \lambda_{2}, \dots)$ with $\lambda_{i} \in \hh^*$, 
$S_\infty$ is the infinite symmetric group, and $(\hh^*)^{\ZZ_+}/S_\infty$ is equipped with the quotient topology.
Our assumption is that the topology on $\Hom(\gg^\RR,\uu(\HH))$ is fine enough to allow $\Theta$ to be continuous.
This will guarantee that the indices we construct in section \ref{sec::anindex} are continuous functions on
the space of unitarizable $\gg$-supermodules, and can hence be used as invariants as intended, assuming of course that physics
is strong enough to make $T$ continuous. In other words, we use $(\hh^*)^{\ZZ_+}/S_\infty$ as a (topological) 
surrogate for $\Hom(\gg^\RR,\uu(\HH))$.

\subsection{Simple example} 
\label{sec::SQM}

We have found it instructive to illustrate the main ideas and terminology in the simplest non-trivial example. The
reader is invited to work through the following formulas, and reflect on the final result \eqref{evenbetter}, 
peeking ahead for precise definitions as necessary.

The symmetry algebra of superconformal quantum mechanics 
\cite{Fubini:1984hf,okazaki2015superconformal} is $\su(1,1\vert 1)$. This is a real Lie superalgebra 
consisting of complex $(2\vert 1)\times (2\vert 1)$ supermatrices with 
vanishing supertrace that are anti-self-adjoint with respect to an appropriately conceived indefinite 
Hermitian inner product on $\CC^{2\vert 1}$. To cut to the chase, 
let's let, for any
\begin{equation}
X= \begin{pmatrix} A & \Psi \\ \bar{\Psi} & D \end{pmatrix}\in \sl(2\vert 1;\CC)\,,
\qquad
\omega(X) := 
\eta^{-1}
\begin{pmatrix} A^\dagger & i \bar{\Psi}^\dagger \\ i \Psi^\dagger &  D^\dagger \end{pmatrix}
\eta \,,
\end{equation}
where $\cdot^\dagger$ is conjugate transpose, and $\eta$ symmetric even real of signature $(1,1\vert
1)$. Then, $\omega(X Y ) = (-1)^{XY} \omega(Y) \omega(X)$ $\forall X,Y\in \sl(2\vert 1;\CC)$, hence
$\omega ([X,Y]) = - [\omega(X),\omega(Y)]$ with respect to the superbracket $[\cdot,\cdot]$, and
$\su(1,1\vert 1) := \bigl\{ X \,|\, \omega(X) = - X \bigr\}$ with the induced superbracket.
$\su(1,1\vert 1)$ has real dimension $(4\vert 4)$, and choosing $\eta=\left(\begin{smallmatrix} 0 &
1 & 0 \\ 1 & 0 & 0 \\ 0 & 0 & 1 \end{smallmatrix}\right)$ for convenience, a basis given by
\begin{equation}
\label{matrices}
\begin{split}
D = 
\begin{pmatrix} 
1 & 0 & 0  \\ 0 & -1 & 0  \\ 0 & 0 & 0 
\end{pmatrix}
,\;
K = 
\begin{pmatrix}
 0 & i & 0 \\ 0 & 0 & 0 \\ 0 & 0 & 0 
\end{pmatrix}
,\;
P = 
\begin{pmatrix}
 0 & 0 & 0 \\ -i & 0 & 0 \\ 0 & 0 & 0 
\end{pmatrix}
,\;
R = 
\begin{pmatrix}
i & 0 & 0 \\ 0 & i & 0 \\ 0 & 0 & 2i
\end{pmatrix}
\\
Q_1 = 
\begin{pmatrix}
0 & 0 & 0 \\ 0 & 0 & 1 \\ -i & 0 & 0
\end{pmatrix}
,\;
Q_2 = 
\begin{pmatrix}
0 & 0 & 0 \\ 0 & 0 & -i \\ 1 & 0 & 0
\end{pmatrix}
,\;
S_1 = 
\begin{pmatrix}
0 & 0 & -1 \\ 0 & 0 & 0 \\ 0 & i & 0
\end{pmatrix}
,\;
S_2 = 
\begin{pmatrix}
0 & 0 & i \\ 0 & 0 & 0 \\ 0 & -1 & 0
\end{pmatrix}
\end{split}
\end{equation}
with non-trivial superbrackets
\begin{equation}
\label{superbrackets}
\begin{gathered}
\relax 
[D,K] = 2K,\; [D,P]= - 2P,\; [D,Q_{1,2}] = - Q_{1,2},\; [D,S_{1,2}] = S_{1,2},\;
\\
[R,Q_1] = Q_2 ,\; [R,Q_2] = - Q_1 ,\; [R,S_1] = S_2 ,\; [R,S_2] = -S_1,
\\
[K,P] = D 
,\; 
[Q_i,Q_j] = 2\delta_{ij} P
,\;
[S_i,S_j] = -2 \delta_{ij} K,
\\
[Q_1,S_1] = [Q_2,S_2] = R
,\;
[Q_1,S_2] = - [Q_2,S_1] = D.
\end{gathered}
\end{equation}
By way of abstract physics terminology (which we will eschew for most of the paper), $P$ is the
``usual'' (Schr\"odinger) Hamiltonian, generating translations in (world-line) time. $D$ is the
dilation operator, and $K$ the ``special conformal generator''. $Q_{1,2}$ are the ``real
supercharges'' of $N=2$ supersymmetric quantum mechanics, of scaling dimension $-1$, and $S_{1,2}$
the ``superconformal generators'', of scaling dimension $+1$. $R$ is the R-charge that transforms
the pairs $(Q_1,Q_2)$ and $(S_1,S_2)$ like an infinitesimal counterclockwise rotation a positively
oriented basis of the euclidean plane. 

Concretely, one is interested in representations of \emph{the complexification} $\su(1,1\vert
1)\otimes \CC \cong \sl(2\vert 1;\CC)$ that are unitary in the sense that all $X\in\su(1,1\vert 1)$
are anti-self-adjoint operators on a super Hilbert space in the same conception as on $\CC^{2\vert
1}$. To analyze such representations, one may start from the bosonic conformal subalgebra that is
spanned by $D,K,P$ and isomorphic to $\su(1,1) \cong \so(2,1;\RR)$, with a focus on the ``conformal
Hamiltonian'', $K-P$. It generates a non-contractible compact one-parameter subgroup inside of $
{\SU}(1,1)$, and hence a non-compact subgroup of the universal cover $\widetilde {\SU}(1,1)$. By
``Wick rotation'', $\tilde D=i(K-P)$, together with $\tilde K = \onehalf(K+P+i D)$ and $\tilde P =
\onehalf(K+P - i D)$ span a copy of the 1-dimensional ``Euclidean'' conformal algebra
$\so(1,2;\RR)$, coincidentally isomorphic to $\so(2,1;\RR)$. Inside the complexification, $E=\tilde
K$, $H=\tilde D$ and $F=\tilde P$ form a standard $\sl(2;\CC)$ triple, under which any desired
unitary representation of positive energy decomposes into highest weight representations that are
unitarizable with respect to $\omega(H)=H$, $\omega(E)=-\omega(F)$. This was explained and utilized
for example by Mack \cite{mack1977all} in the closely related 4-dimensional situation. 
This can be extended to the supercharges:
\begin{equation}
    \tilde Q_1 = \sqrttwo (Q_1-iS_2)
    ,\;
    \tilde Q_2 = \sqrttwo (Q_2 + i S_1)
    ,\;
    \tilde S_1 = \sqrttwo (S_1 + i Q_2)
    ,\;
    \tilde S_2 = \sqrttwo (S_2 - i Q_1)
\end{equation}
together with $\tilde D,\tilde K, \tilde P$, and $\tilde R= R$ span a subalgebra of $\sl(2\vert 1;\CC)$ 
again isomorphic to $\su(1,1\vert 1)$. The complex combinations
\begin{equation}
\begin{split}
 Q & = \sqrttwo (\tilde Q_1 - i \tilde Q_2) = \onehalf ( Q_1-i Q_2 + S_1 - i S_2 ),
\\
\bar Q &= \sqrttwo (\tilde Q_1 + i \tilde Q_2) = \onehalf (Q_1+iQ_2 - S_1 - i S_2),
\\
S & = \sqrttwo (\tilde S_2 + i \tilde S_1) = \onehalf( S_2 + i S_1 - Q_2 - i Q_1 ) ,
\\
\bar S &= \sqrttwo (-\tilde S_2 + i \tilde S_1) = \onehalf (-S_2+i S_1-Q_2 +i Q_1)
 \end{split}
\end{equation}
together with $J=-iR$, satisfy the standard $\sl(2\vert 1)$ relations
\begin{equation}
\label{standardRelations}
\begin{gathered}
\relax
[H,Q] = -Q,\; [H,\bar Q ] = -\bar Q,\; [H,S]= S,\; [H,\bar S]=\bar S,\;
    \\
    [J,Q]= Q,\; [J,\bar Q] = - \bar Q ,\;
    [J,S]= S,\; [J,\bar S] = - \bar S,
    \\
[Q,Q] = [S,S] = [\bar Q,\bar Q] = [ \bar S,\bar S] =  [Q,S] = [\bar Q,\bar S] = 0,
\\
[Q,\bar Q ] = 2F,\; [S,\bar S] = 2E,\;
 [Q,\bar S] = -H - J ,\;
 [S,\bar Q] = H - J,
\end{gathered}
\end{equation}
and the conditions for unitarizability ``satisfied by the Euclidean generators'' are 
\begin{equation}
    \label{crucialiter}
    \omega(J) = J,\; \omega(Q) = i\bar S,\; \omega(\bar Q)=-i S.
\end{equation}

A fundamental example of such a unitary representation is the \emph{oscillator supermodule},
$(\Oss,\langle \cdot, \cdot\rangle)$, also known as ``singleton supermultiplet''. Its construction
depends on the isomorphism $\spo(2\vert 2) \cong \sl(2\vert 1)$. Algebraically, $\Oss$ is the
natural simple supermodule over the Weyl-Clifford algebra $\WCl(W)$ attached to a $(2\vert
2)$-dimensional supersymplectic vector space $W$. The Hermitian form $\langle\cdot,\cdot\rangle$ is
such that the original $\su(1,1\vert 1)$ ``Minkowski signature'' subalgebra from \eqref{matrices},
\eqref{superbrackets} is unitary. Completion is implicit.

Specifically, let $V=\CC^{1\vert 1}$, and $W=V\oplus V^*$. The formula
$\Omega(a,b):=f(w)-(-1)^{ab}g(v)$ for all homogeneous $a=(v,f)$ and $b=(w,g)$ in $W$ defines an even
supersymplectic form $\Omega: W\times W\to \CC$. This is anti-symmetric on the even, and
symmetric on the odd part of $W$. The natural action of $W$ on the supersymmetric algebra $\S(V)$,
defined via $a=(v,f)\mapsto v \cdot + \iota_f \in \End(\S(V))$, satisfies $[a,b] = \Omega(a,b)$ with
respect to the canonical superbracket on $\End(\S(V))$. The image of $W$ in $\End(\S(V))$ generates the
Weyl-Clifford algebra $\WCl(W)$, and it is immediate to see that $\S(V)$ is a simple module over it.
The oscillator supermodule $\Oss$ is $\S(V)$ equipped with the restriction of $\WCl(W)$ to the
orthosymplectic Lie superalgebra $\spo(2\vert 2)\cong \S^2(W) \subset \End(\S(V))$.
With respect to a homogeneous basis $(x,\eta)$ of $V=\CC^{1\vert 1}$, we can think of $\sl(2\vert
1)$ as the Lie superalgebra spanned by the following set of super-differential operators acting on
$\CC[x,\eta]$
\begin{equation}
\label{supervectorfields}
\begin{gathered}
E = - \onehalf \partial_x^2,\; H = -x\partial_x -\onehalf,\;
F= \onehalf x^2 ,\; 
J = \eta\partial_\eta - \onehalf,
\\
Q = \eta x,\;
\bar Q = \partial_\eta x ,\;
S = - \eta\partial_x,\;
\bar S =  \partial_\eta\partial_x,
\end{gathered}    
\end{equation}
which one easily verifies satisfy the above superbrackets.

As an $\sl(2\vert 1)$ supermodule, $\Oss$ decomposes into two simple supermodules $\Oss
= \Oss^+\oplus \Oss^-$, according to the parity of the total polynomial degree,
\begin{equation}
    \Oss^+ = \CC[x^2, x\eta] ,\qquad 
    \Oss^- = \eta \CC[x^2] + x \CC[x^2].
\end{equation}
Both are unitarizable highest weight modules, which will serve to illustrate the general theory in 
later sections, see in particular Figs.\ \ref{fig:rootsystem} and \ref{fig:Example}. The Hermitian form 
on the supermodule $\Oss\cong \CC[x,\eta]$ is the sesqui\-linear extension of the formulas
\begin{equation}
\langle x^n,x^m\rangle = 
\begin{cases} n! & \text{if $n=m$} 
\\
0 & \text{otherwise}
\end{cases}
\qquad
\langle \eta,\eta\rangle = 1 \,,
\qquad 
\langle x^n,\eta\rangle =0.
\end{equation}
The adjoint map $a\mapsto a^\dagger$ defined with respect to $\langle\cdot,\cdot\rangle$ satisfies 
$(ab)^\dagger = b^\dagger a^\dagger$, independent of the parity of $a,b\in\End(\CC[x,\eta])$, 
with $x^\dagger = \partial_x$, $\eta^\dagger = \partial_\eta$. The ``amendment''
\begin{equation}
    \sigma(a) = 
    \begin{cases} 
    a^\dagger & \text{if $a$ is even,}
    \\
    i a^\dagger & \text{if $a$ is odd},
    \end{cases}
\end{equation}
(which is the superadjoint with respect to super Hermitian form $\psi(v,w) = i^v \langle
v,w\rangle$, see subsection \ref{unitarizable}) satisfies $\sigma(ab) = (-1)^{ab}\sigma(b)\sigma(a)$
for all $a,b\in \WCl(V)$, and is compatible with the conditions \eqref{crucialiter} on the vector
fields \eqref{supervectorfields}. Crucially,
\begin{equation}
\label{cruxy}
    Q^\dagger = \bar S ,\qquad 
    S^\dagger = - \bar Q
\end{equation}
and
\begin{equation}
[Q,Q^\dagger] = -H - J = x \partial_x - \eta\partial_\eta + 1
,\qquad 
[S,S^\dagger] = -H + J = x\partial_x + \eta\partial_\eta.
\end{equation}
These expressions are indeed non-negative operators on $\Oss$. The Witten indices \eqref{definedas}
of $Q$ and $S$ in the two simple supermodules $\Oss_{\pm}$ can be calculated algebraically.
\begin{equation}
\label{mainresult}
\begin{split}
I^W_{\Oss^+}(Q) &= \tr_{\Oss^+} (-1)^F e^{-\beta [Q,Q^\dagger]} = 0,
\\
I^W_{\Oss^-}(Q) &= \tr_{\Oss^-} (-1)^F e^{-\beta [Q,Q^\dagger]} = -1, 
\\
I^W_{\Oss^+}(S) &= \tr_{\Oss^+} (-1)^F e^{-\beta [S,S^\dagger]} = +1,
\\
I^W_{\Oss^-}(S) &= \tr_{\Oss^-} (-1)^F e^{-\beta [S,S^\dagger]} = 0.
\end{split}
\end{equation}
They can be viewed as a special case of the analytic result for the general square-zero supercharge
\begin{equation}
Q_{(r,t)} = r Q - t S = \eta (r x + t \partial_x) 
\end{equation}
as a function of $[r:t]\in \CC\PP^1$. Its adjoint is
\begin{equation}
Q_{(r,t)}^\dagger = \bar r\bar S + \bar t \bar Q = \partial_\eta (\bar r\partial_x + \bar t x)
\end{equation}
and
\begin{equation}
\Xi_{(r,t)} = [Q_{(r,t)},Q_{(r,t)}^\dagger] 
=
|r|^2 (J-H) - |t|^2 (J+H) + 2 r \bar t F -2 \bar r t E.
\end{equation}
Their formal joint kernel 
$\{f \in \CC[[x,\eta]] \ : \ Q_{(r,t)}f = Q_{(r,t)}^\dagger f = 0\} $ is two-dimensional, and spanned 
by
\begin{equation}
f_+ = \exp\Bigl(-\frac rt \frac{x^2}{2}\Bigr)\,,
\qquad
f_- = \eta \exp\Bigl(-\frac {\bar t}{\bar r} \frac{x^2}{2}\Bigr)\,.
\end{equation}
Now
\begin{equation}
\langle f_+,f_+\rangle = 
\sum_{n=0}^\infty \Bigl|\frac r{2t}\Bigr|^{2n} \frac{(2n)!}{(n!)^2}
= \Bigl(1-\Bigl|\frac rt\Bigr|^2\Bigr)^{-1/2}
\end{equation}
converges, \emph{i.e.}, $f_+\in \Oss^+$, iff $\bigl|\frac rt\bigr|<1$ (and complementarily for $f_-$).
As a result,
\begin{equation}
\label{evenbetter}
\begin{split}
I^W_{\Oss^+} (Q_{(r,t)}) = 
\begin{cases} 
+1 & \text{if $|r|<|t|$,}
\\
0 & \text{otherwise,}
\end{cases} 
\\
I^W_{\Oss^-} (Q_{(r,t)}) = 
\begin{cases} 
-1 & \text{if $|t|<|r|$,}
\\
0 & \text{otherwise.}
\end{cases} 
\end{split}
\end{equation}
We see in this example that the index is in fact not continuous as a function on the space of nilpotent 
supercharges. This observation does not seem to have been recorded in the literature so far, and will be
interesting to explore further.

\subsection{Leitfaden}
\label{sec::leitfaden}

We start in section \ref{sec:unitarizable} with a detailed review of special linear Lie superalgebras 
and their unitarizable supermodules. This includes the basic structure theory of $\glmn$ and 
$\gg:=\slmn=\even\oplus\odd$, the real forms $\supqn$, the construction of highest weight modules in $\gsmod$ via 
Kac induction from $\gmod$, and the relationship between atypicality of (highest) weights
and the degeneracy of the Kac-Shapovalov form. A central r\^ole is reserved for the two abelian
factors of the maximal compact subalgebra, which generalize what is referred to as ``dimension'', 
$\Delta$, and ``R-charge'', $r$, in the physics literature.

In section \ref{sec::redrecomb}, we present the extension of the ``algebraic Dirac operator'' to 
Lie superalgebras following \cite{SchmidtDirac} as a useful tool to characterize unitarizable 
$\gg$-supermodules, and to understand their decomposition under $\even$. We then give a geometric 
description of the ``region of unitarity'' in slices of weight space continuously parameterized by 
dimension and R-charge. We give a precise and complete description of the fragmentation/recombination 
phenomenon that occurs at the boundary of this region, in terms of the decomposition of Kac modules 
of atypical highest weight. We finally explain the relationship with the notions of ``protected/short 
supermultiplets'' that famously pervade the physics literature. The central statement here is that maximally 
atypical supermodules are absolutely protected.

In section \ref{SectionDS} attention turns to the Duflo-Serganova (DS) functor.\footnote{really, "foncteur
d\'eesse''} We give a lightning review of the self-commuting variety (also known as ``nilpotence variety'' 
in the physics literature), and its r\^ole in relating supermodules over Lie superalgebras of different 
rank in a cohomological fashion. A novel aspect of our discussion is the compatibility of the DS functor
with unitarity, wich is usually glossed over in the literature.

Section \ref{sec::formal_superdimension} aims for a brief (p)review of the formal superdimension introduced 
in \cite{SchmidtGeneralizedSuperdimension}. This notion generalizes the ordinary superdimension of 
finite-dimensional $\gg$-supermodules by assembling the formal (Harish-Chandra) degree of its 
$\even$-constituents, viewed as (relative) discrete series representations of $\SU(p,q)\times \SU(n)\times 
U(1)$ (or its universal cover). The main result is that the superdimension is non-zero only for maximally 
atypical unitarizable supermodules.

Section \ref{sec::anindex} contains our main results. Fist, following Kinney–Maldacena–Minwalla–Raju 
\cite{kinney2007index} we define the notion of a ``counting index'' as an additive and continuous function 
on the space of unitarizable $\gg$-supermodule structures (\emph{cf.} section \ref{subsec::assumptions}).
Second, we elaborate on the definition of the $Q$-Witten index for unitarizable $\gg$-supermodules as a 
``supertrace valued in supercharacters of the Duflo-Serganova twist of $\gg$ with respect to 
$Q+Q^\dagger$''. We then establish the $\RR$-linear equivalence between the KKMR and $Q$-Witten
index. Finally, we interpret the formal superdimensional as a (real) KMMR index, and establish the
relation between its calculation in terms of Harish-Chandra characters and the $Q$-Witten index.

\subsubsection{Acknowledgments} We extend special thanks to Rainer Weissauer for inquiring about 
the precise mathematical interpretation of the superconformal index, and for numerous conversations about 
Lie superalgebras. We thank Johanna Bimmermann, Fabian Hahner, Simone Noja, Ingmar Saberi, and in particular 
Raphael Senghaas for discussions and collaboration on related projects.
S.S.\ thanks the organizers of the 2024 HCM workshop ``Representations of supergroups'' for the opportunity to
present some of this work, and the Instituto Nacional de Matemática Pura e Aplicada (IMPA) for its hospitality 
during the final stages of this work.
J.W.\ thanks the International Centre for Mathematical Sciences, Edinburgh, for support and hospitality during 
the ICMS Visiting Fellows programme where much of this work was done. This work was supported by EPSRC grant 
EP/V521905/1.
This work is funded by the Deutsche Forschungsgemeinschaft (DFG, German Research Foundation) under
project number 517493862 (Homological Algebra of Supersymmetry: Locality, Unitary, Duality). 
This work is funded by the Deutsche Forschungsgemeinschaft (DFG, German Research Foundation) under
Germany’s Excellence Strategy EXC 2181/1 — 390900948 (the Heidelberg STRUCTURES Excellence Cluster).

\section{Unitarizable Supermodules over \texorpdfstring{$\slmn$}{sl(m|n)}}
\label{sec:unitarizable}

\subsection{Structure theory of \texorpdfstring{$\slmn$}{sl(m|n)}} 
\label{sec::structure_theory}

For given $m,n\in \ZZ_+$, the \emph{general linear Lie superalgebra} $\glmn$ is the complex vector space of 
block matrices 
\begin{equation}
X = \left(\begin{array}{@{}c|c@{}}
  A
  & \Psi \\
\hline
  \bar{\Psi} &
  D \tstrut
\end{array}\right),
\end{equation}
where $A$ is an $m\times m$ matrix, $\Psi$ an $m\times n$ matrix, $\bar{\Psi}$ an $n\times m$ matrix 
completely independent of $\Psi$, and $D$ an $n\times n$ matrix. The $\ZZ/2$-grading is 
$\mathfrak{gl}(m\vert n)=\glmn_{0}\oplus \glmn_{1}$ where $\mathfrak{gl}(m\vert n)_{0}$ consists of all 
``even'' block matrices with $\Psi=\bar{\Psi}=0$ and $\glmn_{1}$ consists of all ``odd'' block matrices 
with $A=D=0$. The bracket is the (canonical) super matrix commutator $[X,Y] := X\cdot Y- (-1)^{XY}Y\cdot
X$, where $X,Y \in \glmn$ are homogeneous elements, and $"\cdot"$ denotes usual matrix multiplication, 
extended by linearity (see also section \ref{sec::basic} for conventions). The \emph{supertrace} of
$X\in \glmn$ is $\str(X) := \tr(A)-\tr(D)\in\CC$. The \textit{special linear Lie superalgebra} 
$\slmn$ is the sub Lie superalgebra given by all matrices in $\glmn$ with vanishing supertrace. This is a 
codimension-$1$ ideal in $\glmn$. We write $\gg$ for $\slmn$ in the following. Its even part $\even$ is 
naturally isomorphic to a direct sum of complex special linear Lie algebras and an abelian factor, 
$\even \cong \sl(m)\oplus \sl(n)\oplus \CC E_{m\vert n}$ where $E_{m\vert n}$ denotes the even matrix with 
$A = n E_m$ and $D = m E_n$ in the representation above, and where $E_m$, $E_n$ are unit matrices of the
size indicated. When $m\neq n$ and $m+n \geq 2$, $\slmn$ is simple, and the extension to $\glmn$ splits (naturally, but 
non-canonically) via $\CC\ni 1 \mapsto E_{m+n}\in \glmn$. When $m=n$, we have $E_{2n} = \frac 1n E_{n|n}\in
\sl(n\vert n)$. As a consequence, $\gl(n\vert n)$ does not split, and $\sl(n\vert n)$ becomes reducible, 
although it also does not split. The codimension-1 simple quotient is the \emph{projective special linear Lie 
superalgebra} $\psl(n\vert n)=\sl(n\vert n)/\CC E_{2n}$.

We denote by $\dd := \{ H = \diag(h_1,\ldots,h_{m+n})\}$ the abelian Lie subalgebra of diagonal matrices 
in $\glmn$. We choose the subspace of diagonal matrices with vanishing supertrace, denoted by $\hh$, as 
Cartan subalgebra of $\gg=\slmn$. The standard basis of the dual space $\dd^\ast$ of $\dd$ is
$(\epsilon_{1},\dotsc,\epsilon_{m},\delta_{1},\ldots,\delta_{n})$ where $\epsilon_{i}(H)=h_{i},\
\delta_{k}(H)=h_{k+m}$  for $i=1,\dotsc,m$ and $k=1,\dotsc,n$ and for any $H\in\dd$. We (ab)use this basis 
also for the dual space $\hh^\ast$ of $\hh$. Namely, we identify weights $\lambda \in \hh^{\ast}$ for $\gg$ 
with tuples $(\lambda^{1},\dotsc,\lambda^{m}\vert \mu^{1},\dotsc,\mu^{n})$ via the expansion
$\lambda=\lambda^{1}\epsilon_{1}+\dotsc+\lambda^{m}\epsilon_{m}+\mu^{1}\delta_{1}+\dotsc+\mu^{n}\delta_{n}$,
keeping in mind that shifts by multiples of $(1,\ldots,1\vert {-}1,\ldots,-1)$ do not change the weight. The space 
of weights for $\psl(n\vert n)$ is the subquotient of tuples with $\sum_{i=1}^n (\lambda^i + \mu^i)=0$.

The Cartan subalgebra $\hh$ acts semisimply on $\even$, and $\odd$ is a completely reducible
$\even$-module under the adjoint action induced by the super matrix commutator $[\cdot,\cdot]$. This 
gives a $\ZZ/2$-graded root space decomposition of $\gg$,
\begin{equation}
\gg = \hh \oplus \bigoplus_{\alpha \in \hh^{\ast}\setminus\{0\}} \gg^{\alpha}, 
\qquad \gg^{\alpha}:=\{ X \in \gg \ : \ [H,X] = \alpha(H)X \ \text{for all} \ H \in \hh\}.
\end{equation}
The set of roots is $\Phi=\Phi_{0}\sqcup \Phi_{1}$ where
\begin{equation}
\label{allroots}
\begin{split}
\Phi_{0} &= \{\pm(\epsilon_{i}-\epsilon_{j}),\pm(\delta_{k}-\delta_{l}) \ : \ 1\leq i<j\leq m, \ 1\leq k <l \leq n\}, \\
\Phi_{1} &= \{\pm(\epsilon_{i}-\delta_{k}) \ : \ 1\leq i \leq m, \ 1\leq k\leq n\},
\end{split}
\end{equation}
are the \emph{even} and \emph{odd roots}, respectively. Note that each root space has superdimension
$(1\vert 0)$ or $(0\vert 1)$, and $\Phi_{0}$ is the disjoint union of root systems for $\sl(m)$ and
$\sl(n)$. On $\Phi_{0}$, we fix for the rest of the article the standard positive system 
\begin{equation}
\label{evenpositive}
\Phi_{0}^{+}:=\{\epsilon_{i}-\epsilon_{j}, \delta_{k}-\delta_{l} \ : \ 1 \leq i<j\leq m, \ 1\leq k <l \leq n\}
\end{equation}
such that the root vectors for $\epsilon_{i}-\epsilon_{j}$, $i<j$, are strictly upper triangular
matrices of $\sl(m)$, and the root vectors for $\delta_{k}-\delta_{l}$, $k<l$, are strictly upper
triangular matrices of $\sl(n)$, both diagonally embedded in $\even$. When extending this to the
odd part, we usually make the \emph{standard choice}
\begin{equation}
\label{oddpositive}
\Phi_{1}^{+} := \{\epsilon_{i}-\delta_{k} \ : \ 1\leq i \leq m, \ 1\leq k\leq n\}
\end{equation}    
such that the associated root vectors are the off-diagonal upper block matrices in $\odd$.
Then the set of positive roots is $\Phi^{+}:=\Phi_{0}^{+}\sqcup \Phi^{+}_{1}$. With respect to 
this choice of $\Phi^+$, the Lie superalgebra $\gg$ has the triangular decomposition
\begin{equation}
\gg = \nn^{-}\oplus \hh \oplus \nn^{+}, \qquad \nn^{\pm} := \bigoplus_{\alpha \in \Phi^{+}}\gg^{\pm
\alpha},
\end{equation}
where $\nn^{+}$ is the space of strictly upper triangular matrices and $\nn^{-}$ the 
space of strictly lower triangular matrices. The associated \emph{Borel subalgebra} is 
$\bb = \hh\oplus \nn^{+}$. The simple roots are 
\begin{equation}
\label{standardsimple}
\pi := (\epsilon_1-\epsilon_2,\ldots,\epsilon_{m-1}-\epsilon_m,\epsilon_m-\delta_1,
\delta_1-\delta_2,\ldots,\delta_{n-1}-\delta_n) \,.
\end{equation} 
The space of real weights, defined as the $\RR$-span of $\Phi$ inside of $\hh^*$, and denoted $\Phi_\RR$
or $\hh^*_\RR$, is a real vector space of dimension $n+m-1$. Its dual inside of $\hh$, denoted 
$\hh_\RR$, is the space of supertrace-less real diagonal matrices.

The general linear Lie superalgebra carries a natural \emph{even supersymmetric} and \emph{invariant} 
bilinear form $(\cdot,\cdot): \glmn \times \glmn \longrightarrow \CC$ defined by
\begin{equation}
\label{killing}
(X,Y):=\str(XY)
\end{equation}
for all $X,Y\in \glmn$. Here, even supersymmetric means that $(\cdot,\cdot)$ is symmetric on $\glmn_0$, 
skew-symmetric on $\glmn_1$, and $\glmn_0$ and $\glmn_1$ are orthogonal to each other. Invariant means 
that $([X,Y],Z)=(X,[Y,Z])$ for all $X,Y,Z\in \glmn$. The form $(\cdot,\cdot)$ is always non-degenerate
on $\glmn$. However, its restriction to $\slmn$ is non-degenerate only as long as $m\neq n$. When $m=n$, 
the one-dimensional center of $\sl(n\vert n)$ becomes the non-trivial radical of $(\cdot,\cdot)$.

The restriction of $(\cdot,\cdot)$ to $\dd$ is still non-degenerate, and identifies the subspace $\dd_\RR$ 
of real diagonal matrices with the pseudo-orthogonal space $\RR^{m,n}$. The bilinear form induced on 
the dual space is denoted by the same symbol. On the standard basis, we have for all $1\leq i,j \leq m$ and 
$ 1 \leq k,l \leq n$
\begin{align}
(\epsilon_{i},\epsilon_{j})=\delta_{ij}, \qquad
(\delta_{k},\delta_{l})=-\delta_{kl},\qquad
(\epsilon_{i},\delta_{k})=0.
\end{align}
When $n\neq m$, the restriction of $(\cdot,\cdot)$ to $\hh_\RR$ is non-degenerate of signature $(m-1,n)$ or 
$(m,n-1)$, depending on whether $m>n$ or $m<n$. We can then associate to any root $\alpha\in\Phi$ a unique
element $h_{\alpha}\in\hh_\RR$ through the condition $\alpha(H)=(H,h_{\alpha})$ for all $H \in \hh$. The 
element $h_{\alpha}$ is called the \emph{dual root} associated to $\alpha$. When $n=m$, we have 
$\hh_\RR/\RR E_{2n} \cong \RR^{m-1,n-1}$. We can fix the dual roots as elements of $\hh_\RR$ by requiring 
$\alpha(H) = (H,h_\alpha)$ for all $H\in\dd$. However, the bilinear form on $\hh^*_\RR$, which is equal to
the linear extension of $(\alpha,\beta) := (h_{\alpha},h_{\beta})$ for $\alpha,\beta\in\Phi$, remains 
non-degenerate only when $n\neq m$.
It follows from the above that the odd roots are all isotropic, \emph{i.e.}, 
$(\alpha,\alpha)=0$ for all $\alpha\in\Phi_1$.

The set of roots $\Phi$ carries a natural action of the Weyl group $W$ of $\even$. $W$ is isomorphic to the 
direct product of symmetric groups $S_{m}\times S_{n}$ on $m$ and $n$ letters, respectively. It is
generated by reflections in the even roots,
\begin{equation}
\label{evenreflection}
R_\alpha(\beta) := \beta - 2\frac{(\alpha,\beta)}{(\alpha,\alpha)} \alpha \,,\qquad \alpha\in \Phi_0
\,, \beta\in\Phi.
\end{equation}
This action can be extended linearly to $\hh^*_\RR$ (and to $\hh^*$), where it leaves the bilinear form
$(\cdot,\cdot)$ invariant. The \emph{Weyl group of $\gg$} is defined to be the Weyl group of $\even$. The \emph{Weyl 
vector} $\rho$ is $\rho=\rho_{0}-\rho_{1}$ where
\begin{equation}
\rho_{0} := \frac{1}{2}\sum_{\alpha \in \Phi_{0}^{+}}\alpha, \qquad \rho_{1} :=
\frac{1}{2}\sum_{\alpha \in \Phi_{1}^{+}}\alpha.
\end{equation}
Using the parameterization \eqref{evenpositive} and \eqref{oddpositive}, we find
\begin{equation}
\rho = \frac{1}{2}\biggl( \sum_{i=1}^{m}(m-n+1-2i)\epsilon_{i}+\sum_{k=1}^{n}(m+n-2k+1)\delta_{k}\biggr)
\end{equation}
In calculations, however, it is simpler to use a shifted representative of the Weyl vector,
\begin{multline}
\rho = \frac 12 (m-n-1,\ldots,-m-n-1 \vert m+n-1,\ldots,m-n-1)
+ \frac{m+n+1}{2}(1,\ldots,1\vert -1,\ldots,-1)\\ 
=(m,\ldots,2,1\vert {-}1,-2,\ldots,-n).
\qquad\qquad\qquad\qquad\qquad\qquad\qquad\qquad\qquad\qquad\qquad
\end{multline}
The \emph{dot action} on weight vectors, defined for $w\in W$ and $\lambda\in\hh^*$ by
$w\cdot \lambda = w(\lambda+\rho)-\rho$ as well as constructs such as $(\lambda+\rho,\epsilon_i-\delta_k)$, 
which will be important below, are of course independent of the representative by construction. 

In physics applications, it is sometimes convenient to work with different positive systems, which are
called \emph{non-standard} in the mathematical literature, see \emph{e.g.}, \cite{jakobsen1994full}. Generally 
speaking, different positive systems are related by a sequence of Weyl reflections \eqref{evenreflection} 
in the even roots, together with so-called \emph{odd reflections}, which are defined at the level of
simple roots by
\begin{equation}
\label{oddreflection}
R_\alpha(\beta) := \begin{cases} 
\beta + \alpha & \text{if $(\beta,\alpha)\neq 0$}
\\
\beta & \text{if $(\beta,\alpha) = 0$, $\beta\neq \alpha$}
\\
-\alpha & \text{if $\beta = \alpha.$}
\end{cases}
\,,\qquad \alpha\in\pi\cap\Phi_1\,,\beta\in \pi
\end{equation}
We will explain this for one such non-standard system, which is well-suited for the study of unitary 
representations of $\supqn$, at the end of the next subsection.

\subsection{Real forms \texorpdfstring{$\supqn$}{su(p,q|n)}}
\label{sec::realforms}

The even subalgebra of $\gg$ is $\even \cong \sl(m)\oplus \sl(n)\oplus \CC E_{m\vert n}$. We will be interested in 
those real forms of $\gg$ whose even subalgebras are the real forms $\su(p,q)\oplus \su(n) \oplus \u(1)$ of $\even$, 
where $p+q=m$. These are the real Lie superalgebras $\supqn$.
 
In general, real forms of $\gg$ are in one-to-one correspondence with conjugate-linear anti-involutions of $\gg$
\cite{ParkerRealForms, SerganovaRealForms}.
Here, an \emph{anti-involution} $\omega$ is an involution of the underlying super vector space such that 
$\omega([X,Y])=-[\omega(X),\omega(Y)]$ for all $X,Y \in \gg$. The real form $\gg^{\omega}$ of $\gg$ with respect to $\omega$ is 
then $\gg^{\omega} := \{X \in \gg : \omega(X) = -X\}$.
To identify real forms of $\gg$ with even part $\su(p,q)\oplus \su(n) \oplus \u(1)$, we consider the $\ZZ/2$-compatible 
$\ZZ$-grading $\gg =\gg_{-1}\oplus \gg_{0}\oplus \gg_{+1}$ with $\gg_{1} = \gg_{-1}\oplus
\gg_{+1}$, where
\begin{equation}
\label{intgrading}
\gg_{-1}:=\bigoplus_{\alpha \in \Phi_{1}^{+}}\gg^{-\alpha}=\left\{\left(\begin{array}{@{}c|c@{}}
  0
  & 0 \\
\hline
  \bar{\Psi} &
  0 \tstrut
\end{array}\right) \ : \ \bar{\Psi} \in \CC^{mn}\right\}, \ 
\gg_{+1}:=\bigoplus_{\alpha \in \Phi_{1}^{+}}\gg^{\alpha}=\left\{\left(\begin{array}{@{}c|c@{}}
  0
  & \Psi \\
\hline
  0 &
  0 \tstrut
\end{array}\right) \ : \ \Psi \in \CC^{mn}\right\}.
\end{equation}
Assuming temporarily that $p,q$ are both non-zero, we fix a maximal compact subalgebra
\begin{equation}
\label{maxcompact}
\kk=(\mathfrak{su}(p)\oplus \mathfrak{su}(q)\oplus \mathfrak{u}(1)) \oplus \mathfrak{su}(n) \oplus \u(1),
\end{equation}
diagonally embedded in $\gg$. Here, $\su(p)\oplus \su(q)\oplus \u(1)$ is a maximal compact subalgebra 
of $\su(p,q)$, and the final $\u(1)$ is the same as before. Then $\gg_{\pm 1}$ are $\kk$-modules under 
the action coming from the adjoint action of $\even$ on $\odd$. They each decompose in two simple 
$\kk$-modules,
\begin{equation}
\gg_{-1}=\bar{\mathfrak{q}}\oplus \bar{\mathfrak{s}}, \qquad
\gg_{+1}=\mathfrak{s} \oplus \mathfrak{q}\,.
\end{equation}
The constituents are tensor products of the defining representations of $\su(p)$, $\su(q)$, $\su(n)$
and their duals, and carry $\u(1)$ actions that we normalize momentarily. Writing a general element 
$X \in \gg$ as 
\begin{equation}
X=\left(\begin{array}{@{}cc|c@{}}
a & b & S 
\\ 
c & d & Q
\\
\hline
\bar{Q} & \bar{S}  & D 
\tstrut
\end{array}\right)
\end{equation}
we have $S \in \mathfrak{s}, Q \in \mathfrak{q}, \bar{Q} \in \bar{\qq},  
\bar{S} \in \bar{\mathfrak{s}}$, $D \in \mathfrak{su}(n)^{\CC}$
and $\begin{pmatrix} a & b \\ c & d \end{pmatrix}\in \mathfrak{su}(p,q)^{\CC}$. 
It is then immediate that there are exactly two conjugate-linear 
anti-involutions of $\gg$ compatible with the real forms $\su(p,q)\oplus \su(n) \oplus \u(1)$ of 
$\even$, namely 
\begin{equation}
\label{involutions}
\begin{split}
\omega_{+}\left(\begin{array}{@{}cc|c@{}}
a & b & S 
\\ 
c & d & Q
\\
\hline
\bar{Q} & \bar{S} & D 
\tstrut
\end{array}\right)
&=
\left(\begin{array}{@{}c@{\,\,}c@{\,}|c@{}}
a^{\dagger} & -c^{\dagger} & i\bar{Q}^{\dagger} 
\\ 
-b^{\dagger} & d^{\dagger} & -i\bar{S}^{\dagger}
\\
\hline
iS^{\dagger} & -iQ^{\dagger} &  D^{\dagger}
\tstrut
\end{array}\right), 
\\[.1cm]
\omega_{-}\left(\begin{array}{@{}cc|c@{}}
a & b & S 
\\ 
c & d & Q
\\
\hline
\bar{Q} & \bar{S} & D
\tstrut
\end{array}\right)
&=
\left(\begin{array}{@{}c@{\,\,}c@{\,}|c@{}}
a^{\dagger} & -c^{\dagger} & -i\bar{Q}^{\dagger} 
\\ 
-b^{\dagger} & d^{\dagger} & i\bar{S}^{\dagger}
\\
\hline
-iS^{\dagger} & iQ^{\dagger} &  D^{\dagger}
\tstrut
\end{array}\right), 
\end{split}
\end{equation}
where $\cdot^{\dagger}$ denotes complex conjugate transpose. It may be observed that the two 
possibilities can be mapped into each other by either Galois symmetry $i\mapsto -i$ or by
changing the ``relative signature'' between the even and odd part of the underlying super vector
space. This corresponds to the distinction between $\su(p,q\vert 0,n)$ and $\su(p,q\vert n,0)$,
which is however somewhat formal because it can also be brought about by conjugation with the 
overall grading operator (Fermion number $(-1)^F$), above and beyond the obvious isomorphism 
$\su(p,q\vert 0,n)\cong
\su(q,p\vert n,0)$. In our discussion of unitary representations, we will align the choice with 
our conventions for super Hermitian forms (see subsection \ref{unitarizable}). This will restrict 
the discussion to $\omega_-$, for which real form we will write $\mathfrak{su}(p,q\vert n)$ 
instead of $\mathfrak{su}(p,q\vert n,0)$.
The maximal compact subalgebra \eqref{maxcompact} of $\su(p,q\vert n)_{0}$ 
is referred to as a maximal compact subalgebra of $\su(p,q\vert n)$. The Cartan subalgebra of $\kk$ 
is the set of supertrace-less real diagonal matrices, which we denoted $\hh_\RR$ above.

If either $p=0$ or $q=0$, the maximal compact subalgebra is $\kk=\su(m)\oplus\su(n)\oplus\u(1)$ and the 
$\kk$-modules $\gg_{\pm 1}$ are simple. Still, there are two conjugate-linear anti-involutions
\begin{equation}
\omega_{\pm}\left(\begin{array}{@{}c|c@{}} 
A & \Psi \\
\hline
\bar{\Psi} & D 
\tstrut
\end{array}\right)
=
\left(\begin{array}{@{}c|c@{}} 
A^{\dagger} & \pm i\bar{\Psi}^{\dagger} \\
\hline
\pm i\Psi^{\dagger} & D^{\dagger} 
\tstrut
\end{array}\right)
\,, 
\qquad 
\left(\begin{array}{@{}c|c@{}} A &
\Psi \\
\hline
  \bar{\Psi} & D \tstrut
  \end{array}
  \right)\in \gg.
\end{equation}
In either case, we define as advertised
\begin{equation}
    \supqn := \{ X \in \gg\ : \ \omega_{-}(X) = -X\}.
\end{equation}

The root system $\Phi_{c} := \Phi(\kk^{\CC}; \hh)$ of $\kk^\CC$ with respect to the Cartan subalgebra inherited from 
$\even$ (and ultimately $\gg$) is a subset of $\Phi_0$ (and $\Phi$), naturally called the \emph{compact roots}. 
Elements of its complement $\Phi_{n} := \Phi_0 \setminus \Phi_{c}$ are called \emph{non-compact roots}. We 
denote $\Phi_{c}^{+} := \Phi_{c} \cap \Phi_0^{+}$ and $\Phi_{n}^{+} := \Phi_{n} \cap \Phi_0^{+}$. The ``compact'' 
Weyl vector is $\rho_c = \frac 12 \sum_{\alpha\in\Phi_c^+} \alpha$.

Of particular interest for the unitary representation theory of $\supqn$ are the abelian factors in the
maximal compact subalgebra. This is well-known also from the physics applications. A generator, 
$J$, of the $\u(1)$ that appears in the direct sum decomposition of $\supqn_{0}$ 
commutes with all other even generators, and is called \emph{superconformal R-charge}. The weights 
(eigenvalues) of $J$ in $\gg$-supermodules are known as \emph{R-charge}, and will be symbolized as $r$. 
When $n\neq m$, it is natural to normalize $J$ such that the R-charge induces the $\ZZ$-grading in eq.\ 
\eqref{intgrading}, \emph{i.e.}, to set
\begin{equation}
\label{inducesgrading}
J := \frac{1}{n-m} \left(\begin{array}{@{}c|c@{}} 
nE_{m} & 0 \\
\hline
 0 &
  mE_{n}
\end{array}\right).
\end{equation}
When $n=m$, the superconformal R-charge is central in $\sl(n\vert n)$, while the $\ZZ$-grading \eqref{intgrading} 
is not inner with respect to 
$\sl(n\vert n)$. We will simply use $J= E_{n+n}$ in this case.\footnote{The $\ZZ$-grading is still 
inner with respect to $\gl(n\vert n)$, generated for example by $\Sigma=\onehalf 
\begin{pmatrix} E_n & 0 \\ 0 & -E_n \end{pmatrix}$. Since however 
$\str\Sigma\neq 0$, $\Sigma \notin [\gl(n\vert n), \gl(n\vert n)]$ and does not participate in the 
representation theory at a level comparable to the superconformal R-charge. In contradistinction,
$J=E_{2n} \in [\sl(n\vert n), \sl(n\vert n)]$. As remarked before, $\sl(n\vert n)$, while not simple, is 
a non-trivial central extension.} 
The second $\u(1)$ appears when $p,q$ are both non-zero after the choice of a maximal compact subalgebra of
$\su(p,q)$. Following physics terminology, its generator, $H$, will be called \emph{conformal Hamiltonian}. Its
eigenvalues are known as \emph{(conformal) dimension} and will be symbolized by $\Delta$. It is natural to 
normalize $H$ such that the non-compact even positive roots have dimension $2$, \emph{i.e.}, to set
\begin{equation}
\label{ietoset}
H := \frac{2}{m} 
\left(\begin{array}{@{}cc|c@{}}
qE_{p} & 0 & 0 
\\ 
0 & -pE_{q} & 0 \\ 
\hline
0 & 0 & 0 
\end{array}\right)\,.
\end{equation}
These normalizations of $J$ and $H$ agree for $(p,q\vert n) = (1,1\vert 1)$ with the relations 
\eqref{standardRelations} from the introduction. The supercharges $Q$ and $\bar Q$ have dimension $-1$, the 
superconformal generators $S$ and $\bar S$ dimension $+1$. This is also true in other physically
interesting cases such as $\su(2,2\vert n)$, but not in general. On the occasion, we illustrate in
Fig.\ \ref{fig:rootsystem} the position of the root system of $\gg=\sl(2\vert 1)$ in the $(\Delta,r)$-plane, 
together with the weights of the two simple modules discussed in section \ref{sec::SQM}. The left panel 
includes the Weyl vector and the highest weights for the standard positive system \eqref{oddpositive}.
The simple roots are \eqref{standardsimple} $\pi= (\epsilon_1-\epsilon_2,\epsilon_2-\delta_1)$ with 
corresponding root vectors $E$ and $Q$.
The right panel is for the non-standard system with simple roots $(\epsilon_1-\delta_1,\delta_1-\epsilon_2)$,
corresponding to root vectors $S$ and $\bar S$. It can be obtained from the standard system by an odd 
reflection \eqref{oddreflection} in $\alpha = \epsilon_2-\delta_1$.
Similar positive systems with only odd simple roots exist in particular for all $\sl(n\vert n)$. Their 
advantage is that highest weight vector also maximizes the dimension in any highest weight supermodule.
This facilitates the identification of the ground states, and hence the evaluation of the superconformal 
index. This will be discussed at length in the following subsections, but may be instructive to verify
in the example.

\begin{figure}[ht]
\subfloat[Standard positive system]
{
\begin{tikzpicture}
    [
    x=1cm, y=1cm,  scale=1.2,
    font=\footnotesize,
    >=latex
    ]
\draw[->] (-2.2,0) -- (2.5,0) ;
\foreach \i in {-2,...,-1}
\draw (\i,0.25mm) -- ++ (0pt,-0.5mm) node[below] {$\i$};
\foreach \i in {1,...,2}
\draw (\i,0.25mm) -- ++ (0pt,-0.5mm) node[below] {$\i$};
\draw[->] (0,-2.2) -- (0,2.2);
\foreach \i in {-2,...,-1}
\draw   (0.025,\i) -- ++ (-0.5mm,0pt) node[left] {$\i$};
\foreach \i in {1,...,2}
\draw   (0.025,\i) -- ++ (-0.5mm,0pt) node[left] {$\i$};
\draw   (-1,1)  node[left] {$Q$};
\draw   (-1,-1)  node[left] {$\bar Q$};
\draw   (1,1)  node[right] {$S$};
\draw   (1,-1.05)  node[right] {$\bar S$};
\draw   (-2,0)  node[above] {${F}$};
\draw   (2,0)  node[above] {$E$};
\draw   (2.55,0)  node {\rlap{$\Delta=H$}};
\draw   (0,2.4)  node {$r=J$};
\fill[ForestGreen] (1,1) circle (0.6mm) (-1,1) circle (0.6mm) (2,0) circle (0.6mm);
\fill[red]  (-1,-1) circle (0.6mm) (1.05,-1.05) circle (0.6mm) (-2,0) circle (0.6mm);
\draw (-.5,-.47) node[left] {$1$} ;
\draw (-1.5,-.5) node[left] {$x$} ;
\draw (-2.5,-.45) node[left] {$x^2$} ;
\draw (-.5,.48) node[left] {$\eta$} ;
\draw (-1.5,.48) node[left] {$\eta x$} ;
\draw (-2.5,.53) node[left] {$\eta x^2$} ;
\draw[blue] (-.5,-.5) circle (0.4mm);
\draw[magenta] (-1.5,-.5) circle (0.4mm);
\draw[blue] (-2.5,-.5) circle (0.4mm);
\draw[magenta] (-.5,.5) circle (0.3mm);
\draw[magenta] (-.5,.5) circle (0.5mm);
\draw[blue] (-1.5,.5) circle (0.3mm);
\draw[blue] (-1.5,.5) circle (0.5mm);
\draw[magenta] (-2.5,.5) circle (0.4mm);
\draw (.95,-.85) node[left] {$\rho$} ;
\fill[black] (.95,-.95) circle (0.6mm);
\end{tikzpicture}
}
\qquad
\subfloat[Non-standard positive system]
{
\begin{tikzpicture}
    [
    x=1cm, y=1cm,  scale=1.2,
    font=\footnotesize,
    >=latex
    ]
\draw[->] (-2.2,0) -- (2.5,0) ;
\foreach \i in {-2,...,-1}
    \draw (\i,0.25mm) -- ++ (0pt,-0.5mm) node[below] {$\i$};
\foreach \i in {1,...,2}
    \draw (\i,0.25mm) -- ++ (0pt,-0.5mm) node[below] {$\i$};
\draw[->] (0,-2.2) -- (0,2.2);
\foreach \i in {-2,...,-1}
    \draw   (0.025,\i) -- ++ (-0.5mm,0pt) node[left] {$\i$};
\foreach \i in {1,...,2}
\draw   (0.025,\i) -- ++ (-0.5mm,0pt) node[left] {$\i$};
\draw   (-1,1)  node[left] {$Q$};
\draw   (-1,-1)  node[left] {$\bar Q$};
\draw   (1,1)  node[right] {$S$};
\draw   (1,-1)  node[right] {$\bar S$};
\draw   (-2,0)  node[above] {${F}$};
\draw   (2,0)  node[above] {$E$};
\draw   (2.55,0)  node {\rlap{$\Delta=H$}};
\draw   (0,2.4)  node {$r=J$};
\fill[ForestGreen]  (1,1) circle (0.6mm) (1,-1) circle (0.6mm) (2,0) circle (0.6mm);
\fill[red]  (-1,1) circle (0.6mm) (-1,-1) circle (0.6mm) (-2,0) circle (0.6mm);
\draw (-.5,-.47) node[left] {$1$} ;
\draw (-1.5,-.5) node[left] {$x$} ;
\draw (-2.5,-.45) node[left] {$x^2$} ;
\draw (-.5,.48) node[left] {$\eta$} ;
\draw (-1.5,.48) node[left] {$\eta x$} ;
\draw (-2.5,.53) node[left] {$\eta x^2$} ;
\draw[blue] (-.5,-.5) circle (0.3mm);
\draw[blue] (-.5,-.5) circle (0.5mm);
\draw[magenta] (-1.5,-.5) circle (0.4mm);
\draw[blue] (-2.5,-.5) circle (0.4mm);
\draw[magenta] (-.5,.5) circle (0.3mm);
\draw[magenta] (-.5,.5) circle (0.5mm);
\draw[blue] (-1.5,.5) circle (0.4mm);
\draw[magenta] (-2.5,.5) circle (0.4mm);
\draw (0,-.2) node[left] {$\rho$} ;
\fill[black] (0,0) circle (0.6mm);
    \end{tikzpicture}
\quad}
\caption{Root system of $\sl(2\vert 1)$ with respect to two canonical positive systems. The coordinates are 
the basis $(H,J)$ of the Cartan subalgebra used in the presentation in eq.\ \eqref{standardRelations}. The 
green dots represent the positive roots, and the red dots, the negative roots. The blue circles 
are (some of) the weights of the oscillator supermodule $\Oss^+$, the magenta circles, of $\Oss^-$. 
The double circles are the highest weights. The black dot is the Weyl vector.}
\label{fig:rootsystem}
\end{figure}
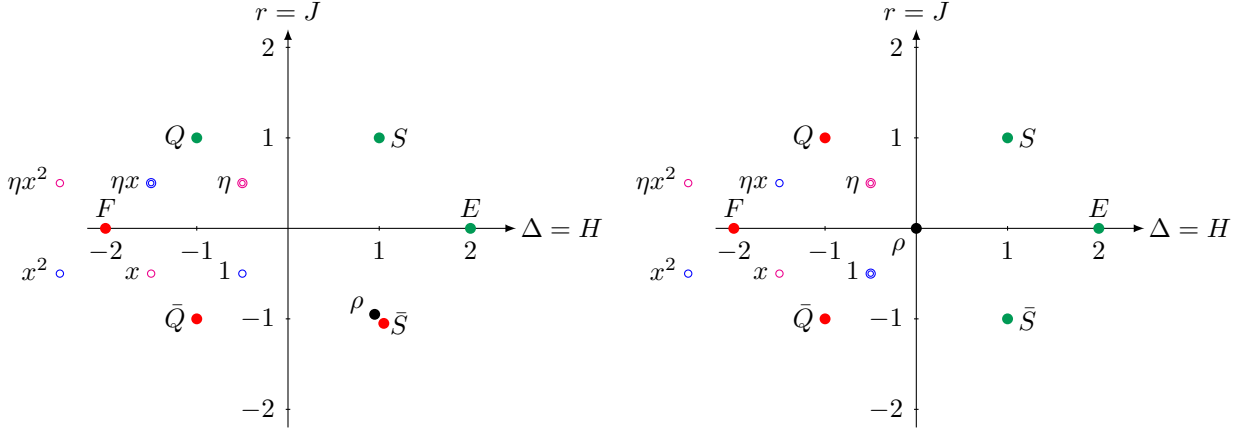

\subsection{Highest weight supermodules and atypicality} 
\label{sec::HW} 

Highest and lowest weight $\gg$-supermodules are a particularly important type of supermodules 
over $\gg$. On the one hand, any simple finite-dimensional $\gg$-supermodule is both a highest and 
lowest weight $\gg$-supermodule. On the other hand, as reviewed in detail in section \ref{unitarizable}, 
any unitarizable simple (in general, infinite-dimensional) supermodule is either a highest or a 
lowest weight $\gg$-supermodule.

\begin{definition} A $\gg$-supermodule $M$  is called \emph{highest (lowest) weight} $\gg$\emph{-supermodule
of highest (lowest) weight} $\Lambda \in \hh^{\ast}$ if there exists a nonzero $v_{\Lambda} \in M$ such that:
\begin{enumerate}
    \item[a)] $Xv_{\Lambda}=0$ for all $X \in \nn^{+}$ ($\nn^-$), 
    \item[b)] $Hv_{\Lambda}=\Lambda(H)v_{\Lambda}$ for all $H \in \hh$ and 
    \item[c)] $\mathfrak{U}(\gg)v_{\Lambda}=M$.
\end{enumerate}
We call $v_{\Lambda}$ \emph{highest (lowest) weight vector} of $M$. We call $M$ even/odd according
to the parity of $v_\Lambda$, denoted $(-1)^{v_\Lambda}=(-1)^M$ if necessary.
\end{definition}

The following discussion will be phrased in term of highest weight $\gg$-supermodules. All statements are
valid  \emph{mutatis mutandis} for lowest weight $\gg$-supermodules. We first record some elementary algebraic
properties that follow directly from the definition.

\begin{lemma}
\label{lemm::properties_HWM}
Let $M$ be a highest weight supermodule with highest weight $\Lambda$ and parity $(-1)^{v_\Lambda}$. 
Let $\alpha_{1}\ldots,\alpha_{k}$ and 
$\beta_{1},\ldots,\beta_{l}$ be an enumeration of the even and odd positive roots, respectively, where $k= 
\frac{m(m-1)+n(n-1)}{2}$ and $l= mn$. Choose root vectors $X_{i} \in 
\gg^{- \alpha_{i}}$ and $Y_{j} \in \gg^{-\beta_{j}}$. Then 
\begin{enumerate}
\item $M$ is spanned by the collection of vectors $X_{1}^{r_{1}}\cdots X_{k}^{r_{k}}Y_{1}^{s_{1}}\cdots 
Y_{l}^{s_{l}}v_{\Lambda}$ where $r_{1},...,r_{k} \in \ZZ_{+}$ and $s_{1},...,s_{l} \in \{0,1\}$, of weight
$\Lambda-\sum_{i=1}^{k}r_{i}\alpha_{i}-\sum_{j=1}^{l}s_{j}\beta_{j}$ and parity 
$(-1)^{\sum_{j=1}^l s_j + v_\Lambda}$.
\item $M$ is a \emph{weight supermodule}, that is, $M=\bigoplus_{\lambda \in \hh^{\ast}}M^{\lambda}$, where 
$M^{\lambda}:=\{m\in M : Hm=\lambda(H)m \text{ for all } H\in \hh\}$. The space $M^{\lambda}$ is 
the \emph{weight space} of weight $\lambda$, and $\dim(M^{\lambda})$ its \emph{multiplicity}. 
$\dim(M^{\lambda}) < \infty$ for all $\lambda$, with $\dim(M^{\Lambda})=1$.
\item Any nonzero quotient of $M$ is again a highest weight supermodule with highest weight $\Lambda$. 
$M$ has a unique maximal submodule and unique simple quotient. In particular, $M$ is indecomposable.
\item $M$ is a quotient of the (even or odd) \emph{Verma supermodule} $V(\Lambda)$, which 
is defined (up to parity) as the quotient of $\UE(\gg)$ by the left ideal generated by $\nn^+$ 
and $H-\Lambda(H)$ for all $H\in\hh$, and as a vector space isomorphic to $\UE(\nn^-)$ by 
Poincar\'e-Birkhoff-Witt.
\item All simple highest weight modules of fixed highest weight and parity are isomorphic.
\end{enumerate}
\end{lemma}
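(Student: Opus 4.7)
The plan is to derive all five parts from the PBW theorem for $\UE(\gg)$ together with the triangular decomposition and careful bookkeeping of weights and parities.

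I would start with part (a), which is the engine for the rest. Since $\nn^+$ annihilates $v_\Lambda$ and $\hh$ acts by the scalar $\Lambda(H)$, the triangular decomposition $\UE(\gg)\cong \UE(\nn^-)\otimes\UE(\hh)\otimes\UE(\nn^+)$ reduces $\mathfrak{U}(\gg)v_\Lambda=M$ to $\UE(\nn^-)v_\Lambda=M$. PBW for Lie superalgebras gives a basis of $\UE(\nn^-)$ consisting of ordered monomials in the $X_i$ and the $Y_j$, with the $Y_j$ appearing to powers $0$ or $1$; here I would note that for $\gg=\slmn$ the odd negative roots $-\beta_j$ satisfy $[Y_j,Y_j]=0$ in $\gg$, so $Y_j^{\,2}=\frac12[Y_j,Y_j]=0$ in $\UE(\nn^-)$, and similarly the brackets among the $Y_j$ vanish, which is the only super-specific point to check. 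Applying these monomials to $v_\Lambda$ yields the spanning set; the weight formula follows by commuting $H\in\hh$ past each factor, and the parity follows because the $X_i$ are even while each $Y_j$ is odd.

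Part (b) is then a corollary: the spanning set is a set of weight vectors, so $M=\bigoplus_\lambda M^\lambda$. Finite dimensionality of $M^\lambda$ reduces to the combinatorial fact that $\Lambda-\lambda$ has only finitely many decompositions $\sum r_i\alpha_i+\sum s_j\beta_j$ with $r_i\in\ZZ_{\ge0}$ and $s_j\in\{0,1\}$, which holds because $\Phi^+$ lies in an open half-space of $\hh_\RR^*$. For $\lambda=\Lambda$ the only such decomposition is the trivial one, so $M^\Lambda=\CC v_\Lambda$.

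For parts (c) and (d), I would proceed in the opposite order. The Verma supermodule $V(\Lambda)$ with the chosen parity carries its own highest weight vector $\bar 1$; because the relations $Xv_\Lambda=0$ for $X\in\nn^+$ and $Hv_\Lambda=\Lambda(H)v_\Lambda$ are satisfied in $M$, the universal property of the quotient yields an even $\gg$-equivariant map $V(\Lambda)\to M$ sending $\bar 1\mapsto v_\Lambda$. It is surjective because $v_\Lambda$ generates $M$. This proves (d). For (c), any proper submodule $N\subset M$ must satisfy $N^\Lambda=0$, since otherwise $v_\Lambda\in N$ and then $N=\UE(\gg)v_\Lambda=M$. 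Hence the sum $N_{\max}$ of all proper submodules still satisfies $N_{\max}^\Lambda=0$, so it is itself proper, and $L(\Lambda):=M/N_{\max}$ is the unique simple quotient. Indecomposability follows: in any direct sum decomposition $M=M'\oplus M''$, the one-dimensionality of $M^\Lambda$ forces $v_\Lambda$ to lie in one summand, which must then equal all of $M$.

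Finally, part (e): any simple highest weight supermodule $L$ of highest weight $\Lambda$ and parity $(-1)^{v_\Lambda}$ is, by (d), a quotient of the Verma supermodule $V(\Lambda)$ of the correct parity; by simplicity and the uniqueness of the maximal submodule from (c), this quotient must be $L(\Lambda)$, so any two such $L$ are isomorphic. The main obstacle, if any, is purely bookkeeping: one has to be careful that the universal map in (d) and the isomorphism in (e) are even morphisms, which is what fixes the parity of $v_\Lambda$ and explains why $L(\Lambda)$ and $\Pi L(\Lambda)$ must be distinguished. Everything else is a faithful super-analog of the classical Verma module argument.
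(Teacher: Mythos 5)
Your overall route is the standard one, and it matches what the paper implicitly relies on (the paper states this lemma as elementary and gives no proof): PBW together with the triangular decomposition for (a), weight bookkeeping for (b), the ``$N^{\Lambda}=0$ for proper submodules'' argument for (c), the universal property of the Verma supermodule for (d), and uniqueness of the simple quotient for (e). Parts (a), (c), (d), (e) are fine as sketched, including the correct observation that $Y_j^2=\tfrac12[Y_j,Y_j]=0$ since twice an odd root is never a root of $\slmn$, and the care about even morphisms fixing the parity.

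There is one inaccurate justification in (b). You claim finiteness of the number of decompositions $\Lambda-\lambda=\sum_i r_i\alpha_i+\sum_j s_j\beta_j$ because ``$\Phi^+$ lies in an open half-space of $\hh^*_\RR$''. This is false when $m=n$, a case the paper explicitly includes: the sum of all odd positive roots is $n\sum_i\epsilon_i-m\sum_k\delta_k$, which for $m=n$ is a multiple of $(1,\ldots,1\vert{-}1,\ldots,-1)$ and hence vanishes identically on the supertraceless Cartan subalgebra $\hh$; so no linear functional on $\hh_\RR^*$ can be strictly positive on all of $\Phi^+_1$. The conclusion you want still holds, but the argument must be repaired: since each $s_j\in\{0,1\}$, there are only $2^{mn}$ choices of the odd part $\Sigma_S=\sum_j s_j\beta_j$, and for each fixed choice one needs finitely many representations of $\Lambda-\lambda-\Sigma_S$ as a $\ZZ_{\ge0}$-combination of the \emph{even} positive roots, which do lie in an open half-space (they form the positive system of $\sl(m)\oplus\sl(n)$, so pair with a regular dominant element of $\hh_\RR$). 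With that substitution your proof is complete and agrees with the standard treatment.
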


The peculiar consequences of the odd roots are brought to the fore by constructing 
the simple highest weight $\gg$-supermodule with highest weight $\Lambda$ not as a quotient of $V(\Lambda)$, but as
the (unique) simple top of a Kac supermodule. Kac supermodules can be described as generalized Verma 
supermodules with respect to $\even$ as Levi subalgebra. Concretely, one starts by extending any $M_0\in\gmod$, 
placed in either even or odd degree, trivially to a $\even\oplus \gg_{+1}$-supermodule. Then, the induced 
$\gg$-supermodule 
\begin{equation}
\label{kacinduct}
K(M_0) := \mathfrak{U}(\gg)\otimes_{\mathfrak{U}(\even\oplus \gg_{+1})}M_0,
\end{equation}
is called \emph{Kac supermodule}. The associated map $K(\cdot):\evsmod \longrightarrow \gsmod$ defines
an exact functor, the \emph{Kac induction functor}. In general, when $M_0$ is simple as a $\even$-module,
$K(M_0)$ is indecomposable as a $\gg$-supermodule, although it might not be simple. 

\begin{theorem}[{\cite[Theorem 4.1]{chen2021simple}}] 
\label{thm::simple_top} 
For any simple $\even$-supermodule $M_0$ the Kac supermodule $K(M_0)$ has a unique maximal proper sub supermodule. 
The unique simple top of $K(M_0)$ is denoted by $L(M_0)$. The map $M_0 \mapsto L(M_0)$ gives rise to a bijection 
between the set of isomorphism classes of simple $\even$-supermodules and the set of isomorphism classes of simple 
$\gg$-supermodules.\footnote{Equivalently, $L$ is a bijection from simple $\even$-modules to simple 
$\gg$-supermodules, up to parity reversal.} $L(M_0)$ is a simple highest weight $\gg$-supermodule iff 
$M_0$ is a simple highest weight $\even$-module, with the same highest weight.
\end{theorem}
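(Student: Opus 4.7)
The plan is to address the four assertions in turn, exploiting the triangular decomposition $\gg = \gg_{-1} \oplus \even \oplus \gg_{+1}$ from \eqref{intgrading} together with Poincar\'e--Birkhoff--Witt. Since $[\gg_{+1},\gg_{+1}] \subset \gg_{+2} = 0$, the sub-superalgebra $\gg_{+1}$ is abelian. PBW applied to this triangular decomposition yields a $\even$-equivariant vector space isomorphism $K(M_0) \cong \bigwedge(\gg_{-1}) \otimes M_0$, equipped with a natural $\ZZ$-grading by $\gg_{-1}$-degree in which $1\otimes M_0$ is the top summand and $\gg_{+1}$ raises degree by one.

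For existence and uniqueness of the simple top, I would observe that $1\otimes M_0$ cyclically generates $K(M_0)$ over $\UE(\gg)$, so any $\gg$-sub-supermodule of $K(M_0)$ meeting $1\otimes M_0$ non-trivially must, by $\even$-simplicity of $M_0$, contain all of $1\otimes M_0$ and hence all of $K(M_0)$. The sum of all proper sub-supermodules therefore avoids $1\otimes M_0$ and is itself proper, producing the unique maximal proper sub-supermodule and the simple top $L(M_0)$. The same reasoning shows that the composition $M_0 \hookrightarrow K(M_0) \twoheadrightarrow L(M_0)$ is injective, and via a Koszul-type argument in the PBW grading (using the pairing $[\cdot,\cdot] : \gg_{+1} \times \gg_{-1} \to \even$) its image is identified intrinsically with $L(M_0)^{\gg_{+1}}$; this gives injectivity of $M_0 \mapsto L(M_0)$.

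The main obstacle is surjectivity: every simple $\gg$-supermodule $L$ must arise as $L(M_0)$ for some simple $\even$-supermodule $M_0$. The strategy is to produce a simple $\even$-sub-supermodule $M_0 \subset L^{\gg_{+1}}$; Frobenius reciprocity for induction from $\even \oplus \gg_{+1}$, namely $\Hom_\gg(K(M_0), L) \cong \Hom_{\even}(M_0, L^{\gg_{+1}})$, then yields a non-zero $\gg$-morphism $K(M_0) \to L$ that is surjective by simplicity of $L$, so $L$ is the simple top of $K(M_0)$. The non-triviality of $L^{\gg_{+1}}$ is the subtle point for infinite-dimensional $L$. I would exploit that each $X \in \gg_{+1}$ satisfies $X^2 = \tfrac 12 [X,X] = 0$ and that $\gg_{+1}$ is concentrated in a single weight of the R-charge $J$ (cf.\ \eqref{inducesgrading}), so that a vector of extremal $J$-weight must lie in $L^{\gg_{+1}}$; the careful infinite-dimensional analysis is carried out in \cite[Theorem 4.1]{chen2021simple}.

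Finally, the highest-weight characterization follows transparently from the embedding $M_0 \hookrightarrow L(M_0)$ as the $\gg_{+1}$-invariants. If $M_0$ is a simple highest weight $\even$-module with highest weight $\Lambda$ and generator $v_\Lambda$, then $v_\Lambda$ is annihilated by the positive nilradical of $\even$ by definition and by $\gg_{+1}$ by construction of $K(M_0)$; since $\nn^+$ is the direct sum of these two summands under the standard positive system \eqref{oddpositive}, $v_\Lambda$ is a $\gg$-highest weight vector of weight $\Lambda$ that generates $L(M_0)$ because $M_0$ generates $K(M_0)$. Conversely, any $\gg$-highest weight vector of $L(M_0)$ is $\gg_{+1}$-invariant, hence lies in the image of $M_0$, where it must be an $\even$-highest weight vector of the same weight generating $M_0$ by $\even$-simplicity.
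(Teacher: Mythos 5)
The paper itself gives no proof of this statement --- it is imported verbatim from \cite[Theorem 4.1]{chen2021simple} --- so the only question is whether your sketch would stand on its own, and it does not quite. The first gap is in the uniqueness of the maximal submodule: from the (correct) observation that every proper sub-supermodule meets $1\otimes M_0$ trivially it does \emph{not} follow that their sum does --- a sum of proper submodules can in general be everything. What rescues the argument is that the $\ZZ$-grading of $K(M_0)\cong\bigwedge(\gg_{-1})\otimes M_0$ is implemented (for $m\neq n$) by a central element of $\even$, which acts on the simple, countable-dimensional $M_0$ by a scalar (Dixmier's version of Schur's lemma); hence every $\gg$-sub-supermodule is graded, its degree-zero piece is a $\even$-submodule of $M_0$, and only then does ``degree-zero part of a sum equals sum of degree-zero parts'' give properness of the sum. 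You never invoke this, and for $m=n$ the grading is not inner in $\sl(n\vert n)$ (as the paper itself notes), so this step genuinely needs extra care there. The same graded-submodule input is used silently in your identification of the image of $M_0$ with $L(M_0)^{\gg_{+1}}$ (the ``Koszul-type argument'').

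The second, more serious issue is surjectivity. Your argument that $L^{\gg_{+1}}\neq 0$ via ``a vector of extremal $J$-weight'' does not apply to an arbitrary simple $\gg$-supermodule: $J$ is central only in $\even$, not in $\gg$, so it need not act semisimply on $L$, and even when it has eigenvectors nothing bounds the eigenvalues. The correct elementary point is that $\UE(\gg_{+1})\cong\bigwedge(\gg_{+1})$ is finite dimensional with nilpotent augmentation ideal $I$, so choosing $0\neq v\in L$ and $k$ maximal with $I^{k}v\neq 0$, any nonzero vector of $I^{k}v$ lies in $L^{\gg_{+1}}$. But even granting this, you still need a \emph{simple} $\even$-submodule of $L^{\gg_{+1}}$ --- a nonzero $\UE(\even)$-module need not contain one --- and in fact the substance of the cited theorem is that $L^{\gg_{+1}}$ is itself simple; deferring ``the careful infinite-dimensional analysis'' to \cite[Theorem 4.1]{chen2021simple} is circular, since that is exactly the statement being proved. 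In short, your outline follows the standard route (Kac induction, Frobenius reciprocity, $\gg_{+1}$-invariants) that the reference takes, and the closing highest-weight characterization is fine once the earlier parts are secured, but the two load-bearing steps --- gradedness of submodules and nontriviality/simplicity of the $\gg_{+1}$-invariants --- are respectively asserted without justification and argued incorrectly or borrowed from the theorem itself.
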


We write $L_0(\Lambda)$ for the simple highest weight $\even$-module of highest weight $\Lambda\in\hh^*$,
which is unique up to isomorphism. We write $K(\Lambda)$ for the Kac supermodule $K(L_0(\Lambda))$,
and $L(\Lambda)$ for its simple quotient $L(L_0(\Lambda))$. The parity of $L(\Lambda)$ will
normally be clear from context, and otherwise is assumed to be \emph{even}.
We emphasize that in general $K(\Lambda)$ is not equal to the Verma supermodule of highest weight 
$\Lambda$, except when $L_0(\Lambda)$ is in fact the Verma module over $\even$. The size of the maximal
proper sub supermodule of $K(\Lambda)$ is encoded in the \emph{atypicality} of the highest weight, 
$\Lambda$, as we describe momentarily. Atypicality is a phenomenon only 
present in the theory of Lie superalgebras, which has no analog in the theory of complex semisimple 
Lie algebras.

In general, let $M$ be a highest weight $\gg$-supermodule with highest weight $\Lambda$. Any element
$z \in \ZC(\gg)$ in the center of the universal enveloping algebra acts on $M$ by a scalar 
$\chi_{\Lambda}(z)$. The map
$\chi_{\Lambda}:\ZC(\gg)\to \CC$ is an algebra homomorphism called the \emph{infinitesimal
character} of $M$. The infinitesimal character can be written explicitly as
\begin{equation}
\label{centralchar}
\chi_{\Lambda}(z):=(\Lambda+\rho)(\text{HC}(z)),
\end{equation}
where $\text{HC}:\ZC(\gg)\to \mathfrak{U}(\hh) \cong \S(\hh)$ is the Harish-Chandra
homomorphism, \emph{i.e.}, the restriction of the projection map
$\mathfrak{U}(\gg)=\mathfrak{U}(\hh)\oplus(\nn^{-}\mathfrak{U}(\gg) + \mathfrak{U}(\gg)\nn^{+})
\to \mathfrak{U}(\hh)$ to $\ZC(\gg)$ composed with the twist $\zeta: \S(\hh)\to \S(\hh)$ defined by 
$\lambda(\zeta(f)):=(\lambda-\rho)(f)$ for all $f\in\S(\hh)$, $\lambda\in\hh^*$. 
The Harish-Chandra homomorphism is an injective ring
homomorphism. To describe its image, we define 
$\S(\hh)^{W} := \{ f\in \S(\hh)\ :\ w (\lambda)(f) =
\lambda(f) \ \text{for all} \ w \in W, \lambda \in \hh^*\}$ and, for any $\lambda\in\hh^*$,
\begin{equation}
\label{Alambda}
A_{\lambda} := \{ \alpha \in \Phi_{1}^{+} : (\lambda+\rho,\alpha) = 0\} \,.
\end{equation}
Then, the image of $\HC$ is \cite{GorelikHC, KacHC}
\begin{equation}
\Im(\HC) = \bigl\{ f \in \S(\hh)^{W} : (\lambda+t\alpha)(f)=\lambda(f) \ 
\text{for all $t\in \CC$, $\lambda\in\hh^*$, and $\alpha \in A_{\lambda-\rho}$}\bigr\}.
\end{equation}
As a consequence, we obtain $\chi_{\Lambda}= \chi_{\Lambda'}$ whenever 
\begin{equation}
\Lambda'=w\biggl(\Lambda+\rho+\sum_{i=1}^{k}t_{i}\alpha_{i}\biggr)-\rho,
\end{equation}
where $w \in W$, $t_{i}\in \CC$, and $\alpha_{1},\dotsc,\alpha_{k}\in A_\Lambda$ are linearly independent 
odd isotropic roots that satisfy $(\Lambda+\rho,\alpha_{i}) = 0$. This leads to the definition of
typicality and atypicality.

\begin{definition}
\label{typatyp}
A weight $\Lambda \in \hh^{\ast}$ is called \emph{typical} if $A_\Lambda=\emptyset$, \emph{i.e.},
$(\Lambda+\rho,\alpha)\neq 0$ for
all $\alpha \in \Phi_{1}^{+}$. Otherwise $\Lambda$ is called \emph{atypical}. The
\emph{degree of atypicality} of $\Lambda$, denoted by $\at(\Lambda)$, is the maximal number of
linearly independent mutually orthogonal positive odd (in particular, isotropic) roots 
$\alpha \in \Phi_{1}^{+}$
such that $(\Lambda + \rho,\alpha) = 0$. In brief, $\at(\Lambda)$ is the dimension of a maximal
isotropic subspace of $\Span_\CC(A_{\Lambda})\subset \hh^{\ast}$. We call a highest weight 
$\gg$-supermodule $M$ with highest weight $\Lambda$ \emph{typical} if $\at(\Lambda) = 0$, and 
otherwise \emph{atypical}.
\end{definition}

We point out that the structure of highest weight supermodules over Lie superalgebras depends more
severely on the ordering of the roots than for ordinary complex Lie algebras. This will have been
illustrated by the example (see Fig.\ \ref{fig:rootsystem}). Still, any change of positive system
can be compensated by an appropriate action on the highest weight to yield an isomorphic 
$\gg$-supermodule. In particular, the degree of atypicality of a weight $\Lambda \in \hh^*$ is 
invariant under both even reflections, eq.\ \eqref{evenreflection} and odd reflections, 
eq.\ \eqref{oddreflection}, while (even the cardinality of) the set $A_\Lambda$ is not. As a 
consequence, the degree of atypicality is independent of the choice of positive root system. 

The \emph{defect} of $\gg$, denoted by $\defect(\gg)$, is the dimension of a maximal isotropic
subspace in the $\RR$-span of $\Phi$. Given $\gg=\slmn$, with $\Phi_\RR\cong\RR^{m,n}$, we have 
$\defect(\gg)=\min(m,n)$. The degree of atypicality satisfies $0\leq \at(\Lambda) \leq \defect(\gg)$ 
for any weight $\Lambda \in \hh^{\ast}$. A weight $\Lambda$ with $\at(\Lambda)=\defect(\gg)$ is
called \emph{maximally atypical}.

The physics literature knows various alternative quantifications of the reducibility of $K(\Lambda)$,
referred to as ``multiplet shortening'', ``BPS-ness'' or ``protectedness''. We will propose translations
of these concepts back into the mathematical language in due course.

\subsection{Unitarizability}
\label{unitarizable}

A \emph{super pre-Hilbert space} is a complex super vector space $\HH=\HH_{0} \oplus 
\mathcal{H}_{1}$ equipped with a positive definite even Hermitian form $\langle\cdot,\cdot\rangle$, that 
is, a complex valued sesquilinear form $\langle\cdot,\cdot\rangle:\HH\times \HH\to \CC$, which is 
conjugate-linear in the \textbf{first} and linear in the second variable, conjugate symmetric, such that 
$\HH_{0}$ and $\HH_{1}$ are mutually orthogonal subspaces, and $\langle v,v\rangle>0$ for all $v\neq 0$. 
The \emph{amendment} of $\langle \cdot,\cdot\rangle$, defined as
\begin{equation}
\psi: \HH\times \HH\to \CC, \qquad \psi(v,w) = i^v\langle v,w\rangle 
:=  \begin{cases}
i\langle v,w\rangle \ &\text{if} \ v,w \in \HH_{1}, \\
\langle v,w \rangle \ &\text{if} \ v,w \in \HH_{0}.
\end{cases}
\end{equation}
is an even \emph{super Hermitian form} on $\HH$, meaning that $\psi(v,w) =(-1)^{vw}\overline{\psi(w,v)}$ for 
all $v,w \in V$, which is \emph{super positive definite}, meaning that $\psi$ is positive definite on $\HH_{0}$ 
and $-i\psi$ is positive definite on $\HH_{1}$. Obviously, $\langle\cdot,\cdot\rangle$ and $\psi$ are
equivalent data, preference for which is dependent on taste and situation. Completion of $\HH$ to a 
$\ZZ/2$-graded Hilbert space is more natural with respect to $\langle\cdot,\cdot\rangle$. Given a 
(possibly only densely defined) endomorphism $T\in\End(\HH)$, the adjoint of $T$ is defined with respect 
to $\langle \cdot,\cdot\rangle$ by the formula
$\langle v, Tw\rangle=\langle T^\dagger v,w\rangle$
while the super-adjoint of $T$ is defined with respect to $\psi$ by
$\psi(v,Tw)=(-1)^{vT}\psi(\sigma(T) v,w)$. Both are involutive and, as before, related by the amendment
\begin{equation}
\label{amendagain}
\sigma(T) = i^T T^\dagger = \begin{cases}
T^{\dagger} \ & \text{if $T$ is even}, \\
iT^{\dagger} \ &\text{if $T$ is odd}.
\end{cases}
\end{equation}
The adjoint $\cdot^{\dagger}$ satisfies $(ST)^{\dagger}=T^{\dagger}S^{\dagger}$, independent of the
parity of $S,T \in \End(\HH)$. The super-adjoint $\sigma(\cdot)$ satisfies $\sigma(ST)=(-1)^{ST}\sigma(T)\sigma(S)$. 
Namely, $\cdot^{\dagger}$ defines a conjugate-linear anti-invo\-lu\-ti\-on on $\End(\HH)$, while $\sigma(\cdot)$ 
defines a conjugate-linear \emph{graded} anti-invo\-lu\-ti\-on. This makes $\sigma$ more natural for the definition of
unitarizability of supermodules over Lie superalgebras with respect to conjugate-linear anti-involutions.
See also sections \ref{sec::realforms} and \ref{sec::basic}.

\begin{definition} 
\label{unitarize}
Let $\omega$ be a conjugate-linear anti-involution of $\gg$, and let $(\rho,M)$ be a $\gg$-supermodule. Then $M$ is 
called $\omega$-\emph{unitarizable} if it is equipped with or admits a positive definite even Hermitian form 
$\langle\cdot,\cdot\rangle$ such that $\rho \circ \omega = \sigma \circ \rho$. 
\end{definition}

For clarity, the definition is formulated in terms of the $\ZZ/2$-graded representation $\rho: \gg \to \End(M)$. 
In what follows, however, we will continue to work with $\gg$-supermodules suppressing the explicit Lie superalgebra 
homomorphism $\rho$ (so as to avoid confusion with the Weyl vector). 
When $\langle\cdot,\cdot\rangle$ is specified or clear from context, we will prefer the symbol 
$\HH$ over $M$. We leave completion to a Hilbert space implicit as needed. Writing the defining condition as 
$\langle v, X w\rangle = i^X \langle \omega(X)v,w\rangle$ for all $v,w\in \HH$, $X\in\gg$, we will refer to 
$\langle\cdot,\cdot\rangle$ also as a contravariant Hermitian form. We will also think of the data as defining a 
``unitary representation'' of the real Lie superalgebra $\gg^\omega\to \gl(\HH)^\sigma$, although we will not be 
concerned with integrability to Lie supergroups. In the context of $\UE(\gg)$-supermodules, we extend $\omega$ 
to a graded anti-involution of $\UE(\gg)$ in the obvious way. When $\omega$ is to be implied from context, we 
just say ``unitarizable''.

\begin{definition}
\label{equivDef}
Two unitarizable $\gg$-supermodules $\HH_{1}$ and $\HH_{2}$ are called \emph{ equivalent} if 
there exists an isomorphism $f: \HH_{1} \to \HH_{2}$ in $\gsmod$ that is 
compatible with the Hermitian forms.
\end{definition}

Note that in the definition of equivalence of unitarizable $\gg$-supermodules, intertwining operators 
are assumed to preserve the grading. This means that in general a unitarizable $\gg$-supermodule is not 
necessarily equivalent to the one obtained by parity change.\footnote{We leave it as an exercise to 
verify that if $\psi$ is an invariant super Hermitian form on $\HH$, then $(-i)^{\Pi \cdot}
\psi(\Pi\cdot,\Pi\cdot)$ is an invariant super Hermitian form on $\Pi\HH$, while the associated 
contravariant Hermitian forms are the same.}

As mentioned in the introduction, constructions of superconformal field theories in theoretical physics are
expected to provide an interesting class of unitary representations of various superconformal algebras, restricted 
by a number of further conditions. In this context, an equivalence class of unitarizable simple $\gg$-supermodules 
is also referred to as a \emph{superconformal multiplet}.

\subsubsection{Implications of unitarity}
We record some basic properties of $\omega$-unitarizable $\gg$-super\-modules. The first result obtains for any 
Lie superalgebra and follows by a standard argumentation.

\begin{proposition} \label{CompletelyReducible} Let $\HH$ be an $\omega$-unitarizable $\gg$-supermodule. 
Then 
\begin{enumerate}
\item[a)] $\HH$ is completely reducible. 
\item[b)] $\HH_{\ev}$ (which we recall is the $\even$-module obtained by restriction and forgetting 
the $\ZZ/2$ grading) is a unitarizable $\even$-module with respect to the real form $\gg_{0}^{\omega}$. 
In particular, $\HH_{\ev}$ is completely reducible as a unitarizable $\even$-module.
\end{enumerate}
\end{proposition}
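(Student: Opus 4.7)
The plan is to prove both parts by the classical orthogonal complement argument, with the only genuinely new ingredients being the bookkeeping of the $\ZZ/2$-grading and the amendment relating ordinary and super adjoints.

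For part a), I would take a sub-supermodule $N\subseteq \HH$ and show that its orthogonal complement
$N^\perp = \{v\in\HH \mid \langle v,w\rangle = 0 \text{ for all } w\in N\}$ is again a sub-supermodule. Since the Hermitian form is even, $\HH_0\perp \HH_1$, and the graded decomposition $N = N_0\oplus N_1$ immediately yields $N^\perp = (N_0^\perp\cap\HH_0)\oplus (N_1^\perp\cap\HH_1)$, so $N^\perp$ is $\ZZ/2$-graded. To check $\gg$-stability, let $v\in N^\perp$, $w\in N$, and $X\in\gg$ homogeneous. Using $T^\dagger = \overline{i^T}\,\sigma(T)$ from \eqref{amendagain}, linearity in the second slot, and the unitarity condition $\sigma\circ \rho = \rho\circ\omega$, I compute
\begin{equation*}
\langle Xv,w\rangle = \langle v, X^\dagger w\rangle = \overline{i^X}\,\langle v, \sigma(X)w\rangle = \overline{i^X}\,\langle v, \omega(X)w\rangle = 0,
\end{equation*}
where the last equality is because $\omega(X)\in\gg$, so $\omega(X)w\in N$, and $v\in N^\perp$. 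Hence $Xv\in N^\perp$. Positive-definiteness of $\langle\cdot,\cdot\rangle$ gives $\HH = N\oplus N^\perp$ (after completion if needed), and iterating this, or a standard Zorn's lemma argument for the infinite-dimensional Hilbert space case, produces a decomposition of $\HH$ into simple sub-supermodules.

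For part b), the point is that anti-involutions are by convention graded (even), so $\omega$ restricts to a conjugate-linear anti-involution $\omega|_{\gg_0}$ of $\gg_0$, with real form $\gg_0^\omega\subseteq \gg^\omega$. For even elements $X$, the amendment \eqref{amendagain} gives $\sigma(X) = X^\dagger$, so the defining unitarity condition reduces to $\rho(\omega(X)) = \rho(X)^\dagger$ for all $X\in\gg_0$. This is exactly the condition that $\HH$, viewed as a (purely complex, ungraded) $\gg_0$-module via $\rho|_{\gg_0}$, is unitarizable with respect to $\omega|_{\gg_0}$ and the same Hermitian form $\langle\cdot,\cdot\rangle$. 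Forgetting the $\ZZ/2$-grading to produce $\HH_{\ev}$ does not affect this statement. Complete reducibility of $\HH_{\ev}$ as a $\gg_0$-module then follows by repeating the orthogonal complement argument of part a), now for ungraded $\gg_0$-submodules (where the grading bookkeeping becomes vacuous and only the identity $X^\dagger = \omega(X)$ for $X\in\gg_0$ is used).

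I do not expect any serious obstacle: the computation of $\langle Xv,w\rangle$ is routine once the amendment is kept track of, and both claims then follow. The only mild subtlety is the distinction between algebraic and Hilbert space complete reducibility in infinite dimensions, which the paper's standing assumptions in section \ref{subsec::assumptions} already arrange by fiat; I would simply note that, in the Hilbert space setting, the argument produces closed complements and an orthogonal Hilbert space direct sum decomposition into simples, whereas in the finite-dimensional case (which covers most concrete examples appearing later) the result is immediate by induction on dimension.
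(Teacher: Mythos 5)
Your argument is correct and is precisely the ``standard argumentation'' the paper invokes without writing out: contravariance of the form (via the amendment relating $\dagger$ and $\sigma$) shows the orthogonal complement of a graded submodule is again a graded submodule, and for b) the even case of \eqref{amendagain} reduces the unitarity condition on $\even$ to $\rho(\omega(X))=\rho(X)^\dagger$, so the same complement argument applies to $\HH_{\ev}$. Your closing caveat about algebraic versus Hilbert-space complete reducibility in infinite dimensions is the only delicate point, and the paper disposes of it exactly as you do, via the discrete decomposability built into the standing assumptions of section \ref{subsec::assumptions}.
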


The second result, due to Neeb and Salmasian, concerns specifically $\gg=\slmn$ and justifies our
focus on the anti-involutions $\omega_-$ and $\omega_+$ given in eq.\ \eqref{involutions}.

\begin{theorem}[\cite{furutsu1991classification,neeb2011lie}] 
\label{involutionsOnly}
The special linear Lie superalgebra $\gg=\slmn$ admits non-trivial $\omega$-unitarizable supermodules 
if and only if the conjugate-linear anti-involution $\omega$ is associated to the real forms 
$\mathfrak{su}(p,q\vert n,0)$ or $\mathfrak{su}(p,q\vert 0,n)$ with $p+q=m$. Any simple such 
$\omega$-unit\-arizable supermodule is either a highest weight $\gg$-supermodule or a lowest weight 
$\gg$-supermodule.
\end{theorem}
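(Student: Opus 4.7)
The theorem packs three claims, which I would address after preliminary bookkeeping. First I would classify the conjugate-linear anti-involutions $\omega$ of $\gg=\slmn$ up to conjugation. Restriction of $\omega$ to $\even\cong\sl(m)\oplus\sl(n)\oplus\CC E_{m\vert n}$ is independently a real form on each simple factor, yielding $\su(p,q\vert r,s)$ (with a sub-choice $\omega_\pm$), the split form $\sl(m\vert n;\RR)$, and quaternionic variants when the ranks permit; this is standard, refined by the constraint that $\omega$ preserve the super-bracket.

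For the ``if'' direction, I would exhibit non-trivial $\omega_-$-unitarizable supermodules for $\supqn$ explicitly, deferring $\omega_+$ to parity reversal. Take a simple unitarizable highest weight $\gg_0^\omega$-module $M_0$, which exists by the classical Harish-Chandra theory for the Hermitian symmetric pair $\su(p,q)$ paired with any finite-dimensional unitary representation of the compact factor $\su(n)\oplus\u(1)$. Form the Kac supermodule $K(M_0)$ via \eqref{kacinduct}. The compatibility $\omega_-(\gg_{\pm 1})=\gg_{\mp 1}$ with the $\ZZ$-grading \eqref{intgrading} allows sesquilinear extension of the Hermitian form on $M_0$ to $K(M_0)\cong\UE(\gg_{-1})\otimes M_0$. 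Provided the highest weight of $M_0$ lies inside the region of unitarity (positivity of the Kac--Shapovalov form, to be developed later in the paper), the radical is a $\gg$-subsupermodule and quotienting yields a positive definite Hermitian form on the simple top $L(M_0)$ of Theorem \ref{thm::simple_top}. The oscillator supermodule of Section \ref{sec::SQM} is the archetype of this construction.

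The ``only if'' direction rests on the following positivity identity. For $X\in\odd\cap\gg^\omega$ (odd, with $\omega(X)=-X$), the defining relation $\rho\circ\omega=\sigma\circ\rho$ combined with \eqref{amendagain} gives $\rho(X)^\dagger=i\rho(X)$, whence
\begin{equation}
\label{keypos}
i\rho([X,X])=2i\rho(X)^2=2\rho(X)^\dagger\rho(X)\geq 0.
\end{equation}
Thus $i[X,X]\in i\gg_0^\omega$ must act as a non-negative operator on every $\omega$-unitarizable supermodule. For each $\omega$ not associated to $\su(p,q\vert n,0)$ or $\su(p,q\vert 0,n)$, I would exhibit odd elements $X,X'\in\gg^\omega$ with $i[X,X]=-i[X',X']\neq 0$ in $\gg_0^\omega$; joint non-negativity then forces $\rho([X,X])=0$ on $\HH$, whence $\rho(X)=0$. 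Ranging over a spanning family of such $X$'s, $\rho$ vanishes on $\odd$, so $\HH$ is a trivial extension of an $\even$-module. For $\su(p,q\vert r,s)$ with $\min(r,s)\geq 1$, the pair $(X,X')$ comes from the four ``quadrants'' of the bi-signature decomposition of $\odd$; the split and quaternionic real forms are further excluded because their even parts lack the Hermitian symmetric structure required for a Kac-compatible unitarizable highest weight module.

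Finally, let $\HH$ be a simple $\omega_-$-unitarizable supermodule. By Proposition \ref{CompletelyReducible}(b), $\HH_{\ev}$ is a $\kk$-semisimple unitarizable $\gg_0^\omega$-module. Applied to supercharges whose anti-commutators span the ray through $H$, identity \eqref{keypos} shows that the conformal Hamiltonian $H$ of \eqref{ietoset} acts non-negatively on $\HH$; $\kk$-finiteness forces its spectrum to be discrete and bounded below. A highest $\even$-weight vector $v$ in the minimal-$H$ eigenspace is annihilated by $\nn^+\cap\even$ and, because $\gg_{+1}$ strictly lowers $H$, also by $\gg_{+1}=\nn^+\cap\odd$; hence it is annihilated by all of $\nn^+$, and simplicity forces $\HH=\UE(\gg)v$. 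The $\omega_+$ case is dual. The main obstacle is the sign-sensitive enumeration of the opposite-sign pairs $(X,X')$ in the non-existence step, which must be verified exhaustively across all non-$\omega_\pm$ candidates; this technical core was carried out in \cite{furutsu1991classification,neeb2011lie}, cited in the theorem.
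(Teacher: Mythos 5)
First, note that the paper does not prove this statement at all: it is quoted from Furutsu--Nishiyama and Neeb--Salmasian, so there is no internal proof to compare with. Your plan is essentially the standard route of those references (positivity of $i[X,X]$ for odd real $X$ to kill the wrong real forms, positive energy to force highest/lowest weight), and your positivity identity $i\rho([X,X])=2\rho(X)^\dagger\rho(X)\ge 0$ is correct in the paper's conventions ($\sigma(T)=iT^\dagger$ for odd $T$, $\rho\circ\omega=\sigma\circ\rho$). As a proof, however, the sketch has two genuine gaps.

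The most concrete one is in the last step. With the normalization \eqref{ietoset} and the standard positive system \eqref{oddpositive}, it is \emph{false} that $\gg_{+1}$ strictly lowers $H$: the odd positive roots $\epsilon_i-\delta_k$ have $H$-eigenvalue $2q/m>0$ for $i\le p$ and $-2p/m<0$ for $i>p$, and the paper's own $\su(1,1\vert 1)$ example (Fig.\ \ref{fig:rootsystem}) shows $Q$ lowering and $S$ raising $\Delta$, both lying in $\gg_{+1}$. Consequently an $H$-extremal, $\kk$-highest vector is annihilated only by \emph{part} of $\nn^+$, and your conclusion ``annihilated by all of $\nn^+$'' does not follow; one must either work with a non-standard Borel adapted to the energy grading and then transfer back by odd reflections, or run the classical positive-energy argument (boundedness of the $H$-spectrum from the cone of positive operators $-H\pm$R-charge combinations, existence of an extremal $K$-type, etc.). Relatedly, ``$\kk$-finiteness forces the spectrum of $H$ to be discrete and bounded below'' is asserted, not derived: a general $\omega$-unitarizable supermodule is not assumed $\kk$-finite, and discreteness/attainment of the extremal eigenvalue is exactly the point that needs the positive-energy machinery (or the paper's Proposition \ref{CompletelyReducible} together with Harish-Chandra theory for $\su(p,q)$). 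Second, your exclusion of the split and quaternionic real forms is not an argument: appealing to the absence of a Hermitian symmetric structure only rules out unitarizable \emph{highest weight} modules, while the theorem requires ruling out \emph{all} non-trivial unitarizable supermodules for those $\omega$. The correct mechanism is the same positivity/cone argument you use for $\su(p,q\vert r,s)$ with $r,s\ge 1$: one must show that for these forms the elements $i[X,X]$, $X\in\odd\cap\gg^\omega$, do not lie in a pointed invariant cone (equivalently, exhibit cancelling pairs spanning $\odd\cap\gg^\omega$), which forces $\rho$ to vanish on $\odd$ and hence, since $[\odd,\odd]=\even$ for $\slmn$, on all of $\gg$. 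This bookkeeping, which you defer to the cited references, is the actual content of the ``only if'' direction; as written the plan would not stand on its own.
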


Here, the notion of lowest/highest weight supermodule is defined with
respect to a positive system that is compatible with the $\su(p,q)\oplus \su(n)\oplus\uu(1)$ 
real subalgebra of $\even$ selected by $\omega$, such as the one we have fixed in section 
\ref{sec::structure_theory}. The physical significance of Theorem \ref{involutionsOnly} is 
that ``supersymmetry implies positive energy''. In the bosonic context, this has to be assumed
separately, \emph{cf.} \cite{mack1977all}.

We consider in the following only $\omega$-unitarizable \emph{highest} weight $\gg$-supermodules. The investigation 
of unitarizable lowest weight $\gg$-supermodules is similar, although it might require exchange of
$\omega_+$ with $\omega_-$. As remarked before, the real 
forms $\mathfrak{su}(p,q\vert 0,n)$ and $\mathfrak{su}(q,p\vert n,0)$ are isomorphic as real Lie 
superalgebras, so we may restrict our considerations to $\su(p,q\vert n,0)$, which is associated to 
$\omega_-$ from eq.~\eqref{involutions} and has unitarizable highest weight $\gg$-supermodules with 
respect to our standard choice of the positive system \cite[Theorem 5.2]{jakobsen1994full}. The 
$\omega$-unitarizable $\gg$-supermodules form a subcategory of $\gsmod$, where the homomorphisms are 
$\gg$-supermodule homomorphisms respecting the Hermitian forms. We will denote it as $\ugsmod$ when needed.
By Prop.\ \ref{CompletelyReducible} and
Thm.\ \ref{involutionsOnly}, the unitarizable highest weight $\gg$-supermodules are the simple 
objects in this category. By Thm.\ \ref{thm::simple_top}, they can be realized as simple tops of Kac 
supermodules. So the main question is for which $\Lambda\in \hh^*$ the simple top $L(\Lambda)$ of
the Kac supermodule $K(\Lambda)$ is unitarizable. An immediate observation, which follows from the
fact that $\omega$ preserves the Cartan subalgebra and exchanges positive with negative roots, is that
$\Lambda$ must be real, \emph{i.e.}, $\Lambda\in\hh^*_\RR$ in the standard notation. Before describing the 
rest of the answer, following \cite{jakobsen1994full}, we explain the Hermitian form.

\subsubsection{Kac--Shapovalov form} 
\label{subsubsec::KS} 

The basic point, which is familiar in physics since the early days of quantum mechanics \cite{Fock}, is 
that the Kac supermodule $K(\Lambda)$ has a unique (up to a real scalar) contravariant Hermitian form, 
the \emph{Kac--Shapovalov} form. This form induces a non-degenerate form on $L(\Lambda)$. The form on 
$L(\Lambda)$ is in general not positive definite, but if it is, then $L(\Lambda)$ is unitarizable. Namely, 
the classification of unitarizable simple highest weight $\gg$-supermodules reduces to the determination 
of all $\Lambda\in\hh^*_\RR$ with the property that the Kac--Shapovalov form on $L(\Lambda)$ is positive 
definite.

Concretely, the Kac--Shapovalov form can be defined with the help of the (Harish-Chandra) projection 
$\pr: \UE(\gg) \to \UE(\hh) \cong \S(\hh)$ on the first summand of the PBW decomposition 
$\UE(\gg) = \UE(\hh) \oplus (\nn^{-}\UE(\gg) + \UE(\gg)\nn^{+})$. We also use that $\omega$ preserves
this decomposition and provides a graded anti-isomorphism between $\UE(\nn^-)$ to $\UE(\nn^+)$. Then, 
the infinitesimal character $\chi_\Lambda$ that appeared in eq.~\eqref{centralchar} 
induces an even sesquilinear form on $\UE(\gg)$ via
\begin{equation}
(X,Y)_{\Lambda} := \chi_{\Lambda}(\pr(\omega(X)Y)) \,.
\end{equation}
This is graded conjugate symmetric, in the sense that $(X,Y)_{\Lambda} = (-1)^{XY}\overline{(Y,X)}_{\Lambda}$,
as a consequence of $\chi_\Lambda(Z) = \overline{\chi_\Lambda(\omega(Z))}$. ($\Lambda$ is real.)
On the (even) Verma supermodule $V(\Lambda)\cong \UE(\nn^-)$, we define the super 
Hermitian form $\psi$ via
\begin{equation}
\psi(X v_{\Lambda}, Y v_{\Lambda} ) = (X,Y)_{\Lambda} \,, \qquad X,Y\in \UE(\nn^-)
\end{equation}
The \emph{Kac--Shapovalov form} is the associated Hermitian form $\langle v,w 
\rangle= (-i)^v\psi(v,w)$. This is contravariant by construction, and descends to the Kac supermodule $K(\Lambda)$
(of either parity) by standard considerations. The following proposition records some of its
basic properties.

\begin{proposition} 
\label{PropertiesHermitianForm} 
Let $K(\Lambda)$ be the Kac supermodule with highest weight $\Lambda$, and let $\langle\cdot,\cdot\rangle$ be 
the Kac--Shapovalov form on $K(\Lambda)$. Then
\begin{enumerate}
\item Up to multiplication by a real scalar, $\langle\cdot,\cdot\rangle$ is the unique
contravariant form on $K(\Lambda)$. It is normalized to $\langle v_{\Lambda},v_{\Lambda}\rangle =1$.
\item Weight spaces $K(\Lambda)^{\lambda},K(\Lambda)^{\mu}$ of $K(\Lambda)$ with
$\lambda\neq \mu$ are orthogonal to each other, $\langle K(\Lambda)^{\lambda},K(\Lambda)^{\mu}\rangle=0$. 
\item The radical of $\langle\cdot,\cdot\rangle$ coincides with the maximal proper sub supermodule of 
$K(\Lambda)$. In particular, $\langle\cdot,\cdot\rangle$ induces a non-degenerate form on $L(\Lambda)$.
\end{enumerate}
\end{proposition}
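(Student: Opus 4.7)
My plan is to establish the three items in the order (2), (1), (3), each building on the previous. The principal tools are contravariance, the PBW decomposition $\UE(\gg) = \UE(\hh)\oplus(\nn^-\UE(\gg)+\UE(\gg)\nn^+)$, the facts that $\omega$ preserves $\hh$ and swaps $\nn^\pm$, and the observation (forced by unitarizability) that the highest weight $\Lambda$, and consequently all weights of $K(\Lambda)$, are real on $\hh_\RR$.

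For (2), I first note that $\omega(H)=H$ for $H\in\hh_\RR$, so contravariance (with $i^H=1$ for even $H$) makes $H$ act as a self-adjoint operator. Then, for $v\in K(\Lambda)^\lambda$ and $w\in K(\Lambda)^\mu$,
\[
\mu(H)\langle v,w\rangle = \langle v, Hw\rangle = \langle Hv,w\rangle = \overline{\lambda(H)}\langle v,w\rangle = \lambda(H)\langle v,w\rangle,
\]
using reality of $\lambda$ on $\hh_\RR$. Separating $H$ with $\lambda(H)\neq\mu(H)$ gives orthogonality. For (1), the same argument shows any contravariant form is weight-orthogonal and hence determined by its restriction to individual weight spaces. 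Since $v_\Lambda$ generates $K(\Lambda)$ (Lemma \ref{lemm::properties_HWM}), arbitrary weight vectors can be written as $Xv_\Lambda, Yv_\Lambda$ with $X,Y\in\UE(\nn^-)$, and iterated contravariance reduces $\langle Xv_\Lambda,Yv_\Lambda\rangle$ to a scalar multiple of $\langle v_\Lambda, Z\,v_\Lambda\rangle$ for some $Z\in\UE(\gg)$. Projecting $Z$ along PBW, the $\UE(\gg)\nn^+$-summand annihilates $v_\Lambda$ on the right, while $\nn^-\UE(\gg)\,v_\Lambda$ is supported in weights strictly below $\Lambda$, hence orthogonal to $v_\Lambda$ by (2). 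The form is therefore fixed by the scalar $\langle v_\Lambda,v_\Lambda\rangle$, which conjugate symmetry forces to be real; the normalization $\langle v_\Lambda,v_\Lambda\rangle=1$ pins it down uniquely.

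For (3), let $R$ denote the radical. Contravariance shows $R$ is a sub supermodule: for $v\in R$, $w\in K(\Lambda)$, $X\in\gg$, the pairing $\langle Xv,w\rangle$ equals, up to a phase, $\langle v,\omega(X)w\rangle=0$. It is proper since $v_\Lambda\notin R$. Conversely, for any proper sub supermodule $N\subset K(\Lambda)$, $v_\Lambda\notin N$ forces $N^\Lambda=0$ (the weight $\Lambda$ space being one-dimensional), and weight orthogonality from (2) gives $N\perp v_\Lambda$. For arbitrary $w=Xv_\Lambda\in K(\Lambda)$ and $v\in N$, contravariance rewrites $\langle v,w\rangle$ as a scalar multiple of $\langle\omega(X)v,v_\Lambda\rangle$, which vanishes since $\omega(X)v\in N$. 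Hence $N\subset R$, so $R$ coincides with the unique maximal proper sub supermodule, and the induced form on $L(\Lambda)$ is non-degenerate by construction. The main technical nuisance will be tracking the phase factors $i^X$ through the iterated contravariance manipulations---in particular, verifying the reality of the scalar in (1) and compatibility with graded sign rules throughout---but none of the steps appears to involve genuinely novel difficulty.
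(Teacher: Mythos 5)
Your proof is correct, and it is the standard Shapovalov-type argument; the paper itself states this proposition without proof (it records these as basic properties following the construction of the form), so there is nothing to contrast at the level of strategy. Each step checks out: weight orthogonality from self-adjointness of $\hh_\RR$ (using that $\omega$ fixes real diagonal matrices and that $\Lambda$, hence every weight of $K(\Lambda)$, is real in the paper's setting), uniqueness by pushing everything onto $v_\Lambda$ via $K(\Lambda)=\UE(\nn^-)v_\Lambda$ and the PBW projection, and the identification of the radical with the unique maximal proper sub supermodule by the two inclusions you give. One small remark: weight orthogonality (and hence (1)) can also be read off directly from the definition $(X,Y)_\Lambda=\chi_\Lambda(\pr(\omega(X)Y))$, since the Harish--Chandra projection $\pr$ is $\hh$-equivariant and kills all nonzero $\ad\hh$-weight components; this route does not invoke reality of $\Lambda$ at all, whereas your self-adjointness argument needs $\Lambda\in\hh^*_\RR$ --- harmless here, since the paper fixes $\Lambda$ real in this context, but worth noting. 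The phase bookkeeping you defer is indeed only bookkeeping: the factors $i^X$ depend only on parities, so they are common to all contravariant forms and do not affect the uniqueness or radical arguments.
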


Thus, a Kac supermodule is simple if and only if the radical of $\langle\cdot,\cdot\rangle$ is trivial. The
size of the radical is related to the degree of atypicality of $K(\Lambda)$ by construction of the 
Kac--Shapovalov form. This is essentially a version of Kac's criterion (\emph{cf.}\ 
\cite[Theorem 4.12]{chen2021simple}), according to which a highest weight Kac supermodule $K(\Lambda)$ 
is simple if and only if $\Lambda$ is typical. Echoing the remarks after Lemma \ref{lemm::properties_HWM}, 
the essence of Kac induction 
is that it elucidates the structure of the radical in terms of the constituents in the decomposition of
$K(\Lambda)$ as a $\even$-module. This decomposition will be summarized in subsection \ref{subsec::Dirac}, 
following \cite{SchmidtDirac}. In preparation, we describe the subset of the space $\hh^*_\RR\subset \RR^{m,n}$ 
of real weights that correspond to unitarizable highest weight $\even$-modules. 

\subsubsection{Parameterization of weight space} 

Indeed, it follows from the definition of Kac induction and Proposition \ref{CompletelyReducible} that a necessary 
condition for $L(\Lambda)$ to be unitarizable as a $\gg$-supermodule is that $L_0(\Lambda)$ be unitarizable as a 
$\even$-module. This imposes a classical sequence of standard conditions on the highest weight, which we recall 
is parameterized in terms of the standard coordinates on $\dd_\RR^*$ as
\begin{equation}
\label{standardCoordinates}
\Lambda = (\lambda^1,\ldots,\lambda^m\vert\mu^1,\ldots,\mu^n),
\end{equation}
modulo shifts by $(1,\ldots,1\vert {-}1,\ldots,-1)$ (see page \pageref{allroots}).
First, we consider the restriction to the maximal compact subalgebra $\kk$ in eq.\ \eqref{maxcompact}. If 
$L_0(\Lambda)$ is unitarizable as a $\even$-module, then as a $\kk^\CC$-module it is semisimple with 
finite multiplicities. In particular, $\Lambda$ is the highest weight of a unitarizable simple 
(hence finite-dimensional) $\kk^\CC$-module, which appears with multiplicity one. Namely, $\Lambda$ must be 
integral and dominant with respect to the positive system induced from $\gg$. On the simple $\kk^\CC$-roots
this means
\begin{equation}
\label{dominance}
\begin{split}
(\Lambda,\epsilon_i-\epsilon_{i+1}) = 
\lambda^i-\lambda^{i+1} & \in \ZZ_{\ge 0} \quad \text{for $i=1,2,\ldots,p-1$}
\\
(\Lambda,\epsilon_j-\epsilon_{j+1}) =
\lambda^j-\lambda^{j+1} & \in \ZZ_{\ge 0} \quad \text{for $j=p+1,\ldots,m-1$}
\\
-(\Lambda,\delta_k-\delta_{k+1}) = 
\mu^k-\mu^{k+1} & \in \ZZ_{\ge 0} \quad \text{for $k=1,\ldots,n-1$}
\end{split}
\end{equation}
Second, for the extension to a $\even$-module to be unitarizable, we require that the non-compact
roots belonging to $\su(p,q)$ be positive. This imposes the constraints
\begin{equation}
\label{second}
- (\Lambda,\epsilon_i-\epsilon_j) = \lambda^j - \lambda^i \ge 0
\quad
\text{for $i=1,\ldots,p-1$ and $j=p+1,\ldots,m$}
\end{equation}
Taken together, we have the inequalities
\begin{equation}
\label{inequalities}
\lambda^{p+1}\ge \cdots \ge \lambda^m \ge \lambda^1 \cdots \ge\lambda^p
\,,
\qquad
\mu^1\ge \cdots \ge \mu^n
\end{equation}
and the differences in the three chains have to be integral. Note that at this stage there is no independent 
constraint on the weights of the two abelian factors in $\kk^\CC$, and no relation between the $\lambda$'s 
and $\mu$'s. Following \cite{jakobsen1994full}, we parameterize the solution to these constraints by writing
\begin{multline}
\label{jakobsenpara}
\Lambda = (0,a_{2},\ldots,a_{m-1},0\vert b_{1},\ldots,b_{n-1},0) + \frac{\uplambda}{2} 
(1,\ldots,1,-1,\ldots,-1\vert 0,\ldots,0) \\ + \frac{\upalpha}{2}(1,\ldots,1\vert 1,\ldots,1),
\end{multline}
with integers $a_i$, $a_j$ satisfying $a_{p+1}\ge \cdots a_{m-1} \ge 0 \ge a_2 \ge \cdots \ge a_p$, integers 
$b_k$ satisfying $b_1\ge \cdots\ge b_{n-1}$ and real numbers $\upalpha$ and $\uplambda$, the second of which 
is non-positive. (To achieve this parameterization, we exploit the shift-invariance to enforce 
$\lambda^1 + \lambda^m = 2\mu^n=: \upalpha$, and define $\uplambda := \lambda^1-\lambda^m$. The 
constraints on the $a$'s and $b$'s then follow from above.)

Finally, in lieu of integrality for the non-compact roots, the requirement that the Kac-Shapovalov form be 
positive semi-definite on the entire Verma supermodule is expressed in terms of polynomials
(``Kac-Shapovalov determinants'') that measure the norm of a certain finite set of vectors in $\UE(\nn^-)v_\Lambda$. 
One finds \cite{enright1983classification, jakobsen1994full} that for fixed $a_i$'s and $b_k$'s satisfying 
the above constraints, the weight $\Lambda$ in \eqref{jakobsenpara} is the highest weight of a unitarizable 
simple highest weight $\even$-module iff
\begin{multline}
\label{thisset}
\uplambda \in \bigl(-\infty,-m+\max(i_0,j_0) + 1 \bigr) \\ \cup 
\bigl\{-m+\max(i_0,j_0) + 1, -m + \max(i_0,j_0) + 2  ,\ldots,-m+ i_0+j_0\bigr\},
\end{multline}
where $i_{0}$ is the largest index for which $a_{i}=0$, and $j_{0}$ is the smallest index for which 
$a_{m-j}\neq 0$ (if $a_{p+1}=0$ then $j_{0}=q$). The values $i_0$ and $j_0$ are part of the 
$\Phi_{c}^{+}$-dominance, and can be deduced from the \emph{length} of the following two Young diagrams:
\begin{align} \label{eq::Young_diagrams}
\begin{split}
Y_{1}(\uplambda) &:= (\lambda^{1} - \lambda^{p}, \ldots, \lambda^{1} - \lambda^{2}, 0), \\
Y_{2}(\uplambda) &:= (\lambda^{p+1} - \lambda^{m}, \ldots, \lambda^{m-1} - \lambda^{m}, 0).
\end{split}
\end{align}
Indeed, if $\len_{i}(\uplambda) := \operatorname{length}(Y_{1}(\uplambda))$, we have 
$i_0 = \len_1(\uplambda)$ and $j_0 = m - \len_2(\uplambda)$.

The set in \eqref{thisset} is geometrically a half line 
together with a finite set of discrete points that correspond to the zeros of those polynomials. We will
refer to this set, crossed with the real line for $\upalpha$, as the \emph{set of $\even$-unitarity for fixed 
spin and R-symmetry quantum numbers} and denote it as $\Gamma_0^{(a,b)}$. The full set of $\even$-unitarity 
$\Gamma_{0}$ is then defined as the union of all $\Gamma_{0}^{(a,b)}$, which is geometrically a collection 
of half-spaces for fixed spin and R-symmetry quantum numbers, together with some lines. 

The terminology is explained from the usage in the physics literature. In the cases of interest, the Dynkin 
labels of $\Lambda$ with respect to $\su(p)\oplus \su(q)$, which are $a_{i}-a_{i+1}$ for $i=1,\ldots,p-1$,
and $a_{j}-a_{j+1}$ for $j=1,\ldots m-1$, are referred to as \emph{spin quantum numbers}, while those for
$\su(n)$, which are $b_{k}-b_{k+1}$ for $k=1,\ldots, n$ are referred to as \emph{R-symmetry quantum numbers}.
The conformal dimension and superconformal R-charge, which we defined in \eqref{inducesgrading} and 
\eqref{ietoset}, respectively, are given by
\begin{equation}
\label{complicated}
\begin{split}
\Delta = \Lambda(H) &= \frac{2qp}{m} \uplambda + \frac{2q}{m} (a_2+\cdots+a_p) - \frac{2p}{m} (a_{p+1}+\cdots+a_{m-1}) 
\\
r = \Lambda(J) &= 
\begin{cases} 
\begin{array}{l} \frac{mn}{n-m}\upalpha + \frac{n}{n-m} \frac{p-q}{2}\uplambda 
+ \frac{1}{n-m} \bigl(n(a_2+\cdots+a_{m-1}) \\
\qquad\qquad\qquad\qquad\qquad\qquad+ m(b_1+\cdots+b_{n-1})\bigr)
\end{array}
& \text{if $m\neq n$}
\\
n\upalpha + \frac{p-q}2\uplambda + a_2+\cdots+a_{n-1}+b_1+\cdots+b_{n-1}
& \text{if $m=n$} 
\end{cases}
\end{split}
\end{equation}
One of the advantages of the $(\Delta,r)$ parameterization is that the passage to $\psl(n\vert n)$ is accomplished 
simply by imposing $r=0$. This restriction is known in the physics literature as \emph{decoupling of superconformal 
R-charge}. For ease of comparison with \cite{jakobsen1994full}, we will however use Jakobsen's parameterization
\eqref{jakobsenpara}, and not restrict $\upalpha$, even when $m=n$.

\section{Decomposition and Fragmentation}
\label{sec::redrecomb}

In general, unitarizability of the $\even$-module $L_0(\Lambda)$ is not sufficient for $L(\Lambda)$ to be
unitarizable as a $\gg$-supermodule. In fact, even if all constituents of $L(\Lambda)_{\ev}$ are unitarizable 
$\even$-modules, $L(\Lambda)$ might still not be unitarizable as a $\gg$-supermodule. This begs for a concise
description of the \emph{set of $\gg$-unitarity} in weight space. What is more, even if $L(\Lambda)$ is unitarizable, 
the Kac supermodule $K(\Lambda)$ is not necessarily composed solely from unitarizable $\even$-constituents. 
On the other hand, any unitarizable $\gg$-super\-module \emph{decomposes completely} in unitarizable $\even$-modules, 
and it is of interest to describe which ones occur. As another consequence, when $L(\Lambda)$ is unitarizable, but 
atypical, the composition factors of the Jordan--H\"older series of $K(\Lambda)$ as a $\gg$-supermodule are 
all highest weight supermodules, but not necessarily unitarizable. This plays a central r\^ole for the
phenomenon of \emph{fragmentation}, and hence the continuity of our indices, and is therefore important to 
describe as precisely as possible.

\subsection{Dirac operator and decomposition under \texorpdfstring{$\even$}{g0}}
\label{subsec::Dirac}

The analysis of the various issues mentioned above is expedited with the help of the algebraic Dirac operator 
for $\mathfrak{g}$. This operator was introduced in \cite{huang2007dirac,huang2005dirac}. Its use for the problem 
at hand is explained in \cite{SchmidtDirac}, while its generalization is discussed in \cite{SSchmidt_Cubic_Dirac}. 
We will here only highlight the main ideas.

For additional clarity, let us denote the supersymmetric invariant non-degenerate bilinear form $(\cdot,\cdot)$ on 
$\gg$ defined in \eqref{killing} by $\bil(\cdot,\cdot)$, and fix two complementary Lagrangian subspaces of $\odd$ 
with bases $\partial_{i}$ and $x_{i}$, for $1 \leq i \leq mn$, such that $\bil(\partial_{i},x_{j}) = - 
\bil(x_j,\partial_i) = \frac{1}{2} \delta_{ij}$. Letting $T(\odd)$ be the \emph{tensor algebra} 
over the vector space $\odd$ (placed in even degree), the \emph{Weyl algebra} is the quotient 
$\mathscr{W}(\odd) = T(\odd)/I$, where $I$ is the two-sided ideal generated by elements 
$v \otimes w - w \otimes v - 2\bil(v,w)$ for all $v, w \in \odd$. The Weyl algebra can be identified with 
the algebra of differential operators with polynomial coefficients in the variables $x_{1},\dotsc,x_{mn}$,
provided we identify \(\partial_{i}\) with the partial derivative \(\partial/\partial x_{i}\) for all 
$i = 1,\dotsc,mn$. 

With the help of the commutators \([x_{i},x_{j}] = 0\), \([\partial_{i},\partial_{j}] = 0\), and 
\([\partial_{i},x_{j}] = \delta_{ij}\) for all \(1 \leq i, j \leq mn\), one verifies that the 
Lie algebra $\even$ is a subalgebra of $\Weyl$, with Lie algebra morphism $\alpha: \even \to \Weyl$ 
given explicitly by \cite[Equation 2]{xiao2015dirac}:
\begin{multline}
\alpha(X)= \sum_{k,j=1}^{mn}(B(X,[\partial_{k},\partial_{j}])x_{k}x_{j}+B(X,[x_{k},x_{j}])\partial_{k}\partial_{j}) 
\\ -\sum_{k,j=1}^{mn}2\bil(X,[x_{k},\partial_{j}])x_{j}\partial_{k}-\sum_{l=1}^{mn}\bil(X,[\partial_{l},x_{l}]).
\end{multline}
We let $\Omega_\gg\in\UE(\gg)$ denote the quadratic Casimir operator of $\gg$, and $\Omega_\even\in \UE(\gg)\otimes
\Weyl$ the image of the quadratic Casimir of $\even$ under the diagonal embedding induced by
\begin{equation}
\even \to \mathfrak{U}(\gg) \otimes \mathscr{W}(\odd), \qquad X \mapsto X \otimes 1 + 1 \otimes \alpha(X).
\end{equation}
The \textit{Dirac operator} $\Dirac$ for $\gg$ is defined to be the odd element
\begin{equation}
\mathrm{D}=2\sum_{i=1}^{mn}(\partial_{i}\otimes x_{i}-x_{i}\otimes \partial_{i}) \in 
\UE(\gg)\otimes \mathscr{W}(\odd).
\end{equation}
It is independent of the choice of the basis of $\odd$ and is $\gg_0$-invariant under the $\even$-action on 
$\mathfrak{U}(\gg) \otimes \mathscr{W}(\odd)$ induced by the adjoint action on both factors 
\cite[Lemma 10.2.1]{huang2007dirac}, \emph{i.e.}, $[\even, \Dirac] = 0$. Analogously to reductive Lie algebras,
the Dirac operator has a nice square.

\begin{proposition}[{\cite[Proposition 10.2.2]{huang2007dirac}}] \label{SquareDirac} 
The Dirac operator $\Dirac\in \mathfrak{U}(\gg)\otimes \mathscr{W}(\odd)$ satisfies
\[
\Dirac^{2}=-\Omega_{\gg}\otimes 1+\Omega_{\even}-C,
\]
where $C$ is a constant that equals $1/8$ of the trace of $\Omega_{\even}$ on $\odd$.
\end{proposition}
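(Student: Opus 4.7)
The plan is to verify the identity by direct expansion, guided by the observation that both sides are even and $\even$-invariant elements of $\UE(\gg) \otimes \Weyl$. On the left, $\Dirac^2 = \onehalf [\Dirac, \Dirac]$ is even because $\Dirac$ is odd, and it commutes with the diagonal embedding of $\even$ because $[\even, \Dirac] = 0$. On the right, $\Omega_\gg$ is central in $\UE(\gg)$ and $\Omega_\even$ is the Casimir of $\even$ under the diagonal embedding, hence also $\even$-invariant. Matching will therefore take place inside the $\even$-invariants, which is a manageable space.

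First I would expand
\begin{equation*}
\Dirac^2 = 4\sum_{i,j}\bigl(\partial_i\partial_j \otimes x_ix_j + x_ix_j \otimes \partial_i\partial_j - \partial_ix_j \otimes x_i\partial_j - x_i\partial_j \otimes \partial_ix_j\bigr),
\end{equation*}
and reduce each family of terms using the two distinct sets of relations at play: in $\Weyl$ one has $[x_i,x_j]=[\partial_i,\partial_j]=0$ and $[\partial_i,x_j]=\delta_{ij}$, while in $\UE(\gg)$ the super-brackets $\partial_i\partial_j + \partial_j\partial_i = [\partial_i,\partial_j]_\gg$, $\;x_ix_j + x_jx_i = [x_i,x_j]_\gg$, and $\partial_ix_j + x_j\partial_i = [\partial_i,x_j]_\gg$ all produce elements of $\even$. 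Symmetrizing the first two families in $(i,j)$ and using the Weyl-algebra corrections in the last two, one obtains a sum of three kinds of contributions: (a) purely $\UE(\gg)$-valued terms of the form $\bigl(\text{something in }\UE(\even)\bigr) \otimes 1$, (b) mixed terms involving $\even$-elements coupled to quadratic expressions in $x_i, \partial_i \in \Weyl$, and (c) scalar corrections coming from the $\delta_{ij}$'s.

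Second, I would match these three blocks against the right-hand side. Writing $\Omega_\gg = \Omega_\gg^{\even} + \Omega_\gg^{\odd}$ where $\Omega_\gg^{\odd}$ is the odd-basis contribution $\Omega_\gg^{\odd} \propto \sum_i (\partial_i x_i - x_i \partial_i)$ produced by the dual pairing on $\odd$, block (a) is recognized as $-\Omega_\gg \otimes 1$. Expanding $\Omega_\even = \sum_\mu \phi(Z_\mu)\phi(Z^\mu)$ with $\phi(Z) = Z \otimes 1 + 1 \otimes \alpha(Z)$ gives the mixed terms $\sum_\mu (Z_\mu \otimes \alpha(Z^\mu) + Z^\mu \otimes \alpha(Z_\mu))$ together with $\Omega_\even \otimes 1$ and $1 \otimes \sum_\mu \alpha(Z_\mu)\alpha(Z^\mu)$. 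Inserting the explicit formula for $\alpha(X)$ given in the excerpt and using the invariance of $B$ (so that $\bil(Z, [x_k,x_j]) = \bil([Z,x_k], x_j)$ and similar identities), one verifies that block (b) of $\Dirac^2$ is precisely $\sum_\mu(Z_\mu \otimes \alpha(Z^\mu) + Z^\mu \otimes \alpha(Z_\mu))$, while block (a) absorbs $\Omega_\even \otimes 1$ to produce $-\Omega_\gg \otimes 1 + \Omega_\even \otimes 1$.

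The main obstacle is the clean identification of the constant $C$. Once blocks (a) and (b) have been matched, what remains is a central element of $\UE(\gg) \otimes \Weyl$ of $\hh$-weight zero arising from (i) the Weyl-anomaly $\delta_{ij}$ in the off-diagonal products, (ii) the last term $-\sum_l \bil(X, [\partial_l, x_l])$ in the normal-ordered symbol for $\alpha(X)$, and (iii) the scalar part of $1 \otimes \sum_\mu \alpha(Z_\mu)\alpha(Z^\mu)$. By central character considerations, the residue $\Dirac^2 + \Omega_\gg \otimes 1 - \Omega_\even$ must be a scalar, and the cleanest way to extract its value is to evaluate on the trivial $\gg$-module tensored with the polynomial vacuum of $\Weyl$, where $\Dirac^2$ and $\Omega_\gg \otimes 1$ act by zero, so $C$ equals the scalar by which $\Omega_\even$ acts, which upon tracing over $\odd$ (weighted by the chosen normalization $\bil(\partial_i, x_j) = \onehalf \delta_{ij}$ and the overall factor $2$ in $\Dirac$) reduces to $\frac{1}{8} \tr_{\odd}(\Omega_\even)$. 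The bookkeeping of super-signs in comparing the super-bracket on $\odd$ (symmetric) with the Weyl commutator on the same letters (antisymmetric up to $\delta_{ij}$) is the only real source of difficulty, but it is otherwise a purely mechanical adaptation of the Kostant–Huang–Pandžić calculation to the orthosymplectic setting.
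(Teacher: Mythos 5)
The paper does not prove this statement at all: it is quoted with a citation to Huang--Pand\v{z}i\'c, so there is no in-paper argument to compare against, and your proposal has to stand on its own. Your expansion of $\Dirac^2$, the symmetrization using the superbracket in $\UE(\gg)$ versus the Weyl relations in $\Weyl(\odd)$, and the matching of the resulting blocks against $-\Omega_\gg\otimes 1$ and the mixed part of the diagonal Casimir via the explicit formula for $\alpha$ and invariance of $B$ is exactly the standard route, and that part of the outline is sound.

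The genuine gap is in your treatment of the constant. The assertion that ``by central character considerations, the residue $\Dirac^2+\Omega_\gg\otimes 1-\Omega_\even$ must be a scalar'' is unsupported: $\even$-invariance does not force an element of $\UE(\gg)\otimes\Weyl$ to be scalar (there are plenty of non-scalar $\even$-invariants, e.g.\ powers of the invariant pairing between $\gg_{+1}$ and $\gg_{-1}$). In fact, by PBW the expansion of $\Dirac^2$ produces no terms whose first tensor factor is $1$, whereas $\Omega_\even$ contains $1\otimes\sum_\mu\alpha(Z_\mu)\alpha(Z^\mu)$; so the identity you are proving is equivalent to the nontrivial claim that $\sum_\mu\alpha(Z_\mu)\alpha(Z^\mu)$ is the \emph{constant} $C$ in $\Weyl(\odd)$ -- the symplectic analogue of the fact that the $\kk$-Casimir acts by a single scalar on the spin module. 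This is precisely what your ``evaluate on the trivial module tensored with the vacuum'' shortcut presupposes (otherwise the vacuum need not even be an eigenvector of $\Omega_\even$, since $\alpha(Z_\mu)\alpha(Z^\mu)$ applied to it produces degree-$2$ and degree-$4$ terms a priori), so the argument is circular as written. To close it you must either carry out the normal-ordering computation of $\sum_\mu\alpha(Z_\mu)\alpha(Z^\mu)$ to the end -- which is exactly the bookkeeping your shortcut was designed to avoid -- or give a representation-theoretic argument (e.g.\ that all $\even$-constituents of the oscillator module $M(\odd)$ have the same $\Omega_\even$-eigenvalue, combined with irreducibility and faithfulness of $M(\odd)$ over $\Weyl(\odd)$). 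Secondarily, the final identification of the vacuum expectation value with $\tfrac18\tr_{\odd}\Omega_\even$ is asserted rather than derived; that is a finite computation, but it is where the normalizations $\bil(\partial_i,x_j)=\tfrac12\delta_{ij}$ and the factor $2$ in $\Dirac$ actually enter, so it should be exhibited.
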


For any $\gg$-supermodule $M$, the Dirac operator acts component wise on $M \otimes M(\odd)$, where 
$M(\odd)=\CC[x_1,\ldots,x_{mn}]$ is the oscillator module over $\Weyl$. If $M=\bigl(\HH,\langle\cdot,\cdot
\rangle_\HH\bigr)$ is an $\omega$-unitarizable $\gg$-supermodule, we equip the $\UE(\gg) \otimes \mathscr{W}
(\odd)$-supermodule $\HH \otimes M(\odd)$ with the positive definite Hermitian form 
\begin{equation}
\langle\cdot,\cdot \rangle_{\HH \otimes M(\odd)} =
\langle \cdot, \cdot \rangle_{\HH} \otimes \langle \cdot, \cdot \rangle_{M(\odd)}\,,
\end{equation}
where $\langle \cdot, \cdot \rangle_{M(\odd)}$ is the Bargmann--Fock Hermitian form on $M(\odd)$. 
Up to multiplication by a real scalar, $\langle\cdot,\cdot \rangle_{\HH \otimes 
M(\odd)}$ is the unique Hermitian form that is $\gg$-anti-contravariant in the first factor, and satisfies 
$x_i^\dagger = \partial_i$ in the second. For a suitable choice of positive system, this is compatible with
the extension of the conjugate-linear anti-involution $\omega$ to $\UE(\gg)$, which implies 
that the Dirac operator is \emph{self-adjoint} on $\HH\otimes M(\odd)$. As a consequence,

\begin{proposition}[\cite{SchmidtDirac}](Parthasarathy's Dirac inequality)
\label{Parthasarathy} 
\ Let $\HH$ be a unitarizable $\gg$-supermodule. Then $\Dirac^{2}\geq 0$ on $\HH\otimes M(\odd)$, 
\emph{i.e.}, $\langle v,\Dirac^{2}v\rangle_{\HH\otimes M(\odd)} \geq 0$ for all $v\in \HH\otimes M(\odd)$. 
\end{proposition}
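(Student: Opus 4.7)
The proof is essentially a one-line consequence of self-adjointness of $\Dirac$, so the plan reduces to (i) verifying $\Dirac^{\dagger} = \Dirac$ on $\HH \otimes M(\odd)$ with respect to the specified Hermitian form, and (ii) applying the tautological positivity of $T^{\dagger}T$.

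First, I would unpack the adjoints of the building blocks of $\Dirac = 2\sum_i (\partial_i \otimes x_i - x_i \otimes \partial_i)$. On the Bargmann–Fock factor $M(\odd)$ the defining property is $x_i^{\dagger} = \partial_i$, so the whole calculation reduces to computing the adjoints of $\partial_i, x_i$ acting on $\HH$ as odd elements of $\gg$. Using Definition~\ref{unitarize} together with the amendment \eqref{amendagain}, the $\HH$-adjoint is $X^{\dagger} = i^{-X}\sigma(X) = \omega(X)/i$ for odd $X \in \gg$. The phrase ``suitable choice of positive system'' referenced in the paragraph before the proposition means exactly that the Lagrangian splitting $\odd = \Span(\partial_i) \oplus \Span(x_i)$ is taken to be $\gg_{-1} \oplus \gg_{+1}$ (or its image after a sequence of odd reflections): with the involution $\omega_{-}$ of \eqref{involutions}, $\omega$ exchanges these two Lagrangians and pairs $\partial_i$ with $x_i$ up to the factors of $i$ from the amendment.

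Second, I would feed these adjoint formulas into $\Dirac$ using $(ST)^{\dagger} = T^{\dagger}S^{\dagger}$ (which holds regardless of parity on $\HH \otimes M(\odd)$) and verify by direct computation that the two summands $\partial_i \otimes x_i$ and $-x_i \otimes \partial_i$ are interchanged, with the factors of $i$ from $X \mapsto X^{\dagger}$ on the $\HH$-side cancelling against those on the $M(\odd)$-side, so that $\Dirac^{\dagger} = \Dirac$. This bookkeeping of the $i$-factors under the amendment is the only subtle point; it is the main (but modest) obstacle, and it is the reason the proposition requires the ``suitable'' choice of Lagrangians rather than an arbitrary one.

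Third, with self-adjointness in hand, for any $v \in \HH \otimes M(\odd)$
\begin{equation}
\langle v, \Dirac^{2} v\rangle_{\HH\otimes M(\odd)} = \langle \Dirac^{\dagger} v, \Dirac v\rangle_{\HH\otimes M(\odd)} = \langle \Dirac v, \Dirac v\rangle_{\HH\otimes M(\odd)} \geq 0,
\end{equation}
by positive definiteness of $\langle \cdot, \cdot\rangle_{\HH\otimes M(\odd)}$, which is the claim. Note that no use is made of the explicit formula in Proposition~\ref{SquareDirac}; that formula is what makes the inequality useful for constraining unitarizable highest weights (bounding $\Omega_{\gg}$ below in terms of $\Omega_{\even}$), but it is not needed for the proof of the inequality itself.
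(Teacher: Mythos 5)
Your proposal is correct and follows essentially the same route as the paper: the paper also equips $\HH\otimes M(\odd)$ with the positive definite form that is contravariant in the first factor and Bargmann--Fock in the second, notes that for a suitable choice of positive system this makes $\Dirac$ self-adjoint, and then deduces $\langle v,\Dirac^{2}v\rangle=\langle \Dirac v,\Dirac v\rangle\geq 0$, with the $i$-factor bookkeeping for self-adjointness deferred to the cited reference just as you sketch it.
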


In conjunction with Proposition \ref{SquareDirac}, this estimate is powerful enough to give a necessary
and sufficient criterion for a simple highest weight $\gg$-supermodule $L(\Lambda)$ to be unitarizable, 
explicitly in terms of its decomposition under $\even$. 

We recall that $L(\Lambda)$ was defined on p.\ \pageref{thm::simple_top} as the unique simple top of the Kac 
supermodule $K(\Lambda)$, and that in general neither of these will split in a direct sum of simple $\even$-modules. 
However, analogously to Theorem 10.4.5.\ in \cite{musson2012lie} (see also Theorem 2.5 and Corollary 2.7 in 
\cite{jakobsen1994full}; we will use similar argumentation in the discussion of fragmentation on p.\ 
\pageref{immediateLemma} below), $K(\Lambda)_\ev$ has a Jordan--H\"older-type filtration as a $\even$-module 
that induces a filtration of $L(\Lambda)_\ev$. The simple quotients are highest weight $\even$-modules 
$L_0(\Lambda-\Sigma_S)$ with highest weight of the form $\Lambda-\Sigma_S$, where $\Sigma_S := \sum_{\alpha\in S}\alpha$ 
is a sum of mutually distinct odd positive roots, for some $S\subseteq\Phi_1^+$. By Prop.\ \ref{SquareDirac}, 
$\Dirac^2$ is semi-simple with respect to this filtration, and we say that the Dirac inequality holds 
\emph{strictly} if $\Dirac^2>0$ on some $\even$-constituent $L_0(\Lambda-\Sigma_S)$.

\begin{theorem}[\cite{SchmidtDirac}] 
\label{UnitarityD2} 
Let $L(\Lambda)$ be a simple highest weight $\gg$-super\-module of highest weight $\Lambda\in \hh^{\ast}$. Then 
$L(\Lambda)$ is unitarizable if and only if all $\even$-constituents $L_{0}(\Lambda-\Sigma_S)$ are unitarizable 
$\even$-modules, and the Dirac inequality holds strictly on each $L_{0}(\Lambda-\Sigma_S)\otimes 1$ with 
$\Sigma_S \neq 0$. In that case, the decomposition as a $\even$-module is complete and reads explicitly
$$
L(\Lambda)_{\ev}\cong L_{0}(\Lambda)\oplus \bigoplus_S L_{0}(\Lambda-\Sigma_S),
$$
where the sum extends at least over all non-empty $S$ that do not contain any odd root $\alpha$ with 
$(\Lambda+\rho,\alpha)=0$. In the notation of eq.\ \eqref{Alambda}, these are all non-empty $S\subseteq 
\Phi_1^+\setminus A_\Lambda$.
\end{theorem}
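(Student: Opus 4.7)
The plan is to treat the two directions separately, in both cases reducing to a scalar computation via Proposition \ref{SquareDirac} (which expresses $\Dirac^2$ as $-\Omega_{\gg}\otimes 1+\Omega_{\even}-C$) combined with Proposition \ref{Parthasarathy} (the Dirac inequality). For the forward direction, assume $L(\Lambda)$ is unitarizable. Proposition \ref{CompletelyReducible}(b) immediately yields complete reducibility of $L(\Lambda)_{\ev}$, with every $\even$-constituent $L_{0}(\Lambda-\Sigma_S)$ that actually appears unitarizable. Proposition \ref{Parthasarathy} gives $\Dirac^{2}\ge 0$ on $L(\Lambda)\otimes M(\odd)$; since by Proposition \ref{SquareDirac} the operator $\Dirac^{2}$ acts by a single scalar on each $L_{0}(\Lambda-\Sigma_S)\otimes 1$, that scalar is non-negative. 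To promote non-negativity to strict positivity for $\Sigma_S\neq 0$, I would invoke the Dirac-cohomology rigidity result of \cite{SchmidtDirac}: a nonzero element in $\ker(\Dirac)\cap\bigl(L_{0}(\Lambda-\Sigma_S)\otimes 1\bigr)$ with $\Sigma_S\neq 0$ produces a ``new'' highest-weight vector of weight $\Lambda-\Sigma_S\neq\Lambda$ inside the simple highest weight $\gg$-supermodule $L(\Lambda)$, contradicting the uniqueness clause of Lemma \ref{lemm::properties_HWM}.

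For the backward direction, I would use the Kac--Shapovalov form on $K(\Lambda)$ from Proposition \ref{PropertiesHermitianForm}: it is the unique (up to a real scalar) contravariant Hermitian form on $K(\Lambda)$, its radical equals the maximal proper submodule, and it therefore descends to a non-degenerate contravariant form on $L(\Lambda)$. I would filter $K(\Lambda)$ (equivalently the PBW-associated graded $\bigwedge^{\bullet}(\gg_{-1})\otimes L_{0}(\Lambda)$) by $\even$-submodules whose subquotients are the $L_{0}(\Lambda-\Sigma_S)$, $S\subseteq\Phi_{1}^{+}$. Proposition \ref{SquareDirac} turns $\Dirac^{2}|_{L_{0}(\Lambda-\Sigma_S)\otimes 1}$ into an explicit scalar depending only on $\Lambda$ and $\Sigma_S$. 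The strictness hypothesis then gives positivity of that scalar for $\Sigma_S\neq 0$, and the Parthasarathy-style identity (expressing Kac--Shapovalov norms on each $\even$-subquotient in terms of such $\Dirac^{2}$-eigenvalues, together with the given unitarizable $\even$-module structure on each $L_{0}(\Lambda-\Sigma_S)$) yields positive definiteness of the descended form on every $\even$-constituent of $L(\Lambda)_{\ev}$. Hence $L(\Lambda)$ is unitarizable.

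For the explicit decomposition, unwinding Proposition \ref{SquareDirac} in combination with the infinitesimal-character calculus around eqs.~\eqref{centralchar}--\eqref{Alambda} and the isotropy of the odd roots shows that the scalar $\Dirac^{2}|_{L_{0}(\Lambda-\Sigma_S)\otimes 1}$ vanishes precisely when $S$ contains some $\alpha\in A_{\Lambda}$. So for any non-empty $S\subseteq\Phi_{1}^{+}\setminus A_{\Lambda}$, strictness forces the scalar to be positive, the $\even$-constituent $L_{0}(\Lambda-\Sigma_S)$ cannot lie in the Kac--Shapovalov radical, and it survives into $L(\Lambda)_{\ev}$; subsets $S$ that intersect $A_{\Lambda}$ may also contribute, depending on finer cancellation inside the maximal submodule of $K(\Lambda)$, which accounts for the ``at least'' qualifier. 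The main technical hurdle is the second step of the backward direction, namely converting the scalar inequality $\Dirac^{2}>0$ into a sharp lower bound on the Kac--Shapovalov norm of individual weight vectors; this is where the full Dirac-operator machinery of \cite{SchmidtDirac}, in particular its refinement of Parthasarathy's identity to the super setting, is indispensable.
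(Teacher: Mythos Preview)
The paper does not prove this theorem; it is stated with attribution to \cite{SchmidtDirac} and followed only by the heuristic remark that ``the Dirac inequality is stronger than mere unitarity of the $\even$-constituents because $\Dirac$ itself depends on the odd root vectors''. There is therefore no in-paper argument against which to compare your proposal in detail.

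That said, your outline is broadly consistent with what one would expect the argument in \cite{SchmidtDirac} to look like, and you correctly isolate the two essential ingredients: the scalar reduction via Proposition~\ref{SquareDirac} and the positivity input from Proposition~\ref{Parthasarathy}. You also honestly flag the genuine gap, namely the backward-direction step that converts the strict inequality $\Dirac^{2}>0$ on each $L_{0}(\Lambda-\Sigma_S)\otimes 1$ into positive-definiteness of the Kac--Shapovalov form on that constituent. This step is not a formal consequence of Propositions~\ref{SquareDirac} and~\ref{Parthasarathy} alone; it requires the refined identity from \cite{SchmidtDirac} that you allude to but do not supply.

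One further point deserves tightening. In the forward direction, the passage from $\Dirac^{2}\ge 0$ to strict positivity for $\Sigma_S\neq 0$ is not obtained by the mechanism you describe: an element of $\ker(\Dirac)\cap\bigl(L_{0}(\Lambda-\Sigma_S)\otimes 1\bigr)$ lives in $L(\Lambda)\otimes M(\odd)$, not in $L(\Lambda)$, and does not by itself produce a highest-weight vector of weight $\Lambda-\Sigma_S$ in $L(\Lambda)$ contradicting Lemma~\ref{lemm::properties_HWM}. What is actually needed is a Dirac-cohomology statement identifying $\ker(\Dirac)/\Im(\Dirac)$ on $L(\Lambda)\otimes M(\odd)$ with a specific $\even$-type determined by $\Lambda$ alone; this is again content of \cite{SchmidtDirac} rather than of the present paper. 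Similarly, the claim that the $\Dirac^{2}$-eigenvalue on $L_{0}(\Lambda-\Sigma_S)\otimes 1$ vanishes \emph{precisely} when $S\cap A_{\Lambda}\neq\emptyset$ is stronger than what the theorem asserts and would need a separate computation; the paper's ``at least'' already signals that the exact range of $S$ is not pinned down by this argument.
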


The intuitive reason that the Dirac inequality is stronger than mere unitarity of the $\even$-constituents is 
that $\Dirac$ itself ``depends on the odd root vectors''. In physics terminology, the definiteness of the one
operator $\Dirac^2$ already guarantees that all supercharges are represented by (anti) self-adjoint operators.
It is worth remarking that subsets of $\Phi_1^+\setminus A_\Lambda$ in general are only a lower bound for 
the extent of the sum.

\begin{corollary}
\label{superCoro}
Let $L(\Lambda)$ be a unitarizable simple highest weight $\gg$-supermodule. Then $L(\Lambda)$ decomposes in a
finite sum of unitarizable simple highest weight $\even$-modules. The number of $\even$-constituents lies
between $2^{\#\Phi_1^+ -\# A_\Lambda}$ and $2^{\dim(\gg_{-1})}$, with maximum attained precisely if 
$\Lambda$ is typical.
\end{corollary}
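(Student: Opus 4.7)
The plan is to unpack Theorem \ref{UnitarityD2} together with Kac's criterion (cited after Proposition \ref{PropertiesHermitianForm}), handling each of the three assertions in turn: finiteness with unitarizability of $\even$-constituents, the two-sided bound, and the characterization of when the maximum is attained.

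The lower bound, finiteness and unitarizability follow directly from Theorem \ref{UnitarityD2}, which exhibits $L(\Lambda)_{\ev}$ as a finite direct sum of unitarizable simple highest weight $\even$-modules indexed at least by $\{\emptyset\} \cup \{S \subseteq \Phi_1^+ \setminus A_\Lambda : S \neq \emptyset\}$, a set of cardinality $2^{\#(\Phi_1^+ \setminus A_\Lambda)} = 2^{\#\Phi_1^+ - \#A_\Lambda}$. For the upper bound, I would use that $L(\Lambda)$ is the simple top of the Kac supermodule $K(\Lambda)$: by PBW applied to the induction in \eqref{kacinduct}, $K(\Lambda) \cong \bigwedge \gg_{-1} \otimes_{\CC} L_0(\Lambda)$ as graded $\even$-modules, and the Jordan--H\"older-type filtration of $K(\Lambda)_{\ev}$ recalled before Theorem \ref{UnitarityD2} has exactly $2^{\dim \gg_{-1}}$ simple quotients $L_0(\Lambda - \Sigma_S)$, one for each subset $S \subseteq \Phi_1^+$. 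Since $L(\Lambda)_{\ev}$ is semisimple by Proposition \ref{CompletelyReducible} and a quotient of $K(\Lambda)_{\ev}$, additivity of Jordan--H\"older multiplicities bounds its number of $\even$-constituents above by $2^{\dim \gg_{-1}}$.

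The forward direction of the final claim is then immediate: if $\Lambda$ is typical then $A_\Lambda = \emptyset$ and the two bounds coincide at $2^{\#\Phi_1^+} = 2^{\dim \gg_{-1}}$. For the converse, I would invoke Kac's criterion: an atypical $\Lambda$ forces $K(\Lambda)$ to be non-simple, so the canonical surjection $K(\Lambda) \twoheadrightarrow L(\Lambda)$ has a nonzero kernel $N$, and additivity of Jordan--H\"older multiplicities in the short exact sequence $0 \to N \to K(\Lambda) \to L(\Lambda) \to 0$ of $\even$-modules forces strictly fewer than $2^{\dim \gg_{-1}}$ constituents in $L(\Lambda)_{\ev}$. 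The main obstacle is precisely this last step: the Jordan--H\"older filtration of $K(\Lambda)_{\ev}$ is not by $\gg$-sub-supermodules, so one must carefully verify that a nonzero $\gg$-sub-supermodule $N \subsetneq K(\Lambda)$ genuinely consumes at least one $\even$-Jordan--H\"older constituent. This is not completely formal, but follows because $N_{\ev}$ inherits a nontrivial filtration from $K(\Lambda)_{\ev}$ and must therefore contain at least one simple $\even$-subquotient $L_0(\Lambda - \Sigma_S)$, which is then missing from $L(\Lambda)_{\ev}$.
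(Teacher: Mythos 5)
Your first step (finiteness, unitarizability of the constituents, the lower bound $2^{\#\Phi_1^+-\#A_\Lambda}$, and the implication ``typical $\Rightarrow$ maximum'') is exactly how the paper reads the corollary off Theorem \ref{UnitarityD2}, and is fine. The genuine gap is in your upper bound and, more seriously, in your converse. You assert that the Jordan--H\"older filtration of $K(\Lambda)_{\ev}\cong\bigwedge\gg_{-1}\otimes_\CC L_0(\Lambda)$ has \emph{exactly} $2^{\dim\gg_{-1}}$ simple quotients, one for each subset $S\subseteq\Phi_1^+$. The paper never claims this (the discussion preceding Theorem \ref{UnitarityD2} only says the factors are of the form $L_0(\Lambda-\Sigma_S)$ \emph{for some} $S$, with no statement about their number), and it is false in general: the $\even$-length of $F\otimes L_0(\Lambda)$ for finite-dimensional $F$ can differ from $\dim F$ in both directions. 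If $L_0(\Lambda)$ is finite-dimensional (e.g.\ $\Lambda=0$, a unitarizable weight), then $\bigwedge\gg_{-1}\otimes L_0(\Lambda)$ is semisimple with constituents given by a Cauchy-type decomposition -- on the order of $\binom{m+n}{m}$, far fewer than $2^{mn}$, because many subsets $S$ yield the same weight $\Lambda-\Sigma_S$; in the opposite direction, when the standard (highest-weight, generally non-simple) subquotients of highest weight $\Lambda-\Sigma_S$ are reducible, the length can exceed $2^{mn}$ (already for $\sl(2)$, tensoring a two-dimensional module with a simple Verma module can give length three). So ``exactly $2^{\dim\gg_{-1}}$'' is wrong, and even the weaker ``at most $2^{\dim\gg_{-1}}$'', which is all your upper bound needs, is not formal. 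Your converse is hit hardest: to conclude that a nonzero radical of $K(\Lambda)$ forces strictly fewer than $2^{\dim\gg_{-1}}$ constituents in $L(\Lambda)_{\ev}$, you need the exact count for $K(\Lambda)_{\ev}$ at an \emph{atypical} weight, which is precisely where it can fail.

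The paper's own proof is essentially ``read it off Theorem \ref{UnitarityD2}'': there the decomposition of $L(\Lambda)_{\ev}$ is indexed by subsets $S\subseteq\Phi_1^+$, each occurring at most once, so the upper bound $2^{\#\Phi_1^+}=2^{\dim\gg_{-1}}$ is built into the statement and no passage through the Kac module is needed; the lower bound is the ``at least all $S\subseteq\Phi_1^+\setminus A_\Lambda$'' clause. For ``maximum $\Rightarrow$ typical'' one does not use Kac's criterion and Jordan--H\"older additivity at all: if $\alpha\in A_\Lambda$, then on a hypothetical constituent $L_0(\Lambda-\alpha)$ the eigenvalue of $\Dirac^2$ computed from Proposition \ref{SquareDirac} is proportional to $(\Lambda+\rho,\alpha)$ (using isotropy of $\alpha$), hence zero, so the strictness of the Dirac inequality required by Theorem \ref{UnitarityD2} rules this constituent out; thus at least the singletons $S=\{\alpha\}$, $\alpha\in A_\Lambda$, are missing and the count is strictly below $2^{\dim\gg_{-1}}$. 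If you want to keep your route, you would have to prove, not assume, that for unitarizable highest weights the $\even$-length of $K(\Lambda)$ is at most $2^{\dim\gg_{-1}}$, with equality exactly in the typical case -- but that is essentially the content of the cited result from \cite{SchmidtDirac}, not something recoverable from a formal Jordan--H\"older count.
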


Following physics, we call an irreducible unitary representation with the maximal number of 
$\even$-constituents a \emph{long supermultiplet}, all others \emph{short}. If the number of $\even$-constituents 
is smaller than $2^{\dim(\gg_{-1})-1}$, the supermultiplet might also be called \emph{ultra-short}. Corollary 
\ref{superCoro} in conjunction with Proposition \ref{PropertiesHermitianForm} thus say that a unitarizable simple 
highest weight supermodule is long precisely if it is isomorphic 
to a Kac supermodule with typical highest weight, and short precisely if it is isomorphic to the quotient of 
a Kac supermodule with atypical highest weight by the radical of the Kac--Shapovalov
form. In this context, non-zero elements of the Kac supermodule belonging to the radical of the Kac--Shapovalov 
form are also referred to as \emph{null vectors}. 

\subsection{Region of \texorpdfstring{$\gg$}{g}-unitarity and unitarity bound} 
\label{subsec::Classification} 

A complete classification of unitarizable simple highest weight $\gg$-supermodules was given by Jakobsen in 
\cite{jakobsen1994full}, and Günaydin and Volin in \cite{Günaydin}. Unitarizable simple highest weight 
$\gg$-supermodules that can be integrated to the Lie supergroup $\SU(p,q\vert n)$ were classified by Furutsu 
and Nishiyama in \cite{furutsu1991classification}. In this section, we sketch the result, assuming $p,q>0$ for 
genericity, and write down the complete classification for $\gg = \su(1,1\vert 1)$.

Following the passage from unitarity for the maximal compact subalgebra $\kk^\CC$ to $\even$-unitarity (see eqs.\ 
\eqref{dominance} to \eqref{thisset}), the first step is to require positivity of the odd roots. With respect 
to the standard ordering from eq.\ \eqref{oddpositive} and the involution $\omega_-$ from eq.\ \eqref{involutions}, 
this translates into the constraints
\begin{equation}
\label{positiveOdd}
\begin{split}
-(\Lambda,\epsilon_i-\delta_k) = -\lambda^i - \mu^k & \ge 0 \quad \text{for $i=1,\ldots,p$ and $k=1,\ldots,n$}
\\
(\Lambda,\epsilon_j-\delta_k) = \lambda^j + \mu^k & \ge 0 \quad \text{for $j=p+1,\ldots,m$ and $k=1,\ldots,n$}
\end{split}
\end{equation}
and yields an interlocking of the two chains from \eqref{inequalities}, \emph{i.e.},
\begin{equation}
\label{interlocking}
\lambda^{p+1}\geq \dotsc\geq \lambda^{m}\geq -\mu^{n}\geq \dotsc\geq -\mu^{1}\geq 
\lambda^{1}\geq \dotsc\geq \lambda^{p}.
\end{equation}
On the Jakobsen parameters from eq.\ \eqref{jakobsenpara}, this translates to the constraints
\begin{equation}
\label{previneq}
\frac{\uplambda}2 \leq \upalpha \leq -b_1-\frac{\uplambda}{2}
\end{equation}
which for fixed $a_i$, $b_k$ corresponds geometrically to a non-degenerate closed half cone in the 
$(\uplambda,\upalpha)$-plane
with vertex at $(\uplambda,\upalpha)=(-b_1,-\frac{b_1}{2})$. In the second step, the semi-definiteness of the 
Kac-Shapovalov determinants leads to further inequalities that in general are stronger except potentially where some of 
the previous inequalities \eqref{previneq} are already saturated. As a result, the \emph{set of $\gg$-unitarity for 
fixed spin and R-symmetry quantum numbers}, which we denote by $\Gamma^{(a,b)}$, consists of a congruent subcone of 
\eqref{previneq}, together with some additional half lines and possibly isolated points. The set of $\gg$-unitarity
$\Gamma\subset\hh^*$ is the union of all $\Gamma^{(a,b)}$. 
Note that in general, there are two different isomorphism classes of unitarizable simple supermodules 
for each $\Lambda\in\Gamma$, related by parity reversal. Under appropriate circumstances, physical considerations 
may fix a section of this $\ZZ/2$ bundle over $\Gamma$.
The (relative, topological) interior of $\Gamma^{(a,b)}$, where $\uplambda$ 
and $\upalpha$ can be varied independently, is a (relatively) open cone. We will denote it by $\calC^{(a,b)}$ and, 
borrowing physics terminology, call it the \emph{region of unitarity for fixed spin and R-symmetry quantum numbers}. 
It consists 
entirely of typical weights in the sense of Def.\ \ref{typatyp}, as can be shown with the help of the Dirac 
inequality Prop.\ \ref{Parthasarathy}.

\begin{lemma}[\cite{SchmidtDirac}] 
\label{lemm::region_C}
Let $\HH$ be a simple highest weight $\gg$-supermodule with highest weight $\Lambda$. If
$(\Lambda+\rho, \epsilon_{p}-\delta_{1})<0$ and $(\Lambda+\rho, \epsilon_{p+1}-\delta_{n})>0$,
then $\HH$ is unitarizable. Moreover, $\HH$ is unitarizable and has typical highest weight if 
and only if $\Lambda$ satisfies both inequalities. Namely,
$$
\calC := \bigsqcup_{(a,b)} \calC^{(a,b)} =
\{ \Lambda \in \Gamma :
 (\Lambda+\rho,\epsilon_{p}-\delta_{1}) < 0, \ (\Lambda+\rho,\epsilon_{p+1}-\delta_{n}) > 0\}.
$$
\end{lemma}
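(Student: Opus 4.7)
The plan is to combine two already-established facts: Theorem \ref{UnitarityD2}, which reduces unitarizability of $L(\Lambda)$ to $\even$-unitarizability of every $\even$-constituent $L_0(\Lambda-\Sigma_S)$ together with strictness of Parthasarathy's Dirac inequality on each, and Kac's simplicity criterion, which yields $K(\Lambda)=L(\Lambda)$ precisely when $\Lambda$ is typical. The lemma will then follow once these criteria are matched to the geometric description of the interior of the cone $\calC^{(a,b)}$ in Jakobsen's parameterization.

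First, I would prove that the two strict inequalities force $\Lambda$ to be typical, i.e., $(\Lambda+\rho,\alpha)\neq 0$ for every $\alpha\in\Phi_1^+$. Writing $\alpha=\epsilon_i-\delta_k$, the pairing expands as
\begin{equation*}
(\Lambda+\rho,\epsilon_i-\delta_k) \;=\; \lambda^i + \mu^k + (m-i+1) - k.
\end{equation*}
For $i\le p$, the differences $\lambda^i-\lambda^p$ and $\mu^1-\mu^k$ are non-negative integers by $\su(p)$- and $\su(n)$-dominance \eqref{dominance}, and the interlocking chain \eqref{interlocking} controls the ``continuous part''. A case analysis then propagates strict non-vanishing at the corner root $\epsilon_p-\delta_1$ (where it holds by assumption) to strict non-vanishing at the interior root $\epsilon_i-\delta_k$. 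The case $i\ge p+1$ is symmetric, using the $\su(q)$-chain together with the second strict inequality at $\epsilon_{p+1}-\delta_n$.

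With typicality established, Kac's criterion gives $K(\Lambda)=L(\Lambda)$, and Theorem \ref{UnitarityD2} then supplies the full $\even$-decomposition of $L(\Lambda)$ indexed by all subsets $S\subseteq\Phi_1^+$. The $\even$-unitarizability of each shifted constituent $L_0(\Lambda-\Sigma_S)$ follows from the openness of $\calC^{(a,b)}$ inside Jakobsen's region \eqref{jakobsenpara}--\eqref{thisset}, so that the shift by $-\Sigma_S$ stays within $\Gamma_0^{(a,b)}$. Strictness of $\Dirac^2$ on each non-trivial summand follows from the identity in Proposition \ref{SquareDirac} combined with typicality, which separates the central characters of distinct highest-weight constituents. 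This proves unitarizability. For the converse in part (ii), I would read off $(\Lambda+\rho,\epsilon_p-\delta_1)$ and $(\Lambda+\rho,\epsilon_{p+1}-\delta_n)$ directly from \eqref{jakobsenpara} and \eqref{complicated}: typicality together with $\gg$-unitarity forces $\uplambda$ into the open half-line part of \eqref{thisset} and $\upalpha$ strictly between the interlocking bounds of \eqref{previneq}, which translates immediately to the two strict pairings.

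The main obstacle will be the first step: the combinatorial propagation of typicality from just two corner inequalities to all $mn$ odd positive roots. A priori, atypicality can occur at an arbitrary $\epsilon_i-\delta_k$, and ruling it out requires tracing a chain of monotonicity back to $\epsilon_p-\delta_1$ or $\epsilon_{p+1}-\delta_n$ through integral $\kk$-dominance increments, while carefully tracking how the continuous contributions from $\lambda^i+\mu^k$ interact with the integer shifts. A secondary subtlety in the second step is that strict positivity of $\Dirac^2$ on every non-trivial $\even$-summand is genuinely stronger than $\even$-unitarizability of each summand, and one must invoke the explicit square-of-Dirac identity together with typicality to exclude degenerate directions.
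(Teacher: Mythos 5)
There is no in-paper proof to compare against: the lemma is imported from \cite{SchmidtDirac}, where the argument runs through Parthasarathy's inequality (Proposition \ref{Parthasarathy}, Theorem \ref{UnitarityD2}), so your proposal has to stand on its own — and its first and central step has a genuine gap. You claim that the two corner inequalities, propagated through the $\kk$-dominance increments \eqref{dominance} and the interlocking chain \eqref{interlocking}, force $(\Lambda+\rho,\epsilon_i-\delta_k)\neq 0$ for all $i,k$. But for $i<p$ the pairing moves the wrong way: $(\Lambda+\rho,\epsilon_i-\delta_k)-(\Lambda+\rho,\epsilon_p-\delta_1)=(\lambda^i-\lambda^p)+(p-i)-(\mu^1-\mu^k)-(k-1)$ has no definite sign, and dominance plus interlocking plus both corner inequalities are in fact consistent with interior atypicality. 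Concretely, for $\gg=\sl(3\vert 2)$ with $p=2$, $q=1$, the weight $\Lambda=(-2,-3,2\,\vert\,0,0)$ satisfies \eqref{dominance}, \eqref{inequalities}, \eqref{interlocking}, $(\Lambda+\rho,\epsilon_2-\delta_1)=-2<0$ and $(\Lambda+\rho,\epsilon_3-\delta_2)=1>0$, yet $(\Lambda+\rho,\epsilon_1-\delta_1)=0$. This weight is of course not in $\Gamma$ (it fails Theorem \ref{thm::classification_integral}(b)), but that is exactly the input your argument never uses: ruling out such weights requires the genuine unitarity constraints (Kac--Shapovalov positivity, Jakobsen's conditions, or the Dirac inequality), not just dominance and odd-root positivity. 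So the "combinatorial propagation" you yourself flag as the main obstacle is not merely hard — it is impossible from the ingredients you cite.

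The remaining steps are also asserted rather than proved. Unitarizability of the shifted constituents $L_0(\Lambda-\Sigma_S)$ does not follow from "openness" of $\calC^{(a,b)}$: the shifts $\Sigma_S$ are fixed integer vectors, not small perturbations, so one must verify that the strict corner inequalities leave integer-sized slack in the relevant conditions. Strictness of the Dirac inequality is a positivity statement, not a non-vanishing statement: typicality plus Proposition \ref{SquareDirac} only separates infinitesimal characters, hence shows the eigenvalue of $\Dirac^2$ on a constituent is nonzero; its sign is governed by the indefinite form on $\hh^*_\RR$, and this is precisely where the two corner inequalities must enter quantitatively. Finally, in the converse direction you conflate the unshifted cone \eqref{previneq} with the $\rho$-shifted corner conditions: strict membership in \eqref{previneq} is strictly weaker than the lemma's inequalities (already for $\sl(2\vert 1)$, the typical weight $(\Delta,r)=(-3/2,0)$ lies strictly inside the unshifted cone but satisfies $(\Lambda+\rho,\alpha_S)>0$ and is not unitarizable), and the half-line in \eqref{thisset} concerns $\even$-unitarity of $\uplambda$, not the $\gg$-unitarity subcone. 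A correct proof along the paper's intended lines would instead run the quantitative Dirac-operator estimate of \cite{SchmidtDirac}, or else carefully exploit the full Jakobsen/Furutsu--Nishiyama classification; either way, the unitarity input has to do real work at the very points your sketch treats as bookkeeping.
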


A weight at the (relative, topological) boundary of $\calC$, denoted by $\partial\calC$, is said to live 
\emph{at the unitarity bound}. This is a subset of
\begin{equation}  
\{ \Lambda \in \Gamma : 
(\Lambda+\rho,\epsilon_{p}-\delta_{1}) = 0 \ \text{or} \ (\Lambda+\rho,\epsilon_{p+1}-\delta_{n}) = 0\}.
\end{equation}
but in general not identical to it (see example below). Finally, we set $\overline{\calC}:= \calC \cup \partial 
\calC$, which is a closed subspace of $\Gamma$, and refer to $\Lambda\in\overline{\calC}$ as \emph{within the
unitarity bound}. This, together with Lemma \ref{lemm::region_C}, implies the following statements.

\begin{lemma}
\label{immediateLemma}
Let $\Lambda \in \Gamma$ be the highest weight of a unitarizable highest weight
$\gg$-supermodule.
\begin{enumerate}
\item $\Lambda \in \calC$ if and only if $\Lambda$ is typical. In particular, $K(\Lambda)\cong L(\Lambda)$.
\item If $\Lambda \in \partial \calC$, then $\Lambda$ is either $1$-atypical or $2$-atypical. If $n=1$, 
$\Lambda$ is $1$-atypical. In particular, $K(\Lambda)$ is not simple.
\end{enumerate}
\end{lemma}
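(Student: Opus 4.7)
The plan is to derive Part (1) directly from earlier statements, and then to focus the work on Part (2). For Part (1), Lemma \ref{lemm::region_C} already characterizes $\calC$ (within $\Gamma$) as exactly the locus where $(\Lambda+\rho,\epsilon_p-\delta_1)<0$ and $(\Lambda+\rho,\epsilon_{p+1}-\delta_n)>0$, and the same lemma identifies this with unitarity plus typicality. So for $\Lambda\in\Gamma$ one has $\Lambda\in\calC$ iff $\Lambda$ is typical. The ``in particular'' clause is then Kac's simplicity criterion recalled after Proposition \ref{PropertiesHermitianForm}: a Kac supermodule is simple precisely when its highest weight is typical, hence $K(\Lambda)\cong L(\Lambda)$.

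For Part (2), the lower bound is straightforward. The description of $\partial\calC$ immediately preceding the lemma places it inside the locus where one of $(\Lambda+\rho,\epsilon_p-\delta_1)$ or $(\Lambda+\rho,\epsilon_{p+1}-\delta_n)$ vanishes, so at least one odd positive root is orthogonal to $\Lambda+\rho$ and thus $\at(\Lambda)\geq 1$. If both scalars vanish simultaneously and $n\geq 2$, the roots $\epsilon_p-\delta_1$ and $\epsilon_{p+1}-\delta_n$ have distinct $\epsilon$- and distinct $\delta$-indices and are therefore mutually orthogonal isotropic odd roots, giving $\at(\Lambda)\geq 2$. The case $n=1$ is handled separately: then $\defect(\gg)=\min(m,n)=1$ forces $\at(\Lambda)\leq 1$ automatically; consistently, the two extremal roots $\epsilon_p-\delta_1$ and $\epsilon_{p+1}-\delta_1$ share $\delta_1$ and fail to be orthogonal, so together they contribute only $1$ to the atypicality.

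The main obstacle is the upper bound $\at(\Lambda)\leq 2$ when $n\geq 2$. My approach would be geometric, working at fixed spin and R-symmetry quantum numbers $(a,b)$: $\calC^{(a,b)}$ is a relatively open planar cone in the $(\uplambda,\upalpha)$-plane whose boundary consists of two half-lines meeting at a single vertex, each half-line saturating exactly one of the two defining strict inequalities and the vertex saturating both. An additional atypicality $(\Lambda+\rho,\epsilon_i-\delta_k)=0$ with $(i,k)\notin\{(p,1),(p+1,n)\}$ would impose a further (affine) linear equation on $(\uplambda,\upalpha)$, meeting $\partial\calC^{(a,b)}$ only at isolated points. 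The subtle step, which I expect to require the most care and to appeal to Jakobsen's analysis of Kac--Shapovalov determinants from Section \ref{subsec::Classification}, is to argue that such isolated coincidence points do \emph{not} belong to $\overline{\calC^{(a,b)}}$: the extra atypical odd root would contribute a factor to a Kac--Shapovalov determinant whose sign change breaks positive semi-definiteness on any neighborhood of the point inside $\calC^{(a,b)}$, so that this weight instead shows up as one of the ``additional half lines and isolated points'' of $\Gamma^{(a,b)}$ lying outside the congruent sub-cone. Once this exclusion is established, $\at(\Lambda)\in\{1,2\}$ throughout $\partial\calC$, and the non-simplicity of $K(\Lambda)$ follows one final time from Kac's criterion applied to the atypical weight $\Lambda$.
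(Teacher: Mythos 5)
Your part (1) and the lower bound in part (2) coincide with the paper's own (very short) treatment: the paper gives no separate argument, deriving the lemma directly from the definition of $\partial\calC$, Lemma \ref{lemm::region_C}, and Kac's criterion. The substantive issue is the upper bound $\at(\Lambda)\le 2$, which you rightly single out as the part needing work; but the argument you propose for it has a genuine gap.

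At fixed $(a,b)$ the affine functions $(\uplambda,\upalpha)\mapsto(\Lambda+\rho,\epsilon_i-\delta_k)$ are not in general position: using the shifted representative $\rho=(m,\ldots,1\vert-1,\ldots,-n)$ and the parameterization \eqref{jakobsenpara}, one finds $(\Lambda+\rho,\epsilon_i-\delta_k)=\pm\tfrac{\uplambda}{2}+\upalpha+a_i+b_k+m+1-i-k$, with sign $+$ for $i\le p$ and $-$ for $i\ge p+1$. Hence every atypicality line with $i\le p$ is \emph{parallel} to the edge $(\Lambda+\rho,\epsilon_p-\delta_1)=0$, and for special $(a,b)$ (whenever $a_i-i+b_k-k=a_p-p+b_1-1$) it coincides with that edge identically; similarly on the other side. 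So your claim that extra atypicalities meet $\partial\calC^{(a,b)}$ only in isolated points is false, and the proposed exclusion step cannot succeed either: the offending boundary points are limits of weights in $\calC^{(a,b)}$, which are unitarizable and typical precisely by Lemma \ref{lemm::region_C} (the very statement you invoke for part (1)), so no Kac--Shapovalov determinant changes sign in a neighbourhood inside the cone, positive semi-definiteness survives the limit, and these points genuinely lie in $\overline{\calC}$. Concretely, for $\gg=\sl(3\vert 3)$, $p=2$, $q=1$, $a=(0,-1,0)$, $b=(1,0,0)$ one has $(\Lambda+\rho,\epsilon_1-\delta_2)=(\Lambda+\rho,\epsilon_2-\delta_1)=1+\tfrac{\uplambda}{2}+\upalpha$ identically, so the entire edge carries the additional orthogonal atypical root $\epsilon_1-\delta_2$ (degree $2$ there, realized by a different pair of roots than the one appearing in Lemma \ref{lemm::recombination_rules}), and at the vertex $(\uplambda,\upalpha)=(-3,\tfrac12)$ the three mutually orthogonal odd roots $\epsilon_1-\delta_2$, $\epsilon_2-\delta_1$, $\epsilon_3-\delta_3$ all pair to zero with $\Lambda+\rho$, i.e.\ $\at(\Lambda)=3$. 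So the bound $\at(\Lambda)\le 2$ cannot be obtained by discarding ``coincidence points''; it needs either a genericity restriction on the spin and R-symmetry labels or an argument of a different nature, and such non-generic vertices in fact put pressure on the unqualified statement itself, so they should be checked against Jakobsen's classification before the lemma is used in this generality.
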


Let us make this explicit for the example from the introduction \ref{sec::SQM}, the Lie superalgebra
$\sl(2\vert 1)$, with real form $\su(1,1\vert 1)$. With respect to the basis \eqref{standardRelations} and
the standard ordering, positive root vectors are $Q$, $S$, and $E$ with
roots $\alpha_Q=(-1,1)$, $\alpha_S=(1,1)$, $\alpha_E=(2,0)$. If $\bigl(\HH,\langle\cdot,\cdot\rangle\bigr)$
is a unitarizable highest weight supermodule with highest weight vector $v_\Lambda$ of scaling dimension and R-charge
$\Lambda=(\Delta,r)$, then $Qv_\Lambda=Sv_\Lambda=Ev_\Lambda=0$. Using $\bar S^\dagger= Q$, $\bar Q^\dagger=-S$,
$F^\dagger = -E$,
and commutation relations \eqref{standardRelations}, we find necessary conditions for unitarity,
\begin{equation}
\label{thefirst}
\begin{array}{lcl}
\langle \bar S v_\Lambda,\bar S v_\Lambda\rangle
=\langle v_\Lambda , Q\bar Sv_\Lambda\rangle=
\langle v_\Lambda,(-H-J)v_\Lambda\rangle \ge 0 &\Rightarrow& -\Delta - r \ge 0,
\\[.1cm]
\langle \bar Q v_\Lambda,\bar Qv_\Lambda\rangle 
= \langle v_\Lambda, -S\bar Q v_\Lambda\rangle =
\langle v_\Lambda,(-H+J) v_\Lambda \rangle \ge 0 &\Rightarrow& -\Delta + r \ge 0,
\end{array}
\end{equation}
which using \eqref{complicated} one checks are just eq.\ \eqref{previneq} in this case. The constraint that
has to be added for sufficiency arises from positivity of $\bar S \bar Q$:
\begin{equation}
\begin{split}
\langle \bar S \bar Q v_\Lambda,\bar S \bar Q v_\Lambda \rangle = &
\langle v_\Lambda, -SQ\bar S \bar Q v_\Lambda \rangle 
=
\langle v_\Lambda,\bigl(S(H+J)\bar Q + S\bar S Q\bar Q\bigr)v_\Lambda\rangle \\
\quad  =
\langle v_\Lambda , \bigl(S\bar Q (H&+J-2) + 4 EF\bigr)v_\Lambda\rangle 
=\langle v_\Lambda, \bigl((H-J)(H+J-2) + 4 H \bigr) v_\Lambda \ge 0 
\\[.1cm]
& \Rightarrow (-\Delta-r)(-\Delta+r-2) \ge 0.
\end{split}
\end{equation}
Namely, as long as the first of \eqref{thefirst} is not saturated, \emph{i.e.}, $-\Delta-r> 0$, the second 
is strengthened to $-\Delta+r-2\ge 0$. Otherwise, it is unchanged.
The region of unitarity is 
\begin{equation}
\calC = \{ \Lambda\in\hh^* : (\Lambda+\rho,\alpha_S) = \Delta-r+2 < 0 ,
(\Lambda+\rho,\alpha_Q) = -\Delta-r > 0 \},
\end{equation}
where we have used that $\rho=-\alpha_Q=(1,-1)$ in the standard positive system, and the full set of
unitarity
\begin{equation}
\label{standardset}
\Gamma = \calC \cup 
\{ \Lambda : \Delta = -r, \Delta - r = 2\Delta \le 0 \} 
\cup 
\{ \Lambda : \Delta = r-2, \Delta + r < 0 \}.
\end{equation}
The region and set of unitarity with respect to the non-standard positive system, in which in particular 
$\rho=0$, can be obtained from \eqref{standardset} by an odd reflection \eqref{oddreflection}. We sketch
the state of affairs, and indicate the decomposition of a few representative $L(\Lambda)$ into $\even$-modules,
in Fig.\ \ref{fig:Example}.

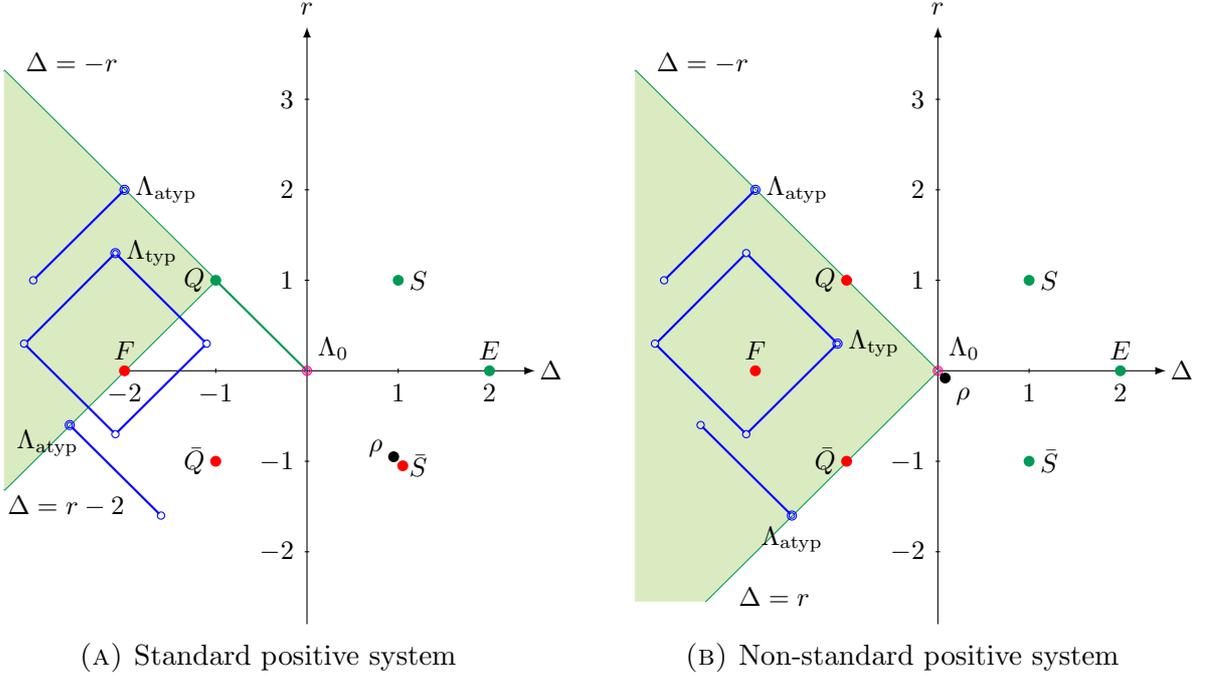
\begin{figure}[ht]
\subfloat[Standard positive system]
{
\begin{tikzpicture}
    [
    x=1cm, y=1cm,  scale=1.2,
    font=\footnotesize,
    >=latex
    ]
\draw[->] (-3.3,0) -- (2.5,0) ;
\foreach \i in {-3,...,-1}
\draw (\i,0.25mm) -- ++ (0pt,-0.5mm) node[below] {$\i$};
\foreach \i in {1,...,2}
\draw (\i,0.25mm) -- ++ (0pt,-0.5mm) node[below] {$\i$};
\draw[->] (0,-2.8) -- (0,3.8);
\foreach \i in {-2,...,-1}
\draw   (0.025,\i) -- ++ (-0.5mm,0pt) node[left] {$\i$};
\foreach \i in {1,...,3}
\draw   (0.025,\i) -- ++ (-0.5mm,0pt) node[left] {$\i$};
\draw[ForestGreen, thick] (-3.32,3.32) -- (0,0);
\draw[ForestGreen, thick] (-3.32,-1.32) -- (-1,1);
\draw (-3.4,-1.5) node[right] {$\Delta = r-2$} ;
\draw (-3.2,3.4) node[right] {$\Delta = -r$} ;
\fill[LimeGreen!30] (-3.32,3.32) -- (-1,1) -- (-3.32,-1.32) -- cycle;
\draw   (-1,1)  node[left] {$Q$};
\draw   (-1,-1)  node[left] {$\bar Q$};
\draw   (1,1)  node[right] {$S$};
\draw   (1,-1.05)  node[right] {$\bar S$};
\draw   (-2,0)  node[above] {${F}$};
\draw   (2,0)  node[above] {$E$};
\draw   (2.55,0)  node {\rlap{$\Delta$}};
\draw   (0,4.0)  node {$r$};
\fill[ForestGreen] (1,1) circle (0.6mm) (-1,1) circle (0.6mm) (2,0) circle (0.6mm);
\fill[red]  (-1,-1) circle (0.6mm) (1.05,-1.05) circle (0.6mm) (-2,0) circle (0.6mm);
\draw (-2.1,1.3) node[right] {$\Lambda_{\rm typ}$} ;
\draw[blue] (-2.1,1.3) circle (0.3mm);
\draw[blue] (-2.1,1.3) circle (0.5mm);
\draw[blue,thick] (-2.08,1.28) -- (-1.12,.32);
\draw[blue] (-1.1,.3) circle (0.4mm);
\draw[blue,thick] (-1.12,.28) -- (-2.08,-.68);
\draw[blue] (-2.1,-.7) circle (0.4mm);
\draw[blue,thick] (-2.12,-.68) -- (-3.08,.28);
\draw[blue] (-3.1,.3) circle (0.4mm);
\draw[blue,thick] (-3.07,.32) -- (-2.12,1.28);
\draw (-2,2) node[right] {$\Lambda_{\rm atyp}$} ;
\draw[blue](-2,2) circle (0.3mm);
\draw[blue] (-2,2) circle (0.5mm);
\draw[blue,thick] (-2.02,1.98) -- (-2.98,1.02);
\draw[blue] (-3,1) circle (0.4mm);

\draw (-3.3,-.8) node[right] {$\Lambda_{\rm atyp}$}; 
\draw[blue](-2.6,-.6) circle (0.3mm);
\draw[blue] (-2.6,-.6) circle (0.5mm);
\draw[blue,thick] (-2.58,-.62) -- (-1.62,-1.58);
\draw[blue] (-1.6,-1.6) circle (0.4mm);

\draw (0,0) node[above right] {$\Lambda_{0}$} ;
\draw[magenta] (0,0) circle (0.5mm);
\draw[magenta] (0,0) circle (0.3mm);

\draw (.95,-.85) node[left] {$\rho$} ;
\fill[black] (.95,-.95) circle (0.6mm);
\end{tikzpicture}
}
\qquad
\subfloat[Non-standard positive system]
{
\begin{tikzpicture}
    [
    x=1cm, y=1cm,  scale=1.2,
    font=\footnotesize,
    >=latex
    ]
\draw[->] (-3.3,0) -- (2.5,0) ;
\foreach \i in {-3,...,-1}
    \draw (\i,0.25mm) -- ++ (0pt,-0.5mm) node[below] {$\i$};
\foreach \i in {1,...,2}
    \draw (\i,0.25mm) -- ++ (0pt,-0.5mm) node[below] {$\i$};
\draw[->] (0,-2.8) -- (0,3.8);
\foreach \i in {-2,...,-1}
    \draw   (0.025,\i) -- ++ (-0.5mm,0pt) node[left] {$\i$};
\foreach \i in {1,...,3}
\draw   (0.025,\i) -- ++ (-0.5mm,0pt) node[left] {$\i$};
\draw[ForestGreen, thick] (-3.32,3.32) -- (0,0);
\draw[ForestGreen, thick] (-2.55,-2.55) -- (0,0);
\draw (-2.3,-2.5) node[right] {$\Delta = r$} ;
\draw (-3.2,3.4) node[right] {$\Delta = -r$} ;
\fill[LimeGreen!30] (-3.32,3.32) -- (0,0) -- (-2.55,-2.55) -- (-3.32,-2.55) -- cycle;
\draw   (-1,1)  node[left] {$Q$};
\draw   (-1,-1)  node[left] {$\bar Q$};
\draw   (1,1)  node[right] {$S$};
\draw   (1,-1)  node[right] {$\bar S$};
\draw   (-2,0)  node[above] {${F}$};
\draw   (2,0)  node[above] {$E$};
\draw   (2.55,0)  node {\rlap{$\Delta$}};
\draw   (0,4.0)  node {$r$};
\fill[ForestGreen]  (1,1) circle (0.6mm) (1,-1) circle (0.6mm) (2,0) circle (0.6mm);
\fill[red]  (-1,1) circle (0.6mm) (-1,-1) circle (0.6mm) (-2,0) circle (0.6mm);
\draw (-1.1,.3) node[right] {$\Lambda_{\rm typ}$} ;
\draw[blue] (-2.1,1.3) circle (0.4mm);
\draw[blue,thick] (-2.08,1.28) -- (-1.12,.32);
\draw[blue] (-1.1,.3) circle (0.3mm);
\draw[blue] (-1.1,.3) circle (0.5mm);
\draw[blue,thick] (-1.12,.28) -- (-2.08,-.68);
\draw[blue] (-2.1,-.7) circle (0.4mm);
\draw[blue,thick] (-2.12,-.68) -- (-3.08,.28);
\draw[blue] (-3.1,.3) circle (0.4mm);
\draw[blue,thick] (-3.07,.32) -- (-2.12,1.28);
\draw (-2,2) node[right] {$\Lambda_{\rm atyp}$} ;
\draw[blue](-2,2) circle (0.3mm);
\draw[blue] (-2,2) circle (0.5mm);
\draw[blue,thick] (-2.02,1.98) -- (-2.98,1.02);
\draw[blue] (-3,1) circle (0.4mm);

\draw (-1.6,-1.6) node[below] {$\Lambda_{\rm atyp}$} ;
\draw[blue](-2.6,-.6) circle (0.4mm);
\draw[blue,thick] (-2.58,-.62) -- (-1.62,-1.58);
\draw[blue] (-1.6,-1.6) circle (0.3mm);
\draw[blue] (-1.6,-1.6) circle (0.5mm);

\draw (0,0) node[above right] {$\Lambda_{0}$} ;
\draw[magenta] (0,0) circle (0.5mm);
\draw[magenta] (0,0) circle (0.3mm);
\draw (0.08,-.28) node[right] {$\rho$} ;
\fill[black] (0.08,-.08) circle (0.6mm);
    \end{tikzpicture}
}
\caption{The set of $\sl(2\vert 1)$-unitarity with respect to two canonical positive systems, together
with the highest weights of the $\sl(2)$-constituents of some representative unitarizable highest weight 
supermodules. The highest weight of each supermodule is indicated by a double circle, and depends on the 
positive system. Also note that the ``trivial'' one-dimensional supermodule with $\Lambda_0=(0,0)$, shown 
in magenta, is 1-atypical in this case, yet has only one $\sl(2)$ constituent. The coordinates are with 
respect to the basis $(H,J)$ of the Cartan subalgebra used in the presentation 
in eq.\ \eqref{standardRelations}.}
\label{fig:Example}
\end{figure}

We conclude this section with the description of the integral points of $\Gamma$, \emph{i.e.}, those 
unitarizable simple supermodules of $\gg$ that integrate to unitary representations of $\SU(p,q) \times 
\SU(n) \times \mathrm{U}(1)$. Indeed, the associated weights of these supermodules are integral as they 
are obtained by the differentiation of a representation of a maximal torus in $\SU(p,q) \times \SU(n) 
\times \mathrm{U}(1)$. To describe them concisely, we define for any integral $\Lambda \in \Gamma_0$ 
the following quantities according to the unitarity conditions in eq.\ \eqref{interlocking}:
\begin{equation}
g_{1}(\Lambda) := \lambda^{m} + \mu^{n}, \qquad
g_{2}(\Lambda) := \lambda^{1} + \mu^{1}, \qquad
g_{3}(\Lambda) := \lambda^{1} - \lambda^{m},
\end{equation}
where the latter value must satisfy
$
g_{3}(\Lambda) \leq -\len_{1}(\Lambda) - \len_{2}(\Lambda)
$
according to eq.\ \eqref{thisset} and our definition of $\len_{1}(\Lambda), \len_{2}(\Lambda)$ in eq.\ 
\eqref{eq::Young_diagrams}. Furthermore, we define $\len_{3}(\Lambda)$ to be the length of the Young diagram
\begin{equation}
Y_{3}(\Lambda) := (\mu^{1} - \mu^{n}, \ldots, \mu^{n-1} - \mu^{n}, 0).
\end{equation}

\begin{theorem}[\cite{furutsu1991classification, SchmidtGeneralizedSuperdimension}] 
\label{thm::classification_integral}
Let $M$ be a simple highest weight $\gg$-supermodule with integral highest weight $\Lambda$ that is $\even$-semisimple. 
Then $M$ is unitarizable if and only if the following two statements hold:
\begin{enumerate}
\item[a)] $\Lambda$ satisfies  
$\lambda^{p+1}\geq \dotsc\geq \lambda^{m}\geq -\mu^{n}\geq \dotsc\geq -\mu^{1}\geq 
\lambda^{1}\geq \dotsc\geq \lambda^{p}.$
\item[b)] $\Lambda$ satisfies one of the following two conditions:
\begin{enumerate}
\item[(i)] $g_{2}(\Lambda) \leq -\len_{1}(\Lambda) - q$ and $g_{1}(\Lambda) \leq \len_{3}(\Lambda)$.
\item[(ii)] $g_{2}(\Lambda) \leq -\len_{1}(\Lambda) - \len_{2}(\Lambda)$ and $g_{1}(\Lambda) = \len_{3}(\Lambda) = 0$.
\end{enumerate}
\end{enumerate}
\end{theorem}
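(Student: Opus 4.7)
The plan is to derive necessity and sufficiency of conditions (a) and (b) from the structure theory developed earlier in this section. For necessity, I begin with Proposition \ref{CompletelyReducible}: unitarizability of $M = L(\Lambda)$ forces $L_0(\Lambda)$ to be a unitarizable $\even$-module, so $\Lambda$ must satisfy the $\kk^\CC$-dominance \eqref{dominance} and non-compact positivity \eqref{second}, yielding \eqref{inequalities}. Unitarity of the full $\gg$-supermodule further requires the odd-root positivity conditions \eqref{positiveOdd} (obtained by evaluating the contravariant Hermitian form on vectors of the form $\bar{X} v_\Lambda$ for $X$ a positive odd root vector, exactly as in the $\sl(2\vert 1)$ calculation in the introduction); combining this with \eqref{inequalities} gives the interlocking \eqref{interlocking}, which is precisely condition (a). Finally, the Enright--Jakobsen constraint \eqref{thisset} on $\uplambda$ combined with integrality cuts out a discrete set of admissible $\uplambda$; expressing $i_0$ and $j_0$ in terms of $\len_1$ and $\len_2$ and rewriting the bound in terms of $g_1, g_2, g_3$ and $\len_3$ produces one of the two alternatives in condition (b).

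For sufficiency, I would apply Theorem \ref{UnitarityD2}: it suffices to verify that every $\even$-constituent $L_0(\Lambda - \Sigma_S)$ of $K(\Lambda)_{\ev}$ is a unitarizable $\even$-module, and that the Dirac inequality is strict on each such constituent with $\Sigma_S \neq 0$. Each $\Sigma_S$ is a sum of distinct positive odd roots $\epsilon_i - \delta_k$, so $\Lambda - \Sigma_S$ remains integral and interlocking-compatible modulo shifts that stay inside Jakobsen's parameterization. The fact that each $\Lambda - \Sigma_S$ still obeys \eqref{thisset} follows from the hypothesis (b) together with a case distinction on which odd roots appear in $S$. For the strict Dirac inequality, when $\Lambda$ lies in the typical region $\calC$, Lemma \ref{lemm::region_C} supplies it directly; when $\Lambda$ is atypical -- on $\partial\calC$ or at an isolated point of $\Gamma$ -- strictness reduces to an explicit positivity check via Proposition \ref{SquareDirac}, ensuring that no $\Lambda - \Sigma_S$ falls onto an additional zero locus of the Kac--Shapovalov form.

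The main obstacle is the combinatorial bookkeeping translating the Jakobsen data $(a_i, b_k, \uplambda, \upalpha)$ into the invariants $g_1, g_2, g_3, \len_1, \len_2, \len_3$, and in particular cleanly separating cases (b)(i) and (b)(ii). Case (ii), which demands $g_1(\Lambda) = \len_3(\Lambda) = 0$, encodes the more restrictive atypical regime in which certain $\mu$-components saturate and particular isotropic odd roots annihilate the highest weight vector nontrivially; confirming that the Dirac inequality remains strict on all non-trivial $\even$-constituents in this singular locus is where the analysis becomes most delicate and where the referenced works \cite{furutsu1991classification, SchmidtGeneralizedSuperdimension} concentrate their detailed computations.
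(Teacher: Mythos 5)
There is a genuine gap in your necessity argument. You claim that condition (b) falls out of the Enright--Jakobsen constraint \eqref{thisset} together with integrality, but \eqref{thisset} is the criterion for unitarizability of the \emph{even} constituent $L_0(\Lambda)$ and constrains only $\uplambda = g_3(\Lambda) = \lambda^1-\lambda^m$ in terms of $\len_1,\len_2$ --- it involves no $\mu$-components whatsoever. The inequalities in (b), by contrast, bound $g_2(\Lambda)=\lambda^1+\mu^1=(\Lambda,\epsilon_1-\delta_1)$ and $g_1(\Lambda)=\lambda^m+\mu^n=(\Lambda,\epsilon_m-\delta_n)$, i.e.\ pairings of $\Lambda$ with \emph{odd} roots, and they are strictly stronger than what (a) plus $\even$-unitarity deliver: condition (a) only gives $g_2\le 0$, whereas (b)(i) demands $g_2\le-\len_1-q$. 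These sharper bounds arise from positivity of the full super Kac--Shapovalov form on vectors obtained by applying \emph{products} of odd negative root vectors to $v_\Lambda$ --- exactly the phenomenon shown in the paper's $\sl(2\vert 1)$ example, where $\lVert \bar S\bar Q v_\Lambda\rVert^2\ge 0$ strengthens the first-order condition $-\Delta+r\ge 0$ to $-\Delta+r-2\ge 0$ away from the saturation locus. Your sketch never performs (or even invokes) this higher-order super determinant analysis, so as written it establishes only (a) together with the $\even$-condition on $g_3$, which is weaker than the stated classification.

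The sufficiency half is likewise only an outline: verifying that every $\even$-constituent $L_0(\Lambda-\Sigma_S)$ is unitarizable and that the Dirac inequality is strict for $\Sigma_S\neq 0$ \emph{under hypothesis (b)} is precisely the hard content --- this is where the dichotomy (b)(i) versus (b)(ii), the saturation $g_1(\Lambda)=\len_3(\Lambda)=0$, and the boundary and isolated points of $\Gamma$ enter --- and ``a case distinction on which odd roots appear in $S$'' plus ``an explicit positivity check'' defers rather than supplies the argument. Note also that the paper itself does not prove this theorem; it is imported from \cite{furutsu1991classification, SchmidtGeneralizedSuperdimension}, and the determinant/Dirac computations you postpone are exactly what those references contain, so your proposal cannot be counted as a proof in its current form.
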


\subsection{Fragmentation and recombination}
\label{fragrecom}

If $L(\Lambda)$ is unitarizable, but $\Lambda$ is atypical (\emph{i.e.}, $\Lambda\in\Gamma\setminus \calC)$, the 
Kac supermodule $K(\Lambda)$ is not simple, but also does not split into a direct sum of simple supermodules. 
Instead, as $K(\Lambda)$ is of highest weight type, it has a Jordan--Hölder series 
$K_0= K(\Lambda) \supsetneq K_1 \supsetneq \cdots \supsetneq K_M=0$ such that each $K_{i-1}/K_{i}$ is isomorphic to
a simple highest weight $\gg$-supermodule $L(\Lambda_i)$, with highest weight $\Lambda_i$ of the form 
$\Lambda-\Sigma_S$ where $\Sigma_S := \sum_{\alpha\in S} \alpha$ is a sum of mutually distinct
odd positive roots, for some $S\subseteq \Phi_1^+$ (\emph{cf.\ } discussion around Theorem \ref{UnitarityD2}).
The $\Lambda_i$ will all be atypical, but it is difficult to describe in general how many and which 
ones occur. Also the composition factors $L(\Lambda_i)=K_{i-1}/K_i$ for $i=2,\ldots,M$ need not be 
unitarizable. It is an instructive exercise to show that this does not happen within the unitarity 
bound.\footnote{In the example, $K(-\frac 12,\frac 12)$ has unitarizable top quotient, but, if 
defined with respect to the standard positive system, $K_1=L(\frac 12,-\frac 12)$, which is not 
unitarizable. On the other hand, $K(\alpha_Q)$ has factors $K_1/K_2= L(0,0)$, $K_2=L(\alpha_F)$, which 
are both unitarizable.}

\begin{lemma} \label{lemm::recombination_rules}
For $n>1$, the simple highest weight supermodules in the Jordan--Hölder series of $K(\Lambda)$ for 
$\Lambda \in \partial \calC$ are 
\begin{enumerate} 
\item $L(\Lambda)$ and $L(\Lambda+\epsilon_{p}-\delta_{1})$ if $\Lambda \in \partial \calC$ satisfies 
only $(\Lambda+\rho,\epsilon_{p}-\delta_{1})=0$.
\item $L(\Lambda)$ and $L(\Lambda-\epsilon_{p+1}+\delta_{n})$ if $\Lambda \in \partial \calC$ satisfies 
only $(\Lambda+\rho,\epsilon_{p+1}-\delta_{n})=0$.
\item $L(\Lambda), L(\Lambda+\epsilon_{p}-\delta_{1})$, $L(\Lambda-\epsilon_{p+1}+\delta_{n})$, 
and $L(\Lambda+(\epsilon_{p}-\delta_{1})+(-\epsilon_{p+1}+\delta_{n}))$ if $\Lambda \in \partial 
\calC$ satisfies both $(\Lambda+\rho,\epsilon_{p}-\delta_{1})=0$ and $(\Lambda+\rho,\epsilon_{p+1}-\delta_{n})=0$.
\end{enumerate}
\end{lemma}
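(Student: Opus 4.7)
The plan is to combine three structural ingredients: (a) Proposition \ref{PropertiesHermitianForm}(3), identifying the maximal proper sub supermodule of $K(\Lambda)$ with the radical of the Kac--Shapovalov form; (b) the linkage principle derived from the description of $\Im(\HC)$ preceding \eqref{Alambda}, which forces any Jordan--H\"older factor $L(\Lambda')$ of $K(\Lambda)$ to satisfy $\chi_{\Lambda'}=\chi_\Lambda$ and hence $\Lambda'+\rho\in w\bigl(\Lambda+\rho+\Span_\CC(A_\Lambda)\bigr)$ for some $w$ in the Weyl group; and (c) the known structure of atypical Kac supermodules for $\sl(m\vert n)$, which bounds the composition length of $K(\Lambda)$ by $2^{\at(\Lambda)}$.

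First I would read off the atypicality from Lemma \ref{immediateLemma}. In cases (1) and (2), $\at(\Lambda)=1$ with $A_\Lambda=\{\alpha\}$ for $\alpha=\epsilon_p-\delta_1$ or $\alpha=\epsilon_{p+1}-\delta_n$, respectively. In case (3), the two roots are orthogonal---the assumption $n>1$ enters precisely here, ensuring $1\neq n$ and hence $(\epsilon_p-\delta_1,\epsilon_{p+1}-\delta_n)=0$---so they span a two-dimensional isotropic subspace of $\Span_\CC(A_\Lambda)$ and $\at(\Lambda)=2$. The composition series thus has length at most $2$ in cases (1) and (2), and at most $4$ in case (3); it remains to identify the additional highest weights and check their multiplicities.

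The candidate weights are obtained by intersecting the linkage affine subspace from (b) with the set of $\kk^\CC$-dominant integral weights occurring in sub supermodules of $K(\Lambda)$. Using Jakobsen's parameterization \eqref{jakobsenpara} together with the boundary conditions $(\Lambda+\rho,\epsilon_p-\delta_1)=0$ and/or $(\Lambda+\rho,\epsilon_{p+1}-\delta_n)=0$, a direct computation shows that the admissible integral shifts of $\Lambda$ along $A_\Lambda$ yield exactly the weights listed in the statement. To verify multiplicity one for each, I would compare characters: $\operatorname{ch}K(\Lambda)$ is explicit from the PBW decomposition $K(\Lambda)\cong\UE(\gg_{-1})\otimes L_0(\Lambda)$, while the character of each $L(\Lambda_i)$ is controlled by the $\even$-decomposition in Theorem \ref{UnitarityD2}, and matching the two sides fixes both the factors and their multiplicities.

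The main obstacle is to actually produce the singular vectors realizing each predicted composition factor, rather than merely bounding the length. For $\at(\Lambda)=1$ this amounts to exhibiting one explicit singular vector in $K(\Lambda)$ at the shifted weight, which is obtainable by a direct computation with products of negative odd root vectors acting on $v_\Lambda$. For $\at(\Lambda)=2$ in case (3) one must further verify that the two sub supermodules generated at $\Lambda+(\epsilon_p-\delta_1)$ and at $\Lambda-(\epsilon_{p+1}-\delta_n)$ intersect in a single copy of $L\bigl(\Lambda+(\epsilon_p-\delta_1)-(\epsilon_{p+1}-\delta_n)\bigr)$ at the bottom. A more conceptual route circumventing the explicit construction is via the Duflo--Serganova functor from section \ref{SectionDS}: $\DS_x$ for a root vector $x$ attached to some $\alpha\in A_\Lambda$ is exact and reduces the atypicality by one, so case (3) follows from the $1$-atypical statements by induction on $\at(\Lambda)$.
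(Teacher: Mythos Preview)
The paper states this lemma without proof, so there is no argument in the text to compare against; the result is implicitly imported from the literature on Kac modules for type~I Lie superalgebras (e.g.\ Jakobsen's analysis in \cite{jakobsen1994full} and the general Brundan--Serganova theory). Your outline is a reasonable reconstruction, and the use of Lemma~\ref{immediateLemma} to pin down $\at(\Lambda)\in\{1,2\}$ together with the orthogonality check $(\epsilon_p-\delta_1,\epsilon_{p+1}-\delta_n)=0$ for $n>1$ is exactly the right first move.

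That said, two of your steps do not go through as written. First, the inductive reduction of case~(3) to the $1$-atypical cases via $\DS_x$ relies on the claim that the DS functor is exact. Proposition~\ref{DSTensorFunctor}(b) in the paper only yields middle exactness together with a five-term sequence involving an unspecified $\gg_x$-supermodule $E$; without controlling $E$ you cannot read off the composition factors of $K(\Lambda)$ from those of $\DS_x(K(\Lambda))$. You would additionally need that $\DS_x$ sends Kac modules to Kac modules, which is true but is neither stated nor proved in the paper. Second, your proposed character comparison leans on Theorem~\ref{UnitarityD2} to determine $\operatorname{ch} L(\Lambda_i)$, but that theorem only gives a \emph{lower bound} on the set of $\even$-constituents (``the sum extends \emph{at least} over all non-empty $S\subseteq\Phi_1^+\setminus A_\Lambda$''), so matching $\operatorname{ch}K(\Lambda)$ against $\sum_i\operatorname{ch}L(\Lambda_i)$ does not close without further input. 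The cleaner route, which is what the external literature actually does, is to invoke the known Jordan--H\"older structure of $1$- and $2$-atypical Kac modules for $\gl(m|n)$ directly rather than reassembling it from the tools internal to this paper.
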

As a consequence, letting
\begin{equation}
\label{assGraded}
\gr K(\Lambda) := \bigoplus_{i=1}^{M} K_{i-1}/K_{i} \cong \bigoplus_{i=1}^{M}L(\Lambda_{i}),
\end{equation}
denote the graded $\gg$-supermodule associated to the Jordan--H\"older series,\footnote{
Note that the parity of the $L(\Lambda_i)$ is determined by the parity of $K(\Lambda)$ and 
the number of odd roots in $\Sigma_S=\Lambda-\Lambda_i$. This is not necessarily equal to the grading 
induced from the Jordan--H\"older series, which in general is neither $\ZZ/2$-graded (nor unique for 
that matter).} we obtain,
\begin{lemma} 
\label{lemm::unitarity_bound_unitary}
Let $\Lambda \in \overline{\calC}$. Then $\gr K(\Lambda)$ is a unitarizable $\gg$-supermodule.
\end{lemma}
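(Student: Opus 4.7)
The plan is to split on whether $\Lambda$ lies in the interior $\calC$ or on the boundary $\partial\calC$. In the interior case, $\Lambda$ is typical by Lemma \ref{immediateLemma}(1), so $K(\Lambda)\cong L(\Lambda)$ is already simple; its Jordan--Hölder series is trivial and $\gr K(\Lambda)=L(\Lambda)$, which is unitarizable by hypothesis ($\Lambda \in \overline{\calC}\subseteq \Gamma$).

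The substance is the boundary case $\Lambda\in\partial\calC$. Since the category of unitarizable $\gg$-supermodules is closed under finite direct sums, it suffices to show that each of the (at most four) composition factors enumerated in Lemma \ref{lemm::recombination_rules} is unitarizable. The factor $L(\Lambda)$ itself is unitarizable by hypothesis, so the task reduces to checking unitarity of the shifted highest weights
\[
\Lambda'=\Lambda+\epsilon_p-\delta_1,\quad
\Lambda''=\Lambda-\epsilon_{p+1}+\delta_n,\quad
\Lambda'''=\Lambda+(\epsilon_p-\delta_1)-(\epsilon_{p+1}-\delta_n),
\]
each arising only when the relevant pairing $(\Lambda+\rho,\cdot)$ vanishes on the corresponding odd root.

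The first observation is that shifts by isotropic roots preserve atypicality in the same direction: since $(\epsilon_p-\delta_1,\epsilon_p-\delta_1)=0$ and $(\epsilon_p-\delta_1,\epsilon_{p+1}-\delta_n)=0$, one computes $(\Lambda'+\rho,\epsilon_p-\delta_1)=(\Lambda+\rho,\epsilon_p-\delta_1)=0$, and similarly for the other shifts. Thus the shifted weights retain the defining saturation condition of $\partial\calC$. To pass from saturation to unitarity, I would translate the shifts into Jakobsen's parameterization \eqref{jakobsenpara} and check case by case that: the interlocking chain \eqref{interlocking} survives (using the saturations, which force equalities linking the endpoints of adjacent blocks, so that decrementing $\mu^1$ and incrementing $\lambda^p$, or their analogues, keeps the chain intact); the $\kk^{\CC}$-dominance conditions \eqref{dominance} remain integral and non-negative; and the discrete Kac--Shapovalov determinant condition \eqref{thisset} is preserved (here both $\Lambda$ and the shifted weight lie on the same arithmetic progression of zeros, because shifts by $\epsilon_p-\delta_1$ or $\epsilon_{p+1}-\delta_n$ change $\uplambda$ by an integer matched by a compensating change of the spin/$R$-symmetry data).

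Alternatively, and perhaps more conceptually, one may approach $\Lambda$ from the interior by a one-parameter family $\Lambda^{(t)}=\Lambda+t\xi\in\calC$ ($t>0$, $\xi$ chosen transverse to $\partial\calC$). Then $K(\Lambda^{(t)})=L(\Lambda^{(t)})$ is unitarizable for $t>0$, and continuity of the Kac--Shapovalov form in $\Lambda$ together with additivity of characters, $\operatorname{ch} K(\Lambda)=\sum_i \operatorname{ch} L(\Lambda_i)$, shows that the limiting form is positive semi-definite on $K(\Lambda)$ and descends non-trivially to each graded piece. The main obstacle I anticipate lies precisely in making this continuity argument yield \emph{strict} positivity on each subquotient rather than only on the top $L(\Lambda)$; this is why the explicit case-by-case verification via Lemma \ref{lemm::recombination_rules} and the explicit description of $\Gamma^{(a,b)}$ seems the safest route, with the Dirac criterion of Theorem \ref{UnitarityD2} as a secondary check when $\even$-unitarity of the constituents $L_0(\Lambda_i-\Sigma_S)$ needs to be confirmed.
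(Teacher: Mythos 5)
Your proof follows essentially the same route as the paper, which obtains this lemma directly as a consequence of Lemma~\ref{lemm::recombination_rules}: handle $\Lambda\in\calC$ via Lemma~\ref{immediateLemma}, and for $\Lambda\in\partial\calC$ check that the finitely many composition factors listed there have highest weights that remain in the set of $\gg$-unitarity (the paper leaves this last verification as the ``instructive exercise'' noted before Lemma~\ref{lemm::recombination_rules}, and your observation that the isotropic shifts preserve the saturation conditions, combined with a case-by-case check in Jakobsen's parameterization, is exactly the intended argument). Your alternative continuity argument is not what the paper uses here, and you correctly flag its weak point (semi-definiteness in the limit), so relying on the explicit verification is the right call.
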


If $L(\Lambda)=K_0/K_1$ is unitarizable, then $K_1$ is the radical of the Kac--Shapovalov form on $K(\Lambda)$ 
by Proposition \ref{PropertiesHermitianForm}. As a consequence, assuming the factors $L(\Lambda_i)$
for $i=2,\ldots,M$ remain unitarizable, their Hermitian form is \emph{not} induced from the 
Kac--Shapovalov form on $K(\Lambda)$. Remarkably \cite{kinney2007index}, this can be remedied in a 
natural way in the consideration of \emph{continuous families} of unitary representations in the
sense of section \ref{subsec::assumptions}. Concretely, let $(\Lambda^{(k)})_{k=1,2,\ldots}\subset \calC$ 
be a sequence of typical weights with unitarizable highest weight modules $L(\Lambda^{(k)})$, and 
$\lim_{k\to\infty}\Lambda^{(k)}=\Lambda_0\in\partial\calC$ at the unitarity bound, where the limit
is taken in the usual topology on $\hh^*$. Then, viewing 
the $L(\Lambda^{(k)})=K(\Lambda^{(k)})$ as a sequence of $\gg$-supermodule structures on a fixed 
underlying (pre-)Hilbert space $\HH$, it can be seen that the associated weight space decomposition (Lemma 
\ref{lemm::properties_HWM} (b) and Proposition \ref{PropertiesHermitianForm} (b)) agrees in the limit 
$k\to\infty$ with the weight space decomposition on the (reducible!) super\-module $K(\Lambda_0)$. In 
particular, this induces a non-degenerate Hermitian form on $K(\Lambda_0)\cong\HH$, which by uniqueness
must agree with the Kac--Shapovalov form on the composition factors. In the sense of section 
\ref{subsec::assumptions}, the assignment $\Lambda\mapsto \gr K(\Lambda)$ is continuous on 
$\overline\calC$, and
\begin{equation}
\label{fragDef}
\lim_{\Lambda\to\Lambda_0} \gr K(\Lambda) = \gr K(\Lambda_0).
\end{equation}

Following physics terminology, we think of the ``filling in'' of $\gr K(\Lambda_0)$ in a continuous
family as the \emph{fragmentation of the long supermultiplet $K(\Lambda)$ as $\Lambda\to\Lambda_0$ hits the 
unitarity bound}, or conversely as \emph{recombination of the short constituents when $\Lambda$ moves 
away from it}. It is important to emphasize that this \emph{does not} imply that the constituents of $\gr 
K(\Lambda_0)$ can only appear together in a continuous family. Quite to the contrary, the ``fragments'' 
can move around independently from each other, as long as their highest weight remains in $\Gamma\setminus\calC$.
In principle, they can also undergo further fragmentation in codimension 2, but we leave 
a complete description of this possibility for future work. If $n>1$ and $\Lambda_0\in\overline\calC$, Lemma 
\ref{lemm::recombination_rules}, implies
\begin{equation} 
\lim_{\Lambda \to \Lambda_{0}} \gr K(\Lambda) = \begin{cases}
L(\Lambda_{0}) \oplus L(\Lambda_{0} -\gamma_{1}) &\text{if} \ (\Lambda_{0} + \rho, \gamma_{1}) = 0, \ 
(\Lambda_{0} + \rho, \gamma_{2}) \neq 0, 
\\
L(\Lambda_{0}) \oplus L(\Lambda_{0} - \gamma_{2}) &\text{if} \ (\Lambda_{0} + \rho, \gamma_{1}) \neq 0, \ 
(\Lambda_{0} + \rho, \gamma_{2}) = 0, 
\\
\begin{array}{@{}l}
L(\Lambda_{0}) \oplus L(\Lambda_{0} -\gamma_{1}-\gamma_2)   \\ 
\quad\oplus L(\Lambda_{0} - \gamma_1) \oplus L(\Lambda_0-\gamma_2) 
\end{array}
&\text{if} \ 
(\Lambda_{0} + \rho, \gamma_{1}) = 0, \ (\Lambda_{0} + \rho, \gamma_{2}) = 0,
\end{cases}
\end{equation}
where $\gamma_{1} := -\epsilon_{p} + \delta_{1}$ and $\gamma_{2} := \epsilon_{p+1} - \delta_{n}$, 
and the $\ZZ/2$-grading is left implicit, as usual.

The fact that unitarizable simple $\gg$-supermodules with atypical highest weight (\emph{i.e.,} $\Lambda\in 
\Gamma\setminus\calC$) can not by themselves ``move back into $\calC$'', and specifically that their 
conformal dimension is completely determined by their superconformal R-charge, is expressed in physics 
as the statement that \emph{short supermultiplets are protected}. Supermodules that do not participate at all 
in any fragmentation/recombination process are called \emph{absolutely protected}. 
\label{absProt} By the above, 
a unitarizable simple supermodule is absolutely protected if and only if it does not appear as composition 
factor in the Jordan--H\"older series of $K(\Lambda_0)$ for any $\Lambda_0\in \partial\calC$. More explicitly, we
can describe this as follows.

\begin{lemma}
\label{absProtLemma}
Let $L(\Lambda)$ be a unitarizable simple highest weight $\gg$-supermodule. Then $L(\Lambda)$ is
absolutely protected if and only if $(\Lambda+\rho, \epsilon_{p}-\delta_{1})>0$ and
$(\Lambda+\rho, \epsilon_{p+1}-\delta_{n})<0$ holds.
\end{lemma}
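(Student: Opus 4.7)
The proof rests on the explicit description of Jordan--H\"older composition factors of $K(\Lambda_0)$ for $\Lambda_0 \in \partial\calC$ given in Lemma~\ref{lemm::recombination_rules}, together with two elementary facts about the two odd roots $\gamma_1 := \epsilon_p - \delta_1$ and $\gamma_2 := \epsilon_{p+1} - \delta_n$: both are isotropic (as are all odd roots of $\slmn$), and for $n > 1$ they are mutually orthogonal since their supporting indices $\{p, 1\}$ and $\{p+1, n\}$ are pairwise disjoint. Hence $(\gamma_i, \gamma_j) = 0$ for all $i, j \in \{1, 2\}$.

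For the sufficiency direction $(\Leftarrow)$, I assume the two strict inequalities and argue by contradiction. Suppose $L(\Lambda)$ is a composition factor of some $K(\Lambda_0)$ with $\Lambda_0 \in \partial\calC$; Lemma~\ref{lemm::recombination_rules} forces $\Lambda = \Lambda_0 + v$ for $v \in \{0,\, \gamma_1,\, -\gamma_2,\, \gamma_1 - \gamma_2\}$, together with the corresponding vanishing of $(\Lambda_0 + \rho, \gamma_i)$ for the indices $i$ featured in the shift. Using $(\gamma_i, \gamma_j) = 0$ throughout, a one-line check in each of the four sub-cases shows that at least one of $(\Lambda+\rho, \gamma_1)$ or $(\Lambda+\rho, \gamma_2)$ vanishes, contradicting the assumed strict inequalities.

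For the necessity direction $(\Rightarrow)$, I argue the contrapositive by inspection of the possible signs of the two inner products for unitarizable $\Lambda$. If $\Lambda \in \calC$ (opposite signs, so $\Lambda$ is typical) the long supermultiplet $L(\Lambda) = K(\Lambda)$ participates in the recombination process at the boundary, as discussed in section~\ref{fragrecom}, hence is not absolutely protected in the physical sense underlying the terminology. If $\Lambda \in \partial\calC$ (at least one inner product vanishes, compatibly with $\Lambda \in \overline\calC$) then $L(\Lambda)$ is itself the simple top of $K(\Lambda)$ and is trivially a composition factor of it. The remaining \emph{mixed} sign configurations, for instance $(\Lambda+\rho, \gamma_1) = 0$ combined with $(\Lambda+\rho, \gamma_2) < 0$, translate via the explicit identities $(\Lambda+\rho, \gamma_1) = \uplambda/2 + a_p - b_1 + q + 2$ and $(\Lambda+\rho, \gamma_2) = -\uplambda/2 + a_{p+1} + q + n$ into specific values of Jakobsen's parameter $\uplambda$ in~\eqref{jakobsenpara} that are either large positive or otherwise inconsistent with the admissible Kac--Shapovalov unitarity range~\eqref{thisset}, so they do not arise for unitarizable $L(\Lambda)$.

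The main obstacle is the last step: ruling out all the mixed sign configurations requires matching the linear equalities and inequalities imposed by the hypothesis on $(\Lambda+\rho, \gamma_i)$ against the discrete/continuous unitary spectrum~\eqref{thisset} for each combination of spin parameters $a_i$ and R-symmetry parameters $b_k$ from~\eqref{jakobsenpara}. The computation is conceptually straightforward, as both inner products depend linearly on $\uplambda$ with opposite slopes and on disjoint subsets of the compact data, but the case analysis across admissible $(p, q, n)$ is tedious and should be tabulated in the final write-up as an auxiliary computation underlying the necessity direction.
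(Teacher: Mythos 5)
The sufficiency direction of your argument is sound and is the intended one: under $(\Lambda+\rho,\epsilon_p-\delta_1)>0$ and $(\Lambda+\rho,\epsilon_{p+1}-\delta_n)<0$ the weight is atypical by Lemma \ref{lemm::region_C}, and since the odd roots are isotropic and (for $n>1$) $\epsilon_p-\delta_1\perp\epsilon_{p+1}-\delta_n$, every Jordan--H\"older factor listed in Lemma \ref{lemm::recombination_rules} has at least one of the two pairings equal to zero, so $L(\Lambda)$ cannot occur in $\gr K(\Lambda_0)$ for any $\Lambda_0\in\partial\calC$. (Note that Lemma \ref{lemm::recombination_rules} is stated only for $n>1$, and for $n=1$ the orthogonality fails, so the case $n=1$ needs at least a remark.)

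The necessity direction has a genuine gap. Your trichotomy ``$\Lambda\in\calC$, $\Lambda\in\partial\calC$, all remaining sign patterns are excluded by unitarity'' is not correct: the paper stresses that $\partial\calC$ is strictly smaller than the locus in $\Gamma$ where one of the two pairings vanishes, and its own $\su(1,1\vert 1)$ example exhibits a unitarizable weight, $\Lambda=(-\tfrac12,\tfrac12)$ in $(\Delta,r)$-coordinates, with $(\Lambda+\rho,\epsilon_p-\delta_1)=1>0$ and $(\Lambda+\rho,\epsilon_{p+1}-\delta_n)=0$, which lies in $\Gamma\setminus\overline{\calC}$. So ``remaining'' configurations do arise, and for them the lemma is proved not by non-existence but by exhibiting a boundary weight $\Lambda_0=\Lambda\pm(\epsilon_p-\delta_1)$ or $\Lambda\pm(\epsilon_{p+1}-\delta_n)$ with $\Lambda_0\in\partial\calC$ (here one checks $L(-\tfrac12,\tfrac12)$ occurs in $\gr K(-\tfrac32,\tfrac32)$), which requires verifying unitarizability of $\Lambda_0$ and membership in $\overline{\calC}$ --- steps absent from your proposal. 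Conversely, for $n>1$ the same orthogonality you use in sufficiency shows that a unitarizable weight with, say, one pairing zero and the other of the ``wrong'' strict sign, or both pairings on the wrong side, could never occur in any boundary $\gr K(\Lambda_0)$; so for those configurations the lemma forces exactly the non-existence statement you assert, and this must actually be extracted from the classification of $\Gamma$. Your route to it is moreover based on incorrect identities: since $\tfrac{\upalpha}{2}(1,\ldots,1\vert 1,\ldots,1)$ pairs to $\upalpha$ with every odd root, one has $(\Lambda+\rho,\epsilon_p-\delta_1)=\tfrac{\uplambda}{2}+\upalpha+a_p+b_1+q$ and $(\Lambda+\rho,\epsilon_{p+1}-\delta_n)=-\tfrac{\uplambda}{2}+\upalpha+a_{p+1}+q-n$, so both quantities contain the free parameter $\upalpha$ with the same coefficient (and your signs and constants are off), and the sign pattern cannot be pinned to ``specific values of $\uplambda$'' to be compared with \eqref{thisset}. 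Since you explicitly defer the whole case analysis, the necessity direction is unproven as it stands; this bookkeeping at and beyond the unitarity bound is precisely where the content of the lemma lies.
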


\begin{corollary} \label{cor::maximal_protected_gleich_absolutely_protected}
If $m,n \geq 3$, then maximally atypical unitarizable simple highest weight $\gg$-supermodules $L(\Lambda)$ 
are absolutely protected. 
\end{corollary}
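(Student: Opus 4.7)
The plan is to invoke Lemma~\ref{absProtLemma}, reducing the claim to establishing both strict inequalities $(\Lambda+\rho,\epsilon_p-\delta_1) > 0$ and $(\Lambda+\rho,\epsilon_{p+1}-\delta_n) < 0$ for a maximally atypical unitarizable $L(\Lambda)$ when $m,n \geq 3$. The first half of the argument is nearly automatic: maximal atypicality means $\at(\Lambda) = \min(m,n) \geq 3$, while Lemma~\ref{immediateLemma}(2) ensures that every weight in $\partial\calC$ has atypicality $1$ or $2$. Hence $\Lambda \notin \partial\calC$, and since atypicality also excludes $\Lambda \in \calC$, we conclude $\Lambda \in \Gamma \setminus \overline{\calC}$. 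This places $\Lambda$ strictly outside $\overline{\calC}$ and yields at least one of the two desired strict inequalities.

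To promote ``at least one'' to ``both'', I would analyze the combinatorial structure of $A_\Lambda$. The inner product $(\epsilon_i-\delta_k,\epsilon_{i'}-\delta_{k'}) = \delta_{ii'} + \delta_{kk'}$ forces a maximal isotropic subset of $A_\Lambda$ to consist of roots with pairwise distinct $i$-indices and pairwise distinct $k$-indices. Taking $n \leq m$ without loss of generality (the case $m\leq n$ follows by Galois symmetry exchanging $\omega_+$ and $\omega_-$ together with the symmetric roles of $\epsilon$ and $\delta$), the $n$ $k$-indices exhaust $\{1,\ldots,n\}$. The function $f(i,k) := (\Lambda+\rho,\epsilon_i-\delta_k)$ is strictly decreasing in $i$ within each compact block of rows $\{1,\ldots,p\}$ and $\{p+1,\ldots,m\}$ (by compact dominance \eqref{dominance}) and strictly decreasing in $k$ (from $\mu^k - \mu^{k+1} \geq 0$), so the zero-locus of $f$ within each block forms a staircase in which increasing $i$ is paired with decreasing $k$.

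The main obstacle will be a case distinction on how the $n$ atypical pairs split between the two compact blocks as $s_1 + s_2 = n$ with $s_j$ pairs in block $j$. The target is to rule out $f(p,1) = 0$ and $f(p+1,n) = 0$, and moreover that these values take the right strict sign. For each split, I plan to combine the staircase inequalities with the interlocking chain \eqref{interlocking} $\lambda^{p+1} \geq \cdots \geq \lambda^m \geq -\mu^n \geq \cdots \geq -\mu^1 \geq \lambda^1 \geq \cdots \geq \lambda^p$, the integrality constraints on $a_i$ and $b_k$, and the half-line/Young-diagram conditions from Theorem~\ref{thm::classification_integral}, to show that the boundary values are incompatible with $n \geq 3$ mutually orthogonal atypical roots. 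The delicate point is that either ``corner'' condition $\gamma_i \in A_\Lambda$ is combinatorially compatible with $\min(m,n)-1$ further orthogonal atypicalities in the $(m-1) \times (n-1)$ subgrid, so ruling out the boundary really does need the unitarity data and not just the root-system bookkeeping — this is where I expect the calculation to be most involved.
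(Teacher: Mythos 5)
Your proposal has a genuine gap at its core. Reducing the claim to Lemma~\ref{absProtLemma} means you must prove \emph{both} strict inequalities $(\Lambda+\rho,\epsilon_p-\delta_1)>0$ and $(\Lambda+\rho,\epsilon_{p+1}-\delta_n)<0$, but your first paragraph only delivers $\Lambda\in\Gamma\setminus\overline{\calC}$. That does not even yield ``at least one of the two desired strict inequalities'': being outside $\calC$ only gives $(\Lambda+\rho,\epsilon_p-\delta_1)\ge 0$ or $(\Lambda+\rho,\epsilon_{p+1}-\delta_n)\le 0$, and being outside $\partial\calC$ does not force either pairing to be nonzero, since $\partial\calC$ is explicitly only a proper subset of the vanishing locus $\{(\Lambda+\rho,\epsilon_p-\delta_1)=0\ \text{or}\ (\Lambda+\rho,\epsilon_{p+1}-\delta_n)=0\}$. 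The entire burden of the argument therefore sits in your second and third paragraphs, where you only outline a case analysis (staircase structure of $A_\Lambda$, the interlocking chain \eqref{interlocking}, Theorem~\ref{thm::classification_integral}) and yourself concede that ruling out the ``corner'' atypicalities $\gamma_1,\gamma_2\in A_\Lambda$ genuinely needs the unitarity data and is the most involved step. As written, that step is not carried out, so the proof is incomplete.

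The paper takes a much shorter and different route that bypasses Lemma~\ref{absProtLemma} and all of the unitarity-region combinatorics. It goes back to the definition of absolute protection (non-appearance as a composition factor of $K(\Lambda_0)$ for $\Lambda_0\in\partial\calC$) and simply counts atypicality: by Lemma~\ref{immediateLemma}(2) and Lemma~\ref{lemm::recombination_rules}, together with the orthogonality $(\gamma_1,\gamma_2)=0$, every composition factor of such a $K(\Lambda_0)$ has degree of atypicality $1$ or $2$, whereas $\at(\Lambda)=\min(m,n)\ge 3$, so $L(\Lambda)$ cannot occur. Your own first paragraph already contains essentially this atypicality bookkeeping; had you applied it to the composition factors of boundary Kac supermodules instead of trying to verify the inequality criterion directly, you would have been done without the staircase analysis. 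If you do want to complete your route, you must actually prove that a maximally atypical weight in $\Gamma$ cannot satisfy $(\Lambda+\rho,\epsilon_p-\delta_1)=0$ or have the wrong sign (and similarly for $\epsilon_{p+1}-\delta_n$), which is precisely the part you deferred.
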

\begin{proof}
Since $m, n \geq 3$, it follows from the assumption that $\at(\Lambda) > 2$. Consider the roots
$\gamma_1 := -\epsilon_p + \delta_1$ and $\gamma_2 := \epsilon_{p+1} - \delta_n$. Then $(\gamma_1, 
\gamma_2) = 0$, showing that $\gamma_1$ and $\gamma_2$ are orthogonal. This implies that the $\gg$-supermodules 
appearing in the Jordan--Hölder series of $K(\Lambda_0)$, for $\Lambda_0 \in \partial \calC$, all share the same 
degree of atypicality — specifically, either $1$ or $2$ (see Lemma \ref{lemm::recombination_rules}). In particular, 
this shows that $L(\Lambda)$ cannot occur as a simple quotient in the Jordan--Hölder series of $K(\Lambda_0)$ for 
any $\Lambda_0 \in \overline{\calC}$.
\end{proof}

\section{Duflo-Serganova Functor} 
\label{SectionDS}

The \emph{Duflo--Serganova (DS) functor} is a symmetric monoidal tensor functor that serves as a powerful tool 
for studying the representation category of Lie superalgebras \cite{duflo2005associated}.  The DS functor  
is associated with any odd square-zero element and relates the $\ZZ/2$-graded 
representation theory of Lie superalgebras of different dimensions in a cohomological manner. In mathematical 
physics, the DS functor appears as a representation theoretic analogue of the so-called twisting procedure 
that reduces a (Lagrangian) supersymmetric field theory to a much simpler \emph{topological} or 
\emph{holomorphic} version \cite{WittenMorse,Witten:1988ze,CostelloNotes,SaberiWilliams}. In fact, the relation
between the two concepts should be much more than an analogy, with the superconformal index as a very
concrete link. We here give a brief introduction to the DS functor, and describe its extension to the unitarizable 
subcategory. Our exposition follows mostly 
\cite{duflo2005associated, gorelik2022duflo, SchmidtGeneralizedSuperdimension}.

\subsection{The algebraic theory}
\label{DScomplex}

The DS functor can be attached quite generally to any odd element $x\in\gg_1$ of a complex Lie superalgebra 
$\gg=\even \oplus \odd$ that ``squares to zero'', in the sense that $x^2 := \frac 12 [x,x]=0$. By the invariance 
of the bracket, any homological constructions will be equivariant with respect to the adjoint action of the 
Lie group $G_0$, attached to $\even$. This makes it natural to adopt a geometric point of view 
\cite{gruson2000ideal}.

\begin{definition}  
\label{nilpotenceDef}
The \emph{self-commuting variety} of a Lie superalgebra $\gg$ is the algebraic variety  
$$  
\YY := \bigl\{ x \in \odd : [x,x] = 0 \bigr\}.  
$$  
\end{definition}  

In the context of Lagrangian field theories with super Poincar\'e symmetry, the self-commuting variety has
also been called \emph{nilpotence variety} \cite{Eager:2018dsx,Elliott:2020ecf}. Its primary purpose was 
to parameterize the possible (topological and holomorphic) twists of such theories, which as mentioned above 
are quite analogous to the DS functor, but by now it has turned out to have much wider applicability also.

The adjoint action of $G_{0}$ preserves $\YY$ and makes it a $G_{0}$-invariant Zariski-closed cone in $\odd$. 
There are in general finitely many $G_0$-orbits, which are in one-to-one correspondence with the orbits of
the Weyl group acting on the set of collections $S\subset \Phi_1$ of mutually orthogonal linearly independent odd 
roots, see ref.\ \cite[Section 4]{duflo2005associated}. This is reflected in a numerical invariant, called 
the \emph{rank}, that we can attach to each $G_{0}$-orbit. To define it, we use that given $x \in \YY$, 
there exists an element $g \in G_{0}$ along with isotropic, mutually orthogonal, and linearly independent 
roots $\alpha_{1}, \dotsc, \alpha_{k}$ such that  
\begin{equation} 
\label{eq::rank}
\Ad_{g}(x) = u_{1} + \dotsb + u_{k},  
\end{equation}
where each $u_{i}$ is a non-zero element of $\gg^{\alpha_{i}}$. The number of roots, $k$, in this representation 
does not depend on the choice of $g$ and thereby defines the \emph{rank} of $x$, which we will denote by $\rk(x)$.
Equivalently, $\rk(x)$ can be interpreted as the rank of $x$ when regarded as a linear operator acting in 
the standard representation.\footnote{This is in general different from the rank of $x$ acting in the adjoint
representation, which is the definition used in the context of nilpotence varieties.} Necessarily, $\rk(x)\le 
\defect(\gg)$ for all $x\in\YY$.

Now to any $\gg$-supermodule $M$, we attach a $G_{0}$-invariant subvariety of $\YY$, called the \emph{associated 
variety} $\YY_{M}$, which provides useful information about atypicality if $M$ is of highest weight type. To this end, 
we note that any element $x \in \YY$ defines an odd endomorphism $x_{M} \in \text{End}_{\CC}(M)$ such 
that $x_{M}^{2} = x_{M} \circ x_{M} = 0$. We then set
\begin{equation}
\label{twistmoddef}
M_{x} := \Ker(x_{M}) / \Im(x_{M}),
\end{equation}
and define the \emph{associated variety} of a $\gg$-supermodule $M$ as the $G_{0}$-invariant subvariety
\begin{equation}
\YY_{M} := \{x \in \YY \ : \ M_{x}\neq 0\}\subset\YY.
\end{equation}
The following statement is similar to Lemma 5.12 and Proposition 5.14 in \cite{coulembier2017homological}.

\begin{proposition}
\label{VarietyTypical} 
Let $M$ be a highest weight $\gg$-supermodule with highest weight $\Lambda$.
\begin{enumerate}
\item The associated variety of $M$ is trivial if and only if the highest weight $\Lambda$ is typical. 
\item If $\alpha$ is an odd positive root satisfying $(\Lambda+\rho,\alpha) = 0$, any associated root 
vector $Q_{\alpha}$ lies in $\YY_{M}$. In particular, $\YY_{M} \neq \{ 0\}$.  
\end{enumerate}
\end{proposition}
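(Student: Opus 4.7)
My plan is to prove part (2) first, since it immediately yields the ``atypical $\Rightarrow \YY_M\neq\{0\}$'' direction of part (1), and then handle the converse ``typical $\Rightarrow \YY_M=\{0\}$'' separately.

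For part (2), first observe that $\alpha$ is odd and hence isotropic, and since $2\alpha\notin\Phi$ the super-bracket gives $[Q_\alpha,Q_\alpha]\in \gg^{2\alpha}=\{0\}$, so $Q_\alpha\in\YY$. The highest weight vector $v_\Lambda$ is annihilated by $\nn^+\ni Q_\alpha$, so the class $[v_\Lambda]\in M_{Q_\alpha}=\Ker(Q_\alpha)/\Im(Q_\alpha)$ is well-defined, and the task reduces to showing $v_\Lambda\notin \Im(Q_\alpha)$. I would first reduce to the simple quotient: by Lemma \ref{lemm::properties_HWM}, $M$ admits a surjection $\varphi\colon M\twoheadrightarrow L(\Lambda)$ sending $v_\Lambda$ to the highest weight vector $v'_\Lambda$, and a relation $v_\Lambda=Q_\alpha w$ in $M$ would push forward to $v'_\Lambda=Q_\alpha\varphi(w)$ in $L(\Lambda)$, so it suffices to prove $[v'_\Lambda]\neq 0$ in $L(\Lambda)_{Q_\alpha}$. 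For this I invoke the non-vanishing of associated varieties for simple atypical highest weight supermodules from \cite{duflo2005associated}: the condition $(\Lambda+\rho,\alpha)=0$ is precisely the Shapovalov-type locus where the $\CC v'_\Lambda$-component of $Q_\alpha$ on $L(\Lambda)^{\Lambda-\alpha}$ vanishes, so that $v'_\Lambda$ cannot be realised as a coboundary.

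For the converse direction in part (1), suppose $\Lambda$ is typical and take any nonzero $x\in\YY_M$. By the orbit description recalled around \eqref{eq::rank}, $x$ is $G_0$-conjugate to $u_1+\cdots+u_k$ with nonzero $u_i\in\gg^{\alpha_i}$ for mutually orthogonal linearly independent odd (isotropic) roots $\alpha_1,\dots,\alpha_k$, and by $G_0$-equivariance of the construction we may assume $x$ has this form. I then appeal to the compatibility of the DS functor with central characters: any nonzero $M_x$ must carry a $\gg_x$-module central character factoring $\chi_\Lambda$ through the morphism $\ZC(\gg)\to \ZC(\gg_x)$ induced by DS. Via the Harish-Chandra description of $\Im(\HC)$ recalled after \eqref{centralchar}, such a factorisation forces $(\Lambda+\rho,\alpha_i)=0$ for each $i$, contradicting typicality.

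The principal obstacle is the Shapovalov-type computation behind the DS non-vanishing invoked in part (2): the correct algebraic condition for $[v'_\Lambda]\neq 0$ is $(\Lambda+\rho,\alpha)=0$ rather than the naive $(\Lambda,\alpha)=0$ suggested by the local $\langle Q_\alpha,Y_\alpha,h_\alpha\rangle$-submodule alone, because the latter fails to account for cancellations coming from other weight-$(\Lambda-\alpha)$ vectors in $V(\Lambda)$ (as one sees already in $\sl(2\vert 1)$). Once this input is in place, the typical direction of part (1) is essentially central-character bookkeeping under DS reduction.
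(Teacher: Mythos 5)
Your strategy for part (2) rests on a claim that is false for non-simple odd roots, namely that $(\Lambda+\rho,\alpha)=0$ is the criterion for the class of the highest weight vector to survive in $\Ker Q_\alpha/\Im Q_\alpha$. That is correct when $\alpha$ is a \emph{simple} odd root (then $M^{\Lambda-\alpha}=\CC Q_{-\alpha}v_\Lambda$, $Q_\alpha Q_{-\alpha}v_\Lambda=(\Lambda,\alpha)v_\Lambda$ and $(\rho,\alpha)=0$), but not in general. Concretely, take $\gg=\sl(2\vert 1)$ with the standard Borel, $\Lambda=(0,0\vert{-}1)$ and $\alpha=\epsilon_1-\delta_1$ (not simple). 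Then $(\Lambda+\rho,\alpha)=0$ while $(\Lambda,\alpha)=-1$, so $Q_\alpha Q_{-\alpha}v_\Lambda=-v_\Lambda$ and the highest weight vector \emph{is} a coboundary in the Verma module, in $K(\Lambda)$, and in $L(\Lambda)$; the cancellations you invoke from other weight-$(\Lambda-\alpha)$ vectors do not occur (in $K(\Lambda)$ that weight space is one-dimensional, and in the Verma module the other basis vector $Y_{\epsilon_2-\delta_1}Y_{\epsilon_1-\epsilon_2}v_\Lambda$ is mapped by $Q_\alpha$ to $(\Lambda,\epsilon_1-\epsilon_2)v_\Lambda=0$ here). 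The proposition is nevertheless true in this example — for instance $[Y_{\epsilon_2-\delta_1}v_\Lambda]$ is a nonzero class — but the witness is not the highest-weight class, so your proof of part (2), and with it the ``atypical $\Rightarrow\YY_M\neq\{0\}$'' half of part (1), has a genuine gap. (Note the paper itself offers no proof; it refers to \cite{coulembier2017homological}.)

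The gap is not repaired by weakening your key claim to the citable statement $\DS_{Q_\alpha}(L(\Lambda))\neq 0$ from \cite{duflo2005associated,coulembier2017homological}: your reduction from $M$ to its simple quotient via Lemma \ref{lemm::properties_HWM} works only because you track the specific class of $v_\Lambda$, and since $\DS_x$ is only middle exact (Proposition \ref{DSTensorFunctor}), non-vanishing on a quotient does not pull back to $M$. The standard repairs are: for $L(\Lambda)$, perform odd reflections so that $\alpha$ becomes simple (the highest weight changes so that the pairing condition is preserved, and then the one-dimensional weight-space argument applies); for an arbitrary highest weight $M$, argue directly, e.g.\ restrict to the $\sl(1\vert 1)$-copy spanned by $Q_\alpha,Q_{-\alpha},h_\alpha$, observe that only the $h_\alpha$-eigenvalue-zero part of $M$ contributes to the cohomology, and show this contribution is nonzero by a weight-by-weight Euler-characteristic count — this is essentially what the cited results do. Your central-character argument for ``typical $\Rightarrow\YY_M=\{0\}$'' is the standard Duflo--Serganova argument and is fine in outline, though the decisive step (producing $z\in\ZC(\gg)$ acting by $1$ on $M$ but by $0$ on $M_x$, using the description of $\Im(\HC)$) is asserted rather than carried out.
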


Next, to define the DS functor following \cite{duflo2005associated,serganova2011superdimension}, we observe that
when $M=\gg$ the adjoint representation, the space \eqref{twistmoddef}  
\begin{equation}  
\label{twistladef}
\gg_{x} := \Ker(\ad_{x}) / \Im(\ad_{x}),  
\end{equation} 
is a Lie superalgebra for any $x\in\YY$, since $[x, \gg] = \Im(\ad_{x})$ is an ideal in $\Ker(\ad_{x})$. Naturally, 
the defect of $\gg$ and that of $\gg_{x}$ are related through the rank of $x$ \cite[Section 4]{gorelik2022duflo}:  
\begin{equation}
\label{eq85}
\defect(\gg_{x}) = \defect(\gg) - \rk(x).
\end{equation} 
For instance, when $\gg=\slmn$ as in our main example, we have 
\begin{equation}
\label{eq86}
\rk(x)\le \min(m,n) \;\text{ and }\;
\slmn_{x} = \mathfrak{sl}(m-k\vert n-k).
\end{equation}
For reference, we also record that with respect to the given Cartan subalgebra $\hh$ and root system $\Phi$ of $\gg$, 
the Lie superalgebra $\gg_{x}$ possesses a Cartan subalgebra given by  
\begin{equation}  
\label{CSAx}
\hh_{x} := (\Ker(\alpha_{1}) \cap \dotsb \cap \Ker(\alpha_{k})) / (\hh^{\alpha_{1}} \oplus \dotsb \oplus \hh^{\alpha_{k}}),  
\end{equation}  
where $\hh^{\alpha} = [\gg^{\alpha}, \gg^{-\alpha}]$, and the root system associated with $(\gg_{x}, \hh_{x})$ is  
\begin{equation}  
\label{rootsysx}
\Phi_{x} = \{\alpha \in \Phi \mid (\alpha, \alpha_{i}) = 0, \ \alpha \neq \alpha_{i}, \ i = 1, \dotsc, k\}.  
\end{equation}  

Along similar lines, when $M$ is a $\gg$-supermodule, the space $M_x=\Ker(x_M)/\Im(x_M)$ defined in \eqref{twistmoddef}
is naturally a $\gg_{x}$-supermodule, as $\Ker(x_{M})$ is invariant under $\Ker(\ad_{x})$, and the relation 
$[x,\gg] \Ker(x_{M}) \subset \Im(x_{M})$ ensures compatibility with the quotient structure. It is also clear
that this construction is compatible with homomorphisms of supermodules. This yields the following generalization 
of statements proven in the finite-dimensional case in ref.\ \cite[Section 2]{gorelik2022duflo}.

\begin{proposition}
\label{DSTensorFunctor}
Let $\gg$ be a Lie superalgebra, and $x\in\YY$ an element of the self-commuting variety. Then
\begin{enumerate}
\item The map $\DS_{x}: \gsmod \to \gxsmod$, $M\mapsto \DS_{x}(M) := M_{x}=\Ker(x_M)/\Im(x_M)$ defines a functor 
from the category of $\gg$-supermodules to the category of $\gg_{x}$-supermodules, called the \emph{Duflo--Serganova 
functor}. $\DS_x$ is an additive functor and preserves the symmetric monoidal tensor structure. 
\item For any exact sequence $0\to M' \to M \to M''\to 0$
of $\gg$-supermodules, there exists an exact sequence of $\gg_{x}$-supermodules
$$
0\to E \to \DS_{x}(M')\to \DS_{x}(M)\to \DS_{x}(M'')\to \Pi(E)\to 0
$$
for some $\gg_{x}$-supermodule $E$ (and where $\Pi$ is the parity reversing functor). In particular, the DS functor 
is middle exact and satisfies $\DS_{x}(\Pi(M))=\Pi(\DS_{x}(M))$ for any $\gg$-supermodule $M$. 
\end{enumerate}
\end{proposition}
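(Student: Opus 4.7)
My plan is to follow the standard template of homological algebra for $\ZZ/2$-graded complexes with an odd square-zero differential, adapted to the action of the subalgebra $\Ker(\ad_x)$ and the quotient $\gg_x = \Ker(\ad_x)/\Im(\ad_x)$. For part (1), I first check that $M_x = \Ker(x_M)/\Im(x_M)$ genuinely carries a $\gg_x$-supermodule structure: if $g \in \Ker(\ad_x)$, then $[x_M, g_M] = [x,g]_M = 0$ on $M$, so $g_M$ preserves both $\Ker(x_M)$ and $\Im(x_M)$ and descends to an endomorphism of $M_x$; if $h = [x,g'] \in \Im(\ad_x)$ and $v \in \Ker(x_M)$, then $h_M v = x_M(g'_M v) \pm g'_M x_M v = x_M(g'_M v) \in \Im(x_M)$, so $\Im(\ad_x)$ acts trivially on $M_x$. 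Functoriality is immediate since any (even) $\gg$-equivariant morphism $f : M \to N$ satisfies $f \circ x_M = x_N \circ f$ and therefore restricts to kernels and descends modulo images. Additivity follows because $\Ker$ and $\Im$ commute with finite direct sums.

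For the symmetric monoidal structure, the key point is a K\"unneth isomorphism $\DS_x(M \otimes N) \cong \DS_x(M) \otimes \DS_x(N)$. On $M \otimes N$, the coproduct turns $x$ into the odd operator $d := x_M \otimes \id + (-1)^{|\cdot|}\id \otimes x_N$, which squares to zero by super-Leibniz. Over the ground field $\CC$, every $\ZZ/2$-graded complex $(M, x_M)$ splits non-canonically as $M \cong H(M) \oplus C_M$, with $C_M$ a sum of $2$-dimensional contractible summands of the form $\CC v \xrightarrow{\sim} \CC\, x_M v$. Tensor products of contractible complexes remain contractible, so the canonical map $\DS_x(M) \otimes \DS_x(N) \to \DS_x(M \otimes N)$ is an isomorphism; $\Ker(\ad_x)$-equivariance is automatic from compatibility of the coproduct with the inclusion $\Ker(\ad_x) \hookrightarrow \gg$, and compatibility with the associators and super-braiding is then formal.

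For part (2), the short exact sequence $0 \to M' \to M \to M'' \to 0$ is simultaneously a short exact sequence of $\ZZ/2$-graded complexes with odd differential $x$, and the snake lemma in this $2$-periodic setting yields a cyclic hexagonal six-term exact sequence
\[
H_0(M') \to H_0(M) \to H_0(M'') \xrightarrow{\delta_0} H_1(M') \to H_1(M) \to H_1(M'') \xrightarrow{\delta_1} H_0(M'),
\]
in which $H_i = \Ker(x)_i/\Im(x)_i$ and the connecting maps $\delta_0, \delta_1$ shift parity because $x$ is odd. Setting $E_0 := \Im(\delta_1) \subset H_0(M')$, $E_1 := \Im(\delta_0) \subset H_1(M')$, and $E := E_0 \oplus E_1$, exactness at $H_0(M')$ and $H_1(M')$ identifies $E$ with the kernel of $\DS_x(M') \to \DS_x(M)$, while exactness at $H_0(M'')$ and $H_1(M'')$ identifies the cokernel of $\DS_x(M) \to \DS_x(M'')$ with $E_1 \oplus E_0 = \Pi(E)$, where the parity swap encodes the odd degree of the connecting maps. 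Splicing yields the announced six-term sequence, and the identity $\DS_x(\Pi(M)) = \Pi(\DS_x(M))$ is tautological since $\Pi$ only exchanges even and odd parts and commutes with $\Ker$ and $\Im$. The main obstacle I anticipate is the K\"unneth step: one must verify that the splitting $M \cong H(M) \oplus C_M$ can be arranged to be compatible enough with the $\Ker(\ad_x)$-action for the K\"unneth map to be an isomorphism of $\gg_x$-supermodules rather than merely of $\ZZ/2$-graded vector spaces—standard over a field, but requiring some care with weight-space decompositions on infinite-dimensional supermodules.
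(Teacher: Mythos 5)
Your proof is correct and follows essentially the same route as the paper, which itself only sketches the key observation (that $\Ker(\ad_x)$ preserves $\Ker(x_M)$ and $[x,\gg]\Ker(x_M)\subset\Im(x_M)$, so $\gg_x$ acts on $M_x$) and defers the remaining standard homological arguments — K\"unneth for monoidality, the two-periodic snake-lemma hexagon with odd connecting maps for the six-term sequence — to the cited reference of Gorelik et al. One small remark: your closing worry about arranging the splitting $M\cong H(M)\oplus C_M$ to be $\Ker(\ad_x)$-compatible is unnecessary, since the canonical map $\DS_x(M)\otimes\DS_x(N)\to\DS_x(M\otimes N)$ is already $\gg_x$-equivariant, and your splitting argument shows it is a linear bijection, which suffices for it to be an isomorphism of $\gg_x$-supermodules.
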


For highest weight supermodules, there is a tight connection between the degree of atypicality and the rank
of $x$, similar to \eqref{eq85}.

\begin{theorem}[{\cite{serganova2011superdimension}}]
\label{DScharacter} 
Let $M=L(\Lambda)$ be a simple highest weight $\gg$-supermodule, of degree of atypicality $\at(M)=\at(\Lambda)$.
Then, for any $x\in\YY$ of rank $\rk(x)$, we have
$$
\at(\DS_x(M)) = \at(M)-\rk(x)
$$
\end{theorem}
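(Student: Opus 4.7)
The strategy is induction on $k := \rk(x)$, with the rank-one case done by direct analysis of the highest weight vector. By \eqref{eq::rank} and $G_{0}$-equivariance of the DS functor, we may, after conjugation and application of the Weyl group of $\even$, assume $x = u_{1} + \cdots + u_{k}$ with $u_{i} \in \gg^{\alpha_{i}}$ nonzero, for mutually orthogonal linearly independent positive isotropic odd roots $\alpha_{1}, \ldots, \alpha_{k}$.

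For the base case $k=1$, set $x = u \in \gg^{\alpha}$ with $\alpha \in \Phi_{1}^{+}$ isotropic. Since $\Lambda+\alpha$ is not a weight of $L(\Lambda)$, the highest weight vector $v_{\Lambda}$ satisfies $x \cdot v_{\Lambda} = 0$, so $[v_{\Lambda}] \in \DS_{x}(L(\Lambda))$. By Proposition \ref{VarietyTypical} combined with $G_{0}$-equivariance of $\YY_{M}$, this class is non-zero precisely when $(\Lambda+\rho,\alpha)=0$, in which case it is a $\gg_{x}$-highest weight vector of weight $\Lambda|_{\hh_{x}}$. To count the atypicality of the resulting $\gg_{x}$-module, choose a maximal family $\alpha = \beta_{1}, \beta_{2}, \ldots, \beta_{\ell}$ of mutually orthogonal linearly independent positive isotropic roots with $(\Lambda+\rho,\beta_{j}) = 0$, so that $\ell = \at(\Lambda)$. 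By \eqref{rootsysx} the roots $\beta_{2},\ldots,\beta_{\ell}$ all lie in $\Phi_{x}^{+}$, and a direct computation confirms that for each $\beta_{j} \perp \alpha$ one has $(\Lambda|_{\hh_{x}} + \rho_{x}, \beta_{j}) = (\Lambda+\rho,\beta_{j}) = 0$, yielding $\at(\DS_{x}(L(\Lambda))) \geq \at(\Lambda)-1$. The reverse inequality is similar: any maximal family of atypicality roots in $\Phi_{x}^{+}$, augmented by $\alpha$, gives such a family in $\Phi_{1}^{+}$ of length $\at(\Lambda)$.

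For $k\geq 2$, decompose $x = x' + u_{k}$ with $x' := u_{1} + \cdots + u_{k-1}$. Since $\alpha_{k}$ is orthogonal to and distinct from $\alpha_{1},\ldots,\alpha_{k-1}$, the image $\overline{u_{k}}$ of $u_{k}$ in $\gg_{x'}$ is a rank-one element of its self-commuting variety. The inductive step is powered by the composition isomorphism
\[
\DS_{x}(M) \cong \DS_{\overline{u_{k}}}\bigl(\DS_{x'}(M)\bigr), \qquad M \in \gsmod,
\]
which follows from a spectral-sequence argument on the double complex attached to the commuting pair $(x', u_{k})$. Combined with the base case applied inside $\gg_{x'}$ and the inductive hypothesis $\at(\DS_{x'}(L(\Lambda))) = \at(\Lambda) - (k-1)$, this yields the desired equality.

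The main obstacle lies in the well-definedness of $\at(\DS_{x}(L(\Lambda)))$ itself: as $\DS_{x}$ is only middle exact (Proposition \ref{DSTensorFunctor}), the $\gg_{x}$-module $\DS_{x}(L(\Lambda))$ need not be simple. One must therefore verify that all its simple subquotients share the same degree of atypicality — equivalently, that this quantity is controlled by the infinitesimal character, which is pinned down by $\Lambda|_{\hh_{x}}$. A secondary technical point is the composition isomorphism for $\DS_{x}$ along the decomposition $x = x' + u_{k}$; while standard in the finite-dimensional setting, its extension to infinite-dimensional highest weight modules deserves some care.
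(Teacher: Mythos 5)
The paper does not prove this statement at all -- it is imported verbatim from \cite{serganova2011superdimension} -- so your proposal can only be measured against the known argument in the literature, and there it falls short in exactly the place you flag yourself. The actual content of the theorem is that \emph{every} simple subquotient of $\DS_x(L(\Lambda))$ has degree of atypicality $\at(\Lambda)-\rk(x)$; the standard proof establishes this by showing that $\DS_x$ induces a homomorphism $\ZC(\gg)\to\ZC(\gg_x)$ compatible with the Harish--Chandra homomorphisms, so that the infinitesimal character of every constituent of $\DS_x(M)$ is determined by $\chi_\Lambda$, and then computing that this correspondence of central characters lowers atypicality by exactly $\rk(x)$. Your argument instead tracks only the single constituent containing the class $[v_\Lambda]$, gives a lower bound $\at(\DS_x(L(\Lambda)))\ge\at(\Lambda)-1$ for it, and defers ``well-definedness of $\at(\DS_x(L(\Lambda)))$'' to a closing remark. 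That deferred point is not a technicality: without the central-character argument you have no control over the other subquotients (which a priori could be more or less atypical), so neither the claimed equality nor even the meaningfulness of the left-hand side is established. Note also that Proposition \ref{VarietyTypical} only gives $\DS_x(M)\neq 0$ when $(\Lambda+\rho,\alpha)=0$; the sharper claim that $[v_\Lambda]\neq 0$ precisely when $(\Lambda+\rho,\alpha)=0$ is correct essentially only when $\alpha$ is simple (so that $M^{\Lambda-\alpha}=\CC\,Y_\alpha v_\Lambda$ and $(\rho,\alpha)=0$); for general $\alpha$ one must first move it into simple position by odd reflections and track how $\Lambda$ changes, which you do not do. Likewise the reduction ``WLOG all $\alpha_i$ positive'' fails as stated, since $W\cong S_m\times S_n$ preserves the sign of odd roots and $G_0$ cannot move $\gg_{-1}$ into $\gg_{+1}$.

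The inductive step has a second genuine gap: the asserted isomorphism $\DS_x(M)\cong\DS_{\overline{u_k}}(\DS_{x'}(M))$ is not a known fact one can invoke via ``a spectral-sequence argument on the double complex''. In general $\DS_{x'+u_k}$ and $\DS_{\overline{u_k}}\circ\DS_{x'}$ agree only at the level of the (reduced) Grothendieck group, and even that requires proof; a double complex yields a spectral sequence, not an isomorphism, and convergence is delicate for infinite-dimensional highest weight modules. The paper itself only obtains such a compositionality statement in the very special unitary setting of Lemma \ref{lemm::decomposes_finitely_many}, where $\omega(x)=-x$ and positivity forces $\DS_x(\HH)=\ker x\vert_{\HH^c}$ with no quotient to take -- hypotheses your purely algebraic induction does not have. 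If you want a complete proof, the efficient route is the one from the literature: establish the induced map on centers and its effect on atypicality of central characters, which handles all constituents and all ranks at once and makes both your base case and your induction unnecessary.
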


\begin{corollary}
\label{DSNM}
Let $x \in \YY$ be an element of the self-commuting variety, and $M$ a highest weight $\gg$-supermodule.
Then
$$
\at(M) < \rk (x) \Rightarrow \DS_{x}(M) = 0
$$
In particular, if $M$ is a highest weight $\gg$-supermodule with typical highest weight, then $\DS_x(M)=0$ 
is trivial for any $x\in\YY\setminus 0$.
\end{corollary}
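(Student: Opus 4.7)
The plan is to derive this corollary directly from Theorem \ref{DScharacter} by reducing to the case of simple highest weight supermodules and then propagating the vanishing along a Jordan--H\"older filtration using the middle-exactness of $\DS_x$ (Proposition \ref{DSTensorFunctor}(2)).

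First, suppose $M = L(\Lambda)$ is simple. Then Theorem \ref{DScharacter} asserts $\at(\DS_x(L(\Lambda))) = \at(\Lambda) - \rk(x)$. Under the hypothesis $\at(\Lambda) < \rk(x)$, the right-hand side would be strictly negative, contradicting the non-negativity of $\at$ on nonzero supermodules (Definition \ref{typatyp}). The only way to avoid the contradiction is $\DS_x(L(\Lambda)) = 0$.

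For a general highest weight supermodule $M$ with highest weight $\Lambda$, I would take a Jordan--H\"older filtration $M = M_0 \supsetneq M_1 \supsetneq \cdots \supsetneq M_r = 0$ with simple highest weight subquotients $L(\Lambda_i) := M_{i-1}/M_i$. All such composition factors inherit the central character $\chi_\Lambda$. From the explicit description of $\Im(\HC)$ recalled before Definition \ref{typatyp}, the degree of atypicality depends only on the central character: if $\Lambda'+\rho = w(\Lambda+\rho+\sum_i t_i \alpha_i)$ with the $\alpha_i$ linearly independent isotropic and mutually orthogonal in $A_\Lambda$, then $\{w\alpha_i\}$ sits inside the analogous set for $\Lambda'$, and the argument reversed equates the dimensions of maximal isotropic subspaces of $\Span_\CC(A_\Lambda)$ and $\Span_\CC(A_{\Lambda'})$. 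Hence $\at(\Lambda_i) = \at(\Lambda) < \rk(x)$, and the simple case applied to each $L(\Lambda_i)$ yields $\DS_x(L(\Lambda_i)) = 0$. Inducting along the six-term sequence of Proposition \ref{DSTensorFunctor}(2) associated to $0 \to M_i \to M_{i-1} \to L(\Lambda_{i-1}) \to 0$ --- where vanishing of both outer terms forces vanishing of both the middle term and the error $E$ --- then gives $\DS_x(M) = 0$.

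The ``In particular'' statement is the case $\at(\Lambda) = 0$, valid for any $x \in \YY \setminus \{0\}$ since then $\rk(x) \geq 1$; it may alternatively be read off directly from Proposition \ref{VarietyTypical}(1). The main obstacle I anticipate is justifying finiteness of the Jordan--H\"older filtration for a general highest weight supermodule, but for $\slmn$ this is guaranteed by the finiteness of the set of weights linked to $\Lambda$ via the dot action combined with isotropic shifts in $A_\Lambda$. A slicker alternative would be to argue uniformly via the associated variety, showing directly that $\YY_M \subseteq \{x \in \YY : \rk(x) \leq \at(\Lambda)\}$ for any highest weight module $M$, which makes the conclusion $\DS_x(M) = 0$ for $\rk(x) > \at(\Lambda)$ tautological from the definition of $\YY_M$.
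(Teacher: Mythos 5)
The paper states this corollary without proof, as an immediate consequence of Theorem~\ref{DScharacter}: your treatment of the simple case is exactly that intended argument ($\at(\DS_x(L(\Lambda)))=\at(\Lambda)-\rk(x)$ cannot be negative, so $\DS_x(L(\Lambda))=0$). What you add, and the paper leaves implicit, is the dévissage needed because the corollary is stated for arbitrary highest weight supermodules: a Jordan--H\"older filtration with simple highest weight subquotients, the fact that the degree of atypicality is constant on a central character, and middle exactness of $\DS_x$ (Proposition~\ref{DSTensorFunctor}) to propagate the vanishing. That route is sound, and your observation that the ``in particular'' clause also follows directly from Proposition~\ref{VarietyTypical} matches the paper's setup.

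Two of your supporting justifications need repair, though the facts you invoke are true. First, your sketch that atypicality depends only on the central character does not quite close: from $\Lambda'+\rho=w(\Lambda+\rho+\sum_i t_i\alpha_i)$ you get $(\Lambda'+\rho,w\alpha_i)=0$ because the $\alpha_i$ are isotropic and mutually orthogonal, but to transport a \emph{maximal} orthogonal subset of $A_\Lambda$ you would need it to be orthogonal to the chosen $\alpha_i$, which is not automatic; the constancy of $\at$ on central characters is a standard result (Kac--Wakimoto, Serganova) and is best cited rather than rederived this way. Second, your proposed reason for finite length is false as stated: for atypical $\Lambda$ the set of weights in $\Lambda-\ZZ_{\ge 0}\Phi^+$ linked to $\Lambda$ is infinite (e.g.\ $\Lambda-t\alpha$ for $t\in\ZZ_{\ge 0}$ and $\alpha\in A_\Lambda$). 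Finite length holds for a different reason: a highest weight $\gg$-supermodule is a quotient of a Verma supermodule, which upon restriction to $\even$ has a finite filtration by ordinary Verma modules, each of finite length in category $\mathcal{O}$; this is also what underlies the Jordan--H\"older series the paper uses in Section~\ref{fragrecom}. With these two points fixed, your proof is complete and slightly more general than what the paper records.
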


\subsection{DS functor and unitarity} 
\label{subsec::DS_functor_and_unitarity}

If $\omega$ is a conjugate-linear anti-involution of $\gg$, $\HH$ an $\omega$-unitarizable highest weight 
$\gg$-supermodule, and $x \in \YY$ an element of the self-commuting variety, the next step would seem to be 
to ask for conditions under which $\DS_{x}(\HH)$ is unitarizable. If one thinks that a real form on $\gg_x$ 
suitable for this purpose should be induced naturally if $x$ itself is (anti-)invariant under $\omega$, one 
might well have another think coming. This is because, \emph{self-commutativity 
and reality are incompatible!} For our Lie superalgebra $\gg=\slmn$, which is reductive, this follows on
account of the integral grading \eqref{intgrading} from the fact that for any non-zero $x\in\gg_{+1}$, 
$[x,\omega(x)]\neq 0$, so that no $x\in\YY$ can satisfy $\omega(x)=-x$. In other words, $\YY$ has no
real points in this case. More fundamentally,\footnote{This fact, without which cohomological field theories would 
be much less interesting for physics, seems grossly underappreciated. J.W.\ would like to thank E.\ Rabinovici, 
K.\ Hori, E.\ Witten, S.\ Hellerman, I.\ Saberi, F.\ Hahner, and R.\ Senghaas for some extensive long-ago and 
more recent discussions on this issue.} and without any assumptions on $\gg$, any $x\in\gg^\omega$ acts on $\HH$ as a 
(possibly (anti-)imaginary) self-adjoint operator. This implies that $x^2 = -x\omega(x)$ is (up to a phase positive or 
negative semi-)definite, and vanishes (if and) only if $x$ itself acts by $0$. As a consequence, $\HH_x=\HH$ 
for any such $x$ because the unitary representation theory of $\gg$ factors through $\YY^\omega$.

Here, we will follow the approach outlined in \cite{gorelik2022duflo} to extend the DS functor to the
real form. We note that $\even$ has semisimple elements, as it is reductive, and define the \emph{rank variety}
as
\begin{align}  
\YY^{\text{hom}} := \{ x \in \odd : [x,x] \text{ is semisimple} \},  
\end{align}  
which is stable under $G_{0}$, although not closed in $\odd$. Elements of $\YY^{\text{hom}}$ are referred to as 
\emph{homological}. Clearly $\YY\subset\YY^{\rm hom}$ as a closed subvariety. Fixing $x \in \YY^{\text{hom}}$, we 
set $c := [x, x]$. For a $\gg$-supermodule $M$, we let $M^{c}$ denote the space of $c$-invariants on $M$. Then, 
$x$ defines a square-zero endomorphism on $M^{c}$, allowing us to consider its cohomology:\footnote{In physics,
localization with respect to ``non-square zero complex supercharges'' is known in various incarnations, such as 
twisted masses, central charges, etc., but we are not aware of a systematic, geometric treatment of the associated
twist varieties.}
\begin{equation}  
M_{x} := \Ker (x\vert_{M^{c}}) / \Im (x\vert_{M^{c}}).  
\end{equation} 
This reduces to \eqref{twistmoddef} if $x\in\YY$. Also, generalizing \eqref{twistladef}, we put
$\gg_x=\Ker(\ad_x|_{\gg^c}) / \Im (\ad_x\vert_{\gg^c})$. The following lemma then follows from the same 
reasoning as before \cite{duflo2005associated}.  

\begin{lemma}  
For all $x \in \YY^{\text{hom}}$, $M\in\gsmod$, the following assertions hold: 
\begin{enumerate}  
\item[a)] $\gg_{x}$ has the natural structure of a Lie superalgebra.  
\item[b)] $M_{x}$ is a $\gg_{x}$-supermodule.  
\end{enumerate}  
\end{lemma}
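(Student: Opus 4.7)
The proof is essentially a graded analogue of the standard construction of cohomology with respect to a square-zero operator, once one notices that the hypothesis ``$c:=[x,x]$ is semisimple'' still permits $x$ to behave like a differential on the $c$-invariants. The key preliminary observation is that $x$ commutes with $c$: applying the super-Jacobi identity to the triple $(x,x,x)$ of odd elements and using the graded symmetry of $[x,x]$ yields $3[x,[x,x]]=0$, i.e.\ $[x,c]=0$. In particular $x\in\gg^c$ and $x$ acts on $M^c$ (if $cm=0$ then $c(xm)=[c,x]m+x(cm)=0$). Moreover $x^2=\tfrac12[x,x]=\tfrac12 c$ in the enveloping algebra, so $x$ is square-zero on $M^c$ and $\ad_x$ is square-zero on $\gg^c$; this already makes the cohomological constructions $M_x$ and $\gg_x$ well-defined as super vector spaces.

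For (a), first I would verify that $\gg^c$ is a Lie sub-superalgebra: since $c$ is semisimple and even, $\ad_c$ acts diagonally on $\gg$, the bracket respects the grading by $\ad_c$-eigenvalues, and $\gg^c$ is the zero-eigenspace. Next I would check that $\ad_x$ preserves $\gg^c$: for $y\in\gg^c$, super-Jacobi gives $[c,[x,y]]=[[c,x],y]+(-1)^{|c||x|}[x,[c,y]]=0$. The standard argument then shows that $\Im(\ad_x|_{\gg^c})$ is a graded two-sided ideal in $\Ker(\ad_x|_{\gg^c})$: if $y\in\Ker(\ad_x)\cap\gg^c$ and $z\in\gg^c$, then by Jacobi $[y,[x,z]]=(-1)^{|y||x|}[x,[y,z]]\in\Im(\ad_x|_{\gg^c})$, using that $[y,z]\in\gg^c$ because $\gg^c$ is a subalgebra. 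One then checks that the induced bracket on $\gg_x$ is well-defined independent of representatives, and inherits graded antisymmetry and super-Jacobi from $\gg$.

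For (b), the same bookkeeping applies to $M$ in place of $\gg$. First $M^c$ is preserved by any $y\in\gg^c$, by the same super-Leibniz computation as for $x$. Then $\Ker(x|_{M^c})$ and $\Im(x|_{M^c})$ are each preserved by $\Ker(\ad_x|_{\gg^c})$: if $y\in\Ker(\ad_x)\cap\gg^c$ and $m\in\Ker(x|_{M^c})$, then $x(ym)=[x,y]m+(-1)^{|x||y|}y(xm)=0$; if instead $m=xn$ with $n\in M^c$, then $ym=\pm x(yn)\in\Im(x|_{M^c})$. Finally, modifying $y$ by $[x,z]\in\Im(\ad_x|_{\gg^c})$ changes $ym$ by $[x,z]m=x(zm)\in\Im(x|_{M^c})$ when $m\in\Ker(x|_{M^c})$, so the action factors to a well-defined action of $\gg_x$ on $M_x$.

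There is no genuine obstacle here — the entire argument is a graded version of the classical construction of the cohomology of a differential graded Lie algebra together with a dg-module, adapted to the super setting. The only subtlety requiring care is the sign tracking in the super-Jacobi identity to verify both that $[x,c]=0$ (justifying the very definition of the construction) and that the various subspaces are graded-invariant under the relevant actions; once $[x,c]=0$ and $x^2|_{M^c}=0$ are established, everything reduces to the standard formalism.
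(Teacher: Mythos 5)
Your proof is correct and follows essentially the same route as the paper, which simply observes that the lemma "follows from the same reasoning as before" (the square-zero case of Duflo--Serganova): $\Im(\ad_x)$ is an ideal in $\Ker(\ad_x)$ and $[x,\gg]\Ker(x_M)\subset\Im(x_M)$, now restricted to $c$-invariants. The only difference is that you spell out the details the paper leaves implicit — $[x,c]=0$ from super-Jacobi, that $\gg^c$ is a subalgebra and $x$ is square-zero on $M^c$ — which is exactly the right bookkeeping.
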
  

As a consequence, for any $x \in \YY^{\text{hom}}$, we have a functor  
\begin{equation}  
\DS_{x}(M) := M_{x} := \Ker (x\vert_{M^{c}}) / \Im (x\vert_{M^{c}})  
\end{equation}  
from the category of $\gg$-supermodules to the category of $\gg_{x}$-supermodules as before. This functor is 
also referred to as the \emph{DS functor} with respect to $x$. Moreover, it is a tensor functor. For given
conjugate-linear anti-involution $\omega$, the real locus $\bigl(\YY^{\rm hom}\bigr)^\omega$ is not 
necessarily empty. This allows for the study of unitarity. If $x\in\YY^{\rm hom}$ with $\omega(x)=-x$, then
$\omega$ induces a well-defined conjugate-linear anti-involution $\omega_{x}$ on $\gg_{x}$.

\begin{theorem}[{\cite{SchmidtGeneralizedSuperdimension}}]
\label{thm::unitarity_DS}
Let $\HH$ be a unitarizable $\gg$-supermodule and let $x \in \YY^{\text{hom}}$ such that $\omega(x) = -x$. 
Then, $\DS_{x}(\HH) = \ker x\vert_{\HH^{c}}$ and is an $\omega_{x}$-unitarizable $\gg_{x}$-supermodule. In 
particular, $\DS_{x}(\HH)$ is a semisimple $\gg_{x}$-supermodule.
\end{theorem}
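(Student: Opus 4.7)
The plan is to use positive-definiteness of $\langle\cdot,\cdot\rangle$ together with the condition $\omega(x)=-x$ to force $\rho(x)$ to annihilate $\HH^{c}$; once that is established, both the identification of $\DS_{x}(\HH)$ with $\ker x|_{\HH^{c}}$ and the $\omega_{x}$-unitarizability of this space fall out immediately.

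First, I translate $\omega(x)=-x$ into an operator identity. Setting $X:=\rho(x)$, Definition \ref{unitarize} together with the amendment \eqref{amendagain} applied to the odd element $x$ gives
\begin{equation}
-X=\rho(\omega(x))=\sigma(\rho(x))=iX^{\dagger},
\end{equation}
so $X^{\dagger}=iX$. Consequently $X^{\dagger}X=iX^{2}=\tfrac{i}{2}\rho(c)$, and for any $v\in\HH^{c}=\ker\rho(c)$ I obtain
\begin{equation}
\|Xv\|^{2}=\langle v,X^{\dagger}Xv\rangle=\tfrac{i}{2}\langle v,\rho(c)v\rangle=0,
\end{equation}
so $Xv=0$. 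Hence $\ker X|_{\HH^{c}}=\HH^{c}$ and $\operatorname{im}X|_{\HH^{c}}=0$, which yields $\DS_{x}(\HH)=\ker X|_{\HH^{c}}=\HH^{c}$ as claimed.

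Second, I check that the restricted form endows $\HH^{c}$ with the structure of an $\omega_{x}$-unitarizable $\gg_{x}$-supermodule. The anti-involution $\omega_{x}$ on $\gg_{x}=\ker\ad_{x}|_{\gg^{c}}/\operatorname{im}\ad_{x}|_{\gg^{c}}$ is induced from $\omega$ because $\omega(c)=-[\omega(x),\omega(x)]=-c$ makes $\gg^{c}$ $\omega$-stable, and $\omega([x,g])=-[\omega(x),\omega(g)]=[x,\omega(g)]$ keeps $\operatorname{im}\ad_{x}|_{\gg^{c}}$ $\omega$-stable. On $\HH^{c}$, the $\gg^{c}$-action preserves the subspace because $[\rho(c),\rho(g)]=0$ for $g\in\gg^{c}$, while $\operatorname{im}\ad_{x}|_{\gg^{c}}$ acts by zero since $X$ does; the action therefore descends to $\gg_{x}$. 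The restriction of $\langle\cdot,\cdot\rangle$ to $\HH^{c}$ is then positive definite, with $\omega_{x}$-contravariance inherited directly from $\omega$-contravariance on $\HH$.

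Finally, the semisimplicity of $\DS_{x}(\HH)$ as a $\gg_{x}$-supermodule is immediate from Proposition \ref{CompletelyReducible}(a) applied to the $\omega_{x}$-unitarizable structure just established. The main obstacle in this argument is really only the first step: the positivity identity $\|Xv\|^{2}=\tfrac{i}{2}\langle v,\rho(c)v\rangle$, which collapses the DS cohomology onto the $c$-invariants, is the only place where unitarity is used non-trivially, and everything else amounts to routine bookkeeping of how $\omega$ and the $\gg$-structure descend through the DS quotient.
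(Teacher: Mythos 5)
Your proposal is correct and takes essentially the same route as the paper's (cited) argument: unitarity plus $\omega(x)=-x$ forces $\rho(x)^{\dagger}=i\rho(x)$, so $\lVert \rho(x)v\rVert^{2}=\tfrac{i}{2}\langle v,\rho(c)v\rangle=0$ on $\HH^{c}$, whence $\DS_{x}(\HH)=\ker x\vert_{\HH^{c}}=\HH^{c}$ (this is exactly the paper's observation that $ic\geq 0$ and $x$ acts trivially on the $c$-invariants), and $\omega_{x}$-unitarizability and semisimplicity follow by restricting the form and invoking Proposition \ref{CompletelyReducible}. No gaps.
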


In generalization of \eqref{eq::rank}, given $x\in \YY^{\rm hom}$ with $\omega(x)=-x$, there exists
$g\in G_0$ and roots $\alpha_i$, $i=1,\ldots,k=\rk(x)$ as before such that
\begin{equation}
\Ad_g(x) = u_1 + \cdots + u_k + z_1 v_1 + \cdots + z_k v_k, 
\end{equation}
where $u_i\in \gg^{\alpha_i}\setminus 0$, $v_i\in \gg^{-\alpha_i}\setminus 0$ and $z_i=\pm 1$ determine the 
\emph{signature} of $x$. In the example $\gg=\slmn$, we have \cite[Section 5]{Serganova_Sherman_Localization}
\begin{equation}
\gg_{x}^{\omega_x}  = \bigl(\DS_{x}(\gg)\bigr)^{\omega_x} = \su(p-r,q-s\vert n-k) 
\end{equation}
for some $r\le p$, $s\le q$ with $r+s=k$. Furthermore, the centralizer of $c := [x,x]$ 
in $\gg$ is 
\begin{equation}
    C(c) = \ker \ad_{c} = \mathfrak{sl}(m-k\vert n-k) \oplus \sl(1\vert 1)^{\oplus k}.
\end{equation}
This implies that $c$ cannot be regular; otherwise, $C(c)$ would be abelian.

The $\gg_{x}$-supermodule $\DS_{x}(\HH)$ is generally not simple. In one special case however, which will 
be sufficient for our purposes in section \ref{sec::anindex}, simplicity can be established \cite{kinney2007index}.

\begin{theorem}[\cite{SchmidtGeneralizedSuperdimension}] 
\label{thm::g_Q_simple}
Let $\HH$ be a unitarizable simple $\gg$-supermodule and let $x \in \YY^{\text{hom}}$ such that $\omega(x) = -x$. 
Assume the highest weight $\Lambda$ of $\HH$ satisfies $(\Lambda + \rho, \alpha) = 0$ for some odd root 
$\alpha$, and $x = u_{\alpha} + z_{\alpha} v_{\alpha}$ has rank one with $c := [x,x]  \in \hh$ and $u_{\alpha} 
\in \gg^{\alpha}, v_{\alpha} \in \gg^{-\alpha}$ and $z_{\alpha}=\pm 1$. 
\begin{enumerate}
\item $\DS_{x}(\HH)$ is non-trivial and decomposes into at most two unitarizable simple $\gg_{x}$-super\-modules.
    \item If $\alpha$ is simple, then $\DS_{x}(\HH)$ is a simple 
$\gg_{x}$-supermodule.
\end{enumerate}
\end{theorem}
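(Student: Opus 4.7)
Plan. The approach reduces both parts to analyzing the $\hh_x$-weight structure of $\DS_x(\HH)$. By Theorem \ref{thm::unitarity_DS}, $\DS_x(\HH) = \ker x|_{\HH^c}$ is $\omega_x$-unitarizable, and hence by Proposition \ref{CompletelyReducible} a direct sum of simple $\omega_x$-unitarizable $\gg_x$-supermodules. It remains to establish (i) non-triviality, (ii) the bound of two simple constituents, and (iii) its improvement to one when $\alpha$ is simple. A useful preliminary: the reality condition $\omega(x) = -x$ forces $x^\dagger = i x$ as an operator on $\HH$, and since $c = [x,x]$ acts as $2x^2$, this yields $\HH^c = \ker x^2 = \ker x$, so that $\DS_x(\HH) = \ker x$ set-theoretically.

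For (i), first bring $\alpha$ to the position of the distinguished simple odd root of some Borel via a sequence of odd reflections (eq.~\eqref{oddreflection}), replacing $\Lambda$ by the adjusted highest weight $\Lambda'$ while preserving the degree of atypicality. In this Borel one has $(\rho', \alpha) = 0$, so the condition $(\Lambda' + \rho', \alpha) = 0$ reduces to $(\Lambda', \alpha) = 0$, and the Kac--Shapovalov norm of $v_\alpha v_{\Lambda'}$ vanishes (Proposition \ref{PropertiesHermitianForm}), forcing $v_\alpha v_{\Lambda'} = 0$ in $L(\Lambda') \cong \HH$. Since $u_\alpha v_{\Lambda'} = 0$ by highest weight, we obtain $x v_{\Lambda'} = u_\alpha v_{\Lambda'} + z_\alpha v_\alpha v_{\Lambda'} = 0$, giving a non-zero class in $\ker x = \DS_x(\HH)$.

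For (ii), candidate highest weight vectors of simple $\gg_x$-constituents of $\DS_x(\HH)$ lift to $\hh$-weight vectors in $\HH^c$ annihilated by $u_\beta$ for all $\beta$ in a positive system $\Phi_x^+$ of $\gg_x$ (compatible with eq.~\eqref{rootsysx}). Combining the atypical recombination structure of Lemma \ref{lemm::recombination_rules} with the rank reduction $\at(\DS_x(\HH)) = \at(\Lambda) - 1$ from Theorem \ref{DScharacter}, at most two $\hh_x$-classes can occur---essentially a ``top'' class descended from $\Lambda$ and an ``atypical partner'' class descended from $\Lambda - \alpha$. The multiplicity-one property of highest weight spaces in unitarizable simple modules then bounds the decomposition by two simple summands. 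For (iii), when $\alpha$ is already simple, these two candidate classes coincide in $\hh_x^\ast$ under the natural projection $\alpha^\perp / \CC \alpha \to \hh_x^\ast$: their difference $\alpha$ is killed. Only one highest weight class survives, and $\DS_x(\HH)$ is simple. The principal obstacle throughout is the verification in (ii) that no further $\gg_x$-highest weight vector lurks in $\ker x$ beyond the two canonical candidates; unitarity is decisive here, via the rigidity it imposes on the Hermitian form structure at the atypical locus (Lemma \ref{lemm::recombination_rules}).
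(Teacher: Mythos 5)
Note first that the paper does not prove this statement itself: it is quoted from \cite{SchmidtGeneralizedSuperdimension}, so your proposal can only be judged on its own merits. Your skeleton is reasonable and partly correct: the observation that $\omega(x)=-x$ forces $x^{\dagger}=ix$, hence $\HH^{c}=\ker x^{2}=\ker x$ and $\DS_{x}(\HH)=\ker x$ with semisimplicity coming from Theorem \ref{thm::unitarity_DS} and Proposition \ref{CompletelyReducible}, is sound; and the non-triviality argument (make $\alpha$ simple by odd reflections, then $x v_{\Lambda'}=u_{\alpha}v_{\Lambda'}+z_{\alpha}v_{\alpha}v_{\Lambda'}=0$ because the norm of $v_{\alpha}v_{\Lambda'}$ is proportional to $(\Lambda'+\rho',\alpha)$) is essentially viable, although you silently assume that the condition $(\Lambda+\rho,\alpha)=0$ survives the chain of odd reflections; since the paper itself warns that the set $A_{\Lambda}$ is not invariant under odd reflections, you must check that the reflections used (at roots $\epsilon_i-\delta_b$ sharing an index with $\alpha$) are automatically typical, which is true but needs to be said.

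The genuine gap is in parts (ii) and (iii), which are the heart of the theorem. The number of simple summands of the semisimple module $\DS_{x}(\HH)$ equals the dimension of its space of $\gg_{x}$-singular vectors, and you never actually bound this. Lemma \ref{lemm::recombination_rules} cannot be used for this purpose: it describes Jordan--H\"older factors of Kac supermodules for the special weights $\Lambda_0\in\partial\calC$ (hence at most $2$-atypical, and only for the two distinguished roots $\epsilon_p-\delta_1$, $\epsilon_{p+1}-\delta_n$), whereas here $\Lambda$ is an arbitrary atypical unitarizable highest weight and $\alpha$ an arbitrary odd root with $(\Lambda+\rho,\alpha)=0$; moreover it says nothing about the $\gg_{x}$-module structure of $\ker x\subset\HH$. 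Likewise Theorem \ref{DScharacter} is stated for square-zero $x\in\YY$, not for the homological element $x\in\YY^{\mathrm{hom}}$ you use, and in any case a statement about atypicality does not control the number of constituents. Finally, the argument for (iii) is a non sequitur: if two linearly independent $\gg_{x}$-singular vectors had the same $\hh_{x}$-weight (which is indeed the case for weights differing by $\alpha$), the module would contain two isomorphic simple summands, not one; simplicity requires showing that the second candidate vector vanishes when $\alpha$ is simple (this does follow from the same norm-zero computation you use in (i), since then $v_{\alpha}v_{\Lambda}=0$) \emph{and}, crucially, that no further singular vectors occur in $\ker x$ --- precisely the step you flag as ``the principal obstacle'' and then dispose of by appealing vaguely to ``rigidity from unitarity''. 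As it stands, the counting claims ``at most two'' and ``exactly one for simple $\alpha$'' are asserted rather than proved.
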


We use Theorem~\ref{thm::g_Q_simple} to show that $\DS_{x}(\HH)$ decomposes into 
finitely many $\gg_{x}$-super\-modules whenever $\HH$ is simple. To this end, we first observe that 
$\DS_{x}(\HH) = \ker x = \HH^{ic} = \HH^{c}$, since $ic$ is a positive operator. Now, consider an element 
$x = u_1 + u_2 + z_1v_1 + z_2v_2 \in \YY^{\hom}$ of rank $2$, such that $[x, x] \in \hh$. Then we can write 
$c = [x, x] = c_1 + c_2$, where $c_1 = [x_{1}, x_{1}]$ and $c_2 = [x_{2}, x_{2}]$, with 
$x_{i} = u_{i} + z_{i}v_{i}$. In particular, $ic_1$ and $ic_2$ are positive operators, and we have
\[
\DS_{x}(\HH) = \HH^{c_1} \cap \HH^{c_2} = \DS_{x_{2}}(\DS_{x_{1}}(\HH)).
\]
Using this structure inductively, we obtain the following lemma from Theorem~\ref{thm::g_Q_simple}.
\begin{lemma} 
\label{lemm::decomposes_finitely_many}
Let $\HH$ be a unitarizable simple $\gg$-supermodule, and let $x \in \YY^{\text{hom}}$ satisfy 
$\omega(x) = -x$ and $[x,x] \in \hh$. Suppose that $x$ is of rank $k$ and decomposes as 
$x = x_1 + \dots + x_k$ in $\hh$. Then
\[
\DS_{x}(\HH) = \DS_{x_k}(\DS_{x_{k-1}}(\dots \DS_{x_1}(\HH))).
\]
In particular, $\DS_{x}(\HH)$ decomposes into finitely many $\gg_{x}$-supermodules.
\end{lemma}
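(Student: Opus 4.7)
The plan is to iterate the rank-two argument presented in the paragraph preceding the lemma. First, since $x$ has rank $k$ and $[x,x] \in \hh$, the standard form for such elements lets us write (up to $G_0$-conjugation) $x = \sum_{i=1}^{k} x_i$ where $x_i = u_i + z_i v_i \in \gg^{\alpha_i} \oplus \gg^{-\alpha_i}$ for mutually orthogonal linearly independent isotropic roots $\alpha_1, \ldots, \alpha_k$. A direct inspection of the odd roots of $\slmn$ in \eqref{allroots} shows that none of $\pm \alpha_i \pm \alpha_j$ for $i \neq j$ is a root, so $[x_i, x_j] = 0$ for $i \neq j$, and consequently $c := [x,x] = \sum_i c_i$ where $c_i := [x_i, x_i]$ are pairwise commuting semisimple elements of $\hh$.

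Next I would exploit unitarity: from $\omega(x_i) = -x_i$ together with the amendment \eqref{amendagain}, one has $x_i^\dagger = i x_i$, whence $ic_i = x_i^\dagger x_i + x_i x_i^\dagger$ is a non-negative self-adjoint operator on $\HH$. Since a sum of non-negative self-adjoint operators has kernel equal to the intersection of their individual kernels, this yields $\HH^c = \bigcap_{i=1}^{k} \HH^{c_i}$. Theorem \ref{thm::unitarity_DS} then identifies $\DS_x(\HH) = \HH^c$; the same theorem applies to each $x_i$ and, inductively, to its image in the successive quotients $\gg_{x_1 \cdots x_j}$, since commutativity of the $x_i$'s ensures that $x_{j+1}$ descends to a well-defined $\omega_{x_1 \cdots x_j}$-anti-invariant homological element.

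With these preparations in place, the identification $\DS_x(\HH) = \DS_{x_k}(\cdots \DS_{x_1}(\HH))$ follows by induction on $k$: at the $j$-th stage the iterated DS functor produces $\HH^{c_1} \cap \cdots \cap \HH^{c_j}$, and the case $j = k$ is precisely $\HH^c$. For the finiteness statement, I would iterate Theorem \ref{thm::g_Q_simple}: each $x_i$ is rank-one with $[x_i, x_i] \in \hh$ attached to the odd root $\alpha_i$, which remains atypical for every simple constituent of the preceding module by Theorem \ref{DScharacter} (atypicality drops by exactly one per step, and we have $\at(\HH) \geq k$ by Corollary \ref{DSNM} for $\DS_x(\HH)$ to be non-zero). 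Hence each stage multiplies the number of simple summands by at most two, bounding the total by $2^k$.

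The main obstacle I anticipate is in the inductive step for the finiteness bound, namely verifying that $x_{j+1}$ remains attached to an odd root at which each simple constituent of the stage-$j$ module is still atypical, so that the hypotheses of Theorem \ref{thm::g_Q_simple} actually hold at every step. This requires keeping careful bookkeeping of the Cartan subalgebras $\hh_{x_1 \cdots x_j}$ via \eqref{CSAx}, of the induced Weyl vector, and of the highest weights of the various summands — essentially a compatibility check between the orthogonality of the $\alpha_i$'s and the atypicality relation $(\Lambda + \rho, \alpha_{j+1}) = 0$ descending through each quotient.
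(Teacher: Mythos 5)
Your proposal is correct and follows essentially the same route as the paper: write $c=[x,x]=\sum_i c_i$ with each $ic_i$ positive, identify $\DS_x(\HH)=\HH^c=\bigcap_i \HH^{c_i}$ as the iterated DS cohomology, and then invoke Theorem \ref{thm::g_Q_simple} inductively for the finiteness. The atypicality bookkeeping you flag as a potential obstacle is treated at the same (implicit) level of detail in the paper's own argument, so nothing in your approach diverges from it.
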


\subsection{Bogomol'nyi–Prasad–Sommerfield or `BPS'}
\label{subsec::BPSness}

As advertised above, the interplay between self-commutativity and unitarity plays a central r\^ ole in the 
physical constructions of invariants and measures of atypicality. One fairly ubiquitous concept we have not 
yet formalized is the notion of \emph{BPS state}. The underlying intuition is that \emph{multiplet shortening}, 
which as discussed on p.\ \pageref{superCoro} is a consequence of \emph{null vectors} in the Kac supermodule, 
is reflected in the unitarizable simple quotient in non-trivial subspaces that are \emph{annihilated 
by more supercharges than necessary}. To make this more precise, let $M$ be a highest weight $\gg$-supermodule. 
Then, as a consequence of the weight space decomposition $M=\bigoplus_{\lambda\in\hh^*} M^\lambda$ (see Lemma 
\ref{lemm::properties_HWM}), for any vector $v_\lambda \in M^\lambda$, and any odd root $\alpha$, we have that 
(at least) either $Q_\alpha v_\lambda=0$ or $Q_{-\alpha} v_\lambda=0$, where $Q_\alpha$ denotes a generator 
of the root space $\gg^\alpha$. If $M$ is
the standard Verma module, exactly one of these is satisfied for non-zero $v_\lambda$, and any additional 
vanishing is a consequence of atypicality.

\begin{definition}
\label{def::BPSstate}
Let $\HH$ be a unitarizable $\gg$-supermodule. For any $v\in\HH$, we denote by $\ann(v):=\{x\in\gg_1\vert xv=0\}$ 
the \emph{odd annihilator} of $v$. Then a non-zero vector $v \in \HH$ is called a \emph{BPS state} if 
$\dim(\ann(v)) > \frac{\dim \gg_1}{2} = \# \Phi_1^+$.
\end{definition}

\begin{lemma}[\cite{coulembier2017homological}]
\label{lemma::longshort}
Let $\HH$ be a unitarizable simple $\gg$-supermodule.
\begin{enumerate}
\item $v\in\HH$ is a BPS state if and only if there exists an odd positive root $\alpha\in\Phi_1^+$
with non-zero root vector $Q_\alpha$ such that $Q_\alpha v=0$, but 
$v\neq 0 \in \Ker Q_\alpha/\Im Q_\alpha$.
\item If $\HH$ has atypical highest weight $\Lambda$, and $(\Lambda+\rho,\alpha)=0$ for some
odd root $\alpha$, then $\Ker Q_\alpha/\Im Q_{\alpha}\neq 0$ is non-trivial. 
\item $\HH$ is a long supermodultiplet if and only if $\ker Q_\HH = \Im Q_{\HH}$ for all square-zero 
$Q \in \gg_1$.
\end{enumerate}
\end{lemma}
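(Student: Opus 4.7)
My plan is to dispatch (2) and (3) quickly using the associated-variety machinery of Proposition \ref{VarietyTypical}, and then attack (1) via the Hodge decomposition afforded by unitarity.

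For (2), the hypothesis $(\Lambda+\rho,\alpha)=0$ for an odd root $\alpha$ is exactly the situation of Proposition \ref{VarietyTypical}(b), which yields $Q_\alpha\in\YY_\HH$; by the definition \eqref{twistmoddef} of $\HH_x$, this is precisely the conclusion $\Ker Q_\alpha/\Im Q_\alpha\neq 0$. For (3), I would recall that a \emph{long} supermultiplet is, by the discussion after Corollary \ref{superCoro}, a unitarizable simple $\gg$-supermodule with typical highest weight. Proposition \ref{VarietyTypical}(a) then identifies typicality of $\Lambda$ with $\YY_\HH=\{0\}$, which via \eqref{twistmoddef} says exactly that $\Ker Q=\Im Q$ for every non-zero square-zero $Q\in\gg_1$ (the case $Q=0$ being trivial). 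This handles both statements.

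For (1), the real-form structure and the amendment \eqref{amendagain} give $Q_\alpha^\dagger = \pm i\,Q_{-\alpha}$ for each odd root $\alpha$ (with a sign fixed by the choice of $\omega$), so that $\Xi_\alpha := Q_\alpha Q_{-\alpha}+Q_{-\alpha}Q_\alpha$ is a non-negative self-adjoint operator commuting with $Q_{\pm\alpha}$ and preserving weights. Standard Hodge theory then supplies an orthogonal decomposition
\[
\HH = \mathcal{H}_\alpha \oplus \Im Q_\alpha \oplus \Im Q_{-\alpha}, \qquad \mathcal{H}_\alpha := \Ker Q_\alpha \cap \Ker Q_{-\alpha} = \Ker \Xi_\alpha,
\]
together with the canonical identification $\Ker Q_\alpha/\Im Q_\alpha\cong\mathcal{H}_\alpha$. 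For the forward direction of (1), for each pair $\{\alpha,-\alpha\}$ with $\alpha\in\Phi_1^+$ I record whether both, exactly one, or neither of $Q_{\pm\alpha}$ annihilate $v$; writing $\#A,\#B,\#C$ for the respective counts, $\dim\ann(v)=2\#A+\#B$ and $\#A+\#B+\#C=\#\Phi_1^+$. The BPS condition of Definition \ref{def::BPSstate} thus reduces to $\#A>\#C$, so in particular $\#A\geq 1$, yielding some $\alpha\in\Phi_1^+$ with $v\in\mathcal{H}_\alpha\setminus\{0\}$; since $\mathcal{H}_\alpha\cap\Im Q_\alpha=0$, such $v$ represents a non-zero class in $\Ker Q_\alpha/\Im Q_\alpha$.

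The main obstacle is the converse of (1): from the existence of a single $\alpha$ for which $v$ defines a non-trivial cohomology class, I must recover the strict counting inequality $\#A>\#C$ rather than just $\#A\geq 1$. Writing the Hodge decomposition as $v=v_h+Q_\alpha u$ with $v_h\in\mathcal{H}_\alpha$ non-zero, the vectors $v$ and $v_h$ need not share an annihilator, so a direct replacement of $v$ by its harmonic projection is not available. My plan is to exploit the unitarity identities $\|Q_\alpha v\|^2 \pm \|Q_{-\alpha} v\|^2 \propto (\lambda,\alpha)\|v\|^2$ on each weight component (with signs dictated by the amendment), together with the $\hh$-semisimplicity of $\HH$, to propagate the harmonicity at $\alpha$ into constraints on the other root-pairs that rule out Case~$C$ entirely on the relevant weight space. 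Pinning down this sign-and-weight bookkeeping across the full odd root system is where I expect most of the technical effort to concentrate.
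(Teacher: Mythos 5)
The paper itself imports this lemma from \cite{coulembier2017homological} without proof, so your argument has to stand on its own; parts (2) and (3) do — deducing (2) from Proposition \ref{VarietyTypical}(b) and (3) from Proposition \ref{VarietyTypical}(a) together with the identification ``long $\Leftrightarrow$ typical'' after Corollary \ref{superCoro} is exactly the right reduction. The problem is part (1). First, your counting identity $\dim\ann(v)=2\#A+\#B$ presupposes that $\ann(v)$ is a direct sum of odd root spaces, which holds only when $v$ is a weight vector (then $x\mapsto xv$ sends distinct root spaces to distinct weight spaces); for a general $v\in\HH$, linear combinations of root vectors can annihilate $v$ without any single root vector doing so, and the identity fails. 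Since Definition \ref{def::BPSstate} is stated for arbitrary non-zero $v$, you must either reduce to weight vectors or argue separately; as written, even the forward direction is only established for weight vectors.

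Second, and more seriously, the converse of (1) is not proved at all: you correctly isolate that from ``$Q_\alpha v=0$ and $[v]\neq 0$ in $\Ker Q_\alpha/\Im Q_\alpha$'' (which for a weight vector forces $(\lambda,\alpha)=0$ and hence $\#A\geq 1$) you still need the strict inequality $\#A>\#C$, but you only sketch a plan. That plan — use the norm identities to ``rule out Case $C$ entirely on the relevant weight space'' — is both stronger than what is needed and not obviously available: Case $C$ genuinely occurs in unitarizable modules. For instance, in a typical Kac module of $\sl(2\vert 1)$ the weight vector $v=\bar Q v_\Lambda$ satisfies $Qv=2Fv_\Lambda\neq 0$ and $\bar S v=-\bar Q\bar S v_\Lambda\neq 0$, so the pair $\{Q,\bar S\}$ is of type $C$; vanishing of $(\lambda,\alpha)$ for one pair does not by itself control whether, on a weight space of multiplicity greater than one, other pairs act with both $Q_{\pm\beta}v\neq 0$ (mixing the two orientations of $\sl(1\vert 1)_\beta$-doublets). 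So the inequality $\#A>\#C$ — equivalently, that harmonicity with respect to a single odd pair already forces BPS-ness in the sense of Definition \ref{def::BPSstate} — is precisely the content of the lemma and remains unestablished; the ``sign-and-weight bookkeeping'' you defer is not a technicality but the missing idea, and it will have to use more than the weight $\lambda$ alone (e.g.\ the structure of $\HH$ as a module over the $\sl(1\vert 1)$-subalgebras, or the classification of unitarizable weights), or else be replaced by the argument of the cited reference.
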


We conclude with the following comparison between various math and physics notions that we have touched 
upon so far.

\begin{proposition}
\label{prop::comparison}
Let $M$ be a unitarizable simple $\gg$-supermodule. Then the following assertions are equivalent:
\begin{enumerate}
\item $\HH$ is a short supermodultiplet.
\item The highest weight of $\HH$ is atypical.
\item $\HH$ contains a BPS state.
\item $\HH$ is protected as a direct summand in families of unitarizable $\gg$-supermodules.
\end{enumerate}
In particular, long unitarizable supermodules do not contain BPS states.
\end{proposition}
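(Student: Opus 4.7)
The plan is to establish the chain of equivalences mostly by applying the structural results already collected in the excerpt, and to argue (4) separately by invoking the geometry of the region of unitarity developed in Section~\ref{sec::redrecomb}.

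First I would prove \emph{(1) $\Leftrightarrow$ (2)}. This is almost immediate from Corollary~\ref{superCoro}: a unitarizable simple highest weight $\gg$-supermodule attains the maximal number $2^{\dim(\gg_{-1})}$ of $\even$-constituents if and only if its highest weight is typical, hence ``long'' is equivalent to ``typical''. Consequently $\HH$ is short if and only if $\Lambda$ is atypical.

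Next I would show \emph{(2) $\Leftrightarrow$ (3)}. For $(2)\Rightarrow(3)$, if $\Lambda$ is atypical there exists an odd positive root $\alpha\in \Phi_1^+$ with $(\Lambda+\rho,\alpha)=0$; Lemma~\ref{lemma::longshort}(2) provides a non-zero class in $\Ker Q_\alpha/\Im Q_\alpha$, and Lemma~\ref{lemma::longshort}(1) then identifies a representative as a BPS state in the sense of Definition~\ref{def::BPSstate}. For $(3)\Rightarrow(2)$, a BPS state $v$ yields by Lemma~\ref{lemma::longshort}(1) an odd root vector $Q_\alpha$ (which is square-zero since odd roots are isotropic) with $\Ker Q_\alpha \neq \Im Q_\alpha$ on $\HH$. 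By the contrapositive of Lemma~\ref{lemma::longshort}(3), $\HH$ cannot be long, so it is short, and therefore atypical by the already established $(1)\Leftrightarrow (2)$.

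Finally I would establish \emph{(2) $\Leftrightarrow$ (4)}. For $(2)\Rightarrow(4)$, if $\Lambda$ is atypical then $\Lambda\in\Gamma\setminus\calC$ by Lemma~\ref{lemm::region_C}, and the continuity of $\Theta$ under our standing assumptions in Section~\ref{subsec::assumptions} forces any continuous family through $\HH$ either to remain at the prescribed atypicality locus or to participate in fragmentation/recombination in the sense of \eqref{fragDef}; in neither case can $\HH$ drop out of the decomposition as an isolated direct summand, so $\HH$ is protected. For $(4)\Rightarrow(2)$, if $\Lambda$ were typical, then by Lemma~\ref{immediateLemma}(1) $\Lambda$ lies in the open cone $\calC$ and $K(\Lambda)\cong L(\Lambda)$; one can then vary $\Lambda$ continuously within $\calC^{(a,b)}$, producing a family in which $\HH$ is deformed non-trivially (and in particular can be moved toward $\partial\calC$, where it fragments), contradicting protectedness. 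The final clause of the proposition is an immediate consequence of $(1)\Leftrightarrow(3)$.

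The only subtle step is $(2)\Leftrightarrow(4)$: the notion of ``protected as a direct summand in families'' must be anchored to the formalism of Section~\ref{subsec::assumptions} and the description of $\overline{\calC}$ in Lemma~\ref{lemm::region_C}, so that ``moving back into $\calC$'' is genuinely obstructed by the discreteness of the recombination rules of Lemma~\ref{lemm::recombination_rules}. The remaining implications are essentially bookkeeping against Corollary~\ref{superCoro} and Lemma~\ref{lemma::longshort}.
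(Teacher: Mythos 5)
Your proof is correct and assembles exactly the ingredients the paper intends: Corollary~\ref{superCoro} for (1)$\Leftrightarrow$(2), Lemma~\ref{lemma::longshort} for (2)$\Leftrightarrow$(3), and the fragmentation/recombination discussion (Lemmas~\ref{lemm::region_C}, \ref{immediateLemma}, \ref{lemm::recombination_rules} and eq.~\eqref{fragDef}) for (2)$\Leftrightarrow$(4); the paper in fact states the proposition without a written proof, so this is the implicit intended argument. Your handling of (2)$\Leftrightarrow$(4) is as precise as the paper's own, merely informal, notion of ``protected'' permits, and you correctly note that a short summand may still disappear via recombination, just not in isolation.
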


One might expect that if $\HH$ contains BPS states, then in particular the highest weight state should be
one. In other words, that $\HH$ is a short supermultiplet if and only if the highest weight state is BPS. 
A quick look at Fig.\ \ref{fig:rootsystem}, which illustrates the weight system of the oscillator supermodule
for $\su(1,1\vert 1)$, shows however that this is not necessarily true and in any case depends on the 
choice of positive system.

In physics, BPS states are further classified by comparing the size of the annihilator with the total
number of supercharges. In other words, if $v\in\HH$ is a BPS state one defines the \emph{(degree of) BPS-ness} by
\begin{equation}
\label{eq::BPSdegree}
\deg_{\rm BPS}(v) = \frac{\dim(\ann(v))- \dim\gg_{+1}}{\dim\gg_{+1}}.
\end{equation}
The intuition is that the more supercharges annihilate a state, the more it is protected against
deformations. Thus, if $\HH$ is a unitarizable simple supermodule, one expects a close relationship 
between the maximal \emph{BPS-ness} of any state in $\HH$ and the degree of atypicality of the 
highest weight (defined as the maximal number of \emph{mutually orthogonal} roots with 
$(\Lambda+\rho,\alpha)=0$). For finite-dimensional supermodules, it is indeed known that the two 
notions are equivalent concepts. We conjecture that this also extends to the infinite-dimensional 
case.

\section{Formal Superdimension}
\label{sec::formal_superdimension}

The aim of this section is to explain the definition of the \emph{formal superdimension}, which is introduced 
more formally in ref.\ \cite{SchmidtGeneralizedSuperdimension}. The idea is to assign an invariant to
unitarizable $\gg$-supermodule that generalizes, on the one hand, the ordinary superdimension 
for finite-dimensional $\gg$-supermodules, and, on the other, what is known as \emph{formal
dimension} or \emph{Harish-Chandra degree} of unitarizable modules over the bosonic subalgebra 
$\even$. In fact, the formal superdimension of a $\gg$-supermodule is simply defined as the alternating 
sum of the formal dimensions of its $\even$-constituents. Such an invariant is expected to be ``protected 
in continuous families'' in a similar sense as atypicality (or BPS-ness) discussed in section \ref{fragrecom}. 
We will realize this expectation in section \ref{sec::anindex}.

Our approach might seem limited by the fact that the formal dimension itself is
defined only for the class of unitarizable $\even$-modules that integrate to 
\emph{relative discrete 
series representations} of a real Lie group $G$ with Lie algebra $\rform$. Here, $\rform$, is the 
reductive real form $\su(p,q) \oplus \su(n) \oplus \u(1)$ of $\even$ (see section \ref{sec::realforms}), 
and $G$ is either the matrix Lie group $\rformg$, or its simply connected covering group by $\sccg$. We 
recall these notions and explain the calculation of the formal dimension in section 
\ref{subsec::relative_discrete_series_formal_dimension}. The definition of the formal 
superdimension follows in section \ref{subsec::superdimension_formula}. It  turns out that all simple 
supermodules for which the formal superdimension is defined and non-zero are maximally atypical, and that
conversely any unitarizable maximally atypical supermodule consists only of $\even$-modules in the (limit of) 
relative holomorphic discrete series. This removes the limitations of the 
formal superdimension to some degree.

\subsection{Relative holomorphic discrete series and formal dimension} 
\label{subsec::relative_discrete_series_formal_dimension} 

We proceed in steps. First, we realize unitarizable highest weight $\even$-modules as 
Harish-Chandra modules of unitary irreducible representations of $G$. Second, we establish the Harish-Chandra condition 
that distinguishes those $\even$-modules that come from $G$-representations that are square-integrable modulo 
the center. Third, we assign a formal dimension to these square-integrable highest weight modules. 
Fourth, we describe the Harish-Chandra parameterization of the relevant highest weights. Fifth, we record the
relation between the formal dimension and the Harish-Chandra character of $L$-packets. Finally, we comment briefly
on some `limit cases`. Our approach closely follows the theory developed in \cite{NeebSquareIntegrable}.

\subsubsection{Unitary representations and Harish-Chandra modules} 

Let $(\pi,\HH)$ be a \emph{unitary representation} of $G$, that is, a complex Hilbert space 
$\bigl(\HH,\langle\cdot,\cdot\rangle\bigr)$
together with a continuous group homomorphism $\pi:G\to U(\HH)$, where $U(\HH)$ is the topological group of 
unitary operators on $\HH$ equipped with the \emph{weak operator topology}.\footnote{%
This is the coarsest topology for which all functions 
\begin{equation}
f_{v,w} : U(\HH) \to \CC, \qquad T \mapsto \langle v, Tw\rangle, \qquad v,w \in \HH
\end{equation}
are continuous. It coincides with the strong operator topology.} A vector $v\in\HH$ is called \emph{smooth} if 
the mapping $G\ni g\mapsto \pi(g)v\in\HH$ is smooth. The subspace $\HH^\infty\subset\HH$ of smooth vectors is
dense and carries a natural action of the Lie algebra $\rform$ by the \emph{derived representation},
\begin{equation} 
\label{conjugation_G}
\mathrm{d}\pi : \rform \to \End(\HH^{\infty}), \qquad \mathrm{d}\pi(X)v := \frac{\mathrm{d}}{\mathrm{d}t} 
\pi(\exp(tX))v.
\end{equation}
Consequently, we may regard $\HH^{\infty}$ as a $\UE(\rform)$-module. Conjugation by elements of $G$ 
in $\End(\HH)$ preserves $\UE(\rform)$, as follows from the functoriality of the exponential map.
\begin{equation} 
\pi(g)\mathrm{d}\pi(X)\pi(g)^{-1} = \mathrm{d}\pi (\Ad(g)(X)). 
\end{equation}

Although $\HH^{\infty}$ has a natural structure, this space is too large to work with effectively in an 
algebraic context. A smaller subspace, which is still dense and a $\UE(\rform)$-module, but more manageable 
for our purposes, can be constructed by decomposing with respect to a maximal compact subgroup 
$K\subset G$, with Lie algebra $\kk\subset\rform$ (see section \ref{sec::realforms}).\footnote{Of some 
importance is the fact that the centralizer in $\gg$ of the center of $\kk$ coincides with $\kk$, meaning 
that $\rform$ is \emph{quasihermitian}.} Formally, we let $\HH^{K}$ be the set of all $v \in 
\HH$ for which $\{\pi(k)v : k \in K\}$ spans a finite-dimensional space. Alternatively, we
can interpret $\HH^{K}$ as the algebraic direct sum of the $K$-isotypic components $\HH_\sigma$
into which $\HH$ decomposes by the Peter-Weyl Theorem. Here, $\sigma$ labels all (equivalences classes 
of) finite-dimensional irreducible representations of $K$. 
By construction, $\HH^K$ consists entirely of smooth vectors and is a $\UE(\rform)$-invariant subspace of 
$\HH^\infty$. As such, it is usually denoted as $\HH^{\infty,K}$. It is at the same time a $\UE(\rform)$-module 
and a representation of $K$ such that the $K$-action preserves $\UE(\rform)$. In other words, $\HH^{\infty,K}$ 
is a $(\rform,K)$-module. 
If $\pi$ is irreducible, then $\dim(\HH_{\sigma}) < \infty$ for any $K$-isotypic component 
$\HH_{\sigma}$, and $\HH^{\infty,K}$ is called the \emph{Harish-Chandra module} underlying $(\pi, 
\HH)$. The Harish-Chandra modules uniquely determine the unitary irreducible representations, see e.g.,
ref.\ \cite{welleda1997general}.

We record that the Harish-Chandra module of an irreducible unitary representation $(\pi,\HH)$ consists 
entirely of analytic vectors. Here, a smooth vector $v \in \HH^{\infty}$ is called \emph{analytic} if 
there exists an $r > 0$ such that the power series 
\begin{equation} 
\label{eq::analytic_vector}
f_{v} : B_{r} \to \HH, \qquad f_{v}(X) = \sum_{k = 0}^{\infty}\frac{1}{k!}\rho(X)^{k}v
\end{equation}
defines a holomorphic function on $B_{r} := \{X \in \even : \lVert X \rVert < r\}$, where $\lVert\cdot\rVert$ 
denotes an $\Ad(H)$-invariant norm on $\even$, $H$ being the Cartan subgroup associated to $\hh$, and we have
extended the derived representation to a representation of the complexification $\even$ on the 
complex vector space $\HH^{\infty,K}$, denoted by the same symbol. For every $r > 0$, 
let $\HH^{a,r}$ denote the set of all analytic vectors for which \eqref{eq::analytic_vector} converges on
$B_{r}$. These spaces are dense in $\HH$.
We are particularly interested in the class of unitary representations $(\pi, \HH)$ for 
which $\HH^{\infty,K}$ is a highest weight module (with respect to a positive system compatible 
with the choice of embedding $K\subset G$. In general, there are other unitary representations
of $\sccg$ with Harish-Chandra module of highest weight with respect to a different positive system. 
We will return to these below.)

\begin{proposition}[{\cite[Theorem 7]{welleda1997general}}]
A simple highest weight $\even$-module $L_{0}(\Lambda)$ is \emph{unitarizable} if and only if there exists 
a unitary representation $(\pi_\Lambda,\HH_\Lambda)$ of $\sccg$ such that $L_0(\Lambda)$ is isomorphic to 
the $\even$-module $\HH^{\infty,K}$ of $K$-finite smooth vectors in $\HH$. In this case, 
$(\pi_\Lambda,\HH_\Lambda)$ is unique and called a \emph{highest weight representation of} $\sccg$.
\end{proposition}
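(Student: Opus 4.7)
The statement is a biconditional together with a uniqueness assertion, so I would split the proof into three parts. For the ``only if'' direction, assume $(\pi,\HH)$ is a unitary representation of $\sccg$ with $\HH^{\infty,K}\cong L_0(\Lambda)$. Restricting the Hilbert-space inner product to $\HH^{\infty,K}$ gives a positive-definite Hermitian form. For $X\in\rform$, differentiating the identity $\pi(\exp(tX))^* = \pi(\exp(-tX))$ at $t=0$ on smooth vectors yields $\mathrm{d}\pi(X)^\dagger = -\mathrm{d}\pi(X)$. Decomposing any $Y\in\even$ as $Y=Y_1+iY_2$ with $Y_j\in\rform$, conjugate-linear extension of $\omega|_\rform=-\mathrm{id}$ produces $\mathrm{d}\pi(Y)^\dagger = \mathrm{d}\pi(\omega(Y))$, which is exactly the contravariance condition of Definition \ref{unitarize} applied to $\even$ (whose relevant anti-involution is the restriction of the real structure $\omega$ defining $\rform$). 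Hence $L_0(\Lambda)$ is unitarizable.

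For the ``if'' direction, start from a unitarizable simple highest weight $\even$-module $L_0(\Lambda)$ equipped with its (essentially unique) contravariant Hermitian form, and let $\HH_\Lambda$ denote its Hilbert-space completion. The goal is to exponentiate the skew-Hermitian action of $\rform$ to a unitary representation of $\sccg$ on $\HH_\Lambda$. The key technical step is to show that every vector in the algebraic module $L_0(\Lambda)$ is an analytic vector in the sense of \eqref{eq::analytic_vector}. Here one exploits two features: first, the highest-weight vector $v_\Lambda$ is $K$-finite (it generates a finite-dimensional simple $K^\CC$-module through the positive-system constraints \eqref{dominance}), and by reductive-pair arguments this already forces $v_\Lambda$ to be an analytic vector for the quadratic Casimir $\Omega_{\even}$; second, the subspace of analytic vectors is $\UE(\even)$-invariant and contains the cyclic generator, hence contains all of $L_0(\Lambda)$. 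Invoking Nelson's analytic vector theorem (or Goodman's refinement adapted to quasi-Hermitian real forms), the skew-Hermitian Lie-algebra representation of $\rform$ integrates to a strongly continuous unitary representation $\pi_\Lambda$ of $\sccg$ on $\HH_\Lambda$. Irreducibility of $\pi_\Lambda$ follows from simplicity of $L_0(\Lambda)$ together with density of $L_0(\Lambda)$ in $\HH_\Lambda$ and the standard fact that closed invariant subspaces of a unitary representation correspond to $(\rform,K)$-submodules after passing to $K$-finite analytic vectors.

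The hardest part is establishing the analytic-vector bound uniformly on $L_0(\Lambda)$, because a priori the norm estimates for $\mathrm{d}\pi(X)^k v_\Lambda$ grow rapidly through the $\UE(\nn^-)$-action described in Lemma \ref{lemm::properties_HWM}. I would handle this by using the explicit realization of $L_0(\Lambda)$ as the simple quotient of a Verma module, together with the positivity of the Kac--Shapovalov form (subsection \ref{subsubsec::KS}) applied to the norm $\|\mathrm{d}\pi(X)^k v_\Lambda\|^2 = \langle v_\Lambda, \mathrm{d}\pi(\omega(X))^k \mathrm{d}\pi(X)^k v_\Lambda\rangle$, bounding the growth via the fact that the Casimir acts as a scalar $\chi_\Lambda(\Omega_\even)$. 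In the Hermitian symmetric setting of $\su(p,q)\oplus\su(n)\oplus\u(1)$, an equivalent and cleaner route is to invoke the analytic realization of unitary highest weight modules on Bergman-type spaces of holomorphic sections over the bounded symmetric domain $G/K$ (Harish-Chandra, Enright--Howe--Wallach, Jakobsen), where the group action is visibly unitary by construction; this is in fact the route we rely on implicitly in later sections.

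For uniqueness, suppose $(\pi,\HH)$ and $(\pi',\HH')$ are two unitary representations of $\sccg$ with $\HH^{\infty,K}\cong L_0(\Lambda)\cong (\HH')^{\infty,K}$ as $(\rform,K)$-modules. By the remark preceding the proposition, a unitary irreducible representation is determined by its Harish-Chandra module; more concretely, any isometric $(\rform,K)$-isomorphism $L_0(\Lambda)\to L_0(\Lambda)$ (unique up to a scalar of modulus one by Schur's lemma on the simple module, since the Hermitian form on a simple unitarizable $\even$-module is unique up to a positive real scalar) extends by density to a unitary intertwiner $\HH\to\HH'$, which intertwines $\pi$ and $\pi'$ because it does so on the dense analytic subspace $L_0(\Lambda)$.
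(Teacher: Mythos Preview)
The paper does not give its own proof of this proposition: it is stated as a citation to \cite[Theorem 7]{welleda1997general} and no argument follows. So there is nothing to compare your proposal against in the paper itself.

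As a standalone sketch your argument is sound and follows the standard route. The ``only if'' direction and the uniqueness statement are exactly as you describe. For the ``if'' direction you correctly identify the analytic-vector estimate as the crux, and both strategies you mention (Casimir/Nelson-type bounds, or the holomorphic realization on the bounded symmetric domain) are the ones used in the literature. One small caution: the Casimir-scalar argument alone does not immediately give the factorial-type bound needed for analyticity of \emph{arbitrary} $X\in\rform$; one typically pairs it with the finite $K$-type decomposition and an elliptic-regularity or Harish-Chandra--Lepowsky type estimate to control $\lVert X^k v\rVert$ uniformly. Your alternative via the geometric realization sidesteps this and is the cleaner route in the quasi-Hermitian case at hand.
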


\subsubsection{Relative holomorphic discrete series} 

Let $L_{0}(\Lambda)$ be a unitarizable highest weight $\even$-module with highest weight $\Lambda$, and 
let $(\pi_{\Lambda}, \HH_{\Lambda})$ be the unique unitary irreducible representation of $G$ such that 
$\HH_{\Lambda}^{\infty,K} \cong L_{0}(\Lambda)$. We wish to realize $(\pi_{\Lambda}, \HH_{\Lambda})$ as a 
subspace of square-integrable functions on $Z\backslash G$, where $Z$ denotes the center of $G$. This
is not strictly possible, since the \emph{central character} $\chi_{\pi_{\Lambda}} : Z \to \Uone(1)$, defined
by $\pi_{\Lambda}(z)v = \chi_{\pi_{\Lambda}}(z)v$ for any $z \in Z$ and $v \in \HH_{\Lambda}$ might be 
non-trivial. Instead, we consider the representation\footnote{This can be viewed as space of sections of
a certain line bundle over $Z\backslash G$.}
\begin{equation}
\Gamma_{G}(E_{\pi_{\Lambda}}) := \{ f \in \calC(G, \CC) \ : \ f(zg) = \chi_{\pi_{\Lambda}}(z) f(g) \ 
\text{for all} \ z \in Z \},
\end{equation}
with action $(g \cdot f)(x) = f(xg)$, viewed as a subspace of the right-regular representation with
fixed central character. Since $\rform$ is reductive, $G$ is unimodular, allowing us to fix a $G$-invariant 
Haar measure $\mathrm{d}g$ on $G$, 
which induces a $G$-invariant Haar measure $\mathrm{d}\mu$ on the unimodular group $Z \backslash G$.
Then the space $\Gamma^{2}_{G}(E_{\pi_{\Lambda}})$ of square-integrable elements,
\begin{equation}
\Gamma^{2}_{G}(E_{\pi_{\Lambda}}) := \{ f \in \Gamma_{G}(E) \ : \ \int_{Z \backslash G} \abs{f(g)}^{2} \ 
\mathrm{d}\mu(Zg) < \infty \},
\end{equation}
carries a natural Hermitian form,
\begin{equation}
\langle f, h \rangle := \int_{Z \backslash G} \overline{f(g)} h(g) \ \mathrm{d}\mu(Zg),
\end{equation}
and we denote its Hilbert space completion by the same symbol. The action of $G$ on $\Gamma^{2}_{G}(E_{\pi_{\Lambda}})$ 
then defines a unitary representation of $G$.
 
For fixed $v\in \HH_\Lambda$, $v\neq 0$ the matrix coefficients of $(\pi_{\Lambda}, \HH_{\Lambda})$ 
define a $G$-equivariant map:
\begin{equation}
\label{equiemb}
\Psi_v : \HH_{\Lambda} \to \Gamma_{G}(E_{\pi_{\Lambda}}), 
\qquad 
\Psi_v(w)(g) := \frac{\langle v, \pi_{\Lambda}(g)w \rangle}{\lVert v\rVert }.
\end{equation}
This allows us to compare $(\pi_{\Lambda}, \HH_{\Lambda})$ and $\Gamma_{G}^{2}(E_{\pi_{\Lambda}})$ if the matrix 
coefficients are square-integrable.

\begin{definition} 
\label{def::relholo}
A unitary irreducible highest weight representation $(\pi_{\Lambda},\HH_{\Lambda})$ of $G$ is 
said to belong to the \emph{relative holomorphic 
discrete series} if there exist non-zero $v,w \in \HH_{\Lambda}$ such that the function 
$$
Z \backslash G \to \CC, \qquad Zg \mapsto \vert \langle v, \pi_{\Lambda}(g)w\rangle \vert
$$
is square-integrable. In this case, we also say that $(\pi_{\Lambda},\HH_{\Lambda})$ is a relative holomorphic 
discrete series representation.
\end{definition}

Here, as usual, the epithet ``holomorphic'' refers to the complex structure on the 
quotient $G/K$ that is compatible with the choice of positive system. As such,
relative holomorphic discrete series representations are direct generalizations of holomorphic
discrete series representations of semisimple Lie groups  \cite{harish1954representations} to 
their universal covering groups. Physically, relative holomorphic discrete series
are ``plane wave normalizable in the direction of the center, otherwise normalizable.''

\begin{theorem}[{\cite[Theorem 2.1, Proposition 2.2]{NeebSquareIntegrable}}] 
\label{thm::embedding}
If $(\pi_{\Lambda},\HH_{\Lambda})$ is a relative holomorphic discrete series representation of $G$, there exists a 
(unique) constant $d(\pi_{\Lambda}) > 0$ such that $\sqrt{d(\pi_{\Lambda})}\Psi_v$ is an isometric $G$-equivariant 
embedding $\HH_{\Lambda} \hookrightarrow \Gamma_{G}^{2}(E_{\pi_{\Lambda}})$, for any fixed  non-zero 
$v \in \HH_{\Lambda}$.
\end{theorem}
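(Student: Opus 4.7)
The plan is to adapt the classical Schur orthogonality argument for (relative) discrete series representations to our setting. I would proceed in three stages.

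First, I would upgrade the existential square-integrability in Definition \ref{def::relholo} to a uniform statement on all of $\HH_\Lambda$. Consider
\[
\HH_\Lambda^{(2)} := \bigl\{ w \in \HH_\Lambda : \Psi_{v_0}(w) \in \Gamma^{2}_{G}(E_{\pi_\Lambda}) \bigr\}
\]
for the fixed non-zero $v_0$ supplied by the definition. Because $\Psi_{v_0}$ intertwines $\pi_\Lambda$ with right translation on $\Gamma_G(E_{\pi_\Lambda})$, and right translation preserves $\Gamma^{2}_G(E_{\pi_\Lambda})$ by $G$-invariance of $d\mu$, the subspace $\HH_\Lambda^{(2)}$ is $\pi_\Lambda(G)$-stable. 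Hence its closure is a closed $G$-invariant subspace of $\HH_\Lambda$; by irreducibility of $\pi_\Lambda$ and non-triviality guaranteed by the definition, the closure is all of $\HH_\Lambda$. A closed-graph argument (using that the $L^2$-norm dominates pointwise evaluation after smearing against $K$-finite vectors) then shows $\Psi_{v_0}: \HH_\Lambda \to \Gamma^{2}_G(E_{\pi_\Lambda})$ is a bounded operator, and the same holds for $\Psi_v$ for every non-zero $v$ by a parallel argument in the first slot.

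Second, I would invoke Schur's lemma. For fixed non-zero $v$, the operator $T_v := \Psi_v^{*}\Psi_v$ on $\HH_\Lambda$ is bounded, self-adjoint, non-negative, and commutes with $\pi_\Lambda(G)$, the last because $\Psi_v$ is $G$-equivariant into the right-regular representation. Irreducibility of $(\pi_\Lambda,\HH_\Lambda)$ then forces $T_v = c(v)\, I$ for some $c(v) \geq 0$, equivalently
\[
\| \Psi_v(w) \|^{2}_{\Gamma^{2}_G(E_{\pi_\Lambda})} = c(v)\, \|w\|^{2}_{\HH_\Lambda} \quad \text{for all } w \in \HH_\Lambda.
\]

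Third, I would compare $c(v)$ for different $v$ by exploiting the symmetry
$|\langle v, \pi_\Lambda(g) w\rangle| = |\langle w, \pi_\Lambda(g^{-1}) v\rangle|$
together with the unimodularity of $Z \backslash G$ (since $G$ is reductive, hence unimodular, and $Z$ is central). The inversion $g \mapsto g^{-1}$ therefore preserves $d\mu$, and unwinding the normalization $\|v\|^{-1}$ built into $\Psi_v$ in \eqref{equiemb} gives $c(v)\|v\|^{2} = c(w)\|w\|^{2}$ for all non-zero $v,w$. Denote this common value by $1/d(\pi_\Lambda) > 0$; positivity holds because Definition~\ref{def::relholo} supplies a strictly positive square-integral, and uniqueness is immediate from the isometry equation $\|\sqrt{d(\pi_\Lambda)}\,\Psi_v(w)\|^{2} = \|w\|^{2}$. $G$-equivariance of $\Psi_v$ is tautological from its construction as a matrix-coefficient map.

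The main obstacle is the first stage: upgrading the existential square-integrability hypothesis into a uniform bounded map on all of $\HH_\Lambda$. Once $\Psi_v$ is known to be bounded and $G$-equivariant, the Schur-lemma manipulations in the second and third stages are entirely formal. This is where irreducibility together with Hilbert-space completeness is genuinely used, and it is the step that has no analogue in the purely algebraic setting of Harish-Chandra modules.
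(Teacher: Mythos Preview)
The paper does not prove this theorem; it is quoted as background from \cite{NeebSquareIntegrable} (Theorem~2.1 and Proposition~2.2 there), so there is no in-paper proof to compare against.

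Your sketch follows the classical Schur-orthogonality route and is conceptually sound. One minor slip in the third stage: with the normalization by $\lVert v\rVert^{-1}$ already built into $\Psi_v$ in \eqref{equiemb}, the inversion argument yields $c(v)=c(w)$ directly, not $c(v)\lVert v\rVert^{2}=c(w)\lVert w\rVert^{2}$; the common constant $c$ itself is $1/d(\pi_\Lambda)$, and then $\sqrt{d(\pi_\Lambda)}\,\Psi_v$ is isometric as stated. Your identification of the first stage as the real analytic obstacle is accurate: density of $\HH_\Lambda^{(2)}$ alone does not give boundedness of $\Psi_{v_0}$, and the closed-graph step you gesture at needs an intermediate argument (typically via $K$-finite or analytic vectors, where one first shows all such matrix coefficients lie in $L^2$ and then invokes uniform boundedness). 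For the details one is referred to the cited source.
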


Harish-Chandra classified in \cite{harish1954representations} all unitary highest weight representation of $G$ that 
belong to the relative holomorphic discrete series by evaluating the integral 
\begin{equation} 
\label{eq::HC_integral}
\int_{Z \backslash G} \vert\langle v_{\Lambda}, \pi_{\Lambda}(g)v_{\Lambda}\rangle\vert^{2} \ \mathrm{d}\mu(Zg),
\end{equation}
where $v_{\Lambda}$ denotes a highest weight vector of the Harish-Chandra module 
$\HH_{\Lambda}^{\infty,K}$. 

\begin{theorem}\cite[Lemma 27, Lemma 29]{harish1954representations} 
\label{thm::HC_condition}
A unitary highest weight representation of $G$ with highest weight $\Lambda$ belongs to the relative holomorphic 
discrete series if and only if, for all (positive non-compact roots) $\beta \in \Phi_{n}^{+}$, we have
$$
(\lambda + \rho_{0}, \beta) < 0.
$$
\end{theorem}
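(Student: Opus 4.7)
The plan is to follow Harish-Chandra's original strategy and evaluate the integral \eqref{eq::HC_integral} directly by exploiting the Hermitian symmetric structure of $G/K$. The proof splits naturally into (i) a geometric reduction of the integral to an integral over a split abelian subgroup, (ii) an explicit computation of the matrix coefficient on a highest weight vector, and (iii) an asymptotic convergence analysis.

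First, I would use the polar (or Cartan $KAK$) decomposition $G=KAK$, where $A=\exp\mathfrak{a}$ for $\mathfrak{a}\subset\mathfrak{p}$ a maximal abelian subspace in the non-compact part of the Cartan decomposition $\rform=\kk\oplus\mathfrak{p}$. By a theorem of Moore, $\mathfrak{a}$ can be chosen so that the restricted root system is (at most) of type $BC_r$ with $r=\min(p,q)$, with the restricted positive non-compact roots coming from a maximal strongly orthogonal subset of $\Phi_n^+$. The Haar measure decomposes (modulo $Z$, which lies in $K$) as $\measure = c\, J(a)\,\diff a\,\diff k_1\,\diff k_2$ with Jacobian $J(a)=\prod_{\beta\in\Phi_n^+}|\sinh\beta(H)|$, where $a=\exp(H)$. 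Because $v_\Lambda$ is a weight vector for the Cartan of $\kk$, the quantity $|\langle v_\Lambda,\pi_\Lambda(g)v_\Lambda\rangle|^2$ is bi-$K$-invariant and hence depends only on $H\in\mathfrak{a}^+$, so that \eqref{eq::HC_integral} reduces to an integral over the positive Weyl chamber in $\mathfrak{a}$ weighted by $J(a)^2$ after squaring.

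Second, I would exploit the Harish-Chandra embedding $G/K\hookrightarrow \gg_{+1}\cong\mathfrak{p}^+$ realizing $G/K$ as a bounded symmetric domain $\Omega$, on which the highest weight representation is realized as holomorphic sections of a homogeneous line bundle twisted by the character $e^{\Lambda}$ of the maximal compact subgroup. In these coordinates the vector $v_\Lambda$ corresponds to the constant function $1$, and a direct calculation along the split torus $\mathfrak{a}$, parameterized by $H=\sum t_\beta H_\beta$ with $H_\beta\in[\gg^\beta,\gg^{-\beta}]\cap\mathfrak{a}$ for a maximal strongly orthogonal set $\beta\in\Phi_n^+$, gives
\begin{equation}
|\langle v_\Lambda,\pi_\Lambda(\exp H)v_\Lambda\rangle|^2 = \prod_{\beta} (\cosh t_\beta)^{4(\Lambda,\beta)/(\beta,\beta)}.
\end{equation}
(This is the familiar formula for the reproducing kernel of the weighted Bergman space on $\Omega$, restricted to the Shilov diagonal.) Combining with the Jacobian, the integral reduces, up to an overall finite constant and a finite-dimensional integral over the compact directions, to a product of one-dimensional integrals of the shape $\int_0^\infty (\sinh t_\beta)^{n_\beta}(\cosh t_\beta)^{4(\Lambda,\beta)/(\beta,\beta)}\diff t_\beta$, where $n_\beta$ collects both the multiplicity of $\beta$ as a restricted root and all transverse $\sinh$-factors through the value $\beta(H_{\beta'})$ for $\beta'$ in the strongly orthogonal set.

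Third, the asymptotic analysis at $t_\beta\to\infty$ shows that each integral converges iff the total exponent $4(\Lambda,\beta)/(\beta,\beta)+n_\beta<-1$. A careful bookkeeping identifies the combined contribution of $n_\beta$ and the shift by $-1$ with exactly $4(\rho_0,\beta)/(\beta,\beta)$, using the standard identity $2\rho_0=\sum_{\alpha\in\Phi_0^+}\alpha$ and the fact that the restricted root multiplicities are precisely the number of compact plus non-compact roots restricting to each $\beta$. This produces the equivalent condition $(\Lambda+\rho_0,\beta)<0$. Finally, since the strongly orthogonal set $\{\beta\}$ spans $\mathfrak{a}^*$ and the remaining non-compact positive roots are non-negative linear combinations of them, the condition extends from the strongly orthogonal subset to all of $\Phi_n^+$, yielding the theorem.

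The main obstacle will be step two: the precise identification of the exponents in the matrix coefficient and in the Jacobian requires a careful choice of strongly orthogonal roots and a clean treatment of the intertwining between the $\kk^\CC$-character carried by $v_\Lambda$ and the transformation of $\mathfrak{a}$ under the restricted Weyl group. Keeping track of normalizations so that the compact part of $\rho_0$ (which at first glance appears unrelated to the non-compact directions of integration) reassembles correctly with the non-compact part requires invoking the specific property of Hermitian symmetric pairs that $\rho_0$ restricted to the center of $\kk$ equals the restriction of the sum of the non-compact positive roots.
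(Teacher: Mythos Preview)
The paper does not give its own proof of this theorem: it is stated with a direct citation to Harish-Chandra's original lemmas, preceded only by the remark that Harish-Chandra obtained the classification ``by evaluating the integral \eqref{eq::HC_integral}''. So there is nothing in the paper to compare your argument against beyond that one sentence. Your outline is consistent with that sentence and is indeed a reasonable sketch of Harish-Chandra's method (and of the later streamlined treatments via the bounded-domain realization).

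One point of notational looseness is worth flagging. You write the $KAK$ Jacobian as $J(a)=\prod_{\beta\in\Phi_n^+}|\sinh\beta(H)|$, but $\Phi_n^+$ consists of roots for the compact Cartan $\hh\subset\kk$, whereas the Jacobian is governed by the \emph{restricted} roots of the split $\mathfrak{a}\subset\mathfrak{p}$, counted with multiplicity. These two systems are related only through the Cayley transform attached to your strongly orthogonal set, and the restricted-root multiplicities receive contributions from both compact and non-compact $\hh$-roots. You clearly know this, since your step (iii) appeals to exactly this bookkeeping to reassemble $\rho_0$, but as written the Jacobian formula in step (i) is not literally correct and would mislead a reader into thinking the product is over $\Phi_n^+$ rather than over the restricted system. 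If you intend this as more than a sketch, that identification should be made explicit before the convergence analysis rather than absorbed into the admittedly delicate ``careful bookkeeping'' of step (iii).
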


This condition is in the following referred to as the \emph{Harish-Chandra condition}.

\subsubsection{Formal dimension} 
\label{subsec::formal_dimension}

The positive constant $d(\pi_{\Lambda})$ in Theorem \ref{thm::embedding} is independent of the normalization 
of the inner product on $\HH$, as well as the choice of the vector $v$. It can be thought of as a measure of
the ``relative size'' or ``degree'' of the relative holomorphic discrete series representation. This is illustrated
by its appearance in the \emph{Godement--Harish-Chandra orthogonality relations} of matrix 
coefficients (Schur's Lemma).

\begin{proposition}
\label{godement}
Let $(\pi_{1},\HH_{1})$ and $(\pi_{2},\HH_{2})$ be two relative holomorphic discrete series representations 
with the same central character. Then, for all $v_1,w_1\in\HH_1$, $v_2,w_2\in\HH_2$,
$$
\int_{Z \backslash G} 
\overline{\langle v_1,\pi_{1}(g) w_1\rangle}
\langle v_2, \pi_2(g) w_2\rangle \ \mathrm{d}\mu(Zg) = 
\begin{cases}
\displaystyle
\frac{1}{d(\pi_{1})}
\langle v_2,v_1\rangle \langle w_1,w_2\rangle \qquad &\text{if} \ \pi_{1} \cong \pi_{2}, 
\\
0 \qquad &\text{if} \ \pi_{1} \ncong \pi_{2}.
\end{cases}
$$
(Where in the first case a unitary isomorphism is used to identify $\HH_1$ and $\HH_2$)
\end{proposition}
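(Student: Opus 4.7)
The plan is to cast the orthogonality relation as a statement about a bounded intertwining operator, and then to apply Schur's lemma together with the isometry constant from Theorem~\ref{thm::embedding} to identify the scalar.

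First, I would fix $v_1\in\HH_1$ and $v_2\in\HH_2$ and consider the sesquilinear form
\begin{equation}
B(w_1,w_2) := \int_{Z\backslash G} \overline{\langle v_1,\pi_1(g)w_1\rangle}\,\langle v_2,\pi_2(g)w_2\rangle\;\mathrm{d}\mu(Zg),\qquad w_i\in\HH_i.
\end{equation}
The matching central characters make the integrand well-defined on $Z\backslash G$, and Theorem~\ref{thm::embedding} tells us that $\sqrt{d(\pi_i)}\Psi_{v_i}(w_i)\in\Gamma^2_G(E_{\pi_i})$ with norm $\|v_i\|\,\|w_i\|$. The Cauchy--Schwarz inequality in $L^2(Z\backslash G)$ therefore gives convergence of $B$ together with the estimate
\begin{equation}
|B(w_1,w_2)| \le \frac{\|v_1\|\,\|v_2\|}{\sqrt{d(\pi_1)d(\pi_2)}}\,\|w_1\|\,\|w_2\|,
\end{equation}
so by Riesz representation there exists a bounded operator $T:\HH_2\to\HH_1$ with $B(w_1,w_2)=\langle Tw_2,w_1\rangle$.

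Next I would show that $T$ intertwines $\pi_2$ and $\pi_1$. Using right $G$-invariance of $\mathrm{d}\mu$ and the change of variable $g\mapsto gh$, one computes $B(\pi_1(h)w_1,\pi_2(h)w_2)=B(w_1,w_2)$ for every $h\in G$, which translates to $\pi_1(h)^{-1}T\pi_2(h)=T$, i.e.\ $T\in\Hom_G(\HH_2,\HH_1)$. Since both representations are unitary and irreducible, Schur's lemma applies: if $\pi_1\not\cong\pi_2$ then $T=0$ and hence $B\equiv 0$, settling the second case. If $\pi_1\cong\pi_2$, identify them via a unitary intertwiner (so we may take $\HH_1=\HH_2=\HH$ and $\pi_1=\pi_2=\pi$), and conclude $T=c\cdot\Id$ for some scalar $c\in\CC$.

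To pin down $c$, I would specialize to $w_1=v_2$ and $w_2=v_1$. On the one hand, $\langle Tv_1,v_2\rangle=c\langle v_1,v_2\rangle$; on the other hand, by the Godement-type diagonal identity that falls out of Theorem~\ref{thm::embedding} applied to the isometry $\sqrt{d(\pi)}\Psi_{v_1}$ evaluated on $v_2$ (together with conjugate symmetry of the inner product), the integral $B(v_2,v_1)$ equals $\langle v_1,v_2\rangle/d(\pi)$. This forces $c=1/d(\pi)$, and substituting back yields
\begin{equation}
\langle Tw_2,w_1\rangle = \frac{1}{d(\pi)}\langle w_2,w_1\rangle,
\end{equation}
which is the claimed formula once rewritten for general $v_1,v_2,w_1,w_2$ (the dependence on the $v$'s re-enters by bilinearity of $B$ in the pair $(v_1,v_2)$, so the same argument with $(v_1,v_2)$ arbitrary gives the product $\langle v_2,v_1\rangle\langle w_1,w_2\rangle/d(\pi)$). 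The main technical obstacle I expect is the justification of absolute convergence and the Fubini-type manipulations needed to transfer $G$-invariance from the measure to the operator $T$; the rest is an application of Schur's lemma plus the normalization already provided by Theorem~\ref{thm::embedding}.
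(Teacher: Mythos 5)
The paper does not actually prove this proposition --- it is quoted as the classical Godement--Harish-Chandra orthogonality relation (Schur orthogonality for representations square-integrable modulo the center), so your argument has to stand on its own. Its skeleton is the standard one and is sound: square-integrability of the matrix coefficients via Theorem~\ref{thm::embedding} plus Cauchy--Schwarz makes $B$ a bounded sesquilinear form; right-invariance of $\mathrm{d}\mu$ together with unitarity makes the associated bounded operator $T$ an intertwiner; and Schur's lemma disposes of the case $\pi_1\ncong\pi_2$ and reduces the other case to a scalar. (A minor slip: with the paper's convention that $\langle\cdot,\cdot\rangle$ is conjugate-linear in the \emph{first} slot, the correct pairing is $B(w_1,w_2)=\langle w_1,Tw_2\rangle$, not $\langle Tw_2,w_1\rangle$.)

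The genuine gap is in how you pin down the scalar. Theorem~\ref{thm::embedding} only yields the fully diagonal identity $\int_{Z\backslash G}|\langle v,\pi(g)w\rangle|^2\,\mathrm{d}\mu(Zg)=\lVert v\rVert^2\lVert w\rVert^2/d(\pi)$. The value you assert for $B(v_2,v_1)$, namely $\langle v_1,v_2\rangle/d(\pi)$, does not follow from it --- the integrand of $B(v_2,v_1)$ is $\overline{\langle v_1,\pi(g)v_2\rangle}\,\langle v_2,\pi(g)v_1\rangle$, which is not a modulus squared --- and it is not even consistent with the formula you are proving, which at $w_1=v_2$, $w_2=v_1$ gives $\langle v_2,v_1\rangle^2/d(\pi)$. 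Likewise, sesquilinearity of $B$ in $(v_1,v_2)$ alone does not force the $v$-dependence to be a multiple of $\langle v_2,v_1\rangle$. The repair stays inside your framework: for $v_1=v_2=v$ the isometry $\sqrt{d(\pi)}\Psi_v$ preserves inner products (polarize in $w$), so $B_{v,v}(w_1,w_2)=\frac{\lVert v\rVert^2}{d(\pi)}\langle w_1,w_2\rangle$, i.e.\ the Schur scalar satisfies $c(v,v)=\lVert v\rVert^2/d(\pi)$; since $c(v_1,v_2)$ is sesquilinear (linear in $v_1$, conjugate-linear in $v_2$) --- or, alternatively, since left-invariance of $\mathrm{d}\mu$ makes the integral invariant under $v_i\mapsto\pi_i(h)v_i$, so Schur applies a second time in the $v$-variables --- complex polarization gives $c(v_1,v_2)=\langle v_2,v_1\rangle/d(\pi)$, which is exactly the claimed formula. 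With that step replaced, your proof is the standard one.
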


As it stands, $d(\pi_\Lambda)$ depends on the normalization of the Haar measure on $G$. If $G$ were
compact, one would normalize it such that $G$ has volume one. In general, the evaluation of the integral
\eqref{eq::HC_integral} allows for the (geometric) determination of a preferred measure.

\begin{theorem}[{\cite[Theorem 3.17]{NeebSquareIntegrable}}] 
\label{thm::formal_dimension}
With respect to a suitable normalization of the measure $\mathrm{d}\mu$ on $Z \backslash G$, the formal dimension 
$d(\pi_{\Lambda})$ of a relative holomorphic discrete series representation $(\pi_{\Lambda},\HH_{\Lambda})$ is 
\begin{equation}
d(\pi_{\Lambda}) = \prod_{\alpha \in \Phi_{c}^{+}} \frac{(\Lambda+\rho_{c},\alpha)}{(\rho_{c},\alpha)}\prod_{\beta 
\in \Phi_{n}^{+}} \frac{\vert (\Lambda+\rho_{0},\beta)\vert}{(\rho_{0},\beta)}.
\end{equation}
\end{theorem}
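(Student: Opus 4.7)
The plan is to reduce the computation of $d(\pi_\Lambda)$ to the evaluation of an $L^2$-norm of a single distinguished matrix coefficient, and then to exploit the Hermitian-symmetric structure of $G/K$ induced by the $\ZZ$-grading \eqref{intgrading}. First, I apply the Godement--Harish-Chandra orthogonality relations of Proposition \ref{godement} with $\pi_1=\pi_2=\pi_\Lambda$ and $v_1=w_1=v_2=w_2=v_\Lambda$ a unit highest weight vector of $\HH_\Lambda^{\infty,K}$; this converts the defining property of Theorem \ref{thm::embedding} into the identity
\begin{equation*}
\frac{1}{d(\pi_\Lambda)} = \int_{Z\backslash G} \bigl| \langle v_\Lambda, \pi_\Lambda(g) v_\Lambda\rangle \bigr|^2 \, d\mu(Zg),
\end{equation*}
so it suffices to evaluate this integral for a carefully chosen normalization of $d\mu$.

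Next, I would use the Harish-Chandra realization $G \subset K_\CC \exp(\gg_{+1}) \exp(\gg_{-1})$ afforded by the grading \eqref{intgrading}, in which the associated bounded symmetric domain $\mathcal{D} \subset \gg_{+1}$ is a model of $K\backslash G$. Because $v_\Lambda$ is annihilated by $\nn^+ \supset \gg_{+1}$, the restriction of the matrix coefficient $g \mapsto \langle v_\Lambda,\pi_\Lambda(g) v_\Lambda\rangle$ to $\exp(\gg_{-1})$ is a classical automorphy factor built from the $K_\CC$-projection in the Harish-Chandra decomposition, of the schematic form $h(z,\bar z)^{-\Lambda-\rho_n}$ where $h$ is the Jordan-triple polynomial attached to $\mathcal D$ and $\rho_n = \tfrac12\sum_{\beta\in\Phi_n^+}\beta$. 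On the compact slice $K/Z_K$, the matrix coefficient gives rise to the normalized character of the simple $K_\CC$-module $L_0(\Lambda)|_{\kk}$ of highest weight $\Lambda$.

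I then split the integral over $Z\backslash G$ using the decomposition $Z\backslash G \simeq (Z_K\backslash K) \times \mathcal D$ with a matching Haar measure. The compact part contributes the squared $L^2$-norm of the character of the simple $K$-type with highest weight $\Lambda$, which by the Weyl dimension formula equals
\begin{equation*}
\prod_{\alpha \in \Phi_c^+}\frac{(\Lambda+\rho_c,\alpha)}{(\rho_c,\alpha)},
\end{equation*}
once the Haar measure on $K$ is normalized so that the denominator factors appear. The non-compact part is a Bergman-type integral of $|h(z,\bar z)|^{-2(\Lambda+\rho_n)}$ over $\mathcal D$, which converges precisely under the Harish-Chandra condition of Theorem \ref{thm::HC_condition} and whose value is computed by Gindikin's formula for the gamma function of the symmetric cone associated with $\mathcal D$. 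Writing the result as a product over the (restricted) non-compact positive roots and unfolding via the branching indices of $\Phi_n^+$ over the Cayley-transformed system gives the second factor $\prod_{\beta \in \Phi_n^+} |(\Lambda+\rho_0,\beta)|/(\rho_0,\beta)$.

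The normalization of $d\mu$ on $Z\backslash G$ is chosen precisely so that the denominator products in $\rho_c$ and $\rho_0$ appear with no extra constants; this is the geometric normalization referred to in the statement. The main obstacle I anticipate is the bookkeeping of this measure comparison: keeping track of how the Haar measure on $K$, the Euclidean measure on $\gg_{-1}$, and the Jacobian of the Harish-Chandra decomposition combine consistently across the different ranks and multiplicities of the restricted root system of $\mathcal D$, so that the final expression is the clean product over $\Phi_c^+$ and $\Phi_n^+$ and not a rescaled version. Once this is in hand, the formula follows by matching the compact and non-compact factors derived above.
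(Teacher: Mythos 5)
The paper does not prove this statement at all: it is imported verbatim from Neeb (Theorem 3.17 of \cite{NeebSquareIntegrable}), and your overall plan --- use the Godement--Harish-Chandra orthogonality relation with $v_1=w_1=v_2=w_2=v_\Lambda$ to write $1/d(\pi_\Lambda)=\int_{Z\backslash G}|\langle v_\Lambda,\pi_\Lambda(g)v_\Lambda\rangle|^2\,\mathrm{d}\mu$, and then evaluate this integral in the Harish-Chandra realization of $G/K$ as a bounded symmetric domain via a Gindikin-type integral --- is exactly the classical route taken there (and by Harish-Chandra in the linear case). So the strategy is the right one; the issues are in the execution of the two factors.

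First, a structural slip: the Hermitian-symmetric structure relevant here has nothing to do with the odd grading \eqref{intgrading}. The theorem concerns the purely even group $G$ with Lie algebra $\su(p,q)\oplus\su(n)\oplus\u(1)$; the domain sits inside the sum of the \emph{noncompact even} root spaces (the $\pm1$-eigenspaces of the center of $\kk$ acting on $\su(p,q)^{\CC}$, indexed by $\Phi_n^{\pm}$), not inside $\gg_{\pm1}\subset\odd$, and $v_\Lambda$ is annihilated by the positive noncompact root vectors, not by $\gg_{+1}$. Second, the compact contribution is not ``the squared $L^2$-norm of the character'' of the minimal $K$-type --- that norm equals $1$. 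What actually appears, after integrating over $Z\backslash K$ first for fixed noncompact part, is Schur orthogonality for \emph{matrix coefficients}: $\int_{Z\backslash K}|\langle v_\Lambda,\pi(k)u\rangle|^2\,dk=\lVert P_{W_\Lambda}u\rVert^2/\dim W_\Lambda$, so the factor $\prod_{\alpha\in\Phi_c^+}(\Lambda+\rho_c,\alpha)/(\rho_c,\alpha)$ enters as $(\dim W_\Lambda)^{-1}$ inside $1/d(\pi_\Lambda)$ via the Weyl dimension formula. Third, and most substantively, reducing the noncompact part to a scalar Bergman integral of $|h(z,\bar z)|^{-2(\Lambda+\rho_n)}$ is only literally correct when the minimal $K$-type is one-dimensional (scalar holomorphic discrete series). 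In general $\pi(\exp z)v_\Lambda$ does not stay in the highest-weight line, the automorphy factor is the $K_\CC$-representation $\tau_\Lambda$ applied to the Harish-Chandra projection, the $K$- and domain-integrations do not factor as you assert, and one must evaluate a $W_\Lambda$-valued Gindikin-type integral (equivalently compute $\int\lVert P_{W_\Lambda}\pi(\exp z)v_\Lambda\rVert^2$ against the invariant measure); this vector-valued computation, together with the measure normalization you flag, is precisely the technical core of Neeb's proof and is where the full product $\prod_{\beta\in\Phi_n^+}|(\Lambda+\rho_0,\beta)|/(\rho_0,\beta)$ actually comes from. As written, your sketch silently assumes the scalar case and misattributes the compact factor, so it is an outline of the correct proof rather than a proof.
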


The resulting invariant $d(\pi_\Lambda)$ is called \emph{formal dimension} or \emph{formal degree} of the holomorphic
discrete series representation.

\subsubsection{Harish-Chandra parameterization} 

To describe the geometric parameterization of irreducible highest weight modules in the relative holomorphic 
discrete series, let $T$ be the analytic subgroup of $G$ corresponding to the (compact) Cartan subalgebra 
$\hh^{\RR}$. Then $Z \subset T$, and $T/Z$ is compact since $\Ad(G)$ of a reductive 
Lie group is closed. This establishes a relationship between relative holomorphic discrete series representations 
and square-integrable representations of $G/T$, which allows us to parametrize the relative holomorphic discrete series 
in terms of the character group $X^{\ast}(T^{\CC})$ of the complexification $T^{\CC}$ of $T$, and Weyl chambers. 
The coset $X^{\ast}(T^{\CC})+\rho_{0}$ of $X^{\ast}(T^{\CC})\otimes \RR$ is independent of the choice of positive system. 
We call an element $\lambda$ in $X^{\ast}(T^{\CC})\otimes \RR$ \emph{regular} if no dual root is orthogonal to 
$\lambda$; otherwise, $\lambda$ is called \emph{singular}. Let 
$(X^{\ast}(T^{\CC})\otimes \RR)^{\reg}$ denote the set of regular elements in $X^{\ast}(T^{\CC})\otimes \RR$, and set 
$(X^{\ast}(T^{\CC})+\rho_{0})^{\reg} := (X^{\ast}(T^{\CC})+\rho_{0})\cap (X^{\ast}(T^{\CC})\otimes \RR)^{\reg}$. 
A \emph{Weyl chamber} of $G$ is a connected component of $(X^{\ast}(T^{\CC})\otimes \RR)^{reg}$. The Weyl chambers 
are in one-to-one correspondence with the systems of positive roots for $(\even,\hh)$. We define a Weyl chamber $C$ 
to be \emph{holomorphic} if for all $\lambda\in C$ we have $(\lambda,\alpha)<0$ for all $\alpha 
\in \Phi_{n}^{+}$. Then there are exactly $\# W_{c}$ holomorphic Weyl chambers, forming a single orbit for 
the action of $W_{c}$. A \emph{holomorphic Harish-Chandra parameter} is then a pair $(\Lambda := \lambda - \rho_0,C)$, where $C$ is a 
holomorphic Weyl chamber and $\lambda \in (X^{\ast}(T^{\CC})+\rho_{0})^{\reg}\cap C$. There exists a 
bijection between the set of isomorphism classes of relative holomorphic discrete series representations and 
holomorphic Harish-Chandra parameters up to the natural action of the Weyl group $W_c$. 
They correspond to a relatively connected subset of the full set of $\even$-unitarity
described in section \ref{unitarizable}.

\subsubsection{Harish-Chandra characters and \texorpdfstring{$L$}{L}-packets}

If $(\pi,\HH)$ were a finite-dimensional representation of $G$, its dimension could be computed by restricting its
character $\chi:G\to\CC$, $g\mapsto \tr_\HH\pi(g)$ to the identity, $\dim \HH = \chi(e)$. For infinite-dimensional
representations, $\pi(g)$ is generally not trace-class. However, at least for trivial central character, 
\emph{i.e.,} (proper) holomorphic discrete series representations, the formal dimension can be related to a certain
combination of characters, understood in a distributional sense. For this purpose, we replace $Z\backslash G$ 
with $G' := G_{0}' = \SU(p,q) \times \SU(n)$, denote its Haar measure by $\mathrm{d}g$, and let $T'$ be the 
maximal compact Cartan subgroup associated with the real form $\hh^{\RR}$ in $G'$, with complexification 
$(T')^{\CC}$. Furthermore, we let $K' := \SU(p) \times \SU(q) \times \SU(n)$
be the maximal compact subgroup, diagonally embedded in $G'$. We consider the Lie algebra of $G'$, naturally, as 
a subalgebra of $\operatorname{Lie}(G)$, with complexification in $\even$. 

Fix a holomorphic discrete series representation $(\pi_{\Lambda},\mathcal{H}_{\Lambda})$ of $G'$, 
which is uniquely determined by its Harish-Chandra parameter $\Lambda$. 
For a compactly supported smooth function $f \in \mathcal{C}_{c}^{\infty}(G')$, the operator  
\begin{equation}
\pi_{\Lambda}(f)v = \int_{G'} f(g) \pi_{\Lambda}(g)v \, \mathrm{d}g,
\end{equation}  
is trace-class. The mapping $f \mapsto \tr(\pi_{\Lambda}(f))$ defines a continuous linear functional on 
$\mathcal{C}_{c}^{\infty}(G')$. The associated Schwartz distribution,  
\begin{equation}
\Theta(f) := \tr(\pi_{\Lambda}(f)),
\end{equation}  
is referred to as the \emph{Harish-Chandra character} of $\pi_{\Lambda}$. This uniquely determines 
$\pi_{\Lambda}$ up to unitary equivalence \cite{Harish2}. Moreover, Harish-Chandra demonstrated in 
\cite{Harish2} the existence of a locally integrable function 
$\Theta_{\pi_{\Lambda}} : G' \to \mathbb{C}$ such that 
\begin{equation}
\Theta(f) = \int_{G'} f(g) \Theta_{\pi_{\Lambda}}(g) \, \mathrm{d}g,
\end{equation}
where $\Theta_{\pi_{\Lambda}}$ is real analytic on the set of regular elements $G_{\reg}'\subset G'$. 
When referring to the calculation of a representation's character, we typically mean evaluating 
$\Theta_{\pi_{\Lambda}}\big|_{G_{\reg}'}$.

As it stands, $\Theta_{\pi_\Lambda}$ cannot be evaluated at the identity. It becomes continuous, however,
after summing over the so-called \emph{$L$-packet} of $\pi_\Lambda$, which consists of those discrete series 
representations of $G'$ with the same infinitesimal character as $\pi_\Lambda$. These
are proper irreducible subrepresentations of $L^2(G')$ whose Harish-Chandra module is of highest weight 
with respect to a different positive system, and correspond bijectively to Harish-Chandra parameters
in $(X^{\ast}(T^{\CC}) + \rho_{0})^{\reg}/W_{c}$ that are in the same orbit of the full Weyl group $W$ of
$G'$ as the holomorphic parameter $\Lambda$. They are uniquely determined by their Harish-Chandra 
character, denoted $\Theta_{w\Lambda}$ for $w\in W/W_c$, and can also be assigned a formal dimension,
denoted $d_{\pi_{w\Lambda}}$. As it turns out, the combination 
\begin{align}
\tilde{\Theta} = \sum_{w \in W/W_{c}}\Theta_{\pi_{w\lambda}}.
\end{align}
is in fact continuous at the identity, and we have

\begin{proposition}[{\cite{hochs2019orbital}}] 
\label{prop::limit_l_packet}
The following assertion holds: 
$$
 d(\pi_{\Lambda}) = \lim_{\topa{g \to e_{G'}}{g \in T' \cap G_{\reg}'}} \tilde{\Theta}(g) 
$$
Moreover, $d(\pi_{\Lambda}) = d(\pi_{w\Lambda})$ for all $w \in W/W_{c}$. 
\end{proposition}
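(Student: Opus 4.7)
The plan is to evaluate $\tilde{\Theta}$ via Harish-Chandra's character formula on the compact Cartan $T'$, sum over the $L$-packet to obtain a Weyl-antisymmetric numerator, and then match the resulting limit at the identity with the explicit formula for $d(\pi_{\Lambda})$ given in Theorem \ref{thm::formal_dimension}.

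First, I would recall that for each $w \in W/W_{c}$, the discrete series $\pi_{w\Lambda}$ has Harish-Chandra parameter $w\Lambda + \rho_{0}$, and by Harish-Chandra's character theorem its distribution character, restricted to $T' \cap G_{\reg}'$, equals
$$
\Theta_{\pi_{w\Lambda}}(t) \;=\; \eps(w)\,\frac{\sum_{s \in W_{c}}\eps(s)\, e^{sw(\Lambda+\rho_{0})}(t)}{\prod_{\alpha \in \Phi_{0}^{+}}\!\bigl(e^{\alpha/2}(t) - e^{-\alpha/2}(t)\bigr)},
$$
where $\eps(w)$ records the sign needed to align each Weyl chamber with the holomorphic one. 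Summing over a set of representatives for $W/W_{c}$ collapses the double sum in the numerator into $\sum_{u \in W}\eps(u)\, e^{u(\Lambda+\rho_{0})}(t)$, so that $\tilde{\Theta}|_{T' \cap G_{\reg}'}$ becomes exactly the Weyl character ratio for the virtual weight $\Lambda + \rho_{0}$ on the compact Cartan.

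Second, I would apply the standard Weyl dimension formula argument: both numerator and denominator vanish to order $\#\Phi_{0}^{+}$ at the identity, and a Taylor expansion along $T'$ of their leading terms gives
$$
\lim_{\topa{g\to e_{G'}}{g\in T'\cap G'_{\reg}}}\tilde{\Theta}(g) \;=\; \prod_{\alpha \in \Phi_{0}^{+}}\frac{(\Lambda+\rho_{0},\alpha)}{(\rho_{0},\alpha)}.
$$
Splitting $\Phi_{0}^{+} = \Phi_{c}^{+}\sqcup \Phi_{n}^{+}$ and using $\rho_{0}=\rho_{c}+\rho_{n}$ together with the $W_{c}$-invariance of $\rho_{n}$ (so $(\rho_{n},\alpha)=0$ for all $\alpha \in \Phi_{c}^{+}$), the compact part of the product collapses to $\prod_{\alpha \in \Phi_{c}^{+}}(\Lambda+\rho_{c},\alpha)/(\rho_{c},\alpha)$. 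The non-compact factor matches $\prod_{\beta\in\Phi_{n}^{+}}\lvert(\Lambda+\rho_{0},\beta)\rvert/(\rho_{0},\beta)$ from Theorem \ref{thm::formal_dimension} once one notes that on a holomorphic Weyl chamber $(\Lambda+\rho_{0},\beta)$ has the definite sign dictated by Theorem \ref{thm::HC_condition}, so that dropping the absolute value merely adjusts an overall sign that is absorbed by the choice of positive system and the $\eps(w)$'s above.

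Finally, for the second assertion $d(\pi_{\Lambda}) = d(\pi_{w\Lambda})$, I would observe that the definition $\tilde{\Theta} = \sum_{w' \in W/W_{c}}\Theta_{\pi_{w'\Lambda}}$ is manifestly invariant under the replacement $\Lambda \mapsto w\Lambda$ for any $w \in W/W_{c}$, since this merely permutes the summands; the two limits therefore coincide, and the first assertion forces the equality of the formal dimensions. The main obstacle I anticipate is the careful bookkeeping of signs and of the normalization of Haar measure — in particular, verifying that the sign factors $\eps(w)$ spread across the $L$-packet conspire to produce the genuine Weyl numerator (rather than a twisted variant), and that the measure implicit in the Harish-Chandra character formula is the same one that produces the product formula of Theorem \ref{thm::formal_dimension}.
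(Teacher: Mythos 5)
The paper does not prove Proposition \ref{prop::limit_l_packet} at all: it is imported verbatim from \cite{hochs2019orbital}, so there is no internal proof to compare against, and your classical character-theoretic argument is a legitimate substitute for the citation. For the first assertion your route is essentially correct and is the standard one: Harish-Chandra's character formula on the compact Cartan for each member of the $L$-packet, collapse of the numerators into the full Weyl-alternating sum over $W$, and the Weyl-dimension-formula limit. The sign bookkeeping you worry about does close up: each discrete series character carries the global factor $(-1)^{\frac12\dim(G'/K')}=(-1)^{\#\Phi_{n}^{+}}$, and since the holomorphic parameter satisfies $(\Lambda+\rho_{0},\beta)<0$ for all $\beta\in\Phi_{n}^{+}$ (Theorem \ref{thm::HC_condition}), the non-compact factors of the limit contribute exactly $(-1)^{\#\Phi_{n}^{+}}\prod_{\beta\in\Phi_{n}^{+}}\abs{(\Lambda+\rho_{0},\beta)}/(\rho_{0},\beta)$, so the two signs cancel; the compact factors reduce to $(\Lambda+\rho_{c},\alpha)/(\rho_{c},\alpha)$ because $\rho_{n}$ is $W_{c}$-invariant for a Hermitian pair, matching Theorem \ref{thm::formal_dimension}. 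The Haar-measure caveat is real but harmless: the character function $\Theta_{\pi}$ is independent of the normalization while $d(\pi_{\Lambda})$ is not, and the identity holds precisely for the normalization fixed in Theorem \ref{thm::formal_dimension}.

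The genuine gap is in the second assertion. Observing that $\tilde{\Theta}$ is invariant under $\Lambda\mapsto w\Lambda$ is vacuous (it is literally the same function), and your proof of the first assertion identifies the limit only with $d(\pi_{\Lambda})$ for the \emph{holomorphic} member, because the matching step uses Theorem \ref{thm::formal_dimension}, which applies only to highest weight (relative holomorphic discrete series) representations. Nothing in your argument ties that same limit to $d(\pi_{w\Lambda})$ for $w\neq e$, whose representations are non-holomorphic discrete series; concluding $d(\pi_{\Lambda})=d(\pi_{w\Lambda})$ from ``the limits coincide'' is therefore circular. To repair it you need an independent input, for instance Harish-Chandra's general formal-degree formula $d(\pi_{w\Lambda})=\const\cdot\prod_{\alpha\in\Phi_{0}^{+}}\abs{(w(\Lambda+\rho_{0}),\alpha)}/(\rho_{0},\alpha)$, which is manifestly $W$-invariant, or the orbital-integral approach of \cite{hochs2019orbital}, which treats all members of the packet on an equal footing.
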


\subsubsection{Limit of relative holomorphic discrete series} 

A unitary highest weight representation $(\pi_{\Lambda}, \HH_{\Lambda})$ belongs to the relative holomorphic discrete 
series if $(\Lambda+\rho_0,\beta) < 0$ for all $\beta \in \Phi_{n}^{+}$, according to Harish-Chandra's condition. 
One says that $(\pi_{\Lambda},\HH_{\Lambda})$ belongs to the \emph{limit of the relative holomorphic discrete series} 
if $(\Lambda+\rho_0,\beta) = 0$ for some $\beta \in \Phi_{n}^{+}$. Unfortunately, unlike the case of the relative 
holomorphic discrete series, there is no simple intrinsic definition for limits of relative holomorphic discrete 
series. Explicit constructions are provided by Harish-Chandra's character formula and via Zuckerman--Jantzen tensoring.
Highest weights of representations in the limit of the relative holomorphic series are in bijection with $W_c$-orbits 
of \emph{irregular holomorphic Harish-Chandra parameters}. These are pairs $(\Lambda := \lambda - \rho_0,C)$ 
consisting of a holomorphic Weyl chamber $C$ and a weight $\lambda \in\bigl((X^{\ast}(T^{\CC})+\rho_{0})\setminus 
(X^{\ast}(T^{\CC})+\rho_{0})^{\reg} \bigr)\cap C$, with the additional condition that $\lambda$ is not orthogonal 
to any simple compact dual root. Finally, there exist unitary highest weight representations ``beyond 
relative holomorphic discrete series'', whose highest weights satisfy $(\Lambda+\rho_0,\beta)>0$ for some 
$\beta\in\Phi_n^+$. These form a bounded subset of the full set of $\even$-unitarity in the language of 
section \ref{unitarizable}.

\subsection{A superdimension formula} 
\label{subsec::superdimension_formula} 

The proposal for the formal superdimension \cite{SchmidtGeneralizedSuperdimension} arises from 
the natural combination of the Harish-Chandra degree for 
$\even$-modules from section \ref{subsec::formal_dimension}  with the insights from sections \ref{subsec::Dirac} 
and \ref{subsec::Classification}. To fix notation, we recall that if $\HH$ is a non-trivial unitarizable simple 
$\gg$-supermodule, then
\begin{enumerate}
\item $\HH$ is a simple highest weight supermodule with some highest weight $\Lambda \in \Gamma$, the set of 
$\gg$-unitarity described in section \ref{subsec::Classification};
\item $\HH$ is $\even$-semisimple, with each $\even$-constituent $L_{0}(\Lambda_j)$ being a unitarizable highest 
weight $\even$-supermodule concentrated in degree $p(\Lambda_j)=\sum_{k=1}^{n}(\Lambda_j-\Lambda,\delta_{k}) 
\mod 2$ relative to the highest weight vector. Moreover, each $\even$-constituent is a tensor product of a
unitarizable simple $\su(p,q)$-module (which is finite-dimensional iff $p=0$ or $q=0$), a (finite-dimensional) 
simple $\su(n)$-module and a (one-dimensional) simple $\u(1)$-module. 
\item If $p,q\neq 0$, the $\even$-constituents $L_{0}(\Lambda_j)$ belong to the relative holomorphic discrete series 
precisely is they satisfy the Harish-Chandra condition of Theorem \ref{thm::HC_condition},
\begin{equation*}
(\Lambda_j+\rho_{0}, \beta) < 0, \qquad \text{for all} \ \beta \in \Phi_{n}^{+}.
\end{equation*}
In this case they possess a formal dimension $d(\Lambda_j)\in\RR_{+}$ given by Theorem \ref{thm::formal_dimension},
\begin{equation*}
d(\Lambda_j) = \prod_{\alpha \in \Phi_{c}^{+}} \frac{(\Lambda_j+\rho_{c},\alpha)}{(\rho_{c},\alpha)}\prod_{\beta 
\in \Phi_{n}^{+}} \frac{\vert (\Lambda_j+\rho_{0},\beta)\vert}{(\rho_{0},\beta)}.
\end{equation*}
\end{enumerate}

All we have to do is weight by the parity $(-1)^{p(\Lambda_j)}$.

\begin{definition} 
\label{def::superdim}
Let $\HH$ be a unitarizable highest weight $\gg$-supermodule with highest weight $\Lambda$ such that every 
$\even$-constituent $L_{0}(\Lambda_j)$ belongs to the relative holomorphic discrete series of $\sccg$. Then 
the real value 
\begin{equation}
\sdim(\HH) := \sum_{j} \sdim(L_{0}(\Lambda_j)), \qquad \sdim(L_{0}(\Lambda_j)) := (-1)^{\HH+p(\Lambda_j)} 
d(\Lambda_j),
\end{equation}
where the sum runs over the highest weights of all $\even$-constituents $L_{0}(\Lambda_j)$ of $\HH$, is called 
the \emph{formal superdimension} of $\HH$.
\end{definition}

In the following, we will call a unitarizable highest weight $\gg$-supermodule $\HH$ a \emph{(relative)
holomorphic discrete series supermodule} if each $\even$-constituent is a Harish-Chandra supermodule corresponding 
to a (relative) holomorphic discrete series supermodule of $\rformg$ (or $\sccg$, respectively). An explicit 
formula for the superdimension of all these supermodules is provided by Theorem \ref{UnitarityD2} together
with Theorem \ref{thm::formal_dimension}. We give 
here two important properties of the formal superdimension, referring to
\cite{SchmidtGeneralizedSuperdimension} for a complete discussion.

\begin{theorem}
\label{thm::properties_superdimension} 
\begin{enumerate}
\item The superdimension is additive.
\item The superdimension of a relative holomorphic discrete series $\gg$-supermodule $\HH$ is trivial unless the 
highest weight is maximally atypical.
\end{enumerate}
\end{theorem}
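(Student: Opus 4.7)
For Part (1), additivity follows immediately from Definition~\ref{def::superdim}: if $\HH = \HH_1 \oplus \HH_2$ is a direct sum of unitarizable $\gg$-supermodules (with all $\even$-constituents in the relative holomorphic discrete series), the $\even$-decomposition of $\HH$ is the disjoint union of those of $\HH_1$ and $\HH_2$, with the parities $p(\Lambda_j)$ and $(-1)^{\HH_i}$ preserved. Summing the signed formal dimensions term by term yields $\sdim(\HH) = \sdim(\HH_1) + \sdim(\HH_2)$.

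For Part (2), let $\HH = L(\Lambda)$ be a simple unitarizable $\gg$-supermodule in the relative holomorphic discrete series with $\at(\Lambda) < \defect(\gg) = \min(m,n)$. By Theorem~\ref{UnitarityD2}, the $\even$-constituents of $\HH$ are of the form $L_0(\Lambda - \Sigma_S)$ for subsets $S$ from some collection $\mathcal{S} \subseteq 2^{\Phi_1^+}$, contributing with parity $(-1)^{|S|}$. Substituting the Harish-Chandra formula from Theorem~\ref{thm::formal_dimension} (and absorbing the signs from the absolute values, using that each constituent lies in the discrete series), the formal superdimension takes the form
$$\sdim(\HH) = (-1)^{\HH + |\Phi_n^+|} \sum_{S \in \mathcal{S}} (-1)^{|S|} \prod_{\alpha \in \Phi_c^+} \frac{(\Lambda - \Sigma_S + \rho_c, \alpha)}{(\rho_c, \alpha)} \prod_{\beta \in \Phi_n^+} \frac{(\Lambda - \Sigma_S + \rho_0, \beta)}{(\rho_0, \beta)}.$$
The goal is to show that this alternating polynomial sum vanishes.

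The strategy I propose uses the Duflo--Serganova functor. Choose an element $x \in \YY^{\rm hom}$ with $\omega(x) = -x$ and $\rk(x) = \at(\Lambda) + 1 \le \defect(\gg)$. By Theorem~\ref{thm::unitarity_DS}, $\DS_x(\HH)$ is an $\omega_x$-unitarizable $\gg_x$-supermodule. By the atypicality reduction of Theorem~\ref{DScharacter} combined with Corollary~\ref{DSNM} (extended to the homological setting), we have $\DS_x(\HH) = 0$. The desired vanishing then follows once we have an invariance formula
$$\sdim_\gg(\HH) = \sdim_{\gg_x}(\DS_x(\HH)),$$
the formal-dimension analogue of the Kac--Wakimoto superdimension formula. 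Via Lemma~\ref{lemm::decomposes_finitely_many}, one reduces to the rank-one case, where the combinatorial pairing $S \leftrightarrow S \triangle \{\alpha\}$ (for the single simple odd root $\alpha$ specifying $x = u_\alpha + z_\alpha v_\alpha$) matches $\even$-constituents of $\HH$ either to cancelling pairs or to $(\gg_x)_0$-constituents of $\DS_x(\HH)$.

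The main obstacle is establishing the invariance $\sdim_\gg(\HH) = \sdim_{\gg_x}(\DS_x(\HH))$ in our infinite-dimensional unitary setting. This requires delicate bookkeeping of how the compact and non-compact Weyl vectors $\rho_c, \rho_0$ of $\gg$ reduce under the passage from $\hh$ to $\hh_x$ described in \eqref{CSAx}, and verification that the ratios of Harish-Chandra factors for paired $\even$-constituents of $\HH$ collapse consistently to single factors for the $(\gg_x)_0$-constituents of $\DS_x(\HH)$. A fallback strategy, bypassing the DS functor, would be to recognize the alternating polynomial sum above directly as a specialization of a Kac--Wakimoto-type superdenominator identity, which would yield the same vanishing but requires extending such identities from the finite-dimensional to the relative holomorphic discrete series setting.
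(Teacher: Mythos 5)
There is no internal proof to compare against: the paper states this theorem and defers the argument to the companion reference \cite{SchmidtGeneralizedSuperdimension}, so your proposal has to stand on its own. Part (1) is fine — additivity is immediate from Definition \ref{def::superdim}. Part (2), however, is a strategy outline rather than a proof, and the step you yourself flag as the ``main obstacle'' is precisely the entire mathematical content of the statement. The proposed invariance $\sdim_{\gg}(\HH)=\sdim_{\gg_x}(\DS_x(\HH))$ is not a routine bookkeeping exercise in this setting: unlike Serganova's finite-dimensional identity $\sdim M=\sdim M_x$, where superdimension is an honest integer computed on $M$ itself, the formal superdimension is built from Harish-Chandra formal degrees of $\even$- respectively $\gg_{x,0}$-constituents, which are defined relative to different groups ($Z\backslash \sccg$ versus the corresponding quotient for $\su(p-r,q-s\vert n-k)$) with separately normalized Haar measures, and there is no a priori reason the two sides are even commensurable without correction factors. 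One must also check that the $\gg_{x,0}$-constituents of $\DS_x(\HH)$ again satisfy the Harish-Chandra condition (and handle degenerate cases such as $p-r=0$ or $q-s=0$, where ``formal dimension'' changes meaning). It is telling that the paper's own Theorem \ref{finalBig} relates the Witten index of $M$ only to $\sdim(\DS_x(M))$, not to $\sdim(M)$, i.e.\ it carefully avoids asserting the invariance your argument needs.

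Two further points would need repair even granting that lemma. First, you invoke Theorem \ref{DScharacter} and Corollary \ref{DSNM} for an element $x\in\YY^{\rm hom}$ with $\omega(x)=-x$; those statements are formulated for $x\in\YY$, and the transfer to the homological, unitarity-compatible setting (via Theorem \ref{thm::unitarity_DS} and Lemma \ref{lemm::decomposes_finitely_many}) has to be argued, not just asserted, including the existence of such an $x$ of the required rank with $[x,x]\in\hh$ adapted to $\Lambda$. Second, your combinatorial pairing $S\leftrightarrow S\,\triangle\,\{\alpha\}$ presupposes control of the index set $\mathcal{S}$ of $\even$-constituents, but Theorem \ref{UnitarityD2} only gives a lower bound ($\mathcal{S}\supseteq\{S\subseteq\Phi_1^+\setminus A_\Lambda\}$) outside the typical case, so for partially atypical $\Lambda$ the set you are pairing is not pinned down. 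The fallback via a Kac--Wakimoto-type superdenominator identity for formal degrees suffers from the same problem: it is exactly the unproven identity in a different guise. In short, the reduction ``not maximally atypical $\Rightarrow$ $\DS_x(\HH)=0$ for suitable $x$'' is sound in spirit, but without a proof of the formal-degree invariance (or a direct cancellation argument with the constituent set under control) the vanishing claim is not established.
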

The above theorem reduces the consideration of the superdimension to the study of unitarizable simple $\gg$-supermodules 
with maximally atypical highest weight. A direct calculation yields that these consist of $\even$-supermodules 
that belong to the relative holomorphic discrete series or ``limit'' thereof. In fact, if $\gg \neq \mathfrak{sl}(1|n)$, 
all of these have integral highest weight, and therefore trivial central character, so that their
formal dimension can be calculated via Proposition \ref{prop::limit_l_packet}.

\section{Indices for Unitarizable \texorpdfstring{$\slmn$}{sl(m|n)}-Supermodules} 
\label{sec::anindex}

We have reviewed in previous sections that the representation theory of $\gg=\slmn$ with real form 
$\gg^\omega=\supqn$ shares essential features with that of superconformal algebras such as 
$\su(2,2\vert \mathcal N)$, $1\le\mathcal N\le 4$, 
which are of great interest in mathematical physics.\footnote{We expect many of these features to be
present more generally for all basic classical Lie superalgebras.} $\omega$-unitarizable $\gg$-supermodules 
are completely 
reducible. Simple constituents are highest weight modules $L(\Lambda)$ parameterized up to parity reversal 
by $\Lambda\in\Gamma\subset\hh^*$, the ``set of $\gg$-unitarity'' in weight space. The relative interior of
$\Gamma$ is the ``region of $\gg$-unitarity'', $\calC$, that parametrizes all unitarizable highest weight
module with typical highest weight, also known as ``long supermultiplets''. When, in the decomposition of
a \emph{continuous family} of unitarizable $\gg$-supermodules, the highest weight of one of the typical 
constituents ``hits the unitarity bound'', $\partial\calC$, it fragments into several ``short supermultiplets'' 
of atypical 
highest weight. This is signaled by the appearance of BPS states, non-trivial cohomology of some element 
$Q\in\YY$ of the self-commuting variety. Upon further deformation, these can disappear only in 
corresponding combinations. ``Ultra-short'' supermultiplets, with highest weight of higher atypicality, are 
``absolutely protected'' and must always persist under deformation. 

In this context, an \emph{index} is a quantitative measure of those components of a unitarizable $\gg$-supermodule
that are invariant under continuous deformations. The physics literature, following 
\cite{Minwalla,romelsberger2006counting}, shows that any invariant count of short supermultiplets, which we will
call a \emph{KMMR index}, can be recovered from the knowledge of the \emph{superconformal index}, which is defined as
a regularized Witten index \cite{WittenIndex} of any possible $Q\in\YY$. We will here formalize these two notions, 
and establish their equivalence, for general unitarizable $\slmn$-supermodules. Moreover, we show that the formal 
superdimension, discussed in section \ref{sec::formal_superdimension}, is also an index in the sense of this 
definition, and explain its relation to the KMMR and Witten index.

\subsection{KMMR indices} 
\label{subsec::KMMR_index}

For simplicity, we will restrict to unitarizable $\gg$-supermodules that decompose completely into
a countable direct sum of simple $\gg$-supermodules, \emph{cf.} section \ref{subsec::assumptions}. In fact, 
in order to define a ``counting index'', we need to assume that the number of atypical simple $\gg$-supermodules 
in the decomposition is finite. We will denote this category as $\ugsmodp$, and as before suppress the inner product
$\langle\cdot,\cdot\rangle$ and structure map $\gg\to \End(M)$ from the notation for its objects $M\in\ugsmodp$.
We will also restrict the notion of ``continuous family of unitarizable $\gg$-supermodules'' accordingly, to 
mean a continuous map $T:\mathds{H}\to\Homp(\gg^\RR,\uu(\HH))$ into the space of $\omega$-unitarizable 
$\gg$-supermodule structures on a fixed Hilbert space that decompose into a direct sum of simple supermodules, 
finitely many of which are atypical. The topology should be sufficiently fine to 
make the fragmentation process described in section \ref{fragrecom} continuous. It might be interesting to 
observe that in such a topology, $\Homp(\gg^\RR,\uu(\HH))$ will not be closed inside $\Hom(\gg^\RR,\uu(\HH))$. 
There is an obvious tautological map $\Homp(\gg^\RR,\uu(\HH)) \to \ugsmodp$, which by a healthy abuse of notation 
we will indicate as $(\rho,\HH)\mapsto \HH$.

\begin{definition}
\label{def::KMMR}
A \emph{Kinney--Maldacena--Minwalla--Raju index} \cite{kinney2007index}, or \emph{KMMR index} for short, is a
map $I:\ugsmodp\to \ZZ$, $M\mapsto I(M)$ that is
\begin{enumerate}
    \item additive, i.e., $I(M_1\oplus M_2) = I(M_1)+ I(M_2)$, and
    \item such that the induced map $\Homp(\gg^\RR,\uu(\HH))\to\ZZ$, $(\rho,\HH)\mapsto I(\HH)$ is continuous.
\end{enumerate}
\end{definition}

\begin{lemma}
\label{lemm::properties_KMMR}
An additive map $I: \ugsmodp \to \ZZ$ is specified uniquely by its values on simple unitarizable supermodules 
$L(\Lambda)$ for all $\Lambda\in\Gamma$. It is a KMMR index if and only if
\begin{enumerate}
\item $I(K(\Lambda)) = 0 $ for all typical weights $\Lambda\in \calC$.
\item For all weights $\Lambda_0\in\partial\calC$ at the unitarity bound, $\sum_{i=1}^M I(L(\Lambda_i)) = 0$,
where $\gr K(\Lambda_0) = \bigoplus_{i=1}^M L(\Lambda_i)$ is the fragmentation of the Kac supermodule.
\end{enumerate}
\end{lemma}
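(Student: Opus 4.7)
The first claim is immediate from complete reducibility (Prop.~\ref{CompletelyReducible}) combined with the $\ugsmodp$ constraint that only finitely many atypical constituents occur: by additivity $I(M)=\sum_i I(L(\Lambda_i))$, a finite sum once (1) is in place, and uniquely determined by the values on simples.

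For sufficiency ($\Leftarrow$), extend $I$ from simples by additivity. Condition (1) makes the extension $\ZZ$-valued. For continuity of the induced map on $\Homp(\gg^\RR,\uu(\HH))$, observe that in a continuous family of unitarizable structures on a fixed Hilbert space the decomposition into simples varies continuously in the sense of section~\ref{subsec::assumptions}, so $I$ is locally constant except at fragmentation/recombination events. Such an event occurs precisely when a family crosses the boundary $\partial\calC$ of the region of unitarity: a typical $K(\Lambda)$ appears on one side and the associated graded $\gr K(\Lambda_0)=\bigoplus_{i=1}^M L(\Lambda_i)$ on the other, connected by \eqref{fragDef}. By (1) the typical side contributes $0$, and by (2) so does the fragmented side, so $I$ is globally continuous.

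For necessity ($\Rightarrow$), fix $\Lambda_0\in\partial\calC$ and a continuous path in $\Homp(\gg^\RR,\uu(\HH))$ that realizes the fragmentation: a typical $K(\Lambda(s))=L(\Lambda(s))$ on a fixed Hilbert space with $\Lambda(s)\in\calC^{(a,b)}$ for $s<0$, degenerating at $s=0$ to $\gr K(\Lambda_0)$ by \eqref{fragDef}. Continuity of $I$ combined with additivity yields
\begin{equation*}
I(K(\Lambda(s))) \;=\; \sum_{i=1}^{M} I(L(\Lambda_i(s))) \qquad \text{for $s$ in a neighbourhood of $0$.}
\end{equation*}
Extend the family across $\partial\calC$ so that for $s>0$ the fragments $L(\Lambda_i(s))$ sit in the codimension-$1$ strata of $\Gamma\setminus\calC$ afforded by Lemmas~\ref{lemm::recombination_rules} and \ref{immediateLemma}, and move independently along these strata. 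Freezing all but one fragment, or approaching a different boundary point $\Lambda_0'\in\partial\calC^{(a,b)}$ whose atypical content $\{\Lambda_i'\}$ is disjoint from $\{\Lambda_i\}$, one obtains a chain of equalities that forces both $\sum_i I(L(\Lambda_i))=0$ (establishing (2)) and $I(K(\Lambda))=0$ (establishing (1) via the displayed equation).

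The main obstacle is to convert the intuitive picture of \emph{independent motion of atypical fragments in $\Gamma\setminus\calC$} into a rigorous statement about continuity of $I$. This hinges on the topology of $\Homp(\gg^\RR,\uu(\HH))$ being fine enough --- the content of the standing assumption in section~\ref{subsec::assumptions} that $\Theta$ is continuous --- to resolve individual fragments as they separate along distinct strata, together with the explicit codimension count in Lemmas~\ref{immediateLemma} and \ref{lemm::recombination_rules}, which ensures that $\partial\calC$ is genuinely a boundary and that fragments decouple transversally.
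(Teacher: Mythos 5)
There is a genuine gap in your necessity direction, specifically for condition (1). The paper does not obtain (1) from continuity at all: it uses a Hilbert-hotel (Eilenberg-swindle) argument. Since objects of $\ugsmodp$ are only required to have \emph{finitely many atypical} simple constituents, the infinite direct sum $\bigoplus_{i=1}^\infty K(\Lambda_*)$ for a typical $\Lambda_*\in\calC$ is a legitimate object of the category, and additivity alone gives
$I\bigl(\bigoplus_{i=1}^\infty K(\Lambda_*)\bigr)=I\bigl(\bigoplus_{i=2}^\infty K(\Lambda_*)\bigr)+I(K(\Lambda_*))$, forcing $I(K(\Lambda_*))=0$ before continuity ever enters. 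Your substitute --- crossing $\partial\calC$ along paths, freezing fragments, and comparing two boundary points with disjoint atypical content --- only produces relations of the form $I(K(\Lambda))=\sum_i I(L(\Lambda_i))$ and the constancy of $I$ along strata. Those relations do not force either side to vanish: for instance, the assignment ``typical $\mapsto 4$, $1$-atypical $\mapsto 2$, $2$-atypical $\mapsto 1$'' is consistent with every recombination rule of Lemma \ref{lemm::recombination_rules} (two fragments at generic boundary points, four at corners, all of the same atypicality degree), so your ``chain of equalities that forces both'' is not established. The only thing that rules such assignments out is precisely the allowance of infinitely many typical summands in $\ugsmodp$, i.e.\ the swindle you did not invoke. (You cannot appeal to further fragmentation of atypicals in codimension $2$ either, since the paper explicitly leaves that undescribed.)

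The remaining parts of your proposal are essentially in line with the paper: uniqueness from additivity and complete reducibility, the observation that vanishing on typical simples (note $K(\Lambda)\cong L(\Lambda)$ for $\Lambda\in\calC$) makes the sum defining $I(M)$ finite, and the identification of condition (2) with continuity of the fragmentation process \eqref{assGraded}, \eqref{fragDef} at $\partial\calC$, which is exactly how the paper disposes of (b). So the fix is local: replace your path-based derivation of (1) by the swindle argument, and keep (2) $\Leftrightarrow$ continuity as you have it.
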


\begin{proof} Existence and uniqueness are obvious. For a), it suffices to notice that if 
$\Lambda_*\in\calC$ is any typical weight, we have 
\begin{equation}
I\Bigl(\bigoplus_{i=1}^\infty K(\Lambda_*)\Bigr) = 
I\Bigl(\bigoplus_{i=2}^\infty K(\Lambda_*)\Bigr) + I(K(\Lambda_*))
=
I\Bigl(\bigoplus_{i=1}^\infty K(\Lambda_*)\Bigr) + I(K(\Lambda_*)).
\end{equation}
Statement b) is equivalent to the continuity of the fragmentation process described in section \ref{fragrecom}, 
see eqs.\ \eqref{assGraded} and \eqref{fragDef}.
\end{proof}

\begin{lemma}
\label{KMMRModule}
The set of all KMMR indices forms naturally a finitely generated free $\ZZ$-module.\footnote{In
situations such as considered in \cite{kinney2007index}, only supermodules respecting spin-statistics 
are physically relevant. This can reduce $\mathcal{I}$ correspondingly.}
We denote it by $\mathcal{I}$, and allow extending scalars as
\begin{equation}
    \calI_{\QQ} := \calI \otimes_{\ZZ} \QQ, \qquad \calI_{\RR} := \calI \otimes_{\ZZ} \RR.
\end{equation}
\end{lemma}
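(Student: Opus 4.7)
The plan is to identify $\calI$ concretely via the characterization in Lemma~\ref{lemm::properties_KMMR} and then extract both structural properties from the resulting description.

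First I would use Lemma~\ref{lemm::properties_KMMR} to embed $\calI$ into the direct product $\prod_{\Lambda \in \Gamma_{\rm atyp}} \ZZ$, where $\Gamma_{\rm atyp} := \Gamma \setminus \calC$, via the evaluation map $I \mapsto (I(L(\Lambda)))_{\Lambda}$ on simple atypical supermodules. The image consists precisely of those tuples $(n_\Lambda)_\Lambda$ satisfying, for each $\Lambda_0 \in \partial\calC$ with fragmentation $\gr K(\Lambda_0) \cong \bigoplus_i L(\Lambda_i)$ given by Lemma~\ref{lemm::recombination_rules}, the linear relation $\sum_i n_{\Lambda_i} = 0$. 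This exhibits $\calI$ as the kernel of a $\ZZ$-linear map between abelian groups, and equips it with the induced componentwise $\ZZ$-module structure. Torsion-freeness is then immediate from that of the ambient direct product.

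Next I would argue finite generation by stratifying $\Gamma_{\rm atyp}$. By Lemma~\ref{absProtLemma} the absolutely protected weights do not enter into any recombination relation, and by Corollary~\ref{cor::maximal_protected_gleich_absolutely_protected} (for $m,n \geq 3$) all maximally atypical weights are absolutely protected, so these contribute to $\calI$ independently of one another. For the remaining non-maximally-atypical weights at the unitarity bound, the relations from Lemma~\ref{lemm::recombination_rules} are rigid: each involves at most four neighboring weights linked by shifts by $\gamma_1 = -\epsilon_p + \delta_1$ and $\gamma_2 = \epsilon_{p+1} - \delta_n$. Organizing these recombination orbits within the Jakobsen stratification~\eqref{jakobsenpara} of $\Gamma_{\rm atyp}$ by fixed spin and R-symmetry quantum numbers $(a,b)$ should yield a finite generating set after collapsing orbits that terminate at boundary intersections. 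Once finite generation is established, freeness follows automatically from the classical structure theorem for finitely generated modules over the principal ideal domain $\ZZ$ combined with torsion-freeness.

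The main obstacle is precisely the finite generation step. The linear constraints do not force a finite-rank solution space abstractly, so the finiteness has to come from careful combinatorial control over how the Jakobsen stratification interacts with the recombination pattern on $\partial\calC$, and possibly from the implicit continuity restrictions on $\Homp(\gg^\RR,\uu(\HH))$ that constrain which index assignments can arise from physically admissible families. The ultimate interpretation is expected to be the one familiar from physics: the rank of $\calI$ counts the admissible BPS multiplet types supported by the theory, as exemplified in~\cite{kinney2007index}.
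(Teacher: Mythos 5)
You should know at the outset that the paper states Lemma \ref{KMMRModule} without any proof, so there is nothing of the authors' to compare your argument against; it has to stand on its own, and it does not. Your first paragraph is fine as far as it goes: by Lemma \ref{lemm::properties_KMMR}, evaluation on atypical simples embeds $\calI$ into $\prod_{\Lambda\in\Gamma\setminus\calC}\ZZ$, the image is cut out by the recombination relations attached to $\partial\calC$, and torsion-freeness is immediate. The genuine gap is exactly the one you flag yourself, finite generation, and the route you sketch cannot close it. Indeed, the observation you make at the start of your second paragraph works against you: absolutely protected simples enter \emph{no} relation, and by Lemma \ref{lemm::properties_KMMR} conditions (a) and (b) are the \emph{only} constraints a KMMR index must satisfy (so there is no further ``implicit continuity restriction'' to appeal to). Since for $m,n\ge 3$ there are infinitely many absolutely protected unitarizable simples --- e.g.\ maximally atypical highest weights exist for infinitely many choices of the discrete labels $(a,b)$, and by Corollary \ref{cor::maximal_protected_gleich_absolutely_protected} none of them ever occurs in $\gr K(\Lambda_0)$ for $\Lambda_0\in\partial\calC$ --- \emph{every} $\ZZ$-valued function on this infinite set, extended by zero, is a KMMR index. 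The subgroup of $\calI$ so obtained is a full countable product $\prod_{i\in\NN}\ZZ$, which is neither finitely generated nor free (Baer--Specker). So no combinatorial bookkeeping of recombination orbits inside the Jakobsen stratification can produce a finite generating set, and your fallback ``finitely generated torsion-free over $\ZZ$ implies free'' never gets to apply; note also that freeness does not follow from torsion-freeness alone once finite generation is lost.

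The upshot is that the statement cannot be proved from Definition \ref{def::KMMR} and Lemma \ref{lemm::properties_KMMR} as they stand; one must change the ambient set-up rather than refine the combinatorics. Plausible repairs, consistent with the footnote about spin-statistics and with \cite{kinney2007index}, are to restrict to indices supported on the simples occurring in a fixed $\HH$ (equivalently on a fixed finite collection of recombination patterns indexed by Lemma \ref{lemm::recombination_rules}), to impose a finite-support condition, or to work modulo indices supported on absolutely protected simples; under any of these the kernel-of-relations description you set up in your first paragraph does yield a finitely generated free $\ZZ$-module, essentially by your intended argument. As written, however, your proposal establishes only that $\calI$ is a torsion-free subgroup of a product of copies of $\ZZ$, which is strictly weaker than the lemma.
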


This identification will be convenient for the comparison with supertraces and superdimensions,
to which we now turn.

\subsection{The character-valued Witten index} 
\label{wittenelab}

For any element $Q$ of the self-commuting variety $\YY$ acting on a fixed (non-trivial) unitarizable 
$\gg$-supermodule $\HH\in \ugsmodp$, its adjoint with respect to the Hermitian inner product 
$\langle\cdot,\cdot\rangle$ is related to its conjugate under the (graded) anti-involution $\omega$ 
defining the real form $\supqn$ of $\gg$ via 
$Q^\dagger=-i\omega(Q)$ (see Definition \eqref{unitarize} and eq.\ \eqref{amendagain}). As a consequence,
$x:= e^{3\pi i/4}(Q+Q^\dagger)$ satisfies $\omega(x)=-x$ and $c:=x^2 := \frac 12 [x,x] $ is semi-simple on
account of $\omega(c)=c^\dagger=-c$. In the notation of section \ref{subsec::DS_functor_and_unitarity}, 
$x\in\bigl(\YY^{\rm hom}\bigr)^\omega$, with $\rk(x)=\rk(Q)$.

\begin{lemma} 
\label{lemm::properties_Xi}
Let $\HH$ be a unitarizable $\gg$-supermodule, $Q\in\YY$. 
\begin{enumerate}
\item $\Xi=i c= [Q,Q^\dagger] $ is a positive operator, \emph{i.e.}, $\langle v, \Xi v\rangle \geq 0$ for all 
$v \in \HH$.
\item $\Xi$ is self-adjoint. In particular, $\HH$ decomposes completely in eigenspaces for $\Xi$,
$$
\HH = \bigoplus_{\xi} \HH(\xi), \qquad \HH(\xi) := \{ v \in \HH : \Xi v = \xi v\},
$$
and each eigenvalue $\xi$ is a non-negative real number.
\item There exists a Cartan subalgebra $\tt$ of $\gg$ with $\Xi \in \tt$, and for any such Cartan subalgebra,
the $\tt$-weight spaces of $\HH$ are eigenspaces of $\Xi$.
\end{enumerate}
\end{lemma}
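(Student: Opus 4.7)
The plan is to identify $\Xi$ as an explicit semisimple even element of $\gg$ acting on $\HH$ via the representation, after which all three assertions reduce to elementary computations together with standard structure theory.

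Unpacking $\Xi = ic$ with $c = \tfrac{1}{2}[x,x]$ and $x = e^{3\pi i/4}(Q+Q^\dagger)$, I would first note that $Q^2 = 0$ (by $Q \in \YY$) and $(Q^\dagger)^2 = 0$ (obtained by applying $\omega$ to $[Q,Q] = 0$), so
\begin{equation*}
\Xi = QQ^\dagger + Q^\dagger Q = [Q,Q^\dagger]
\end{equation*}
(the last equality using that $Q,Q^\dagger$ are both odd). Since $x \in \gg$, one has $c \in [\odd,\odd] \subset \even$, so $\Xi \in \even \subset \gg$; and $\Xi = ic$ is semisimple in the reductive Lie algebra $\even$ because that is precisely the defining condition $x \in \YY^{\text{hom}}$.

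For (a) a direct calculation gives $\langle v, \Xi v\rangle = \lVert Q^\dagger v\rVert^2 + \lVert Qv\rVert^2 \ge 0$. For (c), since $\Xi \in \even$ is semisimple and $\even$ is reductive, $\Xi$ is contained in some Cartan subalgebra of $\even$; because $\slmn$ is basic classical, this is simultaneously a Cartan subalgebra $\tt$ of $\gg$. For any such $\tt$, and any $\tt$-weight vector $v \in \HH^\lambda$ of weight $\lambda \in \tt^*$, one has $\Xi v = \lambda(\Xi) v$, exhibiting $\HH^\lambda$ as a subspace of the $\lambda(\Xi)$-eigenspace of $\Xi$.

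For (b), self-adjointness is immediate from $\Xi^\dagger = (QQ^\dagger + Q^\dagger Q)^\dagger = \Xi$, since $\cdot^\dagger$ is an anti-involution on $\End(\HH)$ irrespective of parity. The algebraic eigenspace decomposition then follows from (c): each simple summand of $\HH \in \ugsmodp$ admits a $\tt$-weight decomposition (by standard weight theory, transporting the $\hh$-decomposition of the simple highest-weight supermodule via the complex adjoint group action that carries $\hh$ to $\tt$), and this refines to the $\Xi$-eigenspace decomposition. The eigenvalues are real by $\overline{\xi}\langle v,v\rangle = \langle \Xi v, v\rangle = \langle v, \Xi v\rangle = \xi\langle v,v\rangle$ for $0 \neq v \in \HH(\xi)$, and non-negative by the positivity from (a). The main obstacle is purely expository: reading the decomposition in (b) at the algebraic level on the dense subspace spanned by $\tt$-weight vectors (prior to Hilbert completion) avoids the analytic subtleties of the spectral theorem for the potentially unbounded operator $\Xi$, so that all the actual work resides in the identification $\Xi \in \gg$ and in the semisimplicity guaranteed by $x \in \YY^{\text{hom}}$.
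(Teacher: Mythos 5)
Your parts (a) and (c) are essentially the paper's argument: (a) is the identical contravariance computation, and (c) — $\Xi=ic$ is a semisimple element of $\even$ (this is exactly the condition $x\in\YY^{\text{hom}}$ established in the paragraph preceding the lemma), hence lies in a Cartan subalgebra of $\even$, which is a Cartan subalgebra $\tt$ of $\gg$, and $\tt$-weight vectors are then $\Xi$-eigenvectors — is the content the paper dismisses as ``obvious''. Your derivation of self-adjointness from $\Xi^\dagger=(QQ^\dagger+Q^\dagger Q)^\dagger=\Xi$ is equivalent to the paper's appeal to $\omega(c)=-c$.

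The gap is in the completeness claim of (b), precisely at the point where you try to avoid the spectral theorem. You assert that each simple summand admits a $\tt$-weight decomposition by ``transporting the $\hh$-decomposition via the complex adjoint group action that carries $\hh$ to $\tt$''. But conjugating the Cartan produces a weight decomposition of the \emph{twisted} module $M^{\Ad(g)}$, not of $M$ with its given action: semisimplicity of $\Xi$ in the Lie algebra does not imply that $\rho(\Xi)$ acts semisimply on an infinite-dimensional module, and a Cartan conjugate to $\hh$ need in general not act semisimply on a unitarizable highest weight module. The paper's own $\su(1,1\vert 1)$ example in section \ref{sec::SQM} is a counterexample to your step: for $Q_{(r,t)}=rQ-tS$ with $r,t\neq 0$ and $|r|\neq|t|$, the element $\Xi_{(r,t)}$ is semisimple in $\even$, yet it contains the raising term $2r\bar t F=r\bar t\,x^2$, which strictly increases the top polynomial degree, so $\Xi_{(r,t)}$ has \emph{no} eigenvectors at all in the algebraic module $\Oss^{\pm}$; its spectral decomposition (e.g.\ the kernel vectors $f_\pm$) lives only in the Hilbert-space completion, and membership there even depends on whether $|r|<|t|$ or $|r|>|t|$. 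So the decomposition in (b) cannot be read ``at the algebraic level on the span of $\tt$-weight vectors'' — that span can be zero — and the analytic input is genuinely needed. This is exactly how the paper proceeds: self-adjointness of $\Xi$, the standing assumption that $\HH$ decomposes discretely into simple summands, and the spectral theorem on the completion. Your algebraic argument does work in the special case $\Xi\in\hh$ (for instance when $Q$ is an odd root vector, the case used later in the proof of Theorem \ref{thm::KMMR_gleich_Witten}), but the lemma is stated for arbitrary $Q\in\YY$.
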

\begin{proof} 
The contravariance of the Hermitian form implies the following for all $v \in \HH$:
\begin{equation}
\langle v, \Xi v \rangle = \langle v,  [Q,Q^{\dagger}]v \rangle = \langle v, QQ^{\dagger}v \rangle + \langle v, 
Q^{\dagger}Qv \rangle = \langle Qv, Qv \rangle + \langle Q^{\dagger}v, Q^{\dagger} v \rangle \geq 0,
\end{equation}
where we used $(Q^{\dagger})^{\dagger} = Q$, and the last inequality follows from the positive definiteness of 
the Hermitian form. This implies a). b) follows from $\omega(c)=-c$, together with the assumption that
$\HH$ decomposes discretely under $\gg$ and the spectral theorem. c) is then obvious.
\end{proof}

In physics, the positivity of $\Xi$ is referred to as a \emph{BPS-bound associated to the supercharge $Q$}, and 
one expects to study the \emph{BPS states} (elements of $\HH(0)$, see section \ref{subsec::BPSness}) by considering 
the supertrace of the operator $e^{-\beta\Xi}$ for positive real $\beta$ \cite{WittenIndex}. The formula 
\begin{equation}
e^{-\beta\Xi}v:= \sum_{k=0}^{\infty} \frac{(-\beta)^{k}}{k!} \Xi^{k}v, \qquad v \in \HH.
\end{equation}
for this operator is well-defined by \eqref{eq::analytic_vector}, as $\Xi$ is an even operator and $\HH$ decomposes
completely in unitarizable simple highest weight $\even$-supermodules. The supertrace $\str_{\HH}(e^{-\beta \Xi})$ 
however, is not. A mathematically clever but physically naive assumption for a remedy would be that $e^{-\beta \Xi}$ 
is trace class (it is obviously bounded), but the convergence of the trace requires that the eigenspaces of $\Xi$ 
be finite-dimensional. This assumption is, however, too strong, although it might hold for some special 
situations (such as the example of $\su(1,1\vert 1)$ from section \ref{sec::SQM}). Instead, one resolves the 
infinite degeneracy of $\HH(0)$ by considering a \emph{character-valued Witten index}. To clarify this notion, 
we let $\tt$ be a Cartan subalgebra of $\gg$ with $\Xi\in\tt$, and recall from our discussion of the 
$\omega$-compatible Duflo-Serganova functor in section \ref{subsec::DS_functor_and_unitarity} that 
$\gg_{x} = \DS_x(\gg) \cong \sl(m-k \vert n-k)$ if $\rk(Q) = k$ with real form $\gg_{x}^{\omega_x}  = 
\su(p-r,q-s\vert n-k)$ for some $r\le p$, $s\le q$ with $r+s=k$. Viewing $\gg_{x}$ as a Lie subsuperalgebra of
$\gg$, the Cartan subalgebra $\tt$ induces a Cartan subalgebra $\tt_{x}$ of $\gg_{x}$. We denote the associated 
analytic complex subgroup by $T_{x} \subset G_0$. The set of regular elements in $\tt_{x}$ is denoted by 
$\tt_{x}^{\reg}$, while the set of regular elements in $T_{x}$ is denoted by $T_{x}^{\reg}$. Note that 
$T_{x}^{\reg} \subset T_{x}$ is open and dense. Linear coordinates on $\tt_{x}$ are referred to in physics 
as \emph{fugacities}. We also fix a positive root system $\Phi_x^+$ for $\gg_x$ (\emph{cf.} eq.\ 
\eqref{rootsysx}).

\begin{proposition} 
\label{lemm::trace_class_twist} 
Let $V$ be a unitarizable highest weight $\gg_{x}$-supermodule. Assume $X \in \tt_{x}^{\reg}$ satisfies 
$\alpha(X) > 0$ for all $\alpha \in \Phi^{+}_x$. Then $e^{X}$ is trace class and 
$$
\str_{V} e^{X} \in \RR.
$$
\end{proposition}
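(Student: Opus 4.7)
The plan is to bound the character of $V$ above by that of the super Verma module of the same highest weight, use the positivity hypothesis on $X$ to verify convergence, and deduce reality from the fact that every summand is real.

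Let $\Lambda \in \tt_x^*$ denote the highest weight of $V$. By Lemma \ref{lemm::properties_HWM}, $V$ decomposes as a weight supermodule with finite-dimensional weight spaces, whose weights lie in $\Lambda - \ZZ_{\geq 0}\Phi_{x,0}^+ - \{0,1\}\Phi_{x,1}^+$, and $V$ is a quotient of the super Verma supermodule $V(\Lambda)$. From the PBW isomorphism $V(\Lambda) \cong \UE(\nn_x^-)$ one reads off both the weight-space bound $\dim V^\mu \leq \dim V(\Lambda)^\mu$ and the formal character identity
\[
\operatorname{ch} V(\Lambda) = e^{\Lambda}\prod_{\alpha \in \Phi_{x,0}^+}\frac{1}{1-e^{-\alpha}}\prod_{\beta \in \Phi_{x,1}^+}(1+e^{-\beta}).
\]

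The regularity of $X$ together with $\alpha(X) > 0$ for all $\alpha \in \Phi_x^+$ forces $X$ to lie in the real form of $\tt_x$ fixed by the anti-involution $\omega_x$ from Theorem \ref{thm::unitarity_DS}; consequently $X$ acts as a self-adjoint operator on the Hilbert completion of the unitarizable supermodule $V$, with real eigenvalues $\mu(X)$. Since each factor $e^{-\alpha(X)} \in (0,1)$ and the sets $\Phi_{x,0}^+$, $\Phi_{x,1}^+$ are finite (as $\gg_x \cong \sl(m{-}k\vert n{-}k)$ is finite-dimensional), the right-hand side above evaluates at $X$ to a finite positive real number. Hence
\[
\tr_V e^X \;=\; \sum_\mu \dim V^\mu\, e^{\mu(X)} \;\leq\; \operatorname{ch} V(\Lambda)(X) \;<\; \infty,
\]
which shows $e^X$ is trace class. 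The supertrace $\str_V e^X = \sum_\mu (-1)^{p(\mu)} \dim V^\mu\, e^{\mu(X)}$ is then an absolutely convergent series whose every term is real, so $\str_V e^X \in \RR$.

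The only mild delicacy is to confirm the bound $\dim V^\mu \leq \dim V(\Lambda)^\mu$ uniformly in $\mu$ for an arbitrary (not necessarily simple) unitarizable highest weight supermodule $V$, which is immediate from the universal property of $V(\Lambda)$ and the fact that $V$ is cyclic on a highest weight vector of weight $\Lambda$. One must also keep careful track of the parities $p(\mu) = p(v_\Lambda) + \sum_j s_j \pmod 2$ of PBW basis vectors when passing from the trace bound to the supertrace; beyond this, the estimate is an immediate consequence of the finiteness of $\Phi_x^+$.
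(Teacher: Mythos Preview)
Your proof is correct and follows essentially the same route as the paper: bound the weight multiplicities of $V$ by those of the Verma supermodule $V(\Lambda)\cong\UE(\nn_x^-)\otimes\CC_\Lambda$, compute the Verma character as the product $e^{\Lambda}\prod_{\alpha\in\Phi_{x,0}^+}(1-e^{-\alpha})^{-1}\prod_{\beta\in\Phi_{x,1}^+}(1+e^{-\beta})$, and use $\alpha(X)>0$ to get convergence and reality. The only cosmetic difference is that the paper computes the product one root at a time via $\calS(\nn_x^-)\cong\bigotimes_\alpha\calS(\CC_{-\alpha})$, whereas you quote the Verma character formula directly.
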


\begin{proof}
Let $\Lambda \in \tt_{x}^{\ast}$ denote the highest weight of $V$. We consider the triangular decomposition 
$\gg_{x} = \nn_{x}^{-} \oplus \tt_{x} \oplus \nn_{x}^{+}$, where $\nn_{x}^{\pm} := \sum_{\alpha \in 
\Phi_{x}^{\pm}}\gg_{x}^{\alpha}$. Then, as a $\tt_{x}$-module, the supermodule $V$ is a quotient of 
the $\tt_{x}$-module 
$$
\UE(\nn^{-}_{x}) \otimes \CC_{\Lambda} \cong \twedge(\nn_{x,0}^{-}) \otimes 
S(\nn_{x,1}^{-}) \cong \calS(\nn_{x}^{-}) \otimes \CC_{\Lambda}.
$$
Here, $\calS(\nn_{x}^{-})$ is the symmetric superalgebra over $\nn_{x}^{-}$.
Now, it is enough to show that $e^{X}$ is trace class on $\UE(\nn^{-}_{x}) \otimes \CC_{\Lambda}$, as quotients 
have smaller multiplicities. For that, we consider the following decomposition of the $\tt_x$-module 
$\calS(\nn_{x}^{-})$:
$$
\calS(\nn_{x}^{-}) \cong \bigotimes_{\alpha \in \Phi_{x}^{+}} \calS(\gg^{-\alpha}) \cong \bigotimes_{\alpha \in 
\Phi_{x}^{+}} \calS(\CC_{-\alpha}).
$$
The trace of $e^{X}$ on any $\calS(\CC_{-\alpha})$ is 
$$
\tr_{\calS(\CC_{-\alpha})} e^{X} = 
\begin{cases}
\sum_{k = 0}^{\infty}e^{-k\alpha(X)} = \frac{1}{1-e^{-\alpha(X)}}, \quad &\alpha \in \Phi_{x,0}, \\
1 + e^{-\alpha(X)}, \ &\alpha \in \Phi_{x,1}.
\end{cases}
$$
We conclude
$$
\tr_{\calS(\nn_{x}^{-}) \otimes \CC_{\Lambda}} e^{X} = e^{\Lambda(X)} 
\prod_{\alpha \in \Phi_{x,1}^{+}}(1+e^{-\alpha(X)})
\prod_{\alpha \in \Phi_{x,0}} \frac{1}{1-e^{-\alpha(X)}} < \infty,
$$
\emph{i.e.}, $e^{X}$ is trace class on $V$. With respect to the weight space decomposition 
$V=\bigoplus_{\lambda\in P_V} V^\lambda$ for some countable set $P_V$, the supertrace
$$
\str_{V} e^{X}= (-1)^V e^{\Lambda(X)} 
\sum_{\lambda \in P_{V}}(-1)^{\lambda-\Lambda} m(\lambda)e^{-\Lambda(X)+\lambda(X)}, 
$$
where $m(\lambda)=\dim V^\lambda$, is dominated by $\tr_V e^X$, and real.
\end{proof}

Proposition \ref{lemm::trace_class_twist} defines for any unitarizable highest weight $\gg_{x}$-supermodule 
$V$ a conjugation invariant continuous function on 
$T_{x}^{\reg, +} := \{e^{X} \in T_{x}^{\reg} : \alpha(X) > 0 \ \forall \alpha \in \Phi^{+}_x\}$,
\begin{equation}
\label{supercharacters}
\chi^{x}_{V} : T_{x}^{\reg, +} \to \RR, \qquad 
\chi^x_V (e^X) := \str_V e^X
\end{equation}
which we refer to as the \emph{supercharacter} of $V$. The set of supercharacters forms a ring, the 
\emph{supercharacter ring} $X^{\ast}(T_{x}^{\reg,+})$. This motivates the following definition.

\begin{definition} 
\label{superWitten}
Let $M$ be a unitarizable highest weight $\gg$-supermodule, and let $Q$ be an element of the self-commuting 
variety. Using the decomposition $\DS_{x}(M) = \bigoplus_{i}V_{i}$ into finitely many $\gg_x$-supermodules
(see Lemma \ref{lemm::decomposes_finitely_many}), we define the $Q$-\emph{Witten index} of $M$ as the 
$\gg_x$-supercharacter
$$
I_{M}^{W}(Q, \cdot) := \sum_{i} \chi_{V_{i}}^{x}(\cdot) = \str_{\DS_{x}(M)}(\cdot) \in 
X^{\ast}(T_{x}^{\reg,+}). 
$$
\end{definition}

The fact that the Witten index detects short (or protected) unitarizable supermodules is reflected
in the following statement.

\begin{lemma} 
\label{lemm::QWitten_protected}
Let $M$ be a unitarizable highest weight $\gg$-supermodule with highest weight $\Lambda$, and $Q\in\YY$. 
Assume $\at(\Lambda) < \rk(Q)$. Then $I^{W}_{M}(Q) = 0$. In particular, if $\Lambda \in \calC$, then 
$I^{W}_{M}(Q) = 0$ for any $Q \in \YY$.
\end{lemma}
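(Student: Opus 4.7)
The aim is to reduce the statement to the vanishing of $\DS_x(M)$ as a vector space, where $x = e^{3\pi i/4}(Q+Q^\dagger)$, because by Definition \ref{superWitten} we have $I^W_M(Q, \cdot) = \str_{\DS_x(M)}(\cdot)$. My plan has three conceptual steps.

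First I would unpack what $\DS_x(M)$ is in the unitary setting. A short calculation gives $c := [x,x] = 2x^2 = -2i\,\Xi$, so $ic = 2\Xi$ is the positive operator from Lemma \ref{lemm::properties_Xi}, $M^c = \ker c = \ker\Xi$, and the positivity identity $\langle v,\Xi v\rangle = \|Qv\|^2 + \|Q^\dagger v\|^2$ (proved in the same lemma) shows $\ker\Xi = \ker Q \cap \ker Q^\dagger$. Consequently $x$ acts by zero on $M^c$, so
\begin{equation*}
\DS_x(M) = \ker(x|_{M^c})/\operatorname{im}(x|_{M^c}) = M^c = \ker\Xi.
\end{equation*}

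Second I would identify $\ker\Xi$ with the purely algebraic cohomology $\DS_Q(M) = \ker Q/\operatorname{im} Q$. Since $M$ decomposes as a direct sum of finite-dimensional $\hh$-weight spaces and $\Xi$ preserves this grading and is self-adjoint positive, Hodge theory reduces to linear algebra on each weight space: any $v \in \ker Q$ of strictly positive eigenvalue $\xi$ satisfies $v = \xi^{-1}QQ^\dagger v \in \operatorname{im} Q$, and conversely if $Qw \in \ker \Xi = \ker Q^\dagger$ then $\langle Qw, Qw\rangle = \langle w, Q^\dagger Qw\rangle = 0$, so $\operatorname{im} Q \cap \ker \Xi = 0$. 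This yields an orthogonal decomposition $\ker Q = \ker\Xi \oplus \operatorname{im} Q$ and hence a natural isomorphism $\ker\Xi \xrightarrow{\sim} \DS_Q(M)$.

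Third I would invoke the algebraic vanishing statement. By Theorem \ref{involutionsOnly}, the unitarizable simple supermodule $M$ is a highest weight supermodule with highest weight $\Lambda$; since $Q\in\YY$ is honestly square-zero (not just homological), Corollary \ref{DSNM} applies with $\at(M) = \at(\Lambda) < \rk(Q)$ and gives $\DS_Q(M) = 0$. Chaining the isomorphisms, $\DS_x(M) = \ker\Xi \cong \DS_Q(M) = 0$, so $I^W_M(Q,\cdot) \equiv 0$ as a supercharacter on $T_x^{\operatorname{reg},+}$. For the ``in particular'' clause, $\Lambda\in\calC$ forces $\at(\Lambda)=0$ by Lemma \ref{immediateLemma}(1), and any non-zero $Q\in\YY$ has $\rk(Q)\ge 1$, so the hypothesis is automatic.

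The main obstacle is the Hodge identification in step two: one must be genuinely careful that the orthogonal splitting makes sense despite $M$ being infinite-dimensional. The key point that makes it elementary is that $\Xi = \{Q,Q^\dagger\}$ is a root-preserving even operator, so everything reduces to spectral theory on the finite-dimensional weight spaces, where no analytic subtleties arise.
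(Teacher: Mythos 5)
Your proposal is essentially the paper's argument: the paper's proof is a one-liner citing Corollary \ref{DSNM}, with the bridge between the homological twist $\DS_x$ (which defines the index) and the square-zero cohomology $\ker Q/\Im Q$ left implicit in Theorem \ref{thm::unitarity_DS} and the Hodge-type remark in the proof of Proposition \ref{prop::QWitten_à_la_physik}; you simply make that bridge explicit before invoking the same corollary, which is fine.

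One auxiliary claim in your second step is wrong, though harmlessly so for this lemma: $\Xi=[Q,Q^\dagger]$ does \emph{not} in general preserve the $\hh$-weight spaces for an arbitrary $Q\in\YY$, since $Q$ need only be $G_0$-conjugate to a sum of root vectors; the paper's own example exhibits this, with $\Xi_{(r,t)}$ containing terms along the even root vectors $E$ and $F$. That claim is only used for the surjectivity half of your Hodge isomorphism $\ker\Xi\cong\ker Q/\Im Q$, which the lemma does not need: since Corollary \ref{DSNM} gives $\ker Q_M=\Im Q_M$, it suffices that $\Im Q\cap\ker\Xi=0$, and your purely algebraic positivity computation $\langle Qw,Qw\rangle=\langle w,Q^\dagger Qw\rangle=0$ for $Qw\in\ker Q^\dagger$ establishes exactly that, with no weight-space or convergence input. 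If you want the full isomorphism, replace the $\hh$-weight grading by the $\Xi$-eigenspace decomposition of Lemma \ref{lemm::properties_Xi}(b) (which rests on the standing discreteness assumption), noting $[\Xi,Q]=0$ so that $Q$, $Q^\dagger$ preserve the eigenspaces.
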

\begin{proof}
This follows at once from Corollary \ref{DSNM}.
\end{proof}

Lemma \ref{lemm::QWitten_protected} allows us to extend the $Q$-Witten index additively to any unitarizable 
supermodule 
$\HH\in\ugsmodp$ with finitely many atypical simple components in a well-defined fashion. We can then relate
this definition to the formulation as a supertrace over the entire supermodule, which is standard in the 
physics literature (such as specifically for $\psu(2,2\vert 4)$ in \cite{kinney2007index}).

\begin{proposition} 
\label{prop::QWitten_à_la_physik}
Let $\HH\in \ugsmodp$, and $Q\in\YY$. Then the $Q$-\emph{Witten index} of $\HH$ satisfies
$$
I^{W}_{\HH}(Q,X) = \str_{\HH}e^{-\beta \Xi + X}
$$
for any  $X \in \tt_{x}^{\reg,+} = \{X \in \tt_{x}^{\reg} : 
\alpha(X) > 0 \ \forall \alpha \in \Phi^{+}_x\}$, and any positive $\beta$.
\end{proposition}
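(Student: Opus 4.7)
The goal is to identify both sides with $\str_{\HH(0)}e^X$, where $\HH(0)=\ker\Xi$. First I would pin down $\DS_x(\HH)$ explicitly. With $x=e^{3\pi i/4}(Q+Q^\dagger)$ one computes $c:=x^2=e^{3\pi i/2}(QQ^\dagger+Q^\dagger Q)=-i\Xi$, so $\HH^c=\ker c=\ker\Xi=\HH(0)$ by Lemma~\ref{lemm::properties_Xi}. On $\HH(0)$ the non-negativity of both $QQ^\dagger$ and $Q^\dagger Q$ forces $Qv=Q^\dagger v=0$, hence $xv=0$; thus $\DS_x(\HH)=\ker x|_{\HH^c}=\HH(0)$, and Definition~\ref{superWitten} gives $I^W_\HH(Q,X)=\str_{\HH(0)}e^X$. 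The task therefore reduces to showing $\str_\HH e^{-\beta\Xi+X}=\str_{\HH(0)}e^X$ for any $\beta>0$.

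Next I would decompose $\HH=\bigoplus_\xi\HH(\xi)$ by the $\Xi$-eigenspaces (Lemma~\ref{lemm::properties_Xi}(b)) and show that every $\xi>0$ stratum contributes zero. A key preliminary is that $X$ commutes with $Q$ and $Q^\dagger$ individually. Choosing a representative of $X$ in the original Cartan $\hh$ that annihilates all isotropic roots $\alpha_i$ appearing in $\Ad_g(x)=\sum(u_i+z_iv_i)$ (cf.~\eqref{CSAx}) gives $[X,x]=0$; since $X$ preserves the $\ZZ$-grading $\gg_{-1}\oplus\gg_0\oplus\gg_{+1}$, while $[X,Q]\in\gg_{+1}$ and $[X,Q^\dagger]\in\gg_{-1}$, the identity $[X,Q]+[X,Q^\dagger]=0$ splits, yielding $[X,Q]=[X,Q^\dagger]=0$. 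In particular $e^X$ commutes with $Q$ and $Q^\dagger$ and preserves every $\HH(\xi)$. For $\xi>0$, the rescaled operator $A:=\xi^{-1/2}Q$ on $\HH(\xi)$ satisfies $A^2=0$ and $\{A,A^\dagger\}=\id$, so standard Hodge theory produces a grading-reversing, $e^X$-equivariant isomorphism between the bosonic and fermionic parts of $\HH(\xi)$; consequently $\str_{\HH(\xi)}e^X=0$ and $\str_{\HH(\xi)}e^{-\beta\Xi+X}=e^{-\beta\xi}\cdot 0=0$.

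The main obstacle will be justifying that the supertrace on $\HH$ splits as a sum over the $\xi$-eigenspaces, i.e., absolute convergence of $\str_\HH e^{-\beta\Xi+X}$. On $\HH(0)$, Lemma~\ref{lemm::decomposes_finitely_many} exhibits $\DS_x(\HH)=\HH(0)$ as a finite direct sum of unitarizable simple highest-weight $\gg_x$-supermodules, on each of which $e^X$ is trace class by Proposition~\ref{lemm::trace_class_twist}; this handles the right-hand side. For the full supertrace, the exponential suppression by $e^{-\beta\Xi}$ with $\beta>0$, combined with the bosonic/fermionic cancellation induced by $A$ within each $\tt_x$-weight multiplet of $\HH(\xi)$, $\xi>0$, should allow one to dominate the positive-$\xi$ contributions by a convergent auxiliary trace modeled on the proof of Proposition~\ref{lemm::trace_class_twist}. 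Once absolute convergence is established, the $\xi$-decomposition yields
\begin{equation*}
\str_\HH e^{-\beta\Xi+X}=\sum_{\xi\ge 0}e^{-\beta\xi}\str_{\HH(\xi)}e^X=\str_{\HH(0)}e^X,
\end{equation*}
as required. A conceptually cleaner alternative, requiring the same absolute-convergence input, is to verify $\beta$-independence directly via $\partial_\beta F(\beta)=-\str_\HH\Xi\,e^{-\beta\Xi+X}=0$, using $\Xi=[Q,Q^\dagger]$, the commutativity of $X$ with $Q$, and supertrace cyclicity, and then pass to the limit $\beta\to\infty$, in which only $\HH(0)$ survives.
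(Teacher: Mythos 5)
Your proposal is correct and follows essentially the same route as the paper's proof: decompose $\HH$ into $\Xi$-eigenspaces, kill the supertrace on each $\HH(\xi)$ with $\xi>0$ by a Hodge-type pairing, identify $\HH(0)=\Ker Q\cap\Ker Q^{\dagger}$ with $\DS_{x}(\HH)$ (Theorem \ref{thm::unitarity_DS}), and use Proposition \ref{lemm::trace_class_twist} together with Lemma \ref{lemm::decomposes_finitely_many} for convergence on the kernel — the paper compresses the cancellation and convergence steps into the remark that they are ``mostly formal under our assumptions'', which you spell out, including the $\beta$-independence. One small caveat: your derivation of $[X,Q]=[X,Q^{\dagger}]=0$ via the $\ZZ$-grading tacitly assumes $Q\in\gg_{+1}$ and $Q^{\dagger}\in\gg_{-1}$, which need not hold for a general element of $\YY$ (self-commuting elements may mix $\gg_{+1}$ and $\gg_{-1}$ components, and the Weyl group cannot always move them into $\gg_{+1}$); but all that is needed is $[X,x]=0$, which follows from choosing the representative of $X$ in $\tt$ to annihilate the isotropic roots occurring in $x$, cf.\ \eqref{CSAx}, and then $x$ itself — odd and invertible on $\HH(\xi)$ for $\xi>0$ since $x^{2}=-i\xi$ there — serves as the grading-reversing, $e^{-\beta\Xi+X}$-equivariant pairing operator in place of $\xi^{-1/2}Q$.
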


\begin{proof}
With respect to the decomposition in Lemma \ref{lemm::properties_Xi} (b), the supertrace 
vanishes outside of the zero eigenspace $\HH(0)=\ker Q \cap \ker Q^{\dagger}$ of $\Xi$, which is identified 
with the DS cohomology $\DS_{x}(\HH)$ according to Theorem \ref{thm::unitarity_DS}.
This follows from a standard argument that is akin to the Hodge decomposition, and mostly formal under our 
assumptions. Note that $[X,\Xi]=0$. On $\HH(0)$, the supertrace is well-defined by Lemma 
\ref{lemm::trace_class_twist}, and equal to the Witten index by definition.
Note that in particular, the supertrace is independent of $\beta$ as advertised.
\end{proof}

It is interesting to observe that the supertrace formula for the Witten index could be extended to a larger class
of unitarizable $\gg$-supermodules, provided the sum of supercharacters over atypical components converges in the
appropriate topology. We finish with the equivalence between the $Q$-Witten index and the KMMR index.

\begin{proposition}
\label{wittentoKMMR}
For any $Q\in\YY$, $X \in \tt_{x}^{\reg,+}$, the assignment $\ugsmodp\to \RR$, 
$M\mapsto I_{M}^{W}(Q,X)$ is a real KMMR index. 
\end{proposition}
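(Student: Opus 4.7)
The plan is to apply the $\RR$-linear version of Lemma \ref{lemm::properties_KMMR}: it suffices to verify real-valuedness, additivity, and the two conditions (i) $I^W(K(\Lambda)) = 0$ for typical $\Lambda\in\calC$, and (ii) $\sum_i I^W(L(\Lambda_i)) = 0$ for every $\Lambda_0 \in \partial\calC$ with fragmentation $\gr K(\Lambda_0) = \bigoplus_i L(\Lambda_i)$. Real-valuedness is Proposition \ref{lemm::trace_class_twist}. Additivity is immediate since $\DS_{x}$ commutes with direct sums (clear from the kernel-over-image definition) and the supertrace is additive. Condition (i) is also immediate: for $\Lambda \in \calC$, Lemma \ref{immediateLemma}(1) gives $K(\Lambda) = L(\Lambda)$, and Corollary \ref{DSNM} gives $\DS_{x}(L(\Lambda)) = 0$ since $\at(\Lambda) = 0 < \rk(Q)$.

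For condition (ii), I would first show $I^W(\gr K(\Lambda_0)) = I^W(K(\Lambda_0))$ by induction on a Jordan--H\"older filtration $K(\Lambda_0) = K_0 \supsetneq K_1 \supsetneq \cdots \supsetneq K_M = 0$. For each short exact sequence $0 \to K_i \to K_{i-1} \to L(\Lambda_i) \to 0$, the middle-exactness of $\DS_x$ (Proposition \ref{DSTensorFunctor}(2)) yields a six-term exact sequence
\begin{equation*}
0 \to E \to \DS_{x}(K_i) \to \DS_{x}(K_{i-1}) \to \DS_{x}(L(\Lambda_i)) \to \Pi(E) \to 0.
\end{equation*}
Taking the alternating sum of supertraces and using $\str(\Pi(E)) = -\str(E)$, the $E$-contributions cancel and one obtains $I^W(K_{i-1}) = I^W(K_i) + I^W(L(\Lambda_i))$. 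Iterating, together with the additivity of $I^W$ on the direct sum $\gr K(\Lambda_0) = \bigoplus_i L(\Lambda_i)$, gives $I^W(K(\Lambda_0)) = \sum_i I^W(L(\Lambda_i)) = I^W(\gr K(\Lambda_0))$.

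It then suffices to establish $I^W(K(\Lambda_0)) = 0$ via supercharacter factorization. The PBW decomposition $K(\Lambda_0) \cong \twedge(\gg_{-1}) \otimes L_0(\Lambda_0)$, as $\hh$-weight super vector spaces with parity inherited from the odd exterior algebra, yields the formal identity
\begin{equation*}
\sch K(\Lambda_0) = \prod_{\alpha \in \Phi_1^+}(1 - e^{-\alpha}) \cdot \operatorname{ch} L_0(\Lambda_0).
\end{equation*}
By Proposition \ref{prop::QWitten_à_la_physik}, for $\beta>0$ large enough, $I^W(\gr K(\Lambda_0)) = \str_{\gr K(\Lambda_0)} e^{-\beta\Xi + X}$, which equals the evaluation of $\sch K(\Lambda_0)$ at $Y = X - \beta\Xi \in \hh$, as $\gr K(\Lambda_0)$ and $K(\Lambda_0)$ share $\hh$-weight content and parities. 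For any DS direction $\alpha_i$ occurring in $\Ad_g(x) = \sum_{j}(u_j + z_j v_j)$ from \eqref{eq::rank}: $\alpha_i(X) = 0$ by the construction \eqref{rootsysx} of $\tt_x$, and $\alpha_i(\Xi) = 0$ because $\Xi \in \hh$ is a linear combination of elements $[u_j, v_j] \propto h_{\alpha_j}$ while $(\alpha_i, \alpha_j) = 0$ (by isotropy for $i = j$ and by orthogonality of the DS roots for $i \neq j$). Hence the factor $(1 - e^{-\alpha_i(Y)}) = 0$ vanishes, forcing $I^W(K(\Lambda_0)) = 0$.

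The principal technical obstacle is promoting this factorization from a formal to a numerical identity, given that $L_0(\Lambda_0)$ is generically infinite-dimensional. The left-hand side $\str_{\gr K(\Lambda_0)} e^{-\beta\Xi + X}$ converges absolutely by the unitarizability of $\gr K(\Lambda_0)$ and the positivity of $\Xi$; one must then verify that $\operatorname{ch} L_0(\Lambda_0)(e^{Y})$ converges separately at $Y = X - \beta\Xi$ for $\beta$ large, by an argument parallel to Proposition \ref{lemm::trace_class_twist} applied to the unitarizable highest weight $\even$-module $L_0(\Lambda_0)$, exploiting that $X - \beta\Xi$ can be made arbitrarily dominant for the positive even roots contributing to the sum. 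Once both sides are seen to converge absolutely, the vanishing factor on the right-hand side forces $I^W(K(\Lambda_0)) = 0$ as a numerical identity and completes the verification of (ii).
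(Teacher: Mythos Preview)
Your argument is correct but differs from the paper's. The paper verifies Definition \ref{def::KMMR} directly: after noting vanishing on typicals via Lemma \ref{lemm::QWitten_protected}, it argues that the convergent weight sum $\str_{K(\Lambda)}e^{-\beta\Xi+X}=\sum_{\lambda}(-1)^{\lambda-\Lambda}m(\lambda)e^{\lambda(-\beta\Xi+X)}$ varies continuously with $\Lambda$ through the fragmentation limit $\Lambda\to\Lambda_0$. You instead invoke Lemma \ref{lemm::properties_KMMR} and establish its condition (b) by explicitly exhibiting a vanishing factor $1-e^{-\alpha_i(Y)}$ in the Kac supercharacter $\sch K(\Lambda_0)=\prod_{\alpha\in\Phi_1^+}(1-e^{-\alpha})\cdot\operatorname{ch}L_0(\Lambda_0)$. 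Your route names the cancellation mechanism; the paper's is shorter. Two caveats on execution: your six-term sequence step is both superfluous (since $K(\Lambda_0)$ and $\gr K(\Lambda_0)$ share $\hh$-weight multiplicities and parities, one has $I^W(\gr K(\Lambda_0))=\sch K(\Lambda_0)(e^Y)$ directly, without ever applying $\DS_x$ to the non-unitarizable filtration pieces $K_i$) and miscited (Proposition \ref{DSTensorFunctor}(2) concerns nilpotent $x\in\YY$, not the homological $x\in\YY^{\text{hom}}$ underlying the $Q$-Witten index, so middle-exactness there would need separate justification); and writing $Y=X-\beta\Xi\in\hh$ with $\alpha_i$ an $\hh$-root tacitly assumes $Q$ has been $G_0$-conjugated to a sum of $\hh$-root vectors, a legitimate but unstated reduction.
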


\begin{proof}
The vanishing on typical components following from Lemma \ref{lemm::QWitten_protected}, we explain ``continuity in
the $\gg$-supermodule structure on $\HH$'', \emph{i.e.,} of the Witten index evaluated at $X\in \tt_x^{\reg,+}$, 
viewed as a function
$$
I^{W}_{\HH}(Q,X) : \Hom'(\gg^{\RR}, \mathfrak{u}(\HH)) \to \RR
$$
Topologically, $\Hom'(\gg^{\RR}, \mathfrak{u}(\HH))$ is $(\hh^{\ast})^{\ZZ_{+}}/S_{\infty}$, while  $\Hom'(\gg^{\RR}_{x}, 
\mathfrak{u}(\DS_{x}(\HH))) \cong (\tt_{x}^{\ast})^{\ZZ_{+}}/S_{\infty}$ if $\Xi \in \tt$ (see section 
\ref{subsec::assumptions}). Focusing on a single fragmentation process starting from $\Lambda\in\calC$, 
we have a convergent sum
$$
\str_{K(\Lambda)} e^{-\beta \Xi + X} = \sum_{\lambda \in P_{K(\Lambda)} \subset \hh^{\ast}} 
(-1)^{\lambda-\Lambda} m(\lambda) e^{\lambda(-\beta\Xi + X)}.
$$
This is clearly continuous on $\Hom'(\gg^{\RR},\mathfrak{u}(K(\Lambda))$, where $K(\Lambda)$ is viewed as a fixed 
Hilbert space (\emph{cf.} section \ref{fragrecom}). This finishes the proof.
\end{proof}

\begin{theorem} 
\label{thm::KMMR_gleich_Witten}
Any KMMR index is a real linear combination of Witten indices. 
\end{theorem}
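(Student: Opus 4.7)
The plan is to show that the finite-dimensional real vector space $\calI_\RR$ of real KMMR indices (Lemma \ref{KMMRModule}) is spanned by the Witten indices $I^W(Q, X)$ from Proposition \ref{wittentoKMMR}. Since a KMMR index is determined by its values on simple unitarizable supermodules $L(\Lambda)$ and must vanish on typical ones (Lemma \ref{lemm::properties_KMMR}(a) together with Lemma \ref{immediateLemma}(1)), I would first introduce the \emph{atypicality filtration}
\[
\calI_\RR = \calI_\RR^{(1)} \supseteq \calI_\RR^{(2)} \supseteq \cdots \supseteq \calI_\RR^{(d)} \supseteq \calI_\RR^{(d+1)} = 0,
\]
where $d = \defect(\gg)$ and $\calI_\RR^{(k)}$ consists of those $I$ that vanish on every simple $L(\Lambda)$ with $\at(\Lambda) < k$. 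This filtration is compatible with the KMMR constraints, because the explicit fragmentation rules of Lemma \ref{lemm::recombination_rules}, combined with the isotropy of odd roots, imply that every fragment at $\partial\calC$ has atypicality at least that of the original $L(\Lambda_0)$.

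Combining Lemma \ref{lemm::QWitten_protected} with Proposition \ref{wittentoKMMR}, every Witten index $I^W(Q, X)$ with $\rk(Q) = k$ lies in $\calI_\RR^{(k)}$. I would then prove by downward induction on $k$ that $\calI_\RR^{(k)}$ is spanned over $\RR$ by Witten indices of rank $\geq k$. Granting the inductive hypothesis, the step amounts to representing the image of any $I \in \calI_\RR^{(k)}$ in the graded piece $\calI_\RR^{(k)} / \calI_\RR^{(k+1)}$ by a real linear combination of rank-$k$ Witten indices; the residual defect then lies in $\calI_\RR^{(k+1)}$ and is absorbed by the induction, with the base case $k = d$ initializing the recursion.

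The heart of the argument is therefore the following separation claim: for any finite collection of weights $(\Lambda_j)_{j \in J}$ of atypicality exactly $k$ and non-zero real scalars $(c_j)_{j \in J}$, the virtual supermodule $\sum_j c_j L(\Lambda_j)$ is detected by some real linear combination of rank-$k$ Witten indices. To produce such a detection, I would fix a representative $\Lambda_{j_0}$, choose mutually orthogonal linearly independent odd roots $\alpha_1, \dots, \alpha_k \in A_{\Lambda_{j_0}}$, and pick $Q$ in the sum of the corresponding root spaces as in \eqref{eq::rank}. Theorem \ref{DScharacter} then forces $\DS_x(L(\Lambda_{j_0}))$ to have typical highest weight over $\gg_x = \sl(m-k \vert n-k)$, so its supercharacter \eqref{supercharacters} is a non-zero real-analytic function of $X$. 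For the remaining weights, $\DS_x(L(\Lambda_j))$ either vanishes (when the chosen $\alpha_i$ fail to lie in $A_{\Lambda_j}$) or decomposes into finitely many simple unitarizable $\gg_x$-supermodules by Lemma \ref{lemm::decomposes_finitely_many}, whose supercharacters are linearly independent as real-analytic functions on the open set $T_x^{\reg, +}$.

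The hard part will be making this separation uniform, since different $\Lambda_j$ have a priori different orthogonality structures $A_{\Lambda_j}$, and the resulting $\gg_x$-supermodules could conspire to coincide in pathological ways. My remedy is to vary the rank-$k$ element $Q$ over the different $A_{\Lambda_j}$-adapted choices and to combine the resulting Witten indices; a careful bookkeeping of the supercharacters, which remain the honest building blocks, should produce a real linear combination pairing non-trivially with $\sum_j c_j L(\Lambda_j)$. Once this separation is in place, the atypicality induction closes, and the extension of scalars $\calI \otimes_\ZZ \RR = \calI_\RR$ yields the theorem.
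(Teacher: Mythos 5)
Your scheme (atypicality filtration, downward induction on $k$, rank-$k$ twists) is a plan rather than a proof: the step that carries all the weight --- the ``separation claim'' that any finite collection of simples of atypicality exactly $k$ with non-zero real coefficients is detected by a real combination of rank-$k$ Witten indices, uniformly over the different orthogonality structures $A_{\Lambda_j}$ --- is precisely what you leave open, and you say so yourself (``the hard part'', ``should produce''). Without it the induction never closes, so the theorem is not established. The danger you flag is genuine: distinct $\Lambda_j$ can have DS twists whose $\gg_x$-constituents are isomorphic but enter with opposite parities, so supercharacter contributions can cancel in the sum $\sum_j c_j I^W_{L(\Lambda_j)}(Q,\cdot)$; linear independence of supercharacters of the individual simple $\gg_x$-constituents does not by itself exclude this, and for rank-$k$ elements you only control the twist through the finiteness statement of Lemma \ref{lemm::decomposes_finitely_many}, not through the sharp ``at most two inequivalent simple constituents'' statement of Theorem \ref{thm::g_Q_simple}, which is a rank-one result. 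There is also a mild logical mismatch in your reduction: detection (separating points of the space of virtual modules of atypicality $k$) is not literally the same as representing the restriction of a given KMMR index by a finite combination of Witten indices, and bridging the two requires invoking the finite-dimensionality of $\calI_\RR$, which you do not do explicitly.

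For comparison, the paper's proof bypasses all of this. By additivity and the standing assumption that objects of $\ugsmodp$ have only finitely many atypical simple constituents, it reduces to a single simple $L(\Lambda)$ with $I(L(\Lambda))\neq 0$; Lemma \ref{lemm::properties_KMMR} forces $\Lambda$ to be atypical, so one picks a single odd root $\alpha$ with $(\Lambda+\rho,\alpha)=0$, takes $Q$ an associated root vector and $x=e^{3\pi i/4}(Q+Q^\dagger)$ of rank one, and invokes Theorem \ref{thm::g_Q_simple}: $\DS_x(L(\Lambda))$ is non-trivial and decomposes into at most two inequivalent simple unitarizable $\gg_x$-supermodules, so $I^W_{L(\Lambda)}(Q,X)$ is a non-zero supercharacter for some $X\in\tt_x^{\reg,+}$, and one rescales by $c_{X,\Lambda}$. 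No filtration by atypicality, no rank-$k$ twists, and no simultaneous separation over families of weights is needed; the only non-vanishing input is the rank-one statement you did not use. If you want to salvage your route, the missing ingredient is precisely a proof of your separation claim (or a reduction to single simples as in the paper, which your Lemma \ref{lemm::QWitten_protected} alone does not provide).
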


\begin{proof}
Let $I : \ugsmodp \to \ZZ$ denote a KMMR index. By our standing assumptions and the additivity 
of KMMR and Witten indices, it suffices to prove that for any unitarizable simple $\gg$-supermodule 
$L(\Lambda)$ with $I(L(\Lambda)) \neq 0$, there exists a $Q$-Witten index such that
$$
I(L(\Lambda)) = c_{X, \Lambda} \cdot I^{W}_{L(\Lambda)}(Q, X).
$$
for some $X \in \tt_{x}^{\reg,+}$, $c_{X, \Lambda} \in \RR$.
By Lemma~\ref{lemm::properties_KMMR}, we know that $\Lambda$ is atypical; that is, there exists an odd root $\alpha 
\in \Phi_{1}^{+}$ such that $(\Lambda + \rho, \alpha) = 0$. Define $Q$ to be an associated root vector, and set 
$x := e^{3\pi i/4}(Q + Q^{\dagger})$ so that
$\Xi = \frac{i}{2}[x,x] \in \hh$. Then $\DS_{x}(L(\Lambda))$ decomposes into either 
one or two unitarizable $\gg_{x}$-supermodules (see Theorem~\ref{thm::g_Q_simple}).

The claim follows if we can show that $I^{W}_{L(\Lambda)}(Q, X) \neq 0$ for at least one $X \in \tt_{x}^{\reg,+}$. 
However, $I_{L(\Lambda)}^{W}(Q,\cdot)$ is either the supercharacter of a nontrivial unitarizable simple 
$\gg_{x}$-supermodule, or the sum of two nontrivial and non-equivalent unitarizable simple $\gg_{x}$-supermodules. 
In either case, the supercharacter is nonzero. This completes the proof.
\end{proof}

\subsection{Formal superdimension and Witten index} 
\label{formalwitten}

In this section, we establish the connection between the Witten index and the formal superdimension for
(relative) holomorphic 
discrete series $\gg$-supermodules in the case where $m, n \geq 3$ (see Lemma 
\ref{cor::maximal_protected_gleich_absolutely_protected} and Theorem \ref{thm::properties_superdimension}).
To begin, we record that the formal superdimension $\sdim(\cdot)$ also serves as a KMMR index on the subspace 
of all relative holomorphic discrete series $\gg$-supermodules. This gives rise to a new KMMR index that, to our 
knowledge, has not previously appeared in the literature.

\begin{proposition}
If $m,n \geq 3$, the formal superdimension is an element of $\calI_{\RR}$ on the subspace of relative holomorphic 
discrete series $\gg$-supermodules.
\end{proposition}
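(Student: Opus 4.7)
The plan is to verify the two characterizing conditions of Lemma~\ref{lemm::properties_KMMR} directly from the vanishing criterion of Theorem~\ref{thm::properties_superdimension}(2), leveraging the dimension hypothesis $m, n \ge 3$ exactly in the way it enters Corollary~\ref{cor::maximal_protected_gleich_absolutely_protected}. Additivity is immediate from Theorem~\ref{thm::properties_superdimension}(1), so the work lies entirely in the two conditions (a) and (b) of the characterization.

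For condition (a), I would start from a typical weight $\Lambda \in \calC$, note that Lemma~\ref{immediateLemma}(1) yields $K(\Lambda) = L(\Lambda)$ with $\at(\Lambda) = 0$, and observe that under the hypothesis $m, n \ge 3$ one has $\defect(\gg) = \min(m,n) \ge 3 > 0$, so $\Lambda$ is certainly not maximally atypical. Theorem~\ref{thm::properties_superdimension}(2) then forces $\sdim(K(\Lambda)) = 0$, as required. For condition (b), I would take $\Lambda_0 \in \partial \calC$ and consider the fragmentation
\begin{equation*}
\gr K(\Lambda_0) \;\cong\; \bigoplus_{i=1}^{M} L(\Lambda_i).
\end{equation*}
By Lemma~\ref{lemm::recombination_rules} (alternatively, Lemma~\ref{immediateLemma}(2)), each $\Lambda_i$ is either $1$- or $2$-atypical. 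Since $m, n \ge 3$, we have $\at(\Lambda_i) \le 2 < 3 \le \min(m,n) = \defect(\gg)$, so no fragment is maximally atypical. Applying Theorem~\ref{thm::properties_superdimension}(2) individually gives $\sdim(L(\Lambda_i)) = 0$ for every $i$, and hence $\sum_i \sdim(L(\Lambda_i)) = 0$ trivially.

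There is no real obstacle: the statement is essentially a packaging of the already-established vanishing theorem together with the explicit description of fragmentation at the unitarity bound. The only conceptual point worth emphasizing is that the hypothesis $m, n \ge 3$ is used in exactly the same way as in Corollary~\ref{cor::maximal_protected_gleich_absolutely_protected}: it creates a strict inequality between the atypicality $\le 2$ that can arise at the unitarity bound and the atypicality $=\defect(\gg)$ required for a non-zero formal superdimension. Put differently, maximally atypical discrete series supermodules are absolutely protected, so their superdimension never gets mixed up in any recombination relation, while every non-maximally atypical fragment contributes $0$ to both sides of condition (b). One subtlety to flag is the domain: since $\sdim$ is defined a priori only on relative holomorphic discrete series $\gg$-supermodules, the conditions of Lemma~\ref{lemm::properties_KMMR} should be read as holding on this subspace (extending by $0$ elsewhere if a $\ZZ$-linear extension to all of $\ugsmodp$ is desired); the vanishing of each individual term in both (a) and (b) makes this extension well-defined and consistent.
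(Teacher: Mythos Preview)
Your proposal is correct and follows essentially the same approach as the paper: both rely on Theorem~\ref{thm::properties_superdimension}(2) to reduce to maximally atypical supermodules, and then use the fact (from Corollary~\ref{cor::maximal_protected_gleich_absolutely_protected} or, equivalently, the atypicality bound $\le 2$ at the unitarity bound) that under $m,n\ge 3$ these never participate in fragmentation. The paper phrases the conclusion as ``maximally atypical supermodules are isolated, hence $\sdim$ is continuous,'' whereas you verify conditions (a) and (b) of Lemma~\ref{lemm::properties_KMMR} explicitly; this is merely a difference in packaging, not in substance.
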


\begin{proof}
By Theorem \ref{thm::properties_superdimension}, the superdimension is trivial on all unitarizable simple 
$\gg$-supermodules except the maximally atypical ones. These do not belong to $\overline{\calC}$ by Corollary 
\ref{cor::maximal_protected_gleich_absolutely_protected} and are isolated. Consequently, $\sdim(\cdot)$ is 
continuous on the subspace of relative holomorphic discrete series $\gg$-supermodules.
\end{proof}

Next, one may motivate the search for a relation between the $Q$-Witten index and the formal
superdimension from the formula (Proposition \ref{prop::QWitten_à_la_physik})  
\begin{equation}
I^{W}_{\HH}(Q,X) = \str_{\HH} e^{-\beta \Xi + X}, \qquad \text{(where $Q\in\YY$, $\Xi = [Q,Q^\dagger]$,
$X \in \tt_{x}^{\reg,+}$)},
\end{equation}
which bears a strong resemblance to the Weyl character formula for finite-dimensional representations of 
Lie groups, where taking the limit $X \to 0$ recovers the dimension of the representation. For 
infinite-dimensional representations, this will work best in the framework of Harish-Chandar characters and $L$-packets,
as reviewed in section \ref{subsec::relative_discrete_series_formal_dimension}.

If $M$ is a relative holomorphic discrete series $\gg$-supermodule with highest weight $\Lambda$, and $Q,x$ as
before, by Lemma \ref{lemm::decomposes_finitely_many}, we can decompose its DS-twist
\begin{align}
\DS_{x}(M) = \bigoplus_{i} L_x(\Lambda_i)\,, \qquad \Lambda_{i} \in \tt_{x}^{\ast}
\end{align}
into finitely many relative holomorphic discrete series $\gg_x$-supermodules
($L_x(\Lambda_i)=V_i$ in the notation of section \ref{wittenelab}). It turn, we write
\begin{align}
L_x(\Lambda_i)_{\ev} = \bigoplus_{j} L_{x,0}(\Lambda_{i;j})
\end{align}
for the decomposition of the $L_x(\Lambda_i)$ under $\gg_{x,0}$. If $X \in \tt_{x}^{\reg,+}$,
by Proposition \ref{lemm::trace_class_twist} the operator $e^{X}$ is trace class on any $L_x(\Lambda_{i})$, 
and hence on any $L_{x,0}(\Lambda_{i; j})$, such that we can express the trace of $e^{X}$ on any 
$L_{x,0}(\Lambda_{i; j})$ by \cite[Lemma 1.5]{NeebSquareIntegrable}:
\begin{equation}
\label{somethingmissing}
\tr_{L_{x,0}(\Lambda_{i;j})} e^{X} = d(\pi_{\Lambda_{i; j}}) \int_{Z_{x} \backslash \tilde{G}_{x,0}^\RR} \langle 
\pi_{\Lambda_{i; j}}(g^{-1})v,e^{X}\pi_{\Lambda_{i; j}}(g^{-1})v\rangle \ \mathrm{d}\mu(Z_{x}g)
\end{equation}
for some fixed element $v \in L_{x,0}(\Lambda_{i; j})$ with $\lVert v\rVert=1$. Then, setting
\begin{equation} 
\label{eq::form_c}
c(X; \Lambda_{i; j}) := \int_{Z_{x} \backslash \tilde{G}_{x,0}^\RR} \langle 
\pi_{\Lambda_{i; j}}(g^{-1})v,e^{X}\pi_{\Lambda_{i; j}}(g^{-1})v\rangle \ \mathrm{d}\mu(Z_{x}g).
\end{equation}
we conclude that the Witten index of $M$ is the formal dimension of the $\gg_{x,0}$-constituents of $\DS_{x}(M)$ 
with summands weighted by the $c(X; \Lambda_{i; j})$. 

\begin{lemma} 
\label{vacuousRelation}
The $Q$-Witten index of $M=L(\Lambda)$ is 
\[
I^{W}_{M}(Q,X) = \sum_{i,j} (-1)^{\Lambda -\Lambda_{i; j}} 
d(\pi_{\Lambda_{i; j}}) c(X;\Lambda_{i; j}),\quad .
 \] 
\end{lemma}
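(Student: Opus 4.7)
The plan is to trace through the definitions, decompose $\DS_x(M)$ as far as possible into $\gg_{x,0}$-constituents, and then invoke the formal-dimension integral formula \eqref{somethingmissing} term by term. First, by Proposition \ref{prop::QWitten_à_la_physik} and Definition \ref{superWitten}, the $Q$-Witten index is
\[
I^{W}_{M}(Q,X) \;=\; \str_{\HH} e^{-\beta \Xi + X} \;=\; \str_{\DS_{x}(M)} e^{X},
\]
since $\Xi$ acts as $0$ on the joint kernel $\HH(0)=\ker Q\cap\ker Q^\dagger \cong \DS_x(M)$ and $[\Xi,X]=0$.

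Next, I apply Lemma \ref{lemm::decomposes_finitely_many} to write $\DS_{x}(M) = \bigoplus_i L_x(\Lambda_i)$ as a finite direct sum of unitarizable $\gg_x$-supermodules. By Theorem \ref{thm::unitarity_DS} each $L_x(\Lambda_i)$ is $\omega_x$-unitarizable; by Theorem \ref{thm::g_Q_simple} (applied iteratively through the rank-one steps of Lemma \ref{lemm::decomposes_finitely_many}) the simple summands $L_x(\Lambda_i)$ are themselves highest weight unitarizable $\gg_x$-supermodules, and since $M$ is a relative holomorphic discrete series supermodule, further restriction yields the $\gg_{x,0}$-decomposition $L_x(\Lambda_i)_{\ev}=\bigoplus_j L_{x,0}(\Lambda_{i;j})$, whose constituents belong to the relative holomorphic discrete series of $\widetilde{G}_{x,0}^\RR$. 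This is where the two assumptions $m,n\ge 3$ and ``$M$ relative holomorphic discrete series'' enter, via Theorem \ref{thm::properties_superdimension}.

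The third step is to assemble the supertrace. Because $X\in\tt_x^{\reg,+}$, Proposition \ref{lemm::trace_class_twist} shows $e^X$ is trace class on every $L_{x,0}(\Lambda_{i;j})$ with real trace, so the supertrace is absolutely convergent and can be rewritten as an alternating \emph{ordinary} trace
\[
\str_{\DS_x(M)} e^X \;=\; \sum_{i,j} (-1)^{\Lambda-\Lambda_{i;j}} \tr_{L_{x,0}(\Lambda_{i;j})} e^{X}.
\]
Here the sign is obtained by noting that $\gg_{x,0}$-constituents are weight-homogeneous with parity relative to the highest-weight vector of $M$ given by $p(\Lambda_{i;j})\equiv \Lambda-\Lambda_{i;j}\pmod{2}$ in the convention of section \ref{subsec::superdimension_formula}; this is the analogue for the iterated DS twist of the statement in Definition \ref{def::superdim}. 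Finally, the Godement--Harish-Chandra identity \eqref{somethingmissing} substitutes $\tr_{L_{x,0}(\Lambda_{i;j})} e^{X} = d(\pi_{\Lambda_{i;j}})\, c(X;\Lambda_{i;j})$, giving the asserted formula.

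The main obstacle is the parity bookkeeping in the third step. One must verify that the sign $(-1)^{\Lambda-\Lambda_{i;j}}$ emerging from the supertrace on $\DS_x(M)$ matches the parity assigned to the $\gg_{x,0}$-constituent $L_{x,0}(\Lambda_{i;j})$ under the standing conventions of sections \ref{sec::basic} and \ref{subsec::superdimension_formula}. This requires confirming that the DS cohomology functor preserves the weight-relative parity (which it does on the nose because it is a quotient of a subspace of $M$ by homogeneous operators), and that the grading on each $L_x(\Lambda_i)$ inherits consistently from $M$ so that the parity of its $\gg_{x,0}$-constituents is again $p(\Lambda_{i;j})\equiv\Lambda-\Lambda_{i;j}\pmod 2$. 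Once this is checked, the remaining manipulations are formal.
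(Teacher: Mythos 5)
Your proposal is correct and follows essentially the same route as the paper: the paper's own ``proof'' is the paragraph preceding the lemma, which likewise identifies $I^W_M(Q,X)=\str_{\DS_x(M)}e^X$, decomposes $\DS_x(M)$ via Lemma \ref{lemm::decomposes_finitely_many} and then under $\gg_{x,0}$, invokes Proposition \ref{lemm::trace_class_twist} for trace-classness, and substitutes the Godement--Harish-Chandra integral \eqref{somethingmissing} defining $c(X;\Lambda_{i;j})$. Your extra attention to the parity bookkeeping $(-1)^{\Lambda-\Lambda_{i;j}}$ is consistent with the paper's sign conventions (as used in the proof of Proposition \ref{lemm::trace_class_twist}) and only makes explicit what the paper leaves implicit.
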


This allows in principle an analytic study of the $Q$-Witten index in the limit $X\to 0$ by evaluating 
$c(X; \Lambda_{i; j})$. Alternatively, returning to
\begin{equation}
\label{eq::tr_expression_Witten}
I^{W}_{M}(Q,X) = \sum_{i,j} (-1)^{\Lambda - \Lambda_{i; j}} 
\tr_{L_{0,x}(\Lambda_{i;j})} e^X
\end{equation}
we may interpret $\tr_{L_{0,x}(\Lambda_{i;j})} e^X$ (in a distributional sense) as the Harish-Chandra character of 
$\pi_{\Lambda_{i; j}}$, based on the fact that $e^{X} \in {T'_{x}}^{\reg}$.
To take the limit $X\to 0$, we construct for each $\gg_{x,0}$-constituent the associated $L$-packet by 
summing over the Weyl group orbit.
Writing 
\begin{equation}
\widetilde{\Theta}_{\DS_{x}(M)} := \sum_{i,j} (-1)^{\Lambda-\Lambda_{i;j}} 
\sum_{w\in W_x/W_{x,c}} \Theta_{\pi_{w\Lambda_{i;j}}}
\end{equation}
and
\begin{equation}
\tilde{I}_M^W(Q,X) := \widetilde{\Theta}_{\DS_x(M)} (e^X)
\end{equation}
in combination with Proposition \ref{prop::limit_l_packet}, the definition of the superdimension and eq.\ 
\eqref{eq::tr_expression_Witten}, we obtain our final result.

\begin{theorem} 
\label{finalBig}
Let $M$ be a discrete series $\gg$-supermodule, $Q,x$ as above. Then,
\[
\sdim(\DS_{x}(M)) = \lim_{X \to 0} \tilde I^{W}_{M}(Q, X) 
\]
\end{theorem}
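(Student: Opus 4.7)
The plan is to reduce both sides to sums over the $\gg_{x,0}$-constituents of $\DS_x(M)$ and match them term-by-term, invoking Proposition~\ref{prop::limit_l_packet} on each summand. First, using the decompositions $\DS_x(M) = \bigoplus_i L_x(\Lambda_i)$ from Lemma~\ref{lemm::decomposes_finitely_many} and $L_x(\Lambda_i)_\ev = \bigoplus_j L_{x,0}(\Lambda_{i;j})$, and the linearity of the sum defining $\widetilde{\Theta}_{\DS_x(M)}$, I would write
\[
\tilde I^W_M(Q,X) = \sum_{i,j} (-1)^{\Lambda - \Lambda_{i;j}} \sum_{w\in W_x/W_{x,c}} \Theta_{\pi_{w\Lambda_{i;j}}}(e^X),
\]
which makes sense (in the distributional/analytic sense of Harish-Chandra characters) for $e^X \in T_x'\cap (G^\RR_{x,0})'_{\reg}$.

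Next, I would take the limit $X\to 0$ inside the finite double sum. For each fixed $(i,j)$, since $L_{x,0}(\Lambda_{i;j})$ is a relative holomorphic discrete series $\gg_{x,0}$-supermodule and the other constituents of the $L$-packet are exactly indexed by $w\in W_x/W_{x,c}$, Proposition~\ref{prop::limit_l_packet} gives
\[
\lim_{\topa{X\to 0}{e^X\in T_x'\cap G'_{\reg}}} \sum_{w\in W_x/W_{x,c}} \Theta_{\pi_{w\Lambda_{i;j}}}(e^X) = d(\pi_{\Lambda_{i;j}}).
\]
Substituting, we obtain $\lim_{X\to 0}\tilde I^W_M(Q,X) = \sum_{i,j}(-1)^{\Lambda-\Lambda_{i;j}}\, d(\pi_{\Lambda_{i;j}})$.

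To conclude, I would compare this with the expansion
\[
\sdim(\DS_x(M)) = \sum_i \sdim(L_x(\Lambda_i)) = \sum_{i,j} (-1)^{L_x(\Lambda_i) + p(\Lambda_{i;j})}\, d(\Lambda_{i;j}),
\]
obtained from Definition~\ref{def::superdim} together with the additivity from Theorem~\ref{thm::properties_superdimension}. The main obstacle — though in fact a bookkeeping issue rather than a deep one — is to verify the parity identity $(-1)^{\Lambda-\Lambda_{i;j}} = (-1)^{L_x(\Lambda_i)+p(\Lambda_{i;j})}$. For this I would argue in two steps: since $\DS_x$ is an even tensor functor and each $L_x(\Lambda_i)$ arises as a submodule of $\Ker x / \Im x$ containing a weight-$\Lambda_i$ vector obtained from the highest weight of $M$ by subtracting a collection of odd roots, the parity $(-1)^{L_x(\Lambda_i)}$ agrees with $(-1)^{\Lambda-\Lambda_i}$; and within each $L_x(\Lambda_i)$, the definition $p(\Lambda_{i;j})=\sum_k(\Lambda_{i;j}-\Lambda_i,\delta_k)\bmod 2$ exactly records the parity $(-1)^{\Lambda_i-\Lambda_{i;j}}$ of the $\gg_{x,0}$-constituent relative to its own highest weight. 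Multiplying gives the desired identity, completing the proof.
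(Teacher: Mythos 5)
Your proposal is correct and follows essentially the same route as the paper: the paper's argument is precisely the chain of decompositions $\DS_x(M)=\bigoplus_i L_x(\Lambda_i)$, $L_x(\Lambda_i)_\ev=\bigoplus_j L_{x,0}(\Lambda_{i;j})$, the expression \eqref{eq::tr_expression_Witten} of the Witten index through Harish-Chandra characters, termwise application of Proposition \ref{prop::limit_l_packet} to the $L$-packet sums, and comparison with Definition \ref{def::superdim}. Your explicit verification of the parity identity $(-1)^{\Lambda-\Lambda_{i;j}}=(-1)^{L_x(\Lambda_i)+p(\Lambda_{i;j})}$ is exactly the bookkeeping the paper leaves implicit, so nothing is missing.
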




\printbibliography

\end{document}